\newtheorem{Def}{Definition}[section]
\newtheorem{Prop}[Def]{Proposition}
\newtheorem{Thm}[Def]{Theorem}
\newtheorem{Lem}[Def]{Lemma}
\newtheorem{Cor}[Def]{Corollary}
\theoremstyle{remark}
\newtheorem{Rem}[Def]{Remark}
\newtheorem{Ex}[Def]{Example}
\renewcommand{\labelenumi}{\theenumi)}
\renewcommand{\[}{\begin{equation}}
\renewcommand{\]}{\end{equation}}
\newcommand{\ere}{\hfill\mbox{$\Diamond$}\end{Rem}}
\newcommand{\eex}{\hfill\mbox{$\Diamond$}\end{Ex}}
\newcommand{\bsb}{\begin{Sb}}
\newcommand{\esb}{\end{Sb}}
\newcommand{\Z}{{\mathbb Z}}
\newcommand{\C}{{\mathbb C}}
\newcommand{\te}{\theta}
\newcommand{\sH}{{\mathcal H}}
\newcommand{\sC}{{\mathcal C}}
\newcommand{\sD}{{\mathcal D}}
\newcommand{\sO}{{\mathcal O}}
\newcommand{\ot}{\otimes}
\newcommand{\tens}{\otimes}
\newcommand{\id}{\mathrm{id}}
\newcommand{\im}[1]{\mathrm{im}\,#1}
\def\Rhom#1#2#3{{{\rm Hom}\sp{#1}(#2,#3)}}
\def\Lhom#1#2#3{{{}\sp{#1}{\rm Hom}(#2,#3)}}
\newcommand{\can}{\operatorname{\it can}}
\newcommand{\Me}{\mathbf{M}_A^C(\psi)}
\newcommand{\Mp}{\mathbf{M}_P^C(\psi)}
\newcommand{\Es}{{(A,C, {\psi})}}
\newcommand{\Ep}{{(P,C,\psi)}}
\newcommand{\Op}{{\Omega^1 P}}
\newcommand{\Ob}{{\Omega^1 B}}
\newcommand{\Ker}{\operatorname{Ker}}
\newcommand{\Hom}{\operatorname{Hom}}
\def\sw#1{{\sb{(#1)}}}
\def\sco#1{{\sp{(#1)}}} 
\def\su#1{{\sp{[#1]}}} 
\def\eps{\varepsilon}
\def\DC{\Delta_{\sC}}
\def\eC{\eps_\sC}
\def\DH{\Delta_{\sH}}
\def\eH{\eps_\sH}
\def\ut{\otimes}
\def\act{\triangleleft}
\def\dl{{}_P\Delta}
\def\csop{\mbox{$(C^*)^{op}$}}
\def\lfa{{\mbox{$\forall\;$}}}
\def\fa{\forall\,\,}
\def\Rhom#1#2#3{{{\rm Hom}\sp{#1}(#2,#3)}}
\def\ten#1{\underset{#1}{\otimes}}
\def\coten#1{\underset{#1}{\square}}
\def\note#1{}
\def\wegdamit#1{}
\def\CM{{\mathbf M}}
\def\st{such that}
\def\eps{{\epsilon}}
\def\vt{\triangleright}
\newcommand{\ff}{faithfully flat}
\renewcommand{\phi}{\varphi}
\newcommand{\Llp}{\mbox{\Large $($}}
\newcommand{\Lrp}{\mbox{\Large $)$}}
\newcommand{\llp}{\mbox{\large $($}}
\newcommand{\lrp}{\mbox{\large $)$}}
\renewcommand{\epsilon}{\varepsilon}
\newcommand{\ra}{\rightarrow}
\newcommand{\ci}{\circ}
\newcommand{\cc}{\!\ci\!}
\newcommand{\lra}{\longrightarrow}
\newcommand{\inc}{\mbox{$\,\subseteq\;$}}
\renewcommand{\subset}{\inc}
\newcommand{\dr}{\mbox{$\Delta_{P}$}}
\newcommand{\dsr}{\mbox{$\Delta_{\Omega^1P}$}}
\newcommand{\hd}{\mbox{$\delta$}}
\newcommand{\he}{\mbox{$\varepsilon$}}
\newcommand{\hD}{\mbox{$\Delta$}}
\newcommand{\ho}{\mbox{$\omega$}}
\newcommand{\hO}{\mbox{$\Omega$}}
\renewcommand{\tilde}{\widetilde}
\def\hge{Hopf-Galois $H$-extension}
\def\cge{coalgebra-Galois $C$-extension}
\def\cg{coalgebra-Galois}
\def\st{\stackrel}
\def\<{\langle}
\def\>{\rangle}
\def\d{\mbox{$\mathop{\mbox{\rm d}}$}}
\def\id{\mbox{$\mathop{\mbox{\rm id}}$}}
\def\im{\mbox{$\mathop{\mbox{\rm Im$\,$}}$}}
\def\ker{\mbox{$\mathop{\mbox{\rm Ker$\,$}}$}}
\def\coker{\mbox{$\mathop{\mbox{\rm Coker$\,$}}$}}
\def\hom{\operatorname{Hom}}
\def\map{\operatorname{Map}}
\def\te{\otimes}
\def\tens{\otimes}
\def\sO{{\mathcal O}}
\def\C{{\mathbb C}}
\def\Z{{\mathbb Z}}
\def\sH{{\mathcal H}}
\def\im{{\rm Im}}
\def\id{{\rm id}}
\newcounter{zlist}
\newenvironment{zlist}{\begin{list}{(\arabic{zlist})}{
\usecounter{zlist}\leftmargin2.5em\labelwidth2em\labelsep0.5em
\topsep0.6ex
\parsep0.3ex plus0.2ex minus0.1ex}}{\end{list}}
\newcounter{blist}
\newenvironment{blist}{\begin{list}{(\alph{blist})}{
\usecounter{blist}\leftmargin2.5em\labelwidth2em\labelsep0.5em
\topsep0.6ex
\parsep0.3ex plus0.2ex minus0.1ex}}{\end{list}}
\newcounter{rlist}
\newenvironment{rlist}{\begin{list}{(\roman{rlist})}{
\usecounter{rlist}\leftmargin2.5em\labelwidth2em\labelsep0.5em
\topsep0.6ex
\parsep0.3ex plus0.2ex minus0.1ex}}{\end{list}}
\newcommand{\M}{\operatorname{\bf M}}
\newcommand{\twa}{{}^{[1]}}
\newcommand{\twb}{{}^{[2]}}
\renewcommand\o{\otimes}
\renewcommand\epsilon\varepsilon
\newfont{\bbb}{msbm10 scaled\magstep1}
\def\kdots{\mathinner{\mkern1mu\raise0pt\vbox{\kern7pt\hbox{.}}\mkern2mu\raise4pt\hbox{.}\mkern2mu\raise8pt\hbox{.}}}
\newcommand{\Rend}[2]{\mathrm{End}\sp{#1}(#2)}
\newcommand{\rhom}[3]{\mathrm{Hom}\sb{#1}(#2,#3)}
\newcommand{\LRhom}[4]{{}^{#1}\mathrm{Hom}\sp{#2}(#3,#4)}
\def\Mc{{\mathbf M}}
\def\t{\theta}
\def\ra{\rightarrow}
\def\text{\hbox}
\def\ra{\rightarrow}
\def\Hom{\mathop{\rm Hom}\nolimits}
\def\Ker{\mathop{\rm Ker}\nolimits}
\def\build#1_#2^#3{\mathrel{\mathop{\kern 0pt#1}\limits_{#2}^{#3}}}
\begin{document} 

\title{\vspace*{-15mm}\LARGE \bf GALOIS-TYPE EXTENSIONS AND EQUIVARIANT PROJECTIVITY}
\label{2chapter4}
\author{\vspace*{-0mm}\large\sc Tomasz Brzezi\'nski \\
Department of Mathematics, Swansea University, \\
  Singleton Park,   Swansea SA2 8PP, U.K.\\ 
E-mail: T.Brzezinski@swansea.ac.uk \and
\vspace*{-0mm}\large\sc
Piotr M.~Hajac\\
\vspace*{-0mm}\large
Instytut Matematyczny, Polska Akademia Nauk\\
\vspace*{-0mm}\large
ul.\ \'Sniadeckich 8, Warszawa, 00-956 Poland
\vspace{0mm}\\
\large\sl
http://www.impan.gov.pl/$\!\widetilde{\phantom{m}}\!$pmh\\
\vspace*{0mm}\large
and\\
\vspace*{-0mm}\large
Katedra Metod Matematycznych Fizyki, Uniwersytet Warszawski\\
\vspace*{-0mm}\large
ul.\ Ho\.za 74, Warszawa, 00-682 Poland}
\date{}
\maketitle
\begin{abstract}\normalsize
The theory of general Galois-type extensions  is presented, including the interrelations between coalgebra extensions and algebra (co)extensions,  properties of corresponding (co)translation maps, and rudiments of entwinings and factorisations. To achieve broad perspective, this theory is placed in the context of far reaching generalisations of the Galois condition to the setting of corings. At the same time,
to bring together $K$-theory and general Galois theory, the equivariant projectivity of extensions is assumed resulting in the centrepiece concept of a {\em principal extension}. 
Motivated by noncommutative geometry, we employ such extensions as replacements
of principal bundles. This brings about the notion of a strong connection and yields finitely generated projective associated modules, which play the role of noncommutative vector bundles.  Subsequently, the theory of strong connections is developed. It is purported as
a basic ingredient in the construction of the Chern character for Galois-type extensions
(called the Chern-Galois character).
\end{abstract}

\begin{center}{\bf Acknowledgements}\end{center}

This work was partially supported by the European Commission grant
MKTD-CT-2004-509794 and the Polish Government grants
115/E-343/SPB/6.PR UE/DIE 50/2005 - 2008 and N201 1770 33.
\newpage
{\footnotesize\tableofcontents}

\section{Introduction}

Taking advantage of Peter-Weyl theory, principal comodule algebras (faithfully flat Hopf-Galois extensions with bijective antipodes) have been shown  \cite{bh} to generalise compact principal bundles in the sense of Henri Cartan (no local triviality assumed). On the other hand, there are examples of quantum spaces which, classically, correspond to principal bundles, yet do not fit the Hopf-Galois framework. More specifically, a natural source of examples of principal bundles is  provided by homogeneous spaces. These can always be defined as quotients of a group by its subgroup. In the case of Hopf algebras  understood as quantum groups, however, there is a rather limited
number of quantum subgroups (given by  surjections of Hopf algebras). 
As a result, not every quantum homogeneous space is a quotient of a quantum group
by its quantum subgroup. For example, only one member
of the family of quantum 2-spheres defined in \cite{p-p87} can be
obtained as a quotient of $SU_q(2)$ by $U(1)$. The theory of
Hopf-Galois extensions
can only describe quantum homogeneous
 spaces that are quotients of quantum groups by quantum subgroups. 

Thus
  it appears necessary to consider a wider class of extensions that, on one hand,  would be close enough to principal comodule algebras, yet general enough to include examples coming from  quantum homogeneous spaces. The basic idea is to replace
 a Hopf
algebra in a Hopf-Galois extension by a coalgebra. This point of view for the first time was taken seriously in \cite{bm98a}, where the studies of {\em coalgebra principal bundles} were initiated. Over the recent years and in a significant number of papers, the theory of coalgebra principal bundles or coalgebra-Galois extensions \cite{bh99} has been developed and refined both in purely algebraic and differential geometric directions. On the algebraic side it has led to revival of the coring theory and provided new points of view on areas such as noncommutative descent theory~\cite{bw03}. On the differential geometric side, it has culminated in the introduction of {\em principal extensions} as noncommutative objects most closely describing principal bundles, and in the development of Chern-Weil theory for such extensions~\cite{bh04}. Most importantly, the abstract theory of principal extensions generalising principal comodule algebras was supported by new interesting examples such as noncommutative or quantum instanton bundles going beyond Hopf-Galois theory. 

It seems that the theory of coalgebra-Galois and principal extensions has achieved a  level of maturity at which  it could be profitable to review recent progress  and present it in a unified manner. This is the aim of the current article. The article consists of two parts. In the first part we analyse the algebraic side of coalgebra-Galois extensions. We give basic definitions and properties, we look at dual ways of defining Galois-type extensions (by algebras or by coalgebras), we also put Galois-type extensions in a wider framework of corings and quantum groupoids. 

The second part is devoted to geometry motivated aspects of the coalgebra-Galois theory. In particular, we define modules associated to Galois-type extensions via corepresentations of their structure comodule coalgebras. They can be understood as modules of sections of associated noncommutative vector bundles. We describe basic elements of the theory of connections and strong connections, and derive consequences of the definition of a principal extension. The key idea here is that the concept of equivariant projectivity
replaces that of faithful flatness used in Hopf-Galois theory. These two
concepts are equivalent in the Hopf-Galois setting (bijective antipode assumed)
but only the implication
``equivariant projectivity" $\Rightarrow$ ``faithfull flatness" is known in general.  Therefore, we build our theory on equivariant projectivity which guarantees that the aforementioned associated modules are finitely generated projective  for any finite-dimensional corepresentation of the structure
coalgebra. This way we arrive at the $K$-theory of the coaction-invariant
subalgebra. Now, we can apply the noncommutative Chern character mapping
the $K_0$-group  to the even cyclic homology. 

 Furthermore, strong connections give explicit formulae for idempotents. Although these formulae depend on the choice of  strong connections, corresponding elements of the $K_0$-group  are connection independent. Thus
 we obtain an explicit map from the Grothendieck group of isomorphism classes of finite-dimensional corepresentations of the structure coalgebra to the even cyclic homology of the coaction-invariant subalgebra. We call it the Chern-Galois character, and view as noncommutative Chern-Weil theory.

\subsection{General conventions and standing assumptions}

All (co)algebras are (co)unital and over a field $k$. We use the standard Heynemann-Sweedler notation (with the summation symbol suppressed) for coproducts and coactions, and $*$ for the convolution product of maps from
a coalgebra to an algebra. The coproduct, counit, multiplication,
and antipode are denoted by $\hD$, $\he$, $m$,
and $S$, respectively. The kernel of the multiplication map $A\otimes A\rightarrow A$
is written as $\hO^1A$, and called the space of universal differential 1-forms.
The formula $\d a:=1\otimes a-a\otimes 1$ defines the universal differential
$A\rightarrow\hO^1A$.

Our typical notation for a left and a right coaction on a vector space $V$ is ${}_V\hD$
and $\hD_V$, or ${}^V\!\varrho$  and $\varrho^V$, respectively. For actions on $V$, we
use symbols like $\mu_V$ or $m_V$.
For an algebra $B$  and a coalgebra $C$, the symbol ${}_B\Mc^C$ stands for the
category of left $B$-modules that are also right $C$-comodules
with $B$-linear coactions. Morphisms in  ${}_B\Mc^C$ are left
$B$-linear right $C$-colinear maps. The space of all colinear
homomorphisms is denoted by $\Hom^C$. Analogous symbols denote other
categories of left (co)modules right (co)modules with the left and right 
structures being compatible and other homomorphism spaces.

\subsection{Equivariant projectivity}

The notion of \textit{equivariant projectivity} of a (left)
$B$-module $P$ occurs whenever $P$ has additional algebraic
structure, compatible with the $B$-module structure. In this case
we might like to require the properties of projectivity (such as
the splitting of the product map) to respect this additional
structure. A typical situation of key importance to the theory of
principal extensions can be described as follows.

 As in \cite{bh04}, an
object  $P\in {}_B\Mc^C$ is called a {\em $C$-equivariantly
projective left
 $B$-module} if for any two objects $M,N$ and
morphisms $\pi:M \ra N$, $f:P \ra N$ in ${}_B\Mc^C$, together with
a right $C$-colinear splitting $i: N \ra M$ of $\pi$ there exists
a morphism $g:P \ra M$ in ${}_B\Mc^C$ such that the following
diagram commutes:
\[
\xymatrix{
M \ar@<1ex>[r]^{\pi} & N \ar@<1ex>[l]^{i} \\
& P \ar@{-->}[lu]^{\exists g} \ar[u]^{f}
}
\]
Similarly to projective modules, the $C$-equivariant projectivity
can be fully characterised by the splitting property of the
multiplication map.
\begin{Lem}\label{2:lemma.e.p}
An object $P\in {}_B\Mc^C$ is a $C$-equivariantly projective left
 $B$-module if and only if there exists a left $B$-module right $C$-comodule
 section $s$ of the product map $B\otimes P\ra P$. Here $B\otimes P$ is a right
 $C$-comodule with the tensor product coaction $\id_B\otimes \Delta_P$.
 \end{Lem}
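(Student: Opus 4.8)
The plan is to mirror the classical fact that a left $B$-module $P$ is projective precisely when the product map $\mu\colon B\otimes P\to P$ splits $B$-linearly (here $B\otimes P$, with $B$ acting on the left tensorand only, is free, so a splitting exhibits $P$ as a direct summand of a free module), and to keep track of the right $C$-coaction throughout. I would first record that $B\otimes P$ is an object of ${}_B\Mc^C$: with the coaction $\id_B\otimes\Delta_P$ one has $\Delta_{B\otimes P}(b(b'\otimes p))=bb'\otimes p_{(0)}\otimes p_{(1)}=b\,\Delta_{B\otimes P}(b'\otimes p)$, so the coaction is left $B$-linear. Likewise $\mu$ is a morphism in ${}_B\Mc^C$, since $B$-linearity of $\Delta_P$ gives $\Delta_P(\mu(b\otimes p))=\Delta_P(bp)=b\,p_{(0)}\otimes p_{(1)}=(\mu\otimes\id_C)(\id_B\otimes\Delta_P)(b\otimes p)$.

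For \emph{necessity} I would simply feed a well-chosen diagram into the definition of $C$-equivariant projectivity. Take $M=B\otimes P$, $N=P$, $\pi=\mu$, $f=\id_P$, and $i\colon P\to B\otimes P$, $i(p)=1_B\otimes p$. Then $\mu\circ i=\id_P$, and $i$ is right $C$-colinear because $\Delta_{B\otimes P}(1_B\otimes p)=1_B\otimes p_{(0)}\otimes p_{(1)}=(i\otimes\id_C)\Delta_P(p)$; thus $i$ is a $C$-colinear splitting of $\pi$. The morphism $g\colon P\to B\otimes P$ in ${}_B\Mc^C$ produced by the definition satisfies $\mu\circ g=\id_P$, so $s:=g$ is exactly the desired left $B$-linear right $C$-colinear section of the product map.

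For \emph{sufficiency}, I would assume such a section $s$ exists and take $\pi\colon M\to N$, $f\colon P\to N$ in ${}_B\Mc^C$ together with a right $C$-colinear splitting $i\colon N\to M$ of $\pi$. The key construction is an ``induction'' step upgrading the merely colinear map $i\circ f\colon P\to M$ to a morphism of ${}_B\Mc^C$: define $h\colon B\otimes P\to M$ by $h(b\otimes p)=b\,\big(i(f(p))\big)$. This $h$ is left $B$-linear by construction, and right $C$-colinear because the coaction on $M$ is $B$-linear and $i\circ f$ is colinear, giving $\Delta_M(h(b\otimes p))=b\,i(f(p))_{(0)}\otimes i(f(p))_{(1)}=b\,i(f(p_{(0)}))\otimes p_{(1)}=(h\otimes\id_C)(\id_B\otimes\Delta_P)(b\otimes p)$. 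I would then set $g:=h\circ s\colon P\to M$, a morphism in ${}_B\Mc^C$ as a composite of such, and verify $\pi\circ g=f$: writing $s(p)=\sum_j b_j\otimes p_j$, the $B$-linearity of $\pi$ and $\pi\circ i=\id_N$ yield $\pi(g(p))=\sum_j b_j f(p_j)=f\big(\sum_j b_j p_j\big)=f(\mu(s(p)))=f(p)$, the last step because $s$ splits $\mu$ and $f$ is $B$-linear.

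I expect no serious obstacle. The necessity direction is a formal specialisation of the definition, while the only genuine content in sufficiency is the induction map $h$, whose construction is standard and whose $B$-linearity is automatic. The one step demanding care is the verification that $h$ (and $i$) are right $C$-colinear; both reduce to the compatibility built into ${}_B\Mc^C$, namely that the coactions on $M$ and on $P$ are left $B$-linear, combined with the colinearity of $f$ and $i$.
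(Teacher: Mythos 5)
Your proposal is correct and follows essentially the same route as the paper: the converse direction specialises the defining diagram to $M=B\otimes P$, $N=P$, $\pi=\mu$, $f=\id_P$, $i(p)=1_B\otimes p$, and the forward direction builds $g$ as $m_M\circ(\id_B\otimes(i\circ f))\circ s$, which is exactly your $h\circ s$. The extra colinearity verifications you supply are details the paper leaves implicit, but the argument is the same.
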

\begin{proof}
Given a section $s$ of the multiplication map $m_P:B\otimes P\ra
P$, and $M$, $N$, $f$, $i$ and $\pi$ as in the diagram above, one
defines the map $g: P\ra M$ by $g =m_M\circ(\id_B\otimes (i\circ
f))\circ s$, where $m_M: B\otimes M\ra M$ is the
$B$-multiplication map for $M$.
 Conversely,
in the defining diagram of a $C$-equivariantly projective
$B$-module $P$ take $M=B\otimes P$, $N=P$, $\pi = m_P$, $i: P\ra
B\otimes P$, $p\mapsto 1_B\ot  p$ and $f$ the identity map. Then
$g$ constructed through such diagram is the required splitting of
the multiplication map.
\end{proof}

In an analogous way, one calls a
 $(B,A)$-bimodule $P$  an {\em $A$-equivariantly
projective left
 $B$-module} if for any two $(B,A)$-bimodules $M,N$ and
$(B,A)$-bilinear maps $\pi:M \ra N$, $f:P \ra N$ in ${}_B\Mc_A$,
together with a right $A$-linear splitting $i: N \ra M$ of $\pi$
there exists a $(B,A)$-bilinear map $g:P \ra M$ such that
$\pi\circ g =f$. This is equivalent to the existence of a
$(B,A)$-bilinear splitting of the multiplication map $B\otimes
P\ra P$.

Since any right $C$-comodule is a left module of the convolution
algebra $C^*$, any object $P\in {}_B\Mc^C$ is a $(B,A)$-bimodule,
where $A=C^{*op}$. In this case, $P$ is a $C$-equivariantly
projective left $B$-module if and only if it is an
$A$-equivariantly projective left $B$-module (since there is a
bijective correspondence between $C$-colinear and $A$-linear
maps).

The notion of equivariant projectivity should be contrasted with
that of  {\em relative projectivity}\index{relatively projective
module}. Given an algebra map $\iota: A\ra B$, any left $B$-module
is also a left $A$-module via $\iota$ and the multiplication in
$B$. In this situation, one often says that $B$ is an {\em
$A$-ring}\index{$A$-ring}  or an {\em algebra over
$A$}\index{algebra over $A$} and that $P$ is a module over an
$A$-ring. The product map $B\otimes P\ra P$ descends to the map
$m_{P\mid A}:B\otimes_A P$. $P$ is called an {\em $A$-relatively
projective left $B$-module} provided the map $m_{P\mid A}$ has a
left $B$-linear section.

An equivariantly projective left $B$-module (be it
$A$-equivariantly or $C$-equivariantly) is always a projective
left $B$-module (a $(B,A)$-linear splitting of the multiplication
map is, in particular, left $B$-linear). Not every
$(B,A)$-bimodule $P$ that is projective as a left $B$-module is an
equivariantly projective module. For an $A$-ring $B$, a projective
left $B$-module is always an $A$-relatively projective left
$B$-module, but the relative projectivity of $P$ does not imply
the projectivity of $P$ (however, when $A$ is a separable algebra
the $A$-relative projectivity is equivalent to the projectivity of
$P$).

\section{Galois-type extensions and coextensions}

This section is devoted to the definition and description of basic algebraic properties of general Galois-type extensions. We start in Section~\ref{2:subs.def} by introducing the notion of equivariant projectivity, then give the definition of coalgebra-Galois extensions and two other types of algebra-Galois (co)extensions. Every such extension is determined by the existence of a {\em (co)translation map}, the properties of which are studied in Section~\ref{2:subs.trans}. Furthemore, any coalgebra-Galois extension or an algebra-Galois coextension gives rise to an algebraic structure, which encodes the symmetries of extension and is known as an {\em entwining structure}. This is closely related (by semi-dualisation) to {\em factorisation} of algebras. Both are described in Section~\ref{2:subs.ent.fac}.  Section~\ref{2:subs.princ} is devoted to the definition of a {\em principal extension} \cite{bh04}   which generalises the concept of a faithfully flat Hopf-Galois extension with bijective antipode and forms a cornerstone of  the theory of noncommutative principal bundles. Representations of entwining structures are given in terms of {\em entwined modules}. These unify many categories of modules studied previously in Hopf algebra theory. Rudimentary properties of entwined modules are described in Section~\ref{2:subs.coring}. In this section it is also shown, how the properties of such modules and Galois-type extensions can be derived from the properties of corings and their comodules. The latter provide a conceptual and algebraic framework for Galois-type extensions.

\subsection{Definitions and basic properties}\label{2:subs.def}

\subsubsection{Coalgebra-Galois extensions}

Let $C$ be a coalgebra over a
field $k$ and $P$ a $k$-algebra
and right $C$-comodule with a comodule structure
map $\hD_P:P \ra P \otimes C$.  In attempting to define a
coalgebra-Galois extension one first has to address the problem of 
defining the coaction invariants.

Recall that for Hopf-Galois extensions coinvariant
elements are defined as $p \in P$ such
that $\hD_P(p)=p \otimes 1$, using the fact
that the unit of a Hopf algebra is group-like.
Since there might not necessarily exist such a
group-like element in the coalgebra
$C$, we can no longer obtain coaction invariants of a
$C$-comodule $P$ in this way.
Instead, we define the  coaction invariants of $P$ by\footnote{
We owe this definition to M.\ Takeuchi.}
\begin{equation}
P^{coC}:=\{b \in P \mid \forall \  p \in P: \hD_P(bp)=b \hD_P(p)\}.
\end{equation}
First observe that $P^{coC}$ is a subalgebra of $P$. Indeed,
for all $b, b^{\prime} \in P^{coC}$ and $p \in P$, 
\begin{equation}
\Delta_P(b b^{\prime} p)
=b \Delta_P(b^{\prime} p)=b b^{\prime} \Delta_P(p).
\end{equation}
Thus $bb'\in P^{coC}$, and since $1\in P^{coC}$, we conclude
that $P^{coC}$ is a subalgebra of~$P$.

Another, and perhaps more intuitive, definition of coaction invariants is
possible, if 
 there  exists a group-like element $e$ in the coalgebra $C$
such that
$\Delta_P(1)=1 \otimes e$. (We call coactions enjoying this property $e$-coaugmented.)
Then  one
 can define the set of {\em $e$-coaction invariants} as
\begin{equation}
P_{e}^{coC}:=\{p \in P \mid \Delta_P(p)=p \otimes e\}.
\end{equation}
Note, however, that it is not always true that
 $P_{e}^{coC}$ is a subalgebra of $P$, although it is a subset of $P$
which contains 1.  These
two types of coaction invariants are related in the following way.

\begin{Lem} \label{2:coinvariants1}
Let $C$ be a coalgebra with a group-like element $e$, and let $P$ be an
algebra and a
right $C$-comodule such that $\Delta_P(1) = 1\otimes e$.
Then $P^{coC} \subseteq P_{e}^{coC}$.
\end{Lem}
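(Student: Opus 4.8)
The plan is to reduce the desired inclusion to a single evaluation of the defining property of $P^{coC}$ at the unit of the algebra. Concretely, I would fix an arbitrary element $b \in P^{coC}$ and aim to verify the membership condition $\Delta_P(b) = b \otimes e$ that characterises $P_e^{coC}$.

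First I would recall that, by the very definition of $P^{coC}$, the identity $\Delta_P(bp) = b\,\Delta_P(p)$ holds for all $p \in P$; the crucial step is to specialise this to $p = 1_P$. Doing so and invoking the standing hypothesis $\Delta_P(1) = 1 \otimes e$ yields $\Delta_P(b) = \Delta_P(b\cdot 1) = b\,(1 \otimes e)$. Here the right-hand product is taken in the natural left $P$-module structure on $P \otimes C$, namely multiplication in the algebra leg, so that $b\,(1 \otimes e) = (b1) \otimes e = b \otimes e$. Combining these equalities gives $\Delta_P(b) = b \otimes e$, that is, $b \in P_e^{coC}$; as $b$ was arbitrary, $P^{coC} \subseteq P_e^{coC}$.

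There is no substantive obstacle in this argument---the result is essentially immediate from the definitions. The only point deserving attention is the correct interpretation of the product $b(1 \otimes e)$: one must use that $P \otimes C$ is being regarded as a left $P$-module via multiplication in the first tensor factor (this is exactly the module structure implicit in writing $b\,\Delta_P(p)$ in the definition of $P^{coC}$), so that $b$ multiplies the $P$-component $1$ and leaves the group-like $e \in C$ untouched. With that convention fixed, the chain of equalities above is unambiguous and the proof concludes. I would also remark that the hypothesis $\Delta_P(1) = 1 \otimes e$ is precisely what makes the specialisation at $p = 1$ informative, and that the reverse inclusion fails in general, which is why $P_e^{coC}$ need not be a subalgebra.
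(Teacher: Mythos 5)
Your argument is correct and coincides with the paper's own proof: both specialise the defining identity $\Delta_P(bp)=b\,\Delta_P(p)$ to $p=1$ and use $\Delta_P(1)=1\otimes e$ to conclude $\Delta_P(b)=b\otimes e$. Your added remark about the left $P$-module structure on $P\otimes C$ is a harmless clarification of exactly the convention the paper uses.
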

\begin{proof}
If $b \in P^{coC}$, then
$\Delta_P(b)=\Delta_P(b \cdot 1)=b \Delta_P(1)=b \cdot (1 \otimes e)=b
\otimes e$, 
i.e.~$b \in P_e^{coC}$.
\end{proof}

Although this is not immediately apparent, both definitions of
coaction invariants  are related to a group-like element. This is,
however, not a group-like element in $C$ but a group-like element in
$P\otimes C$, understood as a coalgebra over $P$ or a {\em coring}. More
information about
corings is given below, and the role of group-like elements is explained
in Remark~\ref{2:rem.grouplike} (cf.\ Proposition~\ref{2:prop.ecoin=coin}).

We call an extension of algebras $B\inc P$ a {\em $C$-extension} if
$B=P^{coC}$. 
The definition of $P^{coC}$ immediately
implies that the coaction of a right $C$-comodule $P$
is a left $P^{coC}$-linear map.
This observation 
allows us to define when a coaction of a coalgebra on an algebra is Galois,
and thus to
generalise the notion of a Hopf-Galois extension.

\begin{Def}[\cite{bh99}]\label{2:def.cgalois}
Let $C$ be a coalgebra and $B\inc P$  a $C$-extension of algebras. We call
the left $P$-module and
right $C$-comodule homomorphism
\begin{equation}
can:P \otimes_B P \lra P \otimes C, \;\;\;  p \otimes p^{\prime} \longmapsto p
\Delta_P(p^{\prime}),
\end{equation}
the canonical map of the $C$-extension $B \subseteq P$. We say that this extension
is a {\em coalgebra-Galois extension}  if the canonical map  is bijective. Furthermore, if
there exists a group-like element $e$ such that $\Delta_P(1)=1\otimes e$, we call $B\inc P$
an {\em $e$-coaugmented \cge}.
\end{Def}
A straightforward generalisation of \cite{h-pm96} provides us with 
an alternative definition of a coalgebra-Galois extension.
\begin{Prop}\label{2:diagram}
Let $C$ be a coalgebra and $B\inc P$  a $C$-extension of algebras. The extension
is a coalgebra-Galois extension {\em if and only if} the following sequence is exact:
\begin{equation}\label{2:sequence}
0\lra P(\hO^1B)P\lra \hO^1P\st{\widetilde{can}|_{\Omega^1P}}{\lra}P\ot C^+
\lra 0.
\end{equation}
Here $\widetilde{can}:P\ot P\ra P\ot_B P\st{can}{\ra}P\ot C$
 is the natural lifting of the canonical map, and $C^+:=\ker\,\he$ is the
augmentation ideal of~$C$.
\end{Prop}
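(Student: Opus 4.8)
The strategy is to compare $\widetilde{can}$ with the canonical map $can$ through the natural surjection $\pi\colon P\otimes P\ra P\otimes_B P$, for which $\widetilde{can}=can\circ\pi$, and thereby translate the three exactness conditions into the injectivity and surjectivity of $can$. The starting point is the counit identity $(\id\otimes\varepsilon)\circ\widetilde{can}=m$, which is immediate from the coaction axiom since $(\id\otimes\varepsilon)\big(p\Delta_P(p')\big)=pp'$. Two consequences follow at once: $\widetilde{can}$ carries $\hO^1P=\ker m$ into $\ker(\id\otimes\varepsilon)=P\otimes C^+$, so the restriction in the statement makes sense; and $\widetilde{can}(x)=0$ forces $m(x)=0$, so that $\ker\widetilde{can}\subseteq\hO^1P$ and hence $\ker\big(\widetilde{can}|_{\hO^1P}\big)=\ker\widetilde{can}$.

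The second ingredient is the identification $\ker\pi=P(\hO^1B)P$. Here $\ker\pi$ is the $k$-span of the relations $pb\otimes p'-p\otimes bp'$ ($b\in B$), whereas $P(\hO^1B)P$ is spanned by $p\,(\d b)\,p'=p\otimes bp'-pb\otimes p'$, because $\hO^1B=B\,\d B$ and $PB=P$; the two spanning families agree up to sign. In particular $P(\hO^1B)P\subseteq\hO^1P$, so the left-hand arrow of the sequence is a genuine inclusion and exactness at $P(\hO^1B)P$ is automatic. (This is also the point to recall that $can$, and hence $\widetilde{can}=can\circ\pi$, is well defined precisely because $B=P^{coC}$ gives $\Delta_P(bp')=b\Delta_P(p')$, as in Definition~\ref{2:def.cgalois}.)

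I would then organise the data into the commutative ladder with exact rows
\begin{equation*}
\begin{array}{ccccccccc}
0&\ra&\hO^1P&\ra&P\otimes P&\st{m}{\ra}&P&\ra&0\\
&&\downarrow&&\downarrow\widetilde{can}&&\|&&\\
0&\ra&P\otimes C^+&\ra&P\otimes C&\st{\id\otimes\varepsilon}{\ra}&P&\ra&0,
\end{array}
\end{equation*}
in which the right-hand square commutes by the counit identity and the bottom row is exact because $k$ is a field and $\varepsilon$ is surjective. Since the rightmost vertical map is the identity, the snake lemma (equivalently, a short diagram chase) gives $\ker\big(\widetilde{can}|_{\hO^1P}\big)\cong\ker\widetilde{can}$ (in fact the equality already noted) and $\coker\big(\widetilde{can}|_{\hO^1P}\big)\cong\coker\widetilde{can}$. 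Combining this with $\widetilde{can}=can\circ\pi$, where $\pi$ is onto, yields the short exact sequence $0\ra\ker\pi\ra\ker\widetilde{can}\ra\ker can\ra0$ and the identity $\im\,\widetilde{can}=\im\,can$, so that $\coker\widetilde{can}=\coker can$.

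Reading off the equivalences then proves both implications simultaneously: the sequence is exact iff $\ker\big(\widetilde{can}|_{\hO^1P}\big)=P(\hO^1B)P$ and $\widetilde{can}|_{\hO^1P}$ is onto; by the first two ingredients the former is equivalent to $\ker can=0$, and by the snake isomorphism together with $\coker\widetilde{can}=\coker can$ the latter is equivalent to $can$ being onto. Hence exactness holds precisely when $can$ is bijective, i.e.\ when $B\subseteq P$ is a coalgebra-Galois extension. The only steps that demand care, rather than being formal, are the identification $\ker\pi=P(\hO^1B)P$ (rewriting the tensor-over-$B$ relations as universal $1$-forms) and checking that the snake-lemma comparisons are induced by the honest inclusions, so that the isomorphisms of kernels and cokernels may legitimately be used as the equalities the exactness statement requires.
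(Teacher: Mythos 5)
Your proof is correct and follows essentially the same route as the paper: the commutative ladder comparing $\widetilde{can}|_{\Omega^1P}$ with $\widetilde{can}$, and the resulting Snake Lemma identification of kernels and cokernels, is exactly the paper's first diagram, while your direct unpacking of $\widetilde{can}=can\circ\pi_B$ together with $\ker\pi_B=P(\Omega^1B)P$ reproduces precisely the short exact sequence the paper extracts from a second Snake Lemma application. No gaps.
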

\begin{proof}
 Consider first the following commutative diagram (of left $P$-modules)
with exact rows and columns:
\[
\xymatrix{
0  \ar[r]  & \ker\,\widetilde{can}|_{\Omega^1P}  \ar[r] \ar[d] & \ker\,\widetilde{can}  \ar[r] \ar[d] &  0 \ar[d]\\
0  \ar[r] & \hO^1P  \ar[r]  \ar[d]_{\widetilde{can}|_{\Omega^1P}} & P\ot P  \ar[r]^{m}  \ar[d]_{\widetilde{can}} & P \ar[r] \ar[d] & 0 \\ 
0  \ar[r] & P\ot C^+  \ar[r] \ar[d] & P\ot C  \ar[r]^{\rm{id}\ot\epsilon} \ar[d] & P  \ar[r] \ar[d] & 0 \\ 
 & \coker\,\widetilde{can}|_{\Omega^1P}  \ar[r] & \coker\,\widetilde{can}  \ar[r] & 0  \ar[r] & 0.\\
}
\]
\noindent
Applying the Snake Lemma to the above diagram we obtain the exact sequence
\begin{equation}\label{2:s1}
0\lra\mbox{Ker}\,\widetilde{can}|_{\Omega^1P}\lra\mbox{Ker}\,\widetilde{can}\lra 0
\lra\mbox{Coker}\,\widetilde{can}|_{\Omega^1P}\lra\mbox{Coker}\,\widetilde{can}\lra 0.
\end{equation}
It follows from  the exactness of this sequence that
\begin{equation}\label{2:tilde}
\mbox{Ker}\,\widetilde{can}|_{\Omega^1P}=\mbox{Ker}\,\widetilde{can},\;\;\;
\mbox{Coker}\,\widetilde{can}|_{\Omega^1P}=\mbox{Coker}\,\widetilde{can}.
\end{equation}
On the other hand, the Snake Lemma applied to


\[
\xymatrix{
0 \ar[r] & P(\hO\sp1B)P \ar[r] \ar[d] &\mbox{Ker}\,\widetilde{can} \ar[r] \ar[d] &\mbox{Ker}\,can \ar[d] \\
0 \ar[r] & P(\hO\sp1B)P \ar[r] \ar[d] & P\ot P \ar[r] \ar[d]_{\widetilde{can}} & P\ot\sb B P \ar[r] \ar[d]_{can} & 0\\
0 \ar[r] & 0 \ar[r] \ar[d] & P\ot C \ar[r] \ar[d] & P\ot C \ar[r] \ar[d] & 0\\
& 0 \ar[r]  &\mbox{Coker}\,\widetilde{can} \ar[r] &\mbox{Coker}\,can \ar[r] & 0
}
\]

\noindent
yields the following exact sequence:
\begin{equation}\label{2:s3}
0\ra P(\hO\sp1\! B)P\ra\mbox{Ker}\,\widetilde{can}\ra\mbox{Ker}\,can\ra 0\ra
\mbox{Coker}\,\widetilde{can}\ra\mbox{Coker}\,can\ra 0.
\end{equation}

Assume now that $B\inc P$ is a coalgebra-Galois $C$-extension . Then 
$\mbox{Ker}\,can=0=\mbox{Coker}\,can$, and, from the exactness of 
(\ref{2:s3}), we can infer that $\mbox{Coker}\,\widetilde{can}=0$ 
and $\mbox{Ker}\,\widetilde{can}=P(\hO\sp1\! B)P$. 
Combining this with (\ref{2:tilde}), we conclude that (\ref{2:sequence}) is exact.

Conversely, assume that the sequence (\ref{2:sequence}) is exact. Then 
$\mbox{Ker}\,\widetilde{can}=\mbox{Ker}\,\widetilde{can}|_{\Omega^1P}=P(\hO\sp1\! B)P$,
and $\mbox{Coker}\,\widetilde{can}=\mbox{Coker}\,\widetilde{can}|_{\Omega^1P}=0\,$. 
Consequently, again due to the exactness of
(\ref{2:s3}), we have that $\mbox{Ker}\,can=0=\mbox{Coker}\,can\,$,
i.e.~$B\inc P$ is a coalgebra-Galois extension.
\end{proof}

Let $X$ and $X'$ be total spaces of principal bundles with the same base and structure group.
Recall that any map $X\ra X'$ inducing identity on the base and commuting with the group
action has to be bijective. We  end this section with a coalgebra-Galois incarnation of this fact.
It is a straightforward generalisation of \cite{s-hj90a}.
\begin{Lem}\label{2:311}
Let $P$ and $P'$ be  \cge s of $B$, and let $P'$ be right \ff\ over $B$.
Then any left $B$-linear right $C$-colinear map  $F:P\ra P'$ is an
isomorphism.
\end{Lem}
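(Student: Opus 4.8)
I read $F$ as a morphism of $C$-comodule algebras that restricts to the identity on $B$ --- that is, a unital algebra map which is left (and, since it fixes $B$, also right) $B$-linear and right $C$-colinear --- this being the meaning of ``a map inducing the identity on the base and commuting with the group action'' and of the morphisms in \cite{s-hj90a}. Multiplicativity and unitality are genuinely needed: for $B=k$, $C=P=P'=\C[G]$ with $G$ finite and $C$-coaction $\Delta$, a unital but non-convolution-invertible $\phi\in(\C[G])^*$ yields via $p\mapsto\phi(p_{(0)})p_{(1)}$ a colinear, unital, $B$-linear, \emph{non}-bijective $F$. It then suffices to prove that $F$ is bijective, for the inverse of a bijective colinear algebra map fixing $B$ is again of this type. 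The plan is to prove bijectivity after the faithfully flat base change $P'\otimes_B(\leer)$ and then descend.

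Since $F$ is a morphism of left $B$-modules and $P'$ is right faithfully flat over $B$, $F$ is bijective if and only if $\id_{P'}\otimes_B F\colon P'\otimes_B P\to P'\otimes_B P'$ is bijective. I would compose this with the canonical isomorphism $\can_{P'}$ of Definition~\ref{2:def.cgalois} and invoke $C$-colinearity of $F$ to reduce the claim to bijectivity of
\[
\kappa:=\can_{P'}\comp(\id_{P'}\otimes_B F)\colon P'\otimes_B P\lra P'\otimes C,\qquad p'\otimes_B p\longmapsto p'F(p_{(0)})\otimes p_{(1)}.
\]

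The Galois property of the \emph{other} extension now enters. From bijectivity of $\can_P$ I form the translation map $\tau=\can_P^{-1}(1\otimes\leer)\colon C\to P\otimes_B P$, $\tau(c)=c\stl\otimes_B c\str$, which by $\can_P\comp\tau$ satisfies $c\stl(c\str)_{(0)}\otimes(c\str)_{(1)}=1\otimes c$, and by $\can_P^{-1}\comp\can_P=\id$ satisfies $p_{(0)}(p_{(1)})\stl\otimes_B(p_{(1)})\str=1\otimes_B p$. I then define $\rho\colon P'\otimes C\to P'\otimes_B P$ by $\rho=\big(p'\cdot\leer\big)\comp(F\otimes_B\id)\comp\tau$, i.e. $\rho(p'\otimes c)=p'F(c\stl)\otimes_B c\str$, and claim it inverts $\kappa$. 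A direct computation, substituting the two displayed identities and using that $F$ is multiplicative and unital, gives $\kappa\comp\rho=\id$ and $\rho\comp\kappa=\id$. Hence $\kappa$, and therefore $\id_{P'}\otimes_B F$, is bijective, and faithful flatness of $P'$ yields that $F$ is bijective, i.e. an isomorphism.

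The main obstacle is the well-definedness of $\rho$: the assignment $c\mapsto F(c\stl)\otimes_B c\str$ descends to the balanced tensor product only because $F\otimes_B\id$ is well defined on $P\otimes_B P$, which requires $F$ to be right $B$-linear --- precisely where the hypothesis that $F$ fixes $B$ is used. The verification of the two inverse identities then forces multiplicativity and unitality of $F$, explaining why the purely module-theoretic hypotheses ``left $B$-linear, right $C$-colinear'' do not suffice. The only non-formal ingredient is faithful flatness, and it is used solely at the end, to descend bijectivity from the base-changed map back to $F$; the core of the argument is the interplay of the two canonical isomorphisms $\can_{P'}$ and $\can_P$.
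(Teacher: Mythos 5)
Your proof is correct and follows essentially the same route as the paper's: base change along the faithfully flat functor $P'\otimes_B(\leer)$, use the Galois property of \emph{both} extensions to see that the base-changed map $\id_{P'}\otimes_B F$ is bijective, and then descend; your explicit inverse $\rho$ built from the translation map of $P$ is just the paper's identification of the composite with $\id_{P'}\otimes_P\can$ unpacked, since $\tau=\can^{-1}(1\otimes\leer)$. Your preliminary observation that $F$ must be read as a unital, multiplicative morphism of comodule algebras is also well taken --- the paper's proof tacitly assumes exactly this when it declares ``Consider $P'$ as a right module over $P$ via $F$'' --- and your counterexample correctly shows that the purely module-theoretic reading of the stated hypotheses would make the lemma false.
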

\begin{proof}
Consider $P'$ as a right module over $P$ via $F$. The composition
\[
P'\ot_PP\ot_BP\lra P'\ot_BP\st{\id\ot F}{\lra}
P'\ot_BP'\st{can'}{\lra}P'\ot C\lra P'\ot_PP\ot C
\]
coincides with 
\[
\id\ot_P can:P'\ot_PP\ot_BP\lra P'\ot_PP\ot C,
\]
where $can$ and $can'$ are the respective canonical maps.
Hence $\id\ot_BF:P'\ot_BP\ra P'\ot_BP'$ is an isomorphism. Therefore, so is
$F$ by the right faithful flatness of $P'$ over $B$.
\end{proof}

\begin{Rem}
As a special case of coalgebra-Galois extensions, obtained
by replacing Hopf algebras in Hopf-Galois extensions
by braided groups one can consider braided Hopf-Galois
 extensions. These provide an intermediate step in between the
$H$- and $C$-Galois,  and allows one to develop a
braided group gauge theory \cite{m-s97}.
\end{Rem}

\subsubsection{Quotient-coalgebra and homogeneous Galois extensions}\label{2:qspaces}

Though it is demonstrated in the previous and following sections that one can get away with
the lack of a group-like element in defining and developing some general aspects
of coalgebra-Galois theory, throughout this section all extensions will be coaugmented
by some group-like element~$e$. The reason is that we are not aware of interesting
examples of non-coaugmented extensions, and co-augmentation seems 
indispensable to prove some of the desired technical results.

On the other hand, a very interesting class of examples comes from the theory of
Hopf-algebra quotients that is elaborated in \cite{ss}, and that already in 1990
gave birth to coalgebra-Galois theory~\cite{s-hj90a}. The setting is as follows.
Let $H$ be a Hopf algebra, $P$ be a right $H$-comodule algebra and $I$ a right
ideal coideal of $H$. Then the composite map
\begin{equation}
P\st{\Delta_P}{\lra}P\otimes H\lra P\otimes (H/I)
\end{equation}
defines a right coaction of the quotient coalgebra $H/I$ on~$P$. Demanding
this coaction to be Galois defines a $\bar{1}$-coaugmented coalgebra-Galois
$H/I$-extension~\cite{s-hj90a}. (Here $\bar{1}$ is the class of $1$
in $H/I$.)
Thus the coaugmentation of such extensions comes automatically
from the Hopf-algebra symmetry that is fundamental in this definition.
We call such extensions {\em quotient-coalgebra Galois extensions}.

The above construction is parallel to what happens in differential geometry. Let us
explain it on the example of the principal instanton bundle $S^7\ra S^4$. The sphere
$S^7$ is a homogeneous space of $SU(4)$. Viewing $SU(2)$ as a block-diagonal
subgroup of $SU(4)$ gives an action of $SU(2)$ on $S^7$ that defines the principal
instanton fibration: $S^7/SU(2)\cong S^4$. The most sophisticated example of
a quotient-coalgebra Galois extension that we know of is a noncommutative deformation
of the instanton bundle~\cite{bcdt04}. 
Here one starts with the Soibelman-Vaksman quantum sphere $S^7_q$
\cite{sv90}, which is a homogeneous space of $SU_q(4)$, and then, following the insight
given by  Poisson geometry \cite{bct02}, one constructs a coideal right ideal $I$ of
the Hopf algebra $\sO(SU_q(4))$ such that the canonical surjection 
$\sO(SU_q(4))\ra \sO(SU_q(4))/I$ corresponds to the block-diagonal inclusion of $SU(2)$
in $SU(4)$ and the induced coaction is Galois yielding $\sO (S^4_q)$ as the coaction invariant
subalgebra~\cite{bcdt04}.

One of the reasons why this example is interesting is that it uses the full generality
of quotient-coalgebra Galois extensions, i.e., we have $P\neq H$ and $I\neq 0$. Observe
that for $I=0$ we recover as a special case Hopf-Galois theory, whereas for  $P= H$
we obtain what is called {\em homogeneous coalgebra-Galois extensions}. We devote
the remainder of this section to the latter case. This is the case which deals with 
quantum homogeneous spaces or left coideal subalgebras (thus justifying the name ``homogeneous coalgebra-Galois extension"). The aim is to try and reproduce in the general noncommutative setting a classical construction in which a homogeneous space $M$ of a group $G$ is viewed as a base for a principal bundle with the total space $G$.

Let $P$ be a Hopf algebra and $I$ a coideal right ideal of $P$, so that $P/I$ is a coalgebra and a right $P$-module. View $P$ as a right $P/I$-comodule via the induced coaction
\[
\hD_P:=(\id\ot\pi_I)\ci\hD,\;\;\; P\st{\pi_I}{\lra}P/I.
\]
 The corresponding $P/I$-extension $B\inc P$ is called   a {\em homogeneous}
  $P/I$-extension. 
The importance of extensions of this type stems from the fact that $B$ is a {\em quantum homogeneous space} or a left coideal subalgebra of $B$. Let us disucss this in more detail.

Since $1$ is a group-like element in a Hopf algebra $P$, its coalgebra projection $\pi_I(1)$ 
is a group-like element in $P/I$. Furthermore, $\Delta_P(1) = 1\otimes \pi_I(1)$. Observe then
that for a homogeneous $P/I$-extension, the coaction-invariant subalgebra
 $B$ is equal to the
subalgebra of  $\pi_I(1)$-coaction invariants
 $P^{P/I}_{\pi_I(1)} = \{ b\in P\; | \; \Delta_P(b) = 
b \otimes \pi_I(1)\}$. Indeed, $B\inc P^{P/I}_{\pi_I(1)}$ by Lemma~\ref{2:coinvariants1}. Conversely,
if $\Delta_P(b) = b \otimes \pi_I(1)$, then, for all $p\in P$,
\begin{eqnarray*}
\Delta_P(bp) &=& b\sw 1p\sw 1 \ot \pi_I(b\sw 2 p\sw 2) = b\sw 1p\sw 1 \ot \pi_I(b\sw 2) p\sw 2\\
& =&
 bp\sw 1 \ot \pi_I(1) p\sw 2 = bp\sw 1 \ot \pi_I( p\sw 2) = b\Delta_P(p),
\end{eqnarray*}
where we have used the fact that $\pi_I$ is a right $P$-module map. Thus, $b\in B$ as required. 
Next, using the coassociativity of the coaction $\Delta_P$ and the description of $B$ as $\pi_I(1)$-coaction invariants, apply $\Delta\ot \id$ to  equation $b\sw 1 \ot \pi_I(b\sw 2) = b\ot \pi_I(1)$ to deduce that  for all $b\in B$,
$
((\id\ot\Delta_P)\circ \Delta)(b) = \Delta(b)\ot \pi_I(1).
$
This implies that 
\begin{equation}
\forall b\in B, \qquad b\sw 1\otimes
b\sw 2 \in P\otimes B,
\label{2:homg.eq}
\end{equation}
 i.e., $\Delta(B)\inc P\ot B$, so that $B$ is a left coideal subalgebra of $P$ or a quantum homogeneous space of $P$.

Thus homogeneous $P/I$-extensions provide one with a suitable set-up for principal bundles over quantum homogeneous spaces. To exploit this fully, however, we need to address a question when a homogeneous $P/I$-extension is a coalgebra-Galois extension. The answer turns out to determine the structure of $I$ completely (cf.~Lemma~5.2 in \cite{bm93}). 

\begin{Thm}\label{2:b+p}
Let $B\inc P$  be a homogeneous  $P/I$-extension. 
 Then this extension is Galois
{\em if and only if} $I=B^+P$, where $
B^+:=B\cap\mbox{\em Ker}\,\he$.
\end{Thm}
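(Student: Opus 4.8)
The plan is to factor the canonical map through the ``free'' Galois isomorphism of the Hopf algebra $P$, thereby reducing the Galois condition to a comparison of two subspaces of $P\ot P$. I would set $\gamma\colon P\ot P\ra P\ot P$, $\gamma(p\ot p')=pp'\sw1\ot p'\sw2$; this is bijective with inverse $p\ot p'\mapsto pS(p'\sw1)\ot p'\sw2$. Writing $q\colon P\ot P\ra P\ot_BP$ for the canonical projection and $\pi_I\colon P\ra P/I$ for the quotient map, a direct check shows that the lifted canonical map $\widetilde{can}=can\circ q$ equals $(\id\ot\pi_I)\circ\gamma$. Since $\gamma$ is bijective and $\pi_I$ is surjective, $\widetilde{can}$ is surjective, so $can$ is \emph{always} surjective; hence the extension is Galois if and only if $can$ is injective, which (as $q$ is surjective) happens precisely when $\ker\widetilde{can}\inc\ker q$. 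Moreover $\ker\widetilde{can}=\gamma^{-1}(P\ot I)$, so the whole problem reduces to comparing $\gamma^{-1}(P\ot I)$ with $\ker q$.

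Next I would record the inclusion $B^+P\inc I$, which holds unconditionally: for $b\in B^+$, applying $\he\ot\id$ to the coinvariance identity $b\sw1\ot\pi_I(b\sw2)=b\ot\pi_I(1)$ gives $\pi_I(b)=\he(b\sw1)\pi_I(b\sw2)=\he(b)\pi_I(1)=0$, so $B^+\inc I$ and, $I$ being a right ideal, $B^+P\inc I$. For the implication $I=B^+P\Rightarrow$ Galois, it then suffices to check $\gamma^{-1}(P\ot I)\inc\ker q$ on the spanning elements $pS(x\sw1)\ot x\sw2$ with $x=bq$, $b\in B^+$, $q\in P$. In $P\ot_BP$ one computes
\[
pS(x\sw1)\ot x\sw2=pS(q\sw1)S(b\sw1)\ot b\sw2 q\sw2=pS(q\sw1)S(b\sw1)b\sw2\ot q\sw2=\he(b)\,pS(q\sw1)\ot q\sw2=0,
\]
where the middle equality moves $b\sw2\in B$ across $\ot_B$ (legitimate since $B$ is a left coideal subalgebra, $\Delta(B)\inc P\ot B$ by~(\ref{2:homg.eq})), and the last uses $S(b\sw1)b\sw2=\he(b)1=0$. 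Hence $\ker\widetilde{can}\inc\ker q$ and $can$ is bijective.

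For the converse, I would assume Galois, i.e.\ $\ker\widetilde{can}\inc\ker q$. For any $x\in I$ we have $S(x\sw1)\ot x\sw2=\gamma^{-1}(1\ot x)\in\gamma^{-1}(P\ot I)=\ker\widetilde{can}\inc\ker q$, so $S(x\sw1)\ot x\sw2=0$ in $P\ot_BP$. Applying the map $\he\ot_B\id\colon P\ot_BP\ra k\ot_BP\cong P/B^+P$ (well defined because $\he\colon P\ra k$ is right $B$-linear for the $B$-action $\he|_B$ on $k$, and $k\ot_BP\cong P/B^+P$ since the defining relations identify $bp$ with $\he(b)p$) sends this to the class of $\he(S(x\sw1))x\sw2=\he(x\sw1)x\sw2=x$, forcing $x\in B^+P$. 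Thus $I\inc B^+P$, and together with the unconditional inclusion this yields $I=B^+P$.

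The main obstacle is the forward direction. Once the factorisation through $\gamma$ identifies $\ker\widetilde{can}$ with $\gamma^{-1}(P\ot I)$, the difficulty is to extract the membership $x\in B^+P$ from the bare vanishing of $S(x\sw1)\ot x\sw2$ in $P\ot_BP$; the right choice of ``evaluation'' is $\he\ot_B\id$ landing in $P/B^+P$, which is exactly what makes the antipode cancel and return $x$ itself. The remaining ingredients---the bijectivity and factorisation of $\gamma$, the unconditional surjectivity of $can$, and the short $\ot_B$-computation for the easy direction---are routine by comparison.
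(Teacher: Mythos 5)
Your proof is correct. The easy direction ($I=B^+P\Rightarrow$ Galois) is essentially the paper's own argument: the computation showing $pS(q\sw1)S(b\sw1)\ot_B b\sw2 q\sw2=0$ via $\Delta(B)\inc P\ot B$ is exactly the paper's equation~(\ref{2:sbp}), and your factorisation $\widetilde{can}=(\id\ot\pi_I)\circ\gamma$ just repackages the paper's explicit inverse $T(p\ot[p']_I)=pS(p'\sw1)\ot_Bp'\sw2$ in a way that makes surjectivity of $\can$ automatic and reduces everything to the kernel comparison $\ker\widetilde{can}\inc\ker q$. The converse, however, is where you genuinely diverge. The paper reaches $I\inc B^+P$ through a fairly heavy chain: Proposition~\ref{2:diagram} (two applications of the Snake Lemma), the intermediate Lemma~\ref{2:homo}, the translation-map formula of Corollary~\ref{2:bpco}, and finally an auxiliary bijection $\can':P\ot_BP\ra P\ot(P/B^+P)$ fitted into a commutative square to conclude that $P/B^+P\ra P/I$ is injective. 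You instead observe directly that $x\in I$ forces $\gamma^{-1}(1\ot x)=S(x\sw1)\ot x\sw2\in\ker\widetilde{can}\inc\ker q$, and then extract $x\in B^+P$ by the single well-chosen evaluation $\he\ot_B\id:P\ot_BP\ra k\ot_BP\cong P/B^+P$, which kills the antipode via $\he\circ S=\he$ and returns the class of $x$. This is shorter, avoids the homological machinery entirely, and as a bonus your proof of $B^+P\inc I$ (apply $\he\ot\id$ to $b\sw1\ot\pi_I(b\sw2)=b\ot\pi_I(1)$ and use that $I$ is a right ideal) is unconditional, whereas the paper derives it only under the Galois hypothesis from the injectivity of $\tau$. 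What the paper's route buys in exchange is the reusable Lemma~\ref{2:homo} and the translation-map formula of Corollary~\ref{2:bpco}, which it needs later; your route buys economy and transparency for the theorem itself.
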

\begin{proof}
Assume first that $I=B^+P$. Taking advantage of (\ref{2:homg.eq}), for any
$b\in B^+$, $p\in P$, we compute:
\begin{equation}\label{2:sbp}
S(b\sw1 p\sw1)\ot_Bb\sw2 p\sw2=S(b\sw1 p\sw1)b\sw2\ot_Bp\sw2=S(p\sw1)\he(b)\ot_Bp\sw2=0.
\end{equation}
Hence there is a well-defined map 
\[
T:P\ot(P/I)\lra P\ot_BP,\;\;\;
T(p\ot [p']_I):=pS(p'\sw1)\ot_Bp'\sw2.
\]
 It is straightforward to verify
that $T$ is the inverse of the canonical map $\can$. Consequently,
$P$ is a coalgebra-Galois $P/I$-extension.

To show the converse, let us first prove the following:

\begin{Lem}\label{2:homo}
Let $P$, $I$ and $B$ be as above. Then $B\inc P$  is a 
coalgebra-Galois $P/I$-extension
{\em if and only if}
$\llp\pi\sb B\circ(S\ot \id )\circ\hD\lrp(I)=0$, where 
$\pi\sb B:P\ot P\ra P\ot\sb B P$ is the canonical surjection.
\end{Lem}
\begin{proof}
By Proposition~\ref{2:diagram}, $P$ is a coalgebra-Galois $P/I$-extension of $B$
if and  only if the following   sequence 
\begin{equation}\label{2:seq}
0\lra P(\hO\sp1\! B)P\lra
P\ot P\st{\widetilde{can}}{\lra}P\ot P\slash I\lra 0
\end{equation}
is exact. One can check
that $\llp \widetilde{can}\circ(S\ot \id )\circ\hD\lrp(I)=0$. Hence, it follows from 
the exactness of (\ref{2:seq}) that 
$\llp (S\ot \id )\circ\hD\lrp(I)\inc P(\hO\sp1\! B)P$. Consequently,
$\llp \pi\sb B\circ(S\ot \id )\circ\hD\lrp(I)=0$ due to the exactness of 
the sequence
\[
0\lra P(\hO\sp1\! B)P\lra
P\ot P\st{\pi\sb B}{\lra}P\ot\sb B P\lra 0 .
\]
To prove the converse, one can proceed as in the considerations 
preceding this lemma.
\end{proof}

\begin{Cor}\label{2:bpco}
Let $B\inc P$ be a coalgebra-Galois $P/I$-extension
 as above. Then the translation map $\tau:=can^{-1}(1\otimes \cdot)$
is given by the formula: $\tau([p]_I):=S(p\sw1)\ot_Bp\sw2\,$.
\end{Cor}

Assume now that $P$ is a coalgebra-Galois $P/I$-extension of $B$. 
It follows from the above
corollary and (\ref{2:sbp}) that $\tau([B^+P]_I)=0$. 
Hence, by the injectivity
of $\tau$, we have $B^+P\inc I$. Furthermore, there is a well-defined
map 
\[
can':P\ot_BP\lra P\ot(P/B^+P),\;\;\; p\ot_Bp'\longmapsto pp'\sw1\ot [p'\sw2]_{B^+P}\, .
\]
Indeed, taking again  advantage of (\ref{2:homg.eq}), we obtain
\begin{eqnarray}
p\ot bp'&\mapsto& pb\sw1 p'\sw1\ot [(b\sw2-\he(b\sw2))p'\sw2+\he(b\sw2)p'\sw2]_{B^+P}\nonumber\\
&=& pb\sw1 p'\sw1\ot\he(b\sw2)[p'\sw2]_{B^+P}\nonumber\\
&=& pbp'\sw1\ot [p'\sw2]_{B^+P}\, .
\end{eqnarray}
On the other hand,
 $pb\ot p'\mapsto pbp'\sw1\ot [p'\sw2]_{B^+P}\,$. Reasoning as in the first part
of the proof, we can conclude that $can'$ is bijective. Next, consider the following
commutative diagram:
\[
\begin{array}{ll}
P\ot_BP  & \mathop{-\hspace{-6pt}\longrightarrow}\limits^{can'} P\ot(P/B^+P)  \\

~\, ^{\id } \Big\downarrow  &  \,~~~~~~~~~~\Big\downarrow\ ^{\id \ot\imath} \\ 

P\ot_BP & \mathop{-\hspace{-6pt}\longrightarrow}\limits^{can} P\ot(P/I)\, , \\
\end{array}
\]
where $\imath([p]_{B^+P}):=[p]_{I}$. (Recall that we have already showed that
$B^+P\inc I$, so that $\imath$ is well-defined.) 
It follows from the commutativity of the diagram that $\id \ot\imath$
is bijective. Hence, as the tensor product is over a field, also $\imath$
is bijective.
In particular, $\imath$ is injective, and therefore
$I\inc B^+P$, as needed.
\end{proof}

From the discussion preceding  Theorem~\ref{2:b+p}, we know that homogeneous 
coalgebra-Galois extensions give rise to quantum homogeneous spaces. On the other 
hand, given a left coideal subalgebra $B$ of a Hopf algebra $P$, one can ask 
when $B\inc P$ is a homogeneous coalgebra-Galois $P/I$-extension for a suitable 
coideal $I$. By Theorem~\ref{2:b+p}, the ideal  $I$ must be of the form $B^+P$, so 
this question makes sense, provided $B^+P$ is a coideal for any left coideal subalgebra 
$B$. This is the case indeed, since
 for a typical
 element $x=bp \in I$, i.e., $p \in P$ and
$b \in B$ with $\varepsilon(b)=0$, we obtain
\begin{eqnarray*}
\Delta(bp)&=& b\sw 1 p\sw 1 \otimes b\sw 2 p\sw 2 =
 b\sw 1 p\sw 1 \otimes b\sw 2 p\sw 2 - bp\sw 1 \otimes p\sw 2 + bp\sw 1
\otimes p\sw 2\\
&=&
 b\sw 1 p\sw 1 \otimes (b\sw 2 - \varepsilon(b\sw 2))p\sw 2 + bp\sw 1
\otimes p\sw 2 
\in P \otimes I + I \otimes P.
\end{eqnarray*}
Now, take any $b\in B$ and $p\in P$ and, using the fact that $B$ is a left coideal subalgebra of $P$, so that equation (\ref{2:homg.eq}) holds,
compute
$$
\Delta_P(bp) = b\sw 1p\sw 1\otimes\pi_I(b\sw 2p\sw 2) = b\sw 1p\sw 1\otimes
\varepsilon(b\sw 2)\pi_I(p\sw 2) = b\dr(p).
$$
Therefore $B\subseteq P^{coP/I}$. In view of Theorem~\ref{2:b+p}, the sufficient and necessary condition for a quantum homogeneous space to be a base of a homogeneous coalgebra-Galois extension is that $B\inc P^{coP/I}$. This
is the case if, for example, $P$ is a faithfully flat left or right $B$-module (cf.\  
\cite[Theorem~1]{t-m79}, \cite[Lemma~1.3]{s-hj92}).

\subsubsection{Algebra-Galois coextensions}

The dual version of  Hopf-Galois extensions can be viewed as a direct 
noncommutative generalisation of the theory of quotients 
of formal schemes under free actions of formal group schemes
(cf.~\cite{s-hj90a}). Spaces are replaced by coalgebras and their fibred products
by  cotensor products. Groups are turned into group rings and replaced by
algebras. Thus there is no dualisation involved  in this generalisation
of group actions on spaces. The ``noncommutativity of a space" is encoded
in the non-cocommutativity of a coalgebra replacing it. Roughly speaking, we treat
the classical space as a coalgebra with the trivial coproduct. Much as for
group rings, the underlying vector space is spanned by the elements of the
space thought of as basis vectors. The coalgebra structure on this vector space
is defined by declaring the basis vectors to be group-like:
\[
C_X=\bigoplus_{x\in X}kx,\;\;\;\hD x=x\ot x,\;\;\;\he(x)=1.
\]
This is what we mean by a {\em classical-space coalgebra}. If $X$ is also a 
$G$-space, i.e., there is an action $X\times G\ra X$, then we have the induced
action
$C_X\ot kG\ra C_X$. 

A step beyond classical spaces is to consider cocommutative
coalgebras that do not admit a basis whose all elements are group-like. As an
example, take the coalgebra dual to the algebra $A:=\C[\theta]/\<\theta^2\>$
of dual numbers. Denote by $\{1^*,\theta^*\}$ the dual basis of $A^*$. Then
the coalgebra structure on $A^*$ is given by the formulae:
\[
\hD 1^*=1^*\ot 1^*,\;\;\;\hD\theta^*=1^*\ot\theta^*+\theta^*\ot 1^*,\;\;\;
\he(1^*)=1,\;\;\;\he(\theta^*)=0.
\]
The aforementioned example belongs to the realm of super geometry:
it is neither classical nor noncommutative.

For a noncommutative example, let us consider the algebra of matrices
\[
M_n(\C)\widetilde{=}\C\< x,y\>/\<x^n-1,\, y^n-1,\, yx-e^{\frac{2\pi i}{n}xy}\>\; .
\]
We can take as generating matrices
\[
x=\left(\begin{array}{ccccc}
0&1&0&\ldots&0\\
\vdots&\vdots&\vdots&&\vdots\\
0&\ldots&&0&1\\
1&0&\ldots&&0\\
\end{array}\right),
\;\;\;
y=\left(\begin{array}{ccccc}
1&0&\ldots&&0\\
0&q&0&\ldots&0\\
\vdots&\vdots&\vdots&&\vdots\\
0&\ldots&&0&q^{n-1}\\
\end{array}\right),
\;\;\; q=e^{\frac{2\pi i}{n}}.
\]
Then $\{ x^ky^l\;|\; k,l\in\{ 0,...,n-1\}\}$ is a linear basis of $M_n(\C)$.
Let $C_n$ denote the dual coalgebra $M_n(\C)^*$. As before, we use the
notation $(x^ky^l)^*$ for the dual basis elements. A direct calculation yields
the coalgebra structure:
\[
\hD ((x^ky^l)^*)=\!\!\!\sum_{p,r,s,t=0}^{n-1}\!\!\!q^{rs}
\hd_{k\,,\,p+s\, \mbox{\scriptsize mod}\, n}\,
\hd_{l\,,\,r+t\, \mbox{\scriptsize mod}\, n}\,
x^py^r\ot x^sy^t,\;\;\;\he((x^ky^l)^*)=\hd_{k,0}\hd_{l,0}.
\]
To make this example more tangible, put $n=2$. Then we have explicitly
\begin{eqnarray}
\hD (1^*)&=&1^*\ot 1^*+x^*\ot x^*+y^*\ot y^*-(xy)^*\ot(xy)^*,\\
\hD (x^*)&=&x^*\ot 1^*+1^*\ot x^*+(xy)^*\ot y^*-y^*\ot(xy)^*,\\
\hD (y^*)&=&y^*\ot 1^*+1^*\ot y^*-(xy)^*\ot x^*+x^*\ot(xy)^*,\\
\hD ((xy)^*)&=&(xy)^*\ot 1^*+x^*\ot y^*-y^*\ot x^*+1^*\ot(xy)^*.
\end{eqnarray}

We can think of a coproduct as a set-theoretical map from the set of basis
elements $X$ to its Cartesian square times the ground field: $X\times X\times k$.
For the classical spaces, this map embeds $X$ in $X\times X\times k$ as the diagonal
in $X\times X$ times $\{1\}$. For the ``commutative spaces", the value of this map
on any element of $X$ is symmetric with respect to the plane
$\{(x,x,\alpha)\in X\times X\times k\;|\; x\in X,\;\alpha\in k\}$.
The noncommutativity of the space is measured by the lack of the aforementioned
symmetry. Thus we can visualise {\em the geometry of noncommutativity}. The finite
spaces fit perfectly this coalgebraic picture (the dual of any finite dimensional
algebra is a coalgebra), but a significant adjustment seems to be required
to accommodate the infinite case, especially if we want to go beyond the discrete
topology. However, this point of view is, hopefully, tenable through some
topological version of the concept of a coalgebra.

Since the coalgebra-Galois extensions are dual to the classical principal
bundles, they are also dual (for each object involved) to the algebra-Galois
coextensions. Therefore, in this section we dualise coalgebra-Galois extensions 
and derive results analogous to the results discussed in the previous 
section. As this dualisation in its full generality is somewhat involved,
it is helpful to consider first
the  definition from \cite[p.\ 3346]{s-hj93}  that
dualises the concept of a \hge.
Let $H$ be a Hopf algebra, $C$ a right $H$-module coalgebra with the 
action $\mu_C:C\otimes H\to C$. Then, since 
the action $\mu_C$ is a coalgebra map, i.e.,  
$\hD\ci\mu_C=(\mu_C\ot\mu_C)\ci(C\ot\mbox{flip}\ot H)\ci(\hD\ot\hD)$,
we have
\begin{eqnarray}
\hD(\mu_C(c,h)-\epsilon(h)c)
&=&
\mu_C(c\sw1,h\sw1)\ot\mu_C(c\sw2,h\sw2)-c\sw1\ot\he(h)c\sw2
\nonumber\\ &-&
\he(h\sw1)c\sw1\ot\mu_C(c\sw2,h\sw2)+\he(h\sw1)c\sw1\ot\mu_C(c\sw2,h\sw2)
\nonumber\\ &=&
\llp\mu_C(c\sw1,h\sw1)-\he(h\sw1)c\sw1\lrp\ot\mu_C(c\sw2,h\sw2)
\nonumber\\ &+&
c\sw1\ot\llp\mu_C(c\sw2,h)-\he(h)c\sw2\lrp,
\end{eqnarray} 
so that  
$I := \{\mu_C(c,h)-\epsilon(h)c\;|\; c\in C, h\in H\} $ is 
a coideal in $C$. Hence $B:=C/I$ is a coalgebra.
Using again the assumption that $\mu_C$ is a coalgebra map, 
it can be directly checked that 
$((C\ot\mu_C)\circ(\Delta\tens H))(C\te H)\inc C\Box_BC$. This way we arrive at:
\begin{Def}\label{2:hgc} 
 We say that 
$C\twoheadrightarrow B$ is a (right) {\em Hopf-Galois   
 $H$-coextension} if the  
canonical left $C$-comodule right $H$-module map 
$ 
cocan := (C\ot\mu_C)\circ(\Delta\tens H):C\te H \ra  
C\Box\sb BC 
$ 
is a bijection. 
\end{Def} 

Now, to obtain a dualisation of the general \cge, we replace
$H$ by an algebra $A$ and remove the condition that the action $\mu_C$ 
is a coalgebra map. At this level of generality, to formulate the definition
of a coextension, we first need:
\begin{Lem}[\cite{bh99}]\label{2:coa}
Let $A$ be an algebra and $C$ a coalgebra and right $A$-module with an action
$\mu_C:C\te A\ra C$. Denote by $I$ the vector space 
\[ 
{\rm span}\{\mu_C(c,a)\sw1\alpha(\mu_C(c,a)\sw2)-c\sw1\alpha(\mu_C(c\sw2,a))\;|
\; a\in A,\, c\in C,\,\alpha\in\mbox{\em Hom}(C,k)\}, 
\]
by $\pi:C\ra C/I$ the canonical surjection, and put $B:=C/I$. Then 
$I$ is a coideal of $C$, the
action $\mu_C$ is left $B$-colinear, i.e., 
$(\pi\te C)\ci\hD\ci\mu_C = (B\te\mu_C)\ci((\pi\te C)\cc\hD\ot A)$,
and
$((C\te\mu_C)\cc(\hD\ot A))(C\ot A)\inc C\Box_BC$.
\end{Lem}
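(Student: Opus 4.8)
The plan is to reduce all three assertions to a single structural fact about $I$. Writing $\mu_C(c,a)$ for the action, set
\[
W(c,a):=\Delta\big(\mu_C(c,a)\big)-c\sw 1\ot\mu_C(c\sw 2,a)\in C\ot C ,
\]
so that the generators of $I$ are precisely the elements $g(c,a,\alpha):=(\id\ot\alpha)\big(W(c,a)\big)$ as $\alpha$ runs over $\Hom(C,k)$. The first and decisive step is to prove the inclusion $W(c,a)\in I\ot C$ for all $c,a$. To see this, fix $c,a$, write $W(c,a)=\sum_i u_i\ot v_i$ with the $v_i$ linearly independent, and pick $\alpha_i\in\Hom(C,k)$ with $\alpha_i(v_j)=\delta_{ij}$; then $u_i=(\id\ot\alpha_i)\big(W(c,a)\big)\in I$, whence $W(c,a)=\sum_i u_i\ot v_i\in I\ot C$. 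In effect $I$ is built to be exactly the smallest subspace with this property: the functionals $\alpha$ are there to force the left leg of each $W(c,a)$ into $I$.

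Granting this inclusion, left $B$-colinearity of $\mu_C$ is immediate. Since $k$ is a field, tensoring is exact and $\ker(\pi\ot\id_C)=I\ot C$; applying $\pi\ot\id_C$ to $W(c,a)\in I\ot C$ therefore yields $\pi(\mu_C(c,a)\sw 1)\ot\mu_C(c,a)\sw 2=\pi(c\sw 1)\ot\mu_C(c\sw 2,a)$, which is precisely the stated identity $(\pi\ot C)\circ\Delta\circ\mu_C=(B\ot\mu_C)\circ((\pi\ot C)\circ\Delta\ot A)$.

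Next I would check that $I$ is a coideal on the generators $g(c,a,\alpha)$. The counit condition $\varepsilon(g(c,a,\alpha))=0$ is a one-line application of the counit axiom, since both summands collapse to $\alpha(\mu_C(c,a))$. For the comultiplication, expand $\Delta(g(c,a,\alpha))$ by coassociativity and set $\Phi_\alpha:=(\id\ot\alpha)\circ\Delta:C\ra C$; inserting the intermediate term $c\sw 1\ot\Phi_\alpha(\mu_C(c\sw 2,a))$ produces the exact splitting
\[
\Delta\big(g(c,a,\alpha)\big)
=\underbrace{(\id\ot\Phi_\alpha)\big(W(c,a)\big)}_{\in\,I\ot C}
+\underbrace{c\sw 1\ot g(c\sw 2,a,\alpha)}_{\in\,C\ot I},
\]
where the first summand lies in $I\ot C$ by the inclusion above (as $\Phi_\alpha$ maps $C$ into $C$) and the second is $c\sw 1$ tensored with another generator of $I$, namely $g$ with $c$ replaced by $c\sw 2$. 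Hence $\Delta(I)\subseteq I\ot C+C\ot I$. For the cotensor inclusion I would finally verify the defining equalizer condition for $C\Box_B C$ directly on $c\sw 1\ot\mu_C(c\sw 2,a)$: applying the right $B$-coaction $(\id\ot\pi)\circ\Delta$ to the first tensorand and the left $B$-coaction $(\pi\ot\id)\circ\Delta$ to the second both give $c\sw 1\ot\pi(c\sw 2)\ot\mu_C(c\sw 3,a)$, the second computation using the left $B$-colinearity just established.

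The main obstacle is the opening inclusion $W(c,a)\in I\ot C$: everything hinges on recognising that the functionals in the definition of $I$ serve exactly to pull the left legs into $I$, and the linear-independence selection of the $\alpha_i$ must be handled carefully. The only other delicate point is the bookkeeping in the comultiplication step, where one has to spot the correct intermediate term and identify $g(c\sw 2,a,\alpha)$ as a genuine generator; once the inclusion is in place, the remaining verifications are routine Sweedler-notation manipulations.
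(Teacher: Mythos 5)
The paper itself gives no proof of this lemma --- it is quoted from \cite{bh99} and used as a black box --- so there is nothing internal to compare your argument against. On its own terms your proof is correct and complete. The decisive observation is the one you isolate at the outset: setting $W(c,a)=\Delta(\mu_C(c,a))-c\sw1\ot\mu_C(c\sw2,a)$, the generators of $I$ are exactly $(\id\ot\alpha)(W(c,a))$, and the dual-basis argument (write $W(c,a)=\sum_i u_i\ot v_i$ with the $v_i$ independent, pick $\alpha_i$ dual to them) shows $W(c,a)\in I\ot C$. From this, left $B$-colinearity is immediate since $\ker(\pi\ot\id_C)=I\ot C$ over a field; the telescoping identity $\Delta(g(c,a,\alpha))=(\id\ot\Phi_\alpha)(W(c,a))+c\sw1\ot g(c\sw2,a,\alpha)$ checks out by coassociativity and yields $\Delta(I)\subseteq I\ot C+C\ot I$; the counit computation is trivial; and the cotensor inclusion follows from the colinearity already established. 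This is the standard argument for this lemma (it is essentially how it is proved in \cite{bh99}), and every step you flag as delicate is handled correctly.
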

Now we can conclude that we have a well-defined map
\[
cocan := (C\ot\mu_C)\circ(\Delta\tens A)\; :\;\; C\tens A \lra  
C\Box\sb BC, 
\]
and can consider: 
\begin{Def}[\cite{bh99}]\label{2:agc} 
Let $A$ be an algebra,  
$C$ a coalgebra and right $A$-module,  and $B=C/I$, 
where $I$ is the coideal of Lemma~\ref{2:coa}. We say 
that $C$ is a (right)
 {\em algebra-Galois $A$-coextension} 
of $B$ if the canonical left $C$-comodule right $A$-module  
map 
$
cocan := (C\ot\mu_C)\circ(\Delta\tens A):C\tens A \lra  
C\Box\sb BC
$ 
is bijective. An algebra-Galois coextension is said to be {\em $\kappa$-augmented} 
if there exists an algebra map $\kappa: A\to k$ such that
 $\eps_C\circ\mu_C = \eps_C\ot\kappa$.
\end{Def} 
To see more clearly that Definition~\ref{2:agc} dualises the notion of 
a Galois $C$-extension, one can notice that both 
$C\tens A$ and $C\Box\sb BC$ are objects in $\sp C\!\CM\sb A$, which 
is dual to $\sb A\CM\sp C$. The structure maps are 
$\Delta\tens C$, $C\tens m$ and $\Delta\Box\sb BC$, $C\Box\sb B\mu_C$, 
respectively. The canonical map $cocan$ is a morphism in $\sp C\!\CM\sb A$.  
The right $A$-coextension $C\twoheadrightarrow B$ is algebra-Galois if $C\tens A \cong 
C\Box_BC$ as objects in $\sp C\!\CM\sb A$ by the canonical map $cocan$.
(In what follows, we consider only right coextensions and omit ``right" for
brevity.)

Finally, note that for the group actions on spaces translated into
group-ring actions on classical-space coalgebras, $cocan$ can be reduced to
\[
F^G_X:X\times G\lra X\times_{X/G}X,\;\;\; F^G_X((x,g))=(x,xg).
\]
Here $X\times_{X/G} X$ is by definition the image of $F^G_X$. 
The bijectivity of $cocan$ is then equivalent to the bijectivity of $F^G_X$.
This guarantees that the action of $G$ on $X$ is free. However, to arrive
at principal actions, one needs to introduce topology and go beyond the map
$F^G_X$ to guarantee the properness of the action (see the preamble of the (co)translation map section).

\subsubsection{Algebra-Galois extensions}\label{aext}

\vspace*{-11mm}$\qquad\qquad\qquad\qquad\qquad\qquad\qquad\qquad\quad$\footnote{This section is based on joint work with P.~Schauenburg and H.-J.~Schneider.}

~\\
\noindent
The right action of a group $G$ on a space $X$ induces the right action of the
group ring $(kG)^{op}$ on a suitable algebra of functions on $X$:
$(f\act g)(x):=f(xg)$. This model is a prototype of our considerations here. Let
$P$ be an algebra and a right $A$-module via the action $P\ot A\st{\act}{\ra}P$.
Then one can define the invariant subalgebra
\[
P^A=\{ b\in P\;|\; (bp)\act a=b(p\act a),\,\fa p\in P,\, a\in A\}.
\]
If $P$ is a comodule via the map $\hD_P: P\ra P\ot C$ and the action of $(C^*)^{op}$
is given by the formula $p\act a=p\sw0 a(p\sw1)$, then $P^{(C^*)^{op}}=P^{co C}$
(cf.\ \cite{m-s93}). Now, if $A$ is a finite dimensional algebra, then the
pullback of the multiplication and the unit map turns $A^*:=\hom (A,k)$ into
a coalgebra. Similarly, any action $M\ot A\st{\act}{\ra}M$ gives rise to a
coaction 
\[
\rho:M\ra M\ot (A^{op})^*,\;\;\;\rho(m)=\sum_{i=1}^{\dim A}m\act e_i\ot e^i,
\]
where $\{e_i\}_{i\in\{1,...,\dim A\}}$ is a basis of $A$ and 
$\{e^i\}_{i\in\{1,...,\dim A\}}$ the dual basis. In this situation, algebras
and coalgebras as well as modules and comodules are equivalent concepts.
In particular, one directly translates the Galois condition for coactions into
an equivalent Galois condition for actions. This has been carried out in \cite{bm00}.
The aim of this section is to study the case $\dim A=\infty$, so that for
the details concerning  $\dim A<\infty$ we refer to \cite{bm00}. Here let us only
observe the following:
\begin{Rem}
In this remark we follow the convention and notation of \cite{bm00}.
An algebra action $A\ot P\ra P$ cannot be Galois in the sense of
\cite[Proposition~2.2]{bm00} when $\dim A=\infty$. Indeed,
suppose that $\dim A=\infty$ and action is Galois. Choose a linear basis $\{e_i\}$
of $A$, and write $\chi^{\#}(1)=\sum_{i=1}^n t_i\ot e_i$. Then
\begin{eqnarray}
1\ot e_{n+1}
&=&
((\chi\ot \id )\ci(\id \ot\chi^{\#}))(e_{n+1}\ot 1)
\nonumber\\ &=&
\sum_{i=1}^n (\chi\ot \id )(e_{n+1}\ot t_i\ot e_i)
\nonumber\\ &=&
\sum_{i=1}^n (e_{n+1}\triangleright t_i)\ot e_i\nonumber.
\end{eqnarray}
This contradicts  the linear independence of $\{e_i\}$. (In the finite
dimensional case one can take $n=\dim A$, and then there does not exist a linearly 
independent $e_{n+1}$.)
\end{Rem}

In the infinitely dimensional case, the point is that an action cannot always
be turned to a coaction~\cite[p.11]{m-s93}. Therefore, we need a definition of the
Galois property which avoids starting from a  coaction.
\begin{Def}\label{2:def.agalois.act}
 Let $P$ be an algebra and a right $A$-module. Let $V\subseteq A^*$ 
 and $j:P\ot V\ra \hom(A,P)$ be a linear map defined by 
 $j(p\ot v)(a):=pv(a)$.
  We say that
the action of $A$ on $P$ is Galois provided
\begin{rlist}
\item the map
$
Can: P\ot_{P^A}P\ra\hom(A,P)$, $Can(p\ot_{P^A}p')(a):=p(p'\act a)
$
is injective,
\item
there exists a subspace $V\inc A^*$ such that
\begin{blist}
\item $Can(P\ot_{P^A}P)=j(P\ot V)$
 ($V$ is sufficiently small),
\item $V(a)=0\Rightarrow a=0$ ($V$ is sufficiently big).
\end{blist}
\end{rlist}
A Galois action is said to be {\em $\kappa$-augmented} if there exists a character of $A$, $\kappa\in V$, such that $1_P\kappa(a) = 1_P\act a$
for all $a\in A$.
\end{Def}
We say that an extension of algebras $B\inc P$ is an {\em $A$-extension} if 
$B=P^A$, and we call it {\em algebra-Galois} if the action of $A$ on $P$ is Galois. Finally an algebra-Galois extension with a $\kappa$-augmented action is called a {\em $\kappa$-augmented} algebra-Galois extension.
Let us begin by extracting immediate properties of algebra-Galois extensions.
\begin{Lem}\label{2:peter}
Assume that the action $P\ot A\ra P$ satisfies condition (ii)(a) for some $V\inc A^*$.
Let $\gamma:P^*\ot P\ra A^*$ be defined by the formula
$
\gamma(f\ot p)(a)=f(p\act a).
$
Then $V=\im\gamma$.
\end{Lem}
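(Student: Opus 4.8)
The plan is to establish the two inclusions $\im\gamma\inc V$ and $V\inc\im\gamma$ separately. In both directions the engine is condition (ii)(a), which identifies the image of $Can$ with $j(P\ot V)$; the recurring trick is to insert the unit $1\in P$ into an appropriate tensor slot and then collapse the resulting $P$-valued identity to a scalar identity in $A^*$ by applying a functional from $P^*$.

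For $\im\gamma\inc V$, I would take a generator $\gamma(f\ot p)$ with $f\in P^*$, $p\in P$, and look at $Can(1\ot_{P^A}p)$, which is the map $a\mapsto p\act a$. By (ii)(a) it lies in $j(P\ot V)$, so there are $q_k\in P$ and $v_k\in V$ with $p\act a=\sum_k q_k\,v_k(a)$ for every $a\in A$. Applying $f$ to both sides gives $\gamma(f\ot p)(a)=\sum_k f(q_k)\,v_k(a)$, that is, $\gamma(f\ot p)=\sum_k f(q_k)\,v_k$. As $V$ is a subspace this lies in $V$, and since $\gamma$ is linear and the simple tensors span $\im\gamma$, the inclusion follows.

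For the reverse inclusion $V\inc\im\gamma$, I would take $v\in V$ and consider $j(1\ot v)$, the map $a\mapsto v(a)\,1_P$. By (ii)(a) this belongs to the image of $Can$, so there exist $p_i,p_i'\in P$ with $v(a)\,1_P=\sum_i p_i\,(p_i'\act a)$ for all $a\in A$. Now I would choose $f\in P^*$ with $f(1_P)=1$, which exists because $P$ is a nonzero unital algebra over the field $k$, and for each $i$ set $f_i\in P^*$, $f_i(x):=f(p_ix)$. Applying $f$ to the displayed identity and rewriting $f\bigl(p_i(p_i'\act a)\bigr)=f_i(p_i'\act a)=\gamma(f_i\ot p_i')(a)$ yields $v=\sum_i\gamma(f_i\ot p_i')\in\im\gamma$.

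The one step that requires a genuine idea is the \emph{absorption} move in the last paragraph: condition (ii)(a) delivers terms of the form $f\bigl(p_i(p_i'\act a)\bigr)$, which are not directly values of $\gamma$, and the point is to push the left multiplier $p_i$ into the functional so as to recognise them as $\gamma(f_i\ot p_i')$. Everything else is a routine manipulation with the unit, a separating functional $f$ with $f(1_P)=1$, and linearity, so I expect no further difficulty.
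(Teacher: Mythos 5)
Your proof is correct and, for the substantive inclusion $V\subseteq \mathrm{Im}\,\gamma$, it is exactly the paper's argument: pick $f\in P^*$ with $f(1)=1$, set $f_i(x)=f(p_ix)$, and absorb the left factor into the functional. The only difference is that you also write out the easy converse inclusion $\mathrm{Im}\,\gamma\subseteq V$ (via $Can(1\otimes_{P^A}p)\in j(P\otimes V)$), which the paper leaves tacit; this is a welcome completion rather than a different route.
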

\begin{proof}
Condition (ii)(a) entails that
\[
\fa v\in V\;\exists\,\mbox{$\sum_i$} p_i\ot p'_i\in P\ot P\;\fa a\in A:\; 
v(a)=\mbox{$\sum_i$} p_i(p'_i\act a).
\]
Choose $f\in P^*$ such that $f(1)=1$ and define $f_i$ by $f_i(p)=f(p_ip)$. Then
\begin{eqnarray}
v(a)&=& f(v(a))
\nonumber\\ &=& 
f(\mbox{$\sum_i$} p_i(p'_i\act a))
\nonumber\\ &=&
\mbox{$\sum_i$} f_i(p'_i\act a)
\nonumber\\ &=&
\gamma(\mbox{$\sum_i$} f_i\ot p'_i)(a),
\end{eqnarray}
as needed.
\end{proof}
Thus condition (ii)(a) uniquely determines $V$. The following lemma shows that it
also forces the action of $A$ to be locally finite. On the other hand, condition (ii)(b)
is responsible for the faithfulness of the action of $A$.
\begin{Lem}\label{2:hans}
The Galois action of $A$ on $P$ is always faithful ($P\act a=0\Rightarrow a=0$)
and locally finite ($\dim (p\act A)<\infty$ for any $p\in P$).
\end{Lem}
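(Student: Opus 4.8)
The plan is to establish the two assertions independently, each as a direct application of the Galois conditions of Definition~\ref{2:def.agalois.act}. Local finiteness will follow from condition (ii)(a), while faithfulness will follow from condition (ii)(b) together with the description $V=\im\gamma$ proved in Lemma~\ref{2:peter}; notably, neither claim requires the injectivity condition (i).

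To prove local finiteness, I would fix $p\in P$ and feed the element $1\ot_{P^A}p\in P\ot_{P^A}P$ into the canonical map. By the definition of $Can$, the map $Can(1\ot_{P^A}p)\colon A\ra P$ sends $a$ to $p\act a$, so its image is precisely the subspace $p\act A$ whose dimension we wish to bound. Condition (ii)(a) places $Can(1\ot_{P^A}p)$ in $j(P\ot V)$, hence it is represented by a \emph{finite} tensor $\sum_{k=1}^{n}q_k\ot v_k$ with $q_k\in P$ and $v_k\in V$. Evaluating at $a\in A$ yields $p\act a=\sum_{k=1}^{n}q_k\,v_k(a)$, so $p\act A\subseteq\lspan\{q_1,\dots,q_n\}$ and therefore $\dim(p\act A)<\infty$.

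For faithfulness, I would assume $P\act a=0$, that is $p\act a=0$ for all $p\in P$, and show $a=0$. By condition (ii)(b) it suffices to verify $v(a)=0$ for every $v\in V$. This is exactly where Lemma~\ref{2:peter} enters: writing $V=\im\gamma$, any $v\in V$ is of the form $v=\gamma(\sum_i f_i\ot p_i)$ with $f_i\in P^*$ and $p_i\in P$, so that $v(a)=\sum_i f_i(p_i\act a)=0$ by hypothesis. Thus $V(a)=0$, and condition (ii)(b) forces $a=0$. I do not anticipate a substantive obstacle in either argument; the only delicate point is bookkeeping, namely confirming that $1\ot_{P^A}p$ is a well-defined element of the balanced tensor product and that membership in $j(P\ot V)$ supplies a \emph{finite} representing sum, which is what makes the image $p\act A$ finite-dimensional.
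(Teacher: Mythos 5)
Your proof is correct and follows essentially the same route as the paper: local finiteness is extracted from condition (ii)(a) by writing $Can(1\ot_{P^A}p)$ as a finite element of $j(P\ot V)$, and faithfulness is the chain $P\act a=0\Rightarrow V(a)=0\Rightarrow a=0$ using $V=\im\gamma$ from Lemma~\ref{2:peter} together with (ii)(b). Your version merely spells out the bookkeeping (the role of $1\ot_{P^A}p$ and the finiteness of the representing tensor) that the paper leaves implicit.
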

\begin{proof}
Let us prove first that $\dim (p\act A)<\infty$. It follows from  condition (ii)(a)
that
\[
\fa p\in P\;\exists\,\mbox{$\sum_i$} e_i\ot v_i\in P\ot V\;\fa a\in A:\; 
p\act a=\mbox{$\sum_i$} e_iv_i(a).
\]
Hence $p\act  A\inc \mbox{\rm span}\{ e_i\}_{i\in\mbox{\scriptsize finite set}}$, 
so that 
$\dim (p\act A)<\infty$. Next, 
the faithfulness assertion follows immediately from condition (ii)(b) and 
Lemma~\ref{2:peter}. Indeed, $$P\act a=0\Rightarrow V(a)=0\Rightarrow a=0.$$
\end{proof}
Furthermore, the axioms for $V$ turn out to be sufficiently strong to make it
a coalgebra and $P$ a $V$-comodule reflecting the $A$-module structure.
\begin{Lem}\label{2:peter2}
Assume that the action $P\ot A\ra P$ satisfies conditions 1 and (ii)(a). Then there exists
a coalgebra structure on $V$ and a coaction $\hD_P:P\ra P\ot V$ such that 
$(j\circ\hD_P)(p)(a)=p\act a$ for any $p\in P,\, a\in A$.
Moreover, $P$ is a coalgebra-Galois $V$-extension of $P^A$.
\end{Lem}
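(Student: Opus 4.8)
The plan is to build the coalgebra $V$ and the coaction $\hD_P$ by hand, the linchpin being that $j$ separates points of $P\ot V$. First I would show that $j:P\ot V\ra\hom(A,P)$ is injective: writing an element as $\sum_i p_i\ot v_i$ with the $v_i$ linearly independent in $V$, choose $a_j\in A$ with $v_i(a_j)=\delta_{ij}$ (possible since linearly independent functionals are jointly surjective onto $k^n$); then $j(\sum_i p_i\ot v_i)=0$ evaluated at $a_j$ forces $p_j=0$. With $j$ injective, condition~(ii)(a) gives $Can(1\ot_{P^A}p)\in Can(P\ot_{P^A}P)=j(P\ot V)$, so I define $\hD_P(p)$ to be the unique element of $P\ot V$ with $j(\hD_P(p))=Can(1\ot_{P^A}p)$. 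Unravelled this reads $(j\ci\hD_P)(p)(a)=1\cdot(p\act a)=p\act a$, the identity demanded in the statement; writing $\hD_P(p)=p\sw0\ot p\sw1$ we have $p\act a=p\sw0\,p\sw1(a)$.

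Next I would equip $V=\im\gamma$ (Lemma~\ref{2:peter}) with the coalgebra structure dual to the opposite algebra of $A$: the counit is $v\mapsto v(1)$ and the coproduct is characterised by $\Delta_V(v)(a\ot a')=v(a'a)$. The only substantial point---and the heart of the lemma, since a generic subspace of the dual $A^*$ of an infinite-dimensional algebra is not a subcoalgebra---is that $\Delta_V$ really maps $V$ into $V\ot V$. Here the description $V=\im\gamma$ is indispensable: for $v=\gamma(f\ot p)$ the module axiom yields
\[
\Delta_V(v)(a\ot a')=v(a'a)=f\left((p\act a')\act a\right)=f(p\sw0\act a)\,p\sw1(a')=\gamma(f\ot p\sw0)(a)\,p\sw1(a'),
\]
so that $\Delta_V(v)=\gamma(f\ot p\sw0)\ot p\sw1\in V\ot V$, the value being unambiguous because $V\ot V\hookrightarrow(A\ot A)^*$. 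Coassociativity of $\Delta_V$ and the counit axioms for $v\mapsto v(1)$ then follow at once from associativity and unitality of $A$, verified on the relevant tensor powers of $A$ and using that each $V^{\ot n}$ injects into $(A^{\ot n})^*$.

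It then remains to check that $\hD_P$ is a coaction for this coalgebra and to identify the invariants. Counitality is the computation $p\sw0\,p\sw1(1)=p\act 1=p$. For coassociativity I would introduce the analogue $j_2:P\ot V\ot V\ra\hom(A\ot A,P)$, $j_2(q\ot v\ot v')(a\ot a')=q\,v(a)v'(a')$, which is injective by the same linear-independence argument; both $(\hD_P\ot\id)\ci\hD_P$ and $(\id\ot\Delta_V)\ci\hD_P$ are sent by $j_2$ to the map $a\ot a'\mapsto(p\act a')\act a$---for the first because $(p\sw0\act a)\,p\sw1(a')=(p\act a')\act a$, for the second because $p\sw0\,p\sw1(a'a)=p\act(a'a)$---so they coincide. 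Finally, since $j$ is injective, $\hD_P(bp)=b\,\hD_P(p)$ is equivalent to the pointwise identity $(bp)\act a=b\,(p\act a)$; hence $P^{coV}=P^A$.

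To conclude that $P$ is a coalgebra-Galois $V$-extension of $P^A$, I would observe that the canonical map $can:P\ot_{P^A}P\ra P\ot V$, $p\ot_{P^A}p'\mapsto p\,\hD_P(p')$, satisfies $j\ci can=Can$ by construction. Since $Can$ is injective (condition~1) with image $j(P\ot V)$ (condition~(ii)(a)) and $j$ is injective, $can=j^{-1}\ci Can$ is a bijection; combined with $P^{coV}=P^A$ this is exactly the assertion. I expect the main obstacle to be the closure property $\Delta_V(V)\inc V\ot V$ of the second paragraph: this is where the two defining constraints on $V$---that it be cut out as $\im\gamma$ by (ii)(a) and be stable under the iterated action---must be used together, and it is the step that genuinely distinguishes the Galois situation from an arbitrary subspace of $A^*$, which carries no coproduct at all.
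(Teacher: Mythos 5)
Your proposal is correct and follows essentially the same route as the paper's proof: the coaction is defined as $j^{-1}(Can(1\ot_{P^A}p))$, the coproduct is characterised by $(\Delta v)(a\ot a')=v(a'a)$ with the closure $\Delta_V(V)\subseteq V\ot V$ extracted from $V=\im\gamma$ and the identity $\gamma(f\ot p)(a'a)=\gamma(f\ot p\sw0)(a)\,p\sw1(a')$, coassociativity is checked through the embeddings $V^{\ot n}\hookrightarrow(A^{\ot n})^*$, and the invariants and Galois property follow from the injectivity of $j$ together with conditions (i) and (ii)(a). The only cosmetic difference is that you verify membership of $\Delta_V(v)$ in $V\ot V$ directly, where the paper packages the same computation as a diagram chase through the surjection $\gamma:P^*\ot P\to V$.
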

\begin{proof}
Since $Can(P\ot_{P^A}P)=j(P\ot V)$ and $j:P\ot V\ra \hom(A,P)$ is injective, there is
a homomorphism 
\[\label{2:cv}
\hD_P:P\lra P\ot V,\;\;\; \hD_P(p):=j^{-1}(Can(1\ot_{P^A}p)).
\]
To define the desired coproduct on $V$, let us consider the relationship between 
$\hD_P$ and $\gamma$. Taking advantage of the natural embedding of $V\ot V$ in
$(A\ot A)^*$, we obtain:
\begin{eqnarray}\label{2:spirit}
\llp(\gamma\ot\id)\circ(\id\ot\hD_P)\lrp(\phi\ot p)(a\ot a')&=&
\gamma(\phi\ot p\act a')(a)
\nonumber\\ &=&
\phi((p\act a')\act a)
\nonumber\\ &=&
\gamma (\phi\ot p)(a'a).
\end{eqnarray}
Consequently, if $t\in\ker\gamma$, then $(\id\ot\hD_P)(t)\in\ker(\gamma\ot\id).$
Therefore, we have a commutative diagram defining the coproduct:
\[
\xymatrix{
0  \ar[r]  & \ker\gamma  \ar[r] \ar[d] & P^*\ot P  \ar[r]^{\gamma} \ar[d]_{\id\ot\Delta_P} & V  \ar[r]  \ar[d]_{\Delta} & 0 \\
0  \ar[r] & \ker\gamma\ot V  \ar[r] & P^*\ot P\ot V  \ar[r]^{\gamma\ot\id} & V\ot V  \ar[r] &\, 0. \\ 
}
\]
In the spirit of (\ref{2:spirit}), we can verify that
\[\label{2:dv}
(\hD v)(a\ot a')=
\sum_i\llp(\gamma\ot\id)\circ(\id\ot\hD_P)\lrp(\phi_i\ot p_i)(a\ot a')=v(a'a).
\]
With the help of the natural embedding $V\ot V\ot V\inc (A\ot A\ot A)^*$, the above
formula entails the coassociativity of \hD. The counit \he\ is given by the 
evaluation 
at 1. The map $\hD_P$ is by construction compatible with the counit, i.e.,
$(\id\ot\he)\circ\hD_P=\id$, and the coassociativity of $\hD_P$ can be proven
the same way as the coassociativity of $\hD$. Thus $V$ is a coalgebra and $P$
is a comodule such that $p\sw0 p\sw1(a)=p\act a$ (see (\ref{2:cv})).
Since (by the injectivity of $j$)
\[
 (bp)\sw0\ot (bp)\sw1=bp\sw0\ot p\sw1\Leftrightarrow
(bp)\act a=b(p\act a), 
\]
we have $P^{coV}=P^A$. Finally, the coaction $\hD_P$ is 
clearly Galois as $can$ is surjective by condition (ii)(a) and is injective due to the 
injectivity of $Can$.
\end{proof}
We have just shown that, if an $A$-extension $B\inc P$ satisfies conditions
(i) and (ii)(a), then there exists a coalgebra $C\inc A^*$ such that $B\inc P$ is
a coalgebra-Galois $C$-extension and $Can=j\circ can$. Hence, behind any 
algebra-Galois extension, there is a coalgebra-Galois extension. On the other
hand, behind any coalgebra-Galois extension there is an algebra-Galois extension ---
just take $A=\csop$ and $V=i(C)$, where $i:C\hookrightarrow C^{**}=A^*$ is the
canonical embedding. The injectivity of $Can$ and condition (ii)(a) follow immediately 
from the definition of the action of \csop\ ($p\act a= p\sw0 a(p\sw1)$), and condition
(ii)(b) is automatic. We now want to employ condition (ii)(b) to show that the aforementioned
procedure of extracting the coalgebra-Galois structure from an algebra-Galois
extension retrieves the original coalgebra-Galois $C$-extension we started from.
\begin{Lem}\label{2:piotr}
 Let $P$ be an algebra and a right $C$-comodule. The $C$-extension
$B\inc P$ is coalgebra-Galois if and only if the induced action ($A:=(C^*)^{op}$,
$p\triangleleft a:=p\sw0a(p\sw1)$) is Galois.
\end{Lem}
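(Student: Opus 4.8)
The plan is to reduce the whole equivalence to a single factorisation identity relating the two canonical maps. Throughout I use that $P^A=P^{coC}=B$ (recorded earlier, following \cite{m-s93}), so that $P\ot_{P^A}P=P\ot_BP$ and both sides of the claimed equivalence concern the same extension. Let $i:C\hookrightarrow C^{**}=A^*$ be the canonical embedding and define $\iota:P\ot C\ra\hom(A,P)$ by $\iota(p\ot c)(a)=p\,a(c)$. One checks at once that $\iota$ is injective (finitely many linearly independent $c$'s are separated by suitable $a\in C^*$), and that on generators
\[
(\iota\ci can)(p\ot_Bp')(a)=p\,p'\sw0\,a(p'\sw1)=p(p'\act a)=Can(p\ot_Bp')(a),
\]
so $Can=\iota\ci can$. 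This is precisely the identity $Can=j\ci can$ from the discussion preceding the lemma, written out with $V=i(C)$.

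For the implication ``coalgebra-Galois $\Rightarrow$ Galois action'' I would assume $can$ bijective. Then $Can=\iota\ci can$ is a composite of the injective $\iota$ with the bijective $can$, hence injective, which is condition (i). Taking $V=i(C)$, surjectivity of $can$ gives $Can(P\ot_BP)=\iota(P\ot C)=j(P\ot V)$, which is condition (ii)(a); and (ii)(b) is automatic, since $V(a)=0$ means $a(c)=i(c)(a)=0$ for all $c\in C$, i.e.\ $a=0$. Hence the induced action is Galois.

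For the converse I would assume the action Galois and recover bijectivity of $can$ from conditions (i), (ii)(a), (ii)(b). Injectivity is immediate: $can(x)=0$ forces $Can(x)=\iota(0)=0$, whence $x=0$ by (i). The crux, and the step I expect to be the main obstacle, is surjectivity, which comes down to showing that the subspace $V$ supplied by the Galois condition is all of $i(C)$. By Lemma~\ref{2:peter}, condition (ii)(a) forces $V=\im\gamma$, and the computation $\gamma(f\ot p)=\sum f(p\sw0)\,i(p\sw1)$ shows $V=\im\gamma\inc i(C)$; writing $V=i(W)$ with $W:=i^{-1}(V)\inc C$, condition (ii)(b) now forces $W=C$, for otherwise a nonzero $a\in C^*=A$ vanishing on $W$ would satisfy $V(a)=0$ with $a\neq0$. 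With $V=i(C)$ established, (ii)(a) reads $\iota(can(P\ot_BP))=\iota(P\ot C)$, so $can(P\ot_BP)=P\ot C$ by injectivity of $\iota$, and $can$ is onto. Thus $can$ is bijective, i.e.\ $B\inc P$ is coalgebra-Galois.
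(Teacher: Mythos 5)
Your proof is correct and follows essentially the same route as the paper's: both rest on the factorisation $Can=j\circ(\id\ot i)\circ can$ and, for the converse, on pinning down $V=i(C)$ (the inclusion $i(C)\subseteq V$ via the same dual-basis argument from condition (ii)(b), the inclusion $V\subseteq i(C)$ obtained by you from Lemma~\ref{2:peter} and by the paper directly from (ii)(a) --- an immaterial difference). You also spell out the forward direction, which the paper dismisses with ``take $V=i(C)$ and verify that all works,'' and you correctly invoke the earlier-recorded identity $P^A=P^{coC}$ to make sense of the tensor products over the two a priori different subalgebras.
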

\begin{proof}
If the $C$-coaction is Galois,
we take $V=i(C)$ and verify that all works. In the converse direction, assume that the action
of $A$ on $P$ is Galois. Since it is given by the formula $p\act a =p\sw0 a(p\sw1)$,
we have that $P^{co C}\inc P^A$ and 
\[\label{2:cc}
Can=j\circ(\id\ot i)\circ can.
\]
Hence, by condition (ii)(a), 
\[\label{2:im}
P\ot V=\im ((\id\ot i)\circ can)\inc P\ot i(C),
\]
so that $V\inc i(C)$. We also have that
 $can$ is injective due to the injectivity of $Can$.
 Furthermore,
now it follows from (ii)(a) that $i(C)\inc V$. Indeed, otherwise there exists 
$u\in i(C)$, $u\not\in V$. Let $\{c_i\}$ be a basis of $i^{-1}(V)$. Then
$i^{-1}(u)$ is linearly independent of $\{c_i\}$, and we can complete the set
$\{c_i\}\cup\{i^{-1}(u)\}$ to a linear basis of $C$. Using such a basis, define
$a_0$ to be 1 on $i^{-1}(u)$ and 0 on all other basis elements. Then $a_0\neq 0$
and $V(a_0)=a_0(i^{-1}(V))=0$, which contradicts (ii)(a). Therefore $V=i(C)$, and
(\ref{2:im}) implies that $P\ot C$ is the image of $can$. 
Combining this with the injectivity of $can$
 we conclude that $can$ is a bijection
from $P\ot_{P^{co C}}P$ to $P\ot C$, i.e., the extension
$P^{co C}\inc P$ is $C$-Galois.  
\end{proof}
We have shown that a coaction is Galois if and only if the induced action is Galois.
To complete the picture, let us note that the coalgebra structure on $V$ constructed
in Lemma~\ref{2:peter} coincides with the coalgebra structure on $C$. First, it is
straightforward to observe that their coactions on $P$ coincide:

\[\label{2:vc}
\xymatrix{
P  \ar[r] \ar[rd] & P\ot V \\
&  P\ot C. \ar[u]_{\id\ot i}
}
\]

Indeed, applying the injection $j$ to the coaction of $V$ yields by (\ref{2:cv})
$Can(1\ot_{P^A}p)\in \hom(A,P)$, and applying it to the composed map gives
$j(p\sw0\ot p\sw1)\in\hom(A,P)$. Evaluating these maps on an arbitrary $a\in A$,
one obtains
\[
Can(1\ot_{P^A}p)(a)=p\act a=p\sw0 a(p\sw1)=j(p\sw0\ot i(p\sw1))(a).
\]
Thus the diagram (\ref{2:vc}) is commutative as claimed. Next, let us choose $f\in P^*$ 
such that $f(1)=1$. Putting together the constructions from Lemma~\ref{2:peter2}
and embedding $i(C)\ot i(C)$ in $(A\ot A)^*$, we compute:
\begin{eqnarray}
(\hD\circ i)(c)(a\ot a')&=&\llp\gamma(f(c\twa \cdot)\ot_{P^A}{c\twb }\sw0)\ot i({c\twb }\sw1)\lrp(a\ot a')
\nonumber\\ &=&
f\llp(c\twa ({c\twb }\sw0\act a)\lrp a'({c\twb }\sw1)
\nonumber\\ &=&
f\llp(c\twa ({c\twb }\act a'\act a)\lrp
\nonumber\\ &=&
f\llp Can(c\twa \ot_{P^A}{c\twb })(a'a)\lrp
\nonumber\\ &=&
f\llp(j\circ(\id\ot i)\circ can)(c\twa \ot_{P^A}{c\twb })(a'a)\lrp
\nonumber\\ &=&
f\llp j(1\ot i(c))(a'a)\lrp
\nonumber\\ &=&
f\llp(a'a)(c)\lrp
\nonumber\\ &=&
(a'a)(c).
\end{eqnarray}
Here we abused the notation and denoted by $\gamma$ the map $P^*\ot_{P^A}P\ra A^*$.
On the other hand,
\[
\llp i(c\sw1)\ot i(c\sw2)\lrp (a\ot a')= a(c\sw1)a'(c\sw2)=(a'a)(c).
\]
The counitality of $i$ is also clear:
\[
(\he_V\circ i)(c)=i(c)(1)=\he_C(c).
\]
Hence $V$ and $C$ are isomorphic as coalgebras. This way we have shown that
indeed the procedure from Lemma~\ref{2:peter2} applied to the algebra-Galois
\csop-extension recovers the original coalgebra-Galois $C$-extension. 
\begin{Lem}
Let $P$ be an algebra and a  right  $C$-comodule. If the coaction is Galois,
then  it is isomorphic to 
the coaction of $V$ corresponding via Lemma~\ref{2:peter2} to the induced
algebra-Galois \csop-extension.
\end{Lem}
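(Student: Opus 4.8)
The plan is to read the statement as a repackaging of the computations carried out in the running discussion that precedes it, so that the proof reduces to exhibiting a single intertwining coalgebra isomorphism. Recall that two coactions $\Delta_P\colon P\ra P\ot C$ and $\Delta_P^V\colon P\ra P\ot V$ on $P$ are isomorphic precisely when there is a coalgebra isomorphism $\theta\colon C\ra V$ with $(\id\ot\theta)\ci\Delta_P=\Delta_P^V$. I would take $\theta$ to be the canonical embedding $i\colon C\hookrightarrow C^{**}=A^*$ (with $A=\csop$), corestricted to its image $V$.

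First I would verify that $i$ is a coalgebra isomorphism onto $V$. Since the $C$-coaction is Galois, Lemma~\ref{2:piotr} applies, and its proof yields $V=i(C)$; thus the corestriction $i\colon C\ra V$ is surjective, while injectivity is automatic as $i$ is the canonical embedding of $C$ into its bidual. The compatibility with coproducts, $\hD_V\ci i=(i\ot i)\ci\hD_C$, and with counits, $\he_V\ci i=\he_C$, was established by the explicit computations just above---the evaluation of $(\hD_V\ci i)(c)$ against $a\ot a'$, matched against $(i(c\sw1)\ot i(c\sw2))(a\ot a')=(a'a)(c)$, together with $(\he_V\ci i)(c)=i(c)(1)=\he_C(c)$. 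Hence $i\colon C\ra V$ is bijective and respects the coalgebra structures, i.e.\ it is a coalgebra isomorphism.

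Next I would invoke the coincidence of the coactions. The commutative triangle~(\ref{2:vc}) records exactly that $\Delta_P^V=(\id\ot i)\ci\Delta_P$, where $\Delta_P^V$ is the $V$-coaction produced by Lemma~\ref{2:peter2} through the defining formula~(\ref{2:cv}). Combining the two paragraphs, the map $i$ is a coalgebra isomorphism $C\ra V$ satisfying $(\id\ot i)\ci\Delta_P=\Delta_P^V$, which is precisely the asserted isomorphism between the original $C$-coaction and the $V$-coaction.

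I do not anticipate a genuine obstacle: all the substantive work---the construction of $V$ and its coproduct, the identity $V=i(C)$, and the equality of coactions---has already been carried out in Lemmas~\ref{2:peter}, \ref{2:peter2} and~\ref{2:piotr} and the surrounding calculations. The only point requiring care is bookkeeping: one should make explicit that the single map $i$ simultaneously transports the coalgebra structure and intertwines the coactions, so that it qualifies as an isomorphism of the comodule data $(P,\Delta_P,C)$ and $(P,\Delta_P^V,V)$, rather than merely matching the two structures piecemeal.
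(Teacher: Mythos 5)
Your proposal is correct and follows essentially the same route as the paper: the lemma is stated there as a summary of the immediately preceding discussion, which establishes exactly the two ingredients you isolate, namely that $i$ corestricts to a coalgebra isomorphism $C\ra V=i(C)$ (via the $(\hD\ci i)(c)(a\ot a')=(a'a)(c)$ computation and counitality) and that the diagram~(\ref{2:vc}) intertwines the two coactions. Your only addition is the useful bookkeeping remark that one should package these two facts as a single isomorphism of comodule data, which is implicit in the paper.
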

Now
one might ask what happens if we start from an algebra-Galois $A$-extension,
go to the coalgebra-Galois $V$-extension, and then to the algebra-Galois
$(V^*)^{op}$-extension. It turns out that $A$ is a subalgebra of $(V^*)^{op}$
via $A\hookrightarrow A^{**}$, and its action on $P$ factors via the action of
$(V^*)^{op}$. First, note that $V\inc A^*$ and, by condition (ii)(b), the pullback
of this inclusion composed with $A\hookrightarrow A^{**}$, i.e.,
$i_A: A\hookrightarrow A^{**}\ra V^*$, is injective. In fact, the injectivity
of this map is equivalent to condition (ii)(b).
Let us  check now that $i_A$   is an algebra homomorphism from $A$ to $(V^*)^{op}$.
To this end, we choose any $v\in V$, and compute:
\[
\llp i_A(a)i_A(a')\lrp(v)= v\sw1(a')v\sw2(a)
=
(\hD v)(a'\ot a)
=
v(aa')
=
\llp i_A(aa')\lrp(v).
\]
Here the penultimate identity follows from (\ref{2:dv}). The unitality of $i_A$ is
also clear:
\[
i_A(1_A)(v)=v(1_A)=\he(v)=1_{(V^*)^{op}}(v).
\]
Finally, the diagram
\[
\xymatrix{
P\ot A \ar[r] \ar[d]_{\id\ot i_A} & P \\
P\ot (V^*)^{op} \ar[ru] &
}
\]

is commutative because, by virtue of (\ref{2:cv}), we have
\begin{eqnarray}
p\act i_A(a) &=& p\sw0 i_A(a)(p\sw1)
\nonumber\\ &=&
p\sw0 p\sw1(a)
\nonumber\\ &=&
j(p\sw0\ot p\sw1)(a)
\nonumber\\ &=&
Can(1\ot_{P^A}p)(a)
\nonumber\\ &=&
p\act a.
\end{eqnarray}
We can summarise much of the above in the following:
\begin{Thm}\label{2:maina}
Let $P$ be an algebra and a right $A$-module. The action of $A$ on $P$ is
Galois {\em if and only if} there exists a coalgebra $V=:C\inc A^*$  coacting
on $P$ on the right and such that
\begin{rlist}
\item
the induced action of \csop\ on $P$  ($p\act f=p\sw0 f(p\sw1)$) is Galois,
\item 
$A$ is a subalgebra of \csop\ via the composition of the canonical 
embedding with the pullback of the above inclusion:
$i_A:A\hookrightarrow A^{**}\ra\csop$,
\item the action of $A$ factors through $i_A$ and the action of \csop\
($p\act a=p\act i_A(a)$).
\end{rlist}
\end{Thm}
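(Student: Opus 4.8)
The plan is to read the theorem as a repackaging of the material developed immediately before it, so that the forward implication is essentially a citation of Lemma~\ref{2:peter2} together with the analysis of $i_A$, while the converse needs one genuinely new observation: that the bare right-$A$-module invariants coincide with the $C$-comodule coinvariants. The common bridge in both directions is Lemma~\ref{2:piotr}, which identifies coalgebra-Galois $C$-extensions with Galois actions of $\csop$.

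For the implication ``$\Rightarrow$'', I would assume the action of $A$ on $P$ is Galois, so that conditions (i), (ii)(a) and (ii)(b) of Definition~\ref{2:def.agalois.act} hold. Applying Lemma~\ref{2:peter2} (which uses only (i) and (ii)(a)) produces the coalgebra $V=:C\inc A^*$, the coaction $\hD_P\colon P\ra P\ot C$, and the statement that $P$ is a coalgebra-Galois $C$-extension of $P^A$; this is the coalgebra required by the theorem, coacting on $P$ on the right. Condition~(i) of the theorem is then immediate from Lemma~\ref{2:piotr}. Conditions~(ii) and~(iii) are precisely the properties of $i_A\colon A\hookrightarrow A^{**}\ra\csop$ established in the paragraphs preceding the theorem: that $i_A$ is injective (this injectivity being equivalent to (ii)(b)), that it is a unital algebra homomorphism into $\csop$ by~(\ref{2:dv}), and that $p\act a=p\act i_A(a)$ by~(\ref{2:cv}). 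Thus this direction is pure assembly.

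For the implication ``$\Leftarrow$'', suppose a coalgebra $C=V\inc A^*$ coacting on $P$ is given and satisfies (i)--(iii). The key step I would carry out first is to prove $P^A=P^{coC}$. Using~(iii) one rewrites the action as $p\act a=p\sw0\,p\sw1(a)$, where $p\sw1\in C\inc A^*$ is evaluated at $a$; this at once gives $P^{coC}\inc P^A$. For the reverse inclusion, take $b\in P^A$; the relation $(bp)\act a=b(p\act a)$, valid for every $a\in A$, says exactly that evaluating the $C$-component of $\hD_P(bp)-(b\ot 1)\hD_P(p)\in P\ot C$ at each $a\in A$ yields $0$, and since $C\inc A^*$ these evaluations separate the points of $C$, forcing $\hD_P(bp)=(b\ot 1)\hD_P(p)$, i.e.\ $b\in P^{coC}$. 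With this identification, condition~(i) and Lemma~\ref{2:piotr} give that $can\colon P\ot_{P^{coC}}P\ra P\ot C$ is bijective. Finally, taking the given $C\inc A^*$ as the subspace $V$ of Definition~\ref{2:def.agalois.act} and noting that $Can=j\circ can$ (compare~(\ref{2:cc})), I would read off the three Galois axioms: injectivity of $Can$ from injectivity of $j$ and of $can$; condition (ii)(a) from surjectivity of $can$ onto $P\ot C$; and condition (ii)(b), i.e.\ $C(a)=0\Rightarrow a=0$, as exactly the injectivity of $i_A$ supplied by~(ii).

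The hard part is the single new fact $P^A=P^{coC}$, and within it the reverse inclusion $P^A\inc P^{coC}$: here the ``sufficiently big'' character of $V$ encoded in $C\inc A^*$ is indispensable, since it is the separation of the points of $C$ by evaluations on $A$ that upgrades equality-after-all-evaluations to equality of coactions. This equality is not a cosmetic matching of domains --- if $P^A$ were strictly larger than $P^{coC}$, the canonical map $P\ot_{P^A}P\ra P\ot C$ would be a proper quotient of the bijective $can$ and injectivity of $Can$ would fail. Everything else is bookkeeping: transcribing bijectivity of $can$ and injectivity of $j$ and $i_A$ into the axioms of Definition~\ref{2:def.agalois.act}.
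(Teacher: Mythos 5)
Your proposal is correct and follows essentially the same route as the paper: the forward direction is assembled from Lemma~\ref{2:peter2}, Lemma~\ref{2:piotr} and the preceding discussion of $i_A$, and the converse hinges on first establishing $P^A=P^{coC}$ via the injectivity of $j$ (your ``evaluations separate points of $C$'' is exactly the paper's injectivity argument for $j:P\ot C\ra\hom(A,P)$ coming from $C\inc A^*$), after which $Can=j\circ can$ delivers conditions (i), (ii)(a) and (ii)(b) just as in the paper.
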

\begin{proof}
If the action of $A$ on $P$ is Galois, then, by Lemma~\ref{2:peter2}, there
exists a right Galois coaction on $P$ by a coalgebra $C\inc A^*$. 
 On the other hand, by Lemma~\ref{2:piotr},
the induced action of \csop\ on $P$ is Galois. The remaining properties follow
from the discussion preceding the theorem.

Assume now that conditions (i)--(iii) are satisfied. It follows from condition~(i)
and Lemma~\ref{2:piotr} that the coaction of $C$ is Galois. Next, condition~(iii) entails
that
\[\label{2:entails}
p\act a=p\act i_A(a)=p\sw0 i_A(a)(p\sw1)=p\sw0 p\sw1(a)= j(p\sw0\ot p\sw1)(a).
\]
Since $C\inc A^*$, the map $j:P\ot C\ra \hom(A,P)$ is injective. 
Therefore, (\ref{2:entails}) implies that
\[
\fa p\in P,\,a\in A:\,(bp)\act a=b(p\act a)\;\Leftrightarrow\; 
\fa p\in P:\, (bp)\sw0\ot (bp)\sw1=bp\sw0\ot p\sw1\,.
\]
Consequently, $P^A=P^{coC}$. Now, using (\ref{2:entails}), one can immediately 
check that $Can=j\circ can$. Hence, the injectivity of $j$ and $can$
imply the injectivity of $Can$, and the surjectivity of $can$ implies condition~(ii)(a) in Definition~\ref{2:def.agalois.act}.
Finally, condition~(ii)(b) in Definition~\ref{2:def.agalois.act} follows from condition~(ii).
\end{proof}
To illustrate the foregoing theory, let us consider an example with trivial
invariants, so that we can focus on the subtelties particular to the Galois actions
of infinite dimensional algebras.
\begin{Ex}
Let $P$ be the Hopf algebra of Laurant polynomials $\C[z,z^{-1}]$ acted upon by
the group algebra $A=\C U(1)$ via the formula 
$z^\mu\act e^{i\theta}=e^{i\mu\theta}z^\mu$. We want to show that this action
is Galois. First, note that the invariant subalgebra $P^A$ is trivial:
$$
\left(\lfa\mu\in\Z,\,\theta\in[0,2\pi):\;(\sum_{k=-m}^na_kz^kz^\mu)
\act e^{i\theta}
=\sum_{k=-m}^na_kz^k(z^\mu\act e^{i\theta})\right)\Rightarrow
a_k=\hd_{k0}a_0.
$$
One can guess that the Galois coaction standing behind this action is simply
the coproduct on $P=\C[z,z^{-1}]$. (The canonical map has the form 
$p\ot p'\to pp'\sw1\ot p'\sw2$.) Therefore, in view of Lemma~\ref{2:peter2},
we take $V\cong\C[z,z^{-1}]$ and view it as a subset of $(\C U(1))^*$ via the
evaluation map:
$i(z^\mu)(e^{i\theta}):=e^{i\mu\theta}$. The injectivity
of the mapping $i$ from $\C[z,z^{-1}]$ to $(\C U(1))^*$ is clear, because a rational
function that is 0 at infinitely many points is the 0 function. Now, it is
straightforward to verify that
$j\circ(\id\ot i)\circ can=Can$, so that condition (ii)(a) holds due to the surjectivity 
of $can$. Condition 1 (injectivity
of $Can$) also holds, because $i$, $j$ and $can$ are injective. Finally, we need
to check that
$$
\left(\lfa\mu\in\Z,\,\mbox{finite subset $I$ of $U(1)$} :\;
i(z^\mu)(\sum_{g\in I}
\lambda_g g)=0\right)\;\Rightarrow\;\left(\lfa g\in I:\; \lambda_g=0\right).
$$
To this end,\footnote{We owe this argument to R.~Matthes.}
note that the left-hand side can be thought of as a system of infinitely many
linear equations, where the lambdas are variables and elments of $U(1)$ are
complex coefficients. Let $n$ be the number of elements in $I$. Since the
equation 
$
\sum_{j=1}^n\lambda_j{g_j^\mu}=0\in\C
$
has to be satisfied for all $\mu\in\Z$, it has to be satisfied for 
$\mu\in\{0,...,n-1\}$. This way we obtain a system of $n$ linear equations
with the coefficient matrix
\[
\left(\begin{array}{cccc}
1&1&\ldots&1\\
g_1&g_2&\ldots&g_n\\
\vdots&&&\vdots\\
g_1^{n-1}&g_2^{n-1}&\ldots&g_n^{n-1}\\
\end{array}\right).
\]
The (Vandermonde) determinant of this matrix is
$\prod_{n\geq j>l\geq 1}(g_j-g_l)$. It is
 non-zero because all the $g_j$s are pairwise different.
This proves that the linear system has only the zero solution, whence
$\sum_{g\in I}
\lambda_g g=0\in\C U(1)$. Thus we can conclude that the action of $\C U(1)$
on $\C[z,z^{-1}]$ is Galois. Therefore, according to Theorem~\ref{2:maina},
the group algebra $\C U(1)$ can be viewed as a subalgebra of the convolution
algebra $(\C[z,z^{-1}]^*)^{op}\cong\map(\Z,\C)$. The injective homomorphism
is given by the formula
\[
\C U(1)\ni\!\!\!\sum_{g\in I\subseteq U(1)}\!\!\!\lambda_g g\to f\in\map(\Z,\C),~~~
f(\mu)=\!\!\!\sum_{g\in I\subseteq U(1)}\!\!\!\lambda_g g^\mu.
\]
\end{Ex}
\note{
One can also consider a possibly more general  version of Galois actions.
(Translation map with coefficients $\tau:V\ra P\ot_{P^A}P\ot A$.)
Let $P$ be an algebra and a right $A$-module. We say that
the action of $A$ on $P$ is almost Galois if there exists a left $P$-linear 
injection $\widetilde{\alpha}: P\ot_{P^A}P\ot A\ra P\ot A^*\ot A$ satisfying
1) $(\id \ot\<\; ,\;\>)\ci\widetilde{\alpha}=m_P\ci(\id \ot_{P^A}\triangleleft)$,
where $\<\; ,\;\>: A^*\ot A\ra k$ is the canonical pairing.
2) There exists a subspace $V\inc A^*$ such that
~~~a) $V(a)=0\Rightarrow a=0$,
~~~b) $\widetilde{\alpha}(P\ot_{P^A}P\ot A)=P\ot V\ot A$.}

\subsection{The (co)translation map}\label{2:subs.trans}

Apart from its  value as a technical tool, the translation map has a
very nice geometric interpretation. In classical geometry a principal
bundle can also be defined in the following way
(cf.\ \cite[Section 4.2]{h-d94}). Consider a topological space
 $X$ with a free
action of a topological group $G$.
The freeness of action means that $x\cdot g =
x$ for $x\in X$, $g\in G$ implies that $g=1$. 
It guarantees that there is a function $\hat{\tau}: X\times_{X/G} X\ra G$
determined by the relation $x\cdot \hat{\tau}(x,x') = x'$. The function
$\hat{\tau}$ is called a {\em translation function}. One then says that $X$ is
a principal $G$-bundle over $X/G$ provided the translation function is continuous and $X\times_{X/G} X$ is closed in $X\times X$. These two conditions are equivalent to the action being
proper. Dualising the notions of a translation function
$\hat{\tau}$ and of a free action
one arrives at the translation map in Definition~\ref{2:transl.def}.

\subsubsection{Coalgebra extensions}

In the studies of coalgebra-Galois extensions an important role is
played by the notion of a translation map.
\begin{Def}
\label{2:transl.def}
For a coalgebra-Galois  $C$-extension $B\inc P$, the map
$$\tau: C \longrightarrow P \otimes _B P \ , \  c \longmapsto \can^{-1}(1 \otimes c),$$
is called a {\em translation map}.
For each $c \in C$, the image $\tau(c)$ is denoted by
$\tau(c):=c^{[1]} \otimes_B c^{[2]}$ (summation understood).
\end{Def}

\begin{Lem}[\sc Translation Map Lemma] \label{2:translation}
Let $C$-extension $B\inc P$ be a coalgebra-Galois extension. 
 For all $c \in C$ and $p \in P$, the translation map $\tau$
has the following properties:
\begin{rlist}
\item $c^{[1]}{c^{[2]}}_{(0)} \otimes {c^{[2]}}_{(1)} = 1 \otimes c$;
\item $c^{[1]}c^{[2]}=\varepsilon(c)1$;
\item $p_{(0)} {p_{(1)}}^{[1]} \ten{B} {p_{(1)}}^{[2]}= 1 \ten{B} p$;
\item $c^{[1]} \ten{B} {c^{[2]}}_{(0)} \otimes {c^{[2]}}_{(1)} = {c\sw
1}^{[1]}
 \ten{B} {c\sw 1}^{[2]} \otimes c\sw 2$;
 \item $c^{[1]} \ten{B} 1 \ten{B} {c^{[2]}} = {c\sw
1}^{[1]}
 \ten{B} {c\sw 1}^{[2]} c\sw 2\su 1\ten{B} c\sw 2\su 2$;
 \item Gauge invariance: for any  algebra and right $C$-comodule map $P\stackrel{F}{\ra} P$, 
$(F\ot_BF)\ci\tau=\tau$;
\item In the case of an $e$-coaugmented coalgebra-Galois extension $B\inc P$: 
$\  e^{[1]} \otimes_B e^{[2]}=
 1 \otimes_B 1$.
\end{rlist}
\end{Lem}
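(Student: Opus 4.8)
The whole lemma rests on three structural facts already available: the canonical map $can$ is bijective and left $P$-linear (Definition~\ref{2:def.cgalois}), so by construction $can(\tau(c))=1\otimes c$ and $can^{-1}$ is again left $P$-linear; and the coaction $\Delta_P$ is coassociative, counital, and left $B$-linear. My plan is to read off (i)--(iii) and (vii) directly from these facts, to derive the coassociativity-type identities (iv) and (v) by applying a suitable injective extension of $can$ and feeding in (i), and to treat the gauge invariance (vi) by an injectivity argument for $can$.

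First, (i) is nothing but the equation $can(\tau(c))=1\otimes c$ spelled out, since $can(c^{[1]}\otimes_Bc^{[2]})=c^{[1]}\Delta_P(c^{[2]})=c^{[1]}{c^{[2]}}_{(0)}\otimes{c^{[2]}}_{(1)}$. Applying $\id_P\otimes\varepsilon_C$ to (i) and using the counit law ${c^{[2]}}_{(0)}\varepsilon({c^{[2]}}_{(1)})=c^{[2]}$ gives (ii). For (iii) I would use left $P$-linearity of $can^{-1}$ to compute $p_{(0)}{p_{(1)}}^{[1]}\otimes_B{p_{(1)}}^{[2]}=p_{(0)}\,\tau(p_{(1)})=can^{-1}(p_{(0)}\otimes p_{(1)})=can^{-1}(\Delta_P(p))=can^{-1}(can(1\otimes_Bp))=1\otimes_Bp$, using $can(1\otimes_Bp)=\Delta_P(p)$. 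Property (vii) is the same one-line argument in reverse: in the coaugmented case $can(1\otimes_B1)=\Delta_P(1)=1\otimes e=can(\tau(e))$, so bijectivity of $can$ forces $e^{[1]}\otimes_Be^{[2]}=1\otimes_B1$.

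The two remaining identities need a little more care. For (iv), whose two sides lie in $P\otimes_BP\otimes C$, I would apply the map $can\otimes\id_C$, injective since $can$ is bijective and we work over a field. Using coassociativity of $\Delta_P$ to expand the iterated coaction and then (i), the left-hand side is sent to $1\otimes c_{(1)}\otimes c_{(2)}$, while (i) applied to $c_{(1)}$ sends the right-hand side to the same element; injectivity yields the claim. For (v), whose sides lie in $P\otimes_BP\otimes_BP$, I would apply $\id_P\otimes_Bcan$, which is well defined and injective because $can$ is a bijective left $B$-linear map, so the functor $P\otimes_B-$ sends it to an isomorphism. The left-hand side maps to $c^{[1]}\otimes_B{c^{[2]}}_{(0)}\otimes{c^{[2]}}_{(1)}$, which by (iv) equals ${c_{(1)}}^{[1]}\otimes_B{c_{(1)}}^{[2]}\otimes c_{(2)}$; the right-hand side maps, after using left $P$-linearity of $can$ and $can(\tau(c_{(2)}))=1\otimes c_{(2)}$, to the same element, so injectivity again finishes the job.

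Finally, for (vi) the preliminary point is that $F\otimes_BF$ must be well defined, which requires $F(B)\subseteq B$; this holds for a unital right $C$-colinear algebra endomorphism (immediately in the coaugmented case, where $\Delta_P(F(b))=(F\otimes\id)\Delta_P(b)=F(b)\otimes e$ shows $F(b)\in B$), and is automatic for the left $B$-linear gauge transformations relevant here, for which $F|_B=\id$. Granting this, I would apply $can$ to $(F\otimes_BF)(\tau(c))$: colinearity gives $\Delta_P(F(c^{[2]}))=F({c^{[2]}}_{(0)})\otimes{c^{[2]}}_{(1)}$, and with multiplicativity and unitality of $F$ this yields $can(F(c^{[1]})\otimes_BF(c^{[2]}))=F(c^{[1]}{c^{[2]}}_{(0)})\otimes{c^{[2]}}_{(1)}=F(1)\otimes c=1\otimes c=can(\tau(c))$, the middle step being (i). Bijectivity of $can$ then gives $(F\otimes_BF)\circ\tau=\tau$. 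The only genuinely delicate point in the whole lemma is this well-definedness of $F\otimes_BF$; everything else amounts to careful bookkeeping of the Sweedler indices.
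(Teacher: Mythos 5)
Your proposal is correct and follows essentially the same route as the paper: (i) is $\can\circ\tau=1\otimes(\cdot)$, (ii) and (vii) follow by applying $\id\otimes\eps$ and by bijectivity of $\can$, (iii) by applying $\can$ (or its inverse) to both sides, (iv) by applying $\can\otimes\id_C$ together with colinearity/coassociativity, (v) by transporting (iv) along $\id\otimes_B\can^{\pm1}$, and (vi) by applying $\can$ and using multiplicativity, colinearity and unitality of $F$. The only substantive difference is that you explicitly justify the well-definedness of $F\otimes_BF$ in (vi) via $F(B)\subseteq B$ — a point the paper's proof passes over in silence — and your argument for it covers the coaugmented and left $B$-linear cases that are actually used later.
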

\begin{proof}
(i)  From
the definition of the translation map it follows that
$$c^{[1]}{c^{[2]}}_{(0)} \otimes {c^{[2]}}_{(1)}=(m \otimes \id)\circ
(\id \otimes \Delta_P)\circ \, \tau(c)=can \circ \tau(c)=1 \otimes c.$$

(ii) Apply $\id\ot\eps$ to property (i).

(iii) Applying $\can$ to both sides of the equation, on one hand we obtain
$$
\can(p_{(0)}{p_{(1)}}^{[1]} \otimes_B {p_{(1)}}^{[2]})=
p_{(0)} {p_{(1)}}^{[1]} {{p_{(1)}}^{[2]}}_{(0)}
\otimes {{p_{(1)}}^{[2]}}_{(1)}=
p_{(0)} \otimes p_{(1)}
$$ 
by property (i). On the other hand,  $\can(1 \otimes p)=
p_{(0)} \otimes p_{(1)}$. Since $\can$ is a bijection,
both arguments are equal.

(iv) We apply the isomorphism
$can \otimes \id_C$ to both
 sides and  use the fact that the canonical map $\can$ is
right $C$-colinear to obtain
 \begin{eqnarray*}
\can(c^{[1]} \otimes_B {c^{[2]}}_{(0)}) \otimes {c^{[2]}}_{(1)} &=&
\can(c^{[1]} \otimes_B {c^{[2]}})_{(0)}
\otimes \can(c^{[1]} \otimes_B {c^{[2]}})_{(1)}\\
&=&
1 \otimes c\sw 1 \otimes c\sw 2=\can({c\sw 1}^{[1]}
\otimes_B {c\sw 1}^{[2]}) \otimes c\sw 2.
\end{eqnarray*}

(v) Apply $\id\ot_B\can^{-1}$ to equality (iv) and then use property (iii).

(vi) For any $c\in C$ compute
$$
can (F(c\su 1)\ot_B F(c\su 2)) = F(c\su 1)F(c\su 2)\sw 0\ot F(c\su 2)\sw 1 = F(c\su 1c\su 2\sw 0)\ot c\su 2 \sw 1 =1\ot c.
$$
Now apply $\can^{-1}$ to deduce the assertion.

(vii) Since for an $e$-coaugmented coalgebra-Galois extension
$\Delta_P(e) =1\otimes e$, we can use property (iii) to compute
$$
e^{[1]} \otimes_B e^{[2]}=1_{(0)}{1_{(1)}}^{[1]}
\otimes_B {1_{(1)}}^{[2]}=
1 \otimes_B 1,
$$
as required. 
\end{proof}

The properties listed in the Translation Map Lemma are simply
dualisations of the properties of the classical translation function. For
example, Lemma~\ref{2:translation}(ii) corresponds to the classical
property $\hat{\tau}(x,x)=1$, while Lemma~\ref{2:translation}(v)  corresponds to the classical
transitivity property of the translation function,
$\hat{\tau}(x,x')\hat{\tau}(x',x'') = \tau(x,x'')$, for all $x,x',x''\in
X$. The properties of the translation map in the case of a Hopf-Galois
extension were first studied in \cite{s-hj90b}.

As the first application of the Translation Map Lemma we show that two
notions of coaction invariants
 introduced earlier coincide in the case of an
$e$-coaugmented coalgebra-Galois extension.

\begin{Prop}\label{2:prop.ecoin=coin}
$P_e^{coC}=P^{coC}$ for an $e$-coaugmented coalgebra-Galois 
$C$-extension $B\inc P$.
\end{Prop}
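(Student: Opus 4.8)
The plan is to prove the two characterisations of coaction invariants coincide by establishing both inclusions. The inclusion $P^{coC}\subseteq P_e^{coC}$ is already furnished by Lemma~\ref{2:coinvariants1}, since an $e$-coaugmented coalgebra-Galois extension satisfies $\Delta_P(1)=1\otimes e$ by definition. So the entire content of the proof reduces to proving the reverse inclusion $P_e^{coC}\subseteq P^{coC}$, which is where the Galois hypothesis must enter (note that $P_e^{coC}$ is in general not even a subalgebra, so the two sets are genuinely different without the Galois condition).

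For the reverse inclusion I would take an arbitrary $b\in P_e^{coC}$, so that $\Delta_P(b)=b\otimes e$, and aim to show $\Delta_P(bp)=b\,\Delta_P(p)$ for all $p\in P$. The natural tool is the translation map $\tau$ together with property (iii) of the Translation Map Lemma, which expresses the unit of $P\otimes_B P$ via the coaction and translation map: $p_{(0)}\,{p_{(1)}}^{[1]}\otimes_B {p_{(1)}}^{[2]}=1\otimes_B p$. The strategy is to rewrite $\Delta_P(bp)$ using that $b$ is an $e$-coaction invariant, and then use injectivity of the canonical map (equivalently, of $\tau$) to reduce the desired identity to something manifestly true. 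Concretely, I would apply $\can$ or work inside $P\otimes_B P$ and compute $b\otimes_B p$ against $1\otimes_B bp$: using property (iii) applied to $bp$ gives $(bp)_{(0)}\,{(bp)_{(1)}}^{[1]}\otimes_B {(bp)_{(1)}}^{[2]}=1\otimes_B bp$, and I want to show this equals $b\otimes_B p$ after suitable manipulation, invoking $\Delta_P(b)=b\otimes e$ and property (vii), $e^{[1]}\otimes_B e^{[2]}=1\otimes_B 1$.

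A cleaner route, which I expect to be the main line of argument, is to compare $b\otimes_B 1$ and the element $1\otimes_B b$ inside $P\otimes_B P$ and show they coincide, then multiply on the right by $p$. Specifically, from $\Delta_P(b)=b\otimes e$ and property (vii) one computes, via property (iii) applied to $b$, that $1\otimes_B b = b_{(0)}\,{b_{(1)}}^{[1]}\otimes_B {b_{(1)}}^{[2]} = b\,e^{[1]}\otimes_B e^{[2]} = b\,(e^{[1]}\otimes_B e^{[2]}) = b\,(1\otimes_B 1)=b\otimes_B 1$. This key identity $1\otimes_B b = b\otimes_B 1$ in $P\otimes_B P$ is exactly the balancing condition expressing $b\in B=P^{coC}$. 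Once I have $1\otimes_B b=b\otimes_B 1$, I right-multiply by an arbitrary $p$ to get $1\otimes_B bp=b\otimes_B p$ (legitimate since the right tensor factor carries the $P$-module structure), apply $\can$ to both sides, and read off $\Delta_P(bp)=\can(1\otimes_B bp)=\can(b\otimes_B p)=b\,\Delta_P(p)$, which is precisely the condition $b\in P^{coC}$.

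The main obstacle is bookkeeping with the balanced tensor product $P\otimes_B P$: I must be careful that right-multiplication by $p$ is well defined on $P\otimes_B P$ (it acts on the second factor) and that the step $b\,e^{[1]}\otimes_B e^{[2]}=b\,(1\otimes_B 1)$ correctly uses $\tau(e)=e^{[1]}\otimes_B e^{[2]}=1\otimes_B 1$ from property (vii) while keeping the left $P$-action straight. The only genuinely delicate point is verifying that $b\in P_e^{coC}$ indeed yields $b_{(0)}\otimes{b_{(1)}}^{[1]}\otimes_B{b_{(1)}}^{[2]}=b\,e^{[1]}\otimes_B e^{[2]}$, which follows immediately upon substituting $\Delta_P(b)=b\otimes e$; everything else is a routine application of the Translation Map Lemma and the bijectivity of $\can$. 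I therefore expect the proof to be short, with the crux being the identity $1\otimes_B b=b\otimes_B 1$ established through property (vii).
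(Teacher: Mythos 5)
Your proposal is correct and follows essentially the same route as the paper: the paper likewise reduces to the inclusion $P_e^{coC}\subseteq P^{coC}$ and computes $\Delta_P(bp)=b_{(0)}{b_{(1)}}^{[1]}\Delta_P({b_{(1)}}^{[2]}p)=b\,e^{[1]}\Delta_P(e^{[2]}p)=b\,\Delta_P(p)$ using Translation Map Lemma (iii), the hypothesis $\Delta_P(b)=b\otimes e$, and $\tau(e)=1\otimes_B 1$. Your reorganisation via the intermediate identity $1\otimes_B b=b\otimes_B 1$ followed by right multiplication by $p$ and application of $\can$ is the same argument in a slightly different order.
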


\begin{proof}
$P^{coC} \subseteq P_e^{coC}$ is proven in Lemma \ref{2:coinvariants1}.
For the opposite inclusion, let $b \in P_e^{coC}$, i.e.~$b\sw 0 \otimes b\sw
1=b \otimes e$.
Then using properties (iii) and (v) from Lemma \ref{2:translation} we
obtain,  for all 
$p \in P$,
$$
\Delta_P(bp)=b_{(0)} {b_{(1)}}^{[1]}\Delta_P({b_{(1)}}^{[2]}p)=
b e^{[1]} \Delta_P(e^{[2]}p)=b \Delta_P(p).
$$
This shows that $b \in P_e^{coC}$.
\end{proof}

\subsubsection{Algebra coextensions}

Assume that $C\twoheadrightarrow B$ is an algebra-Galois 
$A$-coextension. Then
$cocan$ is a bijection and there exists the 
{\it cotranslation map} $\check{\tau} : C\Box_B C \ra
A$, $\check{\tau} := (\eps\otimes \id_C)\circ cocan^{-1}$. By dualising
properties of the translation map 
(or directly from the definition of $\check{\tau}$),
one can establish the corresponding properties of the cotranslation map.

\begin{Lem}[\sc Cotranslation Map Lemma] \label{2:cotranslation} 
Let $C\twoheadrightarrow B$ be an algebra-Galois 
$A$-coextension. Then the cotranslation 
map $\check{\tau}$ has the following properties, for all $c\ot c'\in C\,\coten{B}\,C$ (summation implicit) and $a\in A$:
\begin{rlist}
\item $\check{\tau}\circ cocan = \eps\ot \id_A$;
\item $\check{\tau}\circ\Delta = \eps$;
\item $\mu_C\circ(\id_C\otimes\check{\tau})\circ(\Delta \,\coten{B}\,\id_C) =
\eps\,\coten{B}\, \id_C$ or explicitly $\mu_C(c\sw 1,\check{\tau}(c\sw 2,c')) = \eps(c)c'$;
\item  $\check{\tau}(c,\mu_C(c',a)) = \check{\tau}(c,c')a$;
\item $\check{\tau}\circ(\id_C\,\coten{B}\,\eps\,\coten{B}\,\id_C) = m_A\circ(\check{\tau}\ot\check{\tau})\circ(\id_C\,\coten{B}\, \Delta\,\coten{B}\, \id_C)$;
\item Gauge invariance: for any right $A$-linear coalgebra map $F:C\ra C$, 
$\check{\tau}\circ (F\Box_B F) = \check{\tau}$;
\item In the case of a $\kappa$-augmented algebra-Galois 
$A$-coextension 
$C\twoheadrightarrow B$, $\kappa\circ\check{\tau} = \eps\Box_B \eps$.
\end{rlist}
\end{Lem}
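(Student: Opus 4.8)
The plan is to obtain all seven identities as dualisations of the Translation Map Lemma (Lemma~\ref{2:translation}), running everything off a single computational engine: the defining formula $\check{\tau}=(\eps\otimes\id_A)\circ cocan^{-1}$ together with the fact that $cocan$, being an isomorphism in ${}^{C}\CM_{A}$ (as recorded after Definition~\ref{2:agc}), is simultaneously left $C$-colinear and right $A$-linear, so its inverse is too. Throughout I would use the elementary identities $cocan(c\otimes a)=c\sw1\otimes\mu_C(c\sw2,a)$ and $(\eps\otimes\id_C)\circ cocan=\mu_C$, the latter from counitality. Properties (i), (iv) and (vi) then fall out at once. For (i), $\check{\tau}\circ cocan=(\eps\otimes\id_A)\circ cocan^{-1}\circ cocan=\eps\otimes\id_A$. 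For (iv), right $A$-linearity of $cocan^{-1}$ gives $cocan^{-1}(c\otimes\mu_C(c',a))=cocan^{-1}(c\otimes c')\cdot a$ (the right action being $\id_C\otimes m_A$ on the $A$-leg), and applying $\eps\otimes\id_A$ yields $\check{\tau}(c\otimes\mu_C(c',a))=\check{\tau}(c\otimes c')\,a$. For (vi), a right $A$-linear coalgebra map $F$ satisfies $cocan\circ(F\otimes\id_A)=(F\coten{B}F)\circ cocan$, hence $cocan^{-1}\circ(F\coten{B}F)=(F\otimes\id_A)\circ cocan^{-1}$; composing with $\eps\otimes\id_A$ and using $\eps\circ F=\eps$ gives $\check{\tau}\circ(F\coten{B}F)=\check{\tau}$.

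The heart of the matter is (iii). Here I would exploit left $C$-colinearity of $cocan^{-1}$: writing it out on $c\otimes c'$ (left coactions $\Delta\otimes\id_A$ on $C\otimes A$ and $\Delta\coten{B}\id_C$ on $C\coten{B}C$) and collapsing the middle leg with $\eps$ produces the reconstruction identity $cocan^{-1}(c\otimes c')=c\sw1\otimes\check{\tau}(c\sw2\otimes c')$. Applying $cocan$ back and then $\eps\otimes\id_C$, and using $(\eps\otimes\id_C)\circ cocan=\mu_C$, gives precisely $\mu_C(c\sw1,\check{\tau}(c\sw2\otimes c'))=\eps(c)c'$. Property (ii) is then the special case of (i) obtained by feeding in $c\otimes 1_A$, since $cocan(c\otimes 1_A)=\Delta(c)$ by unitality of the action, so $\check{\tau}(\Delta(c))=\eps(c)1_A$. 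Property (vii) follows by applying $\eps_C$ to (iii) and invoking the augmentation $\eps_C\circ\mu_C=\eps_C\otimes\kappa$: the left-hand side collapses to $\eps(c\sw1)\kappa(\check{\tau}(c\sw2\otimes c'))=\kappa(\check{\tau}(c\otimes c'))$, while the right-hand side is $\eps(c)\eps(c')$. Finally (v) is formal once (iii) and (iv) are in hand: by (iv), $\check{\tau}(c\otimes c''\sw1)\,\check{\tau}(c''\sw2\otimes c')=\check{\tau}(c\otimes\mu_C(c''\sw1,\check{\tau}(c''\sw2\otimes c')))$, and by (iii) the inner expression $\mu_C(c''\sw1,\check{\tau}(c''\sw2\otimes c'))$ equals $\eps(c'')c'$, so the product collapses to $\eps(c'')\check{\tau}(c\otimes c')$, which is the elementwise form of the stated operator identity.

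The main obstacle is not any single calculation but the cotensor-product bookkeeping underlying all of them. One must track which tensors genuinely lie in $C\coten{B}C$ and in $C\coten{B}C\coten{B}C$, confirm that the structure maps of $C\coten{B}C$ are those recorded after Definition~\ref{2:agc} (left coaction $\Delta\coten{B}\id_C$, right action $\id_C\coten{B}\mu_C$), and verify that the intermediate maps are well defined on the relevant cotensor products — in particular $\id_C\coten{B}\eps\coten{B}\id_C$ and $\id_C\coten{B}\Delta\coten{B}\id_C$ in (v), and the legitimacy of the Sweedler splitting $c\sw1\otimes\check{\tau}(c\sw2\otimes c')$ in (iii). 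Because $cocan$ is a morphism in ${}^{C}\CM_{A}$, its inverse automatically respects these structures, so the colinearity and linearity steps above are valid; the only remaining care is to ensure that each Sweedler leg stays inside the correct cotensor factor. Once this is discharged, every property is a short manipulation dual to the corresponding line of the Translation Map Lemma.
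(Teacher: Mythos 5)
Your proof is correct, and it carries out precisely the ``directly from the definition of $\check{\tau}$'' route that the paper indicates but does not write down: the paper offers no proof of Lemma~\ref{2:cotranslation}, only the remark that the properties follow by dualising the Translation Map Lemma or from the definition. Your key reconstruction identity $cocan^{-1}(c\ot c')=c\sw 1\ot\check{\tau}(c\sw 2\ot c')$, obtained from the left $C$-colinearity of $cocan^{-1}$, is exactly the dual of the formula $\tau=can^{-1}(1\ot\cdot)$ underlying the proof of Lemma~\ref{2:translation}, and the remaining deductions (in particular deriving (v) from (iii) and (iv), and (vii) from (iii) via the augmentation) are all sound.
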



\subsubsection{Algebra extensions}

Let $B\subseteq P$ be an algebra-Galois extension by $A$
 (relative to $V\subseteq A^*$)  as in Definition~\ref{2:def.agalois.act}. 
Then for any $v\in V$ one can find a unique $v\su 1\ot_B v\su 2 \in P\ot_B P$ 
(summation assumed) such that, for any $p\in P$, $Can(v\su 1\ot_B v\su 2) = 
j(p\ot v)$. The assignment $v\mapsto v\su 1\ot_B v\su 2$ defines a map 
$\tau: V\ra P\ot_B P$ which is called a {\em translation map} for an
algebra-Galois $A$-extension $B\subseteq P$. In view of Theorem~\ref{2:maina}, the map $\tau$ can 
be seen to coincide with the translation map of the corresponding coalgebra-Galois
 extension, so that the Translation Map Lemma can be used to derive the following
 properties of $\tau$.
\begin{Lem}\label{2:a-translation}
Let $B\subseteq P$ be an algebra-Galois extension by $A$ 
(relative to $V\subseteq A^*$). Then the translation map 
$\tau: V\ra P\ot_B P$ has the following properties for all 
$a\in A$, $v\in V$ and $p\in P$
\begin{rlist}
\item $v\su 1(v\su 2\act a) = 1_P v(a)$.
\item $v\su 1 v\su 2 = 1_P v(1_A)$.
\item Let $\sum_i p_i\ot v_i = j^{-1} (Can(1_P\ot _B p))$. 
Then $\sum_i p_iv_i\su 1\ot_B v_i\su 2 = 1\ot_B p$.
\item Consider $V$ as a right $A$-module by $(v\cdot a)(a') = v(aa')$. 
Then $v\su 1\ot_B v\su 2\act a = (v\cdot a)\su 1\ot_B (v\cdot a)\su 2$, i.e.,
 $\tau$ is a right $A$-module map.
\item Gauge invariance: for any right $A$-linear algebra map $F:P\ra P$, 
$(F\ot_B F)\circ \tau = \tau$.
\item In the case of a $\kappa$-augmented algebra-Galois extension, 
$\kappa\su 1\ot_B \kappa\su 2 = 1\otimes_B 1$.
\end{rlist}
\end{Lem}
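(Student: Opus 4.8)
The plan is to reduce every assertion to the corresponding identity of the coalgebra-Galois translation map via the Translation Map Lemma~\ref{2:translation}. The crucial preliminary observation is that, under the identification $V=:C\inc A^*$ furnished by Lemma~\ref{2:peter2}, the translation map $\tau$ defined here coincides with the coalgebra-Galois translation map $\tau_{\mathrm{cg}}\colon c\mapsto can^{-1}(1\ot c)$ of Definition~\ref{2:transl.def}. Indeed, recall from Lemma~\ref{2:peter2} that the associated coaction satisfies $p\act a=p\sw 0 p\sw 1(a)$ (so that $j\ci\Delta_P(p)$ evaluated at $a$ returns $p\act a$), that $P^A=P^{coC}$, and that $Can=j\ci can$, where $can\colon P\ot_BP\ra P\ot C$ is the canonical map of the coalgebra-Galois $C$-extension $B\inc P$. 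Since $Can(v\su 1\ot_B v\su 2)=j(1_P\ot v)$ by the defining property of $\tau$ and $j$ is injective, we get $can(v\su 1\ot_B v\su 2)=1\ot v$, i.e.\ $v\su 1\ot_B v\su 2=can^{-1}(1\ot v)=\tau_{\mathrm{cg}}(v)$. Hence each identity below is obtained by pairing the $C$-legs of the appropriate formula in Lemma~\ref{2:translation} with elements $a\in A$, using $p\act a=p\sw 0 p\sw 1(a)$.

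Properties (i)--(iii) follow at once. For (i), evaluate the $C$-leg of Lemma~\ref{2:translation}(i), namely $v\su 1 v\su 2\sw 0\ot v\su 2\sw 1=1\ot v$, at $a$: the left-hand side becomes $v\su 1\bigl(v\su 2\sw 0 v\su 2\sw 1(a)\bigr)=v\su 1(v\su 2\act a)$ and the right-hand side becomes $1_P v(a)$. Property (ii) is Lemma~\ref{2:translation}(ii), $v\su 1 v\su 2=\he_C(v)1_P$, combined with the fact that the counit of $C$ is $v\mapsto v(1_A)$. For (iii), note $Can(1_P\ot_B p)(a)=p\act a=j(\Delta_P(p))(a)$, so $\sum_i p_i\ot v_i=j^{-1}(Can(1_P\ot_B p))=p\sw 0\ot p\sw 1$; hence $\sum_i p_i v_i\su 1\ot_B v_i\su 2=p\sw 0 p\sw 1\su 1\ot_B p\sw 1\su 2=1\ot_B p$ by Lemma~\ref{2:translation}(iii).

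Property (iv) is the delicate point, since it couples the $A$-module structure on $V$ with the coproduct of $C=V$. Pairing the last tensorand of Lemma~\ref{2:translation}(iv), $v\su 1\ot_B v\su 2\sw 0\ot v\su 2\sw 1=v\sw 1\su 1\ot_B v\sw 1\su 2\ot v\sw 2$, with $a$ turns the left-hand side into $v\su 1\ot_B(v\su 2\act a)$ and the right-hand side into $v\sw 1\su 1\ot_B v\sw 1\su 2\,v\sw 2(a)$. It remains to recognise the element $v\sw 1\,v\sw 2(a)\in V$ as $v\cdot a$: for every $a'\in A$, formula~(\ref{2:dv}) gives $\bigl(v\sw 1 v\sw 2(a)\bigr)(a')=v\sw 1(a')\,v\sw 2(a)=v(aa')=(v\cdot a)(a')$. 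Linearity of $\tau$ then identifies the right-hand side with $(v\cdot a)\su 1\ot_B(v\cdot a)\su 2$, which is the claim. The conceptual crux is the identification of $\tau$ with $\tau_{\mathrm{cg}}$ in the first paragraph; after that the only place demanding care is here, where the order reversal built into~(\ref{2:dv}) must be respected.

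Finally, for (v) observe that a right $A$-linear algebra map $F\colon P\ra P$ is automatically right $C$-colinear: applying the injective map $j$ to $\Delta_P(F(p))$ and to $(F\ot\id)(\Delta_P(p))$ yields in both cases the functional $a\mapsto F(p)\act a=F(p\act a)$, whence $\Delta_P\ci F=(F\ot\id)\ci\Delta_P$. As $F$ is also an algebra map, Lemma~\ref{2:translation}(vi) applies and gives $(F\ot_B F)\ci\tau=\tau$. For (vi), a character $\kappa\in V$ is group-like in $C$: it is multiplicative, so $(\Delta\kappa)(a\ot a')=\kappa(a'a)=\kappa(a)\kappa(a')=(\kappa\ot\kappa)(a\ot a')$ by~(\ref{2:dv}), and $\he_C(\kappa)=\kappa(1_A)=1$. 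Moreover the $\kappa$-augmentation condition $1_P\act a=1_P\kappa(a)$ reads, after applying $j^{-1}$, as $\Delta_P(1)=1\ot\kappa$, so $B\inc P$ is $\kappa$-coaugmented. Lemma~\ref{2:translation}(vii) then yields $\kappa\su 1\ot_B\kappa\su 2=1\ot_B 1$.
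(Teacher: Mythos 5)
Your proof is correct and follows exactly the route the paper indicates: reduce everything to the Translation Map Lemma~\ref{2:translation} via the correspondence $Can=j\circ can$ of Theorem~\ref{2:maina} (the paper explicitly leaves the details "to the reader as an exercise"). You have filled in those details accurately, including the two points that genuinely require care — the order reversal in $(\Delta v)(a\ot a')=v(a'a)$ when identifying $v\sw 1\,v\sw 2(a)$ with $v\cdot a$ in (iv), and the verification that right $A$-linearity implies right $C$-colinearity in (v).
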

\begin{proof}
The properties listed in the lemma are simply properties of the translation map in
 Lemma~\ref{2:translation} translated to the algebra-Galois case with the help of
 Theorem~\ref{2:maina}. 
 We leave direct proofs to the reader as an exercise. Note only that (i) is 
simply the definition of a translation map.
\end{proof}

\subsection{Entwining and factorisation}\label{2:subs.ent.fac}

\subsubsection{Entwining}

In the definition of a Hopf-Galois extension one requires that $P$ is a
comodule algebra of a Hopf algebra $H$. Obviously, no such assumption is
possible in the case of a coalgebra-Galois extension, since a priori $C$
does not have any algebra structure.  This might appear to be a problem,
in particular when the differential geometry of coalgebra-Galois
extensions is considered, and some replacement for an algebra structure
on $C$ compatible with an algebra structure on $P$ is needed. Thus the
original point of view taken in \cite{bm98a} was to require a
compatibility between $P$ and $C$ in terms of an {\em entwining}, and
then define the canonical map and require its bijectivity
 within this setting. It has been realised in
\cite{bh99} that given a coalgebra-Galois extension as defined in
Definition~\ref{2:def.cgalois},
 there actually exists a relation between the coalgebra structure of
$C$ and the algebra structure of $P$
provided by an entwining.

\begin{Def} \label{2:ES}
An {\em entwining structure} consists of a triple $\Es$,
where $A$ is an algebra,
$C$ a coalgebra and $\psi: C \otimes A \ra A \otimes C$ a linear map
such that 
the following ``bow-tie''-diagram commutes:\vspace*{1mm}
\[
\xymatrix{
& C\ot  A\ot  A \ar[ddl]_{\psi\otimes \id_A} 
\ar[dr]^{\id_C\otimes m}  &  &             
C\ot  C\ot  A \ar[ddr]^{\id_C\otimes\psi}& \\
& & C\ot  A \ar[ur]^{\Delta\otimes \id_A} 
\ar[dr]^{\eps\ot \id_A}\ar[dd]^{\psi}  & &\\
A\ot C\ot  A \ar[ddr]_{\id_A\otimes\psi} & C \ar[ur]^{\id_C\otimes 
\eta} \ar[dr]_{\eta\otimes \id_C}& &A  & C\ot  A\ot  C 
\ar[ddl]^{\psi\otimes \id_C}\\
& &  A\ot  C \ar[ur]_{\id_A\otimes\eps} \ar[dr]_{\id_A\otimes\Delta} & &\\
& A\ot  A\ot  C \ar[ur]_{m\otimes \id_C} &  & A\ot C\ot  C & ,}
\]
where $m$ is the multiplication and $\eta$ the unit of $A$, and $\Delta$ is comultiplication and $\eps$ the counit of $C$. 
\end{Def}

The notion of an entwining structure was introduced in
\cite{bm98a}. The bow-tie diagram expresses the most natural compatibility
conditions
between algebra and coalgebra structures. The right part of the diagram
states that $\psi$ respects the coproduct (right pentagon) and the
counit (right triangle). The left part of the diagram expresses the
compatibility of $\psi$ with  the product (left pentagon) and the unit
(left triangle). Any algebra and a
coalgebra can be provided with an entwining structure with $\psi$ being
the usual flip of tensor factors (for obvious reasons this can be called
a {\em trivial} entwining structure). There are several extremely nice
properties of an entwining structure. Most importantly, the notion of an
entwining structure is self-dual in the following sense. The structure
described by the bow-tie diagram is invariant under the operation that
involves interchanging $A$ with $C$, $\Delta$ with $m$, $\varepsilon$
with $\eta$, and reversing all the arrows.  Another property is related
to tensor algebras and, consequently to universal differential algebras.
 Consider the tensor
algebra $A^\otimes = \oplus_n A^{\otimes n}$ with the product given by
the concatenation, i.e.,
$$
(a^1\otimes \ldots \otimes a^m)(a^{m+1}\otimes\ldots a^{m+n}) =
a^1\otimes \ldots \otimes a^ma^{m+1}\otimes\ldots a^{m+n}.
$$
Then $\Es$ induces an entwining structure $(A^\otimes, C,\psi{^\otimes})$
with 
$$
\psi^\otimes\mid_{C\otimes A^{\otimes n}} = (\id_A^{\otimes n-1}\otimes
\psi)\circ(\id_A^{\otimes n-2}\otimes
\psi\otimes\id_A)\circ\ldots \circ (\psi\otimes \id_A^{\otimes n-1}).
$$
As an exercise in the self-duality the reader can derive the
corresponding entwining structure for a tensor coalgebra $C^\otimes$.

To describe the action of $\psi$
we  use the following {\it $\alpha$-notation}:
$\psi(c\otimes a) = a_\alpha \otimes c^\alpha$ (summation
over a Greek index understood), for all $a\in
A$, $c\in C$, which proves very useful in concrete computations
involving $\psi$. The reader is advised to check that the bow-tie
diagram is
equivalent to the following four explicit relations:
\begin{eqnarray*}
&&\hspace{-.7cm} \mbox{left pentagon:}\;\; (aa')_\alpha \otimes c^\alpha =
a_\alpha a'_\beta \otimes
c^{\alpha\beta} ,  \\
&&\hspace{-.7cm}\mbox{left triangle:}\;\;
1_{\alpha}\otimes c^\alpha = 1\otimes c,\\
&&\hspace{-.7cm}\mbox{right pentagon:}\;
a_\alpha\otimes c^\alpha\sw 1\otimes c^\alpha\sw 2 =
a_{\beta\alpha}\otimes
c\sw 1^\alpha\otimes c\sw 2^\beta , \\
&&\hspace{-.7cm} \mbox{right triangle:}\; a_\alpha\eps(c^\alpha) =
a\eps(c),
\end{eqnarray*}
for all $a,a'\in A$, $c\in C$.

One may (or perhaps even should) think of an entwining map $\psi$ as a
twist in the convolution algebra ${\rm Hom}(C,A)$. Namely, given an
entwining structure, one can define  the map $*_\psi:
{\rm Hom}(C,A)\otimes{\rm Hom}(C,A)\ra{\rm Hom}(C,A)$ via $(f*_\psi
g)(c) = f(c\sw 2)_\alpha g(c\sw 1^\alpha)$, for all $f,g\in {\rm
Hom}(C,A)$ and $c\in C$. One can easily check that $({\rm
Hom}(C,A),*_\psi)$ is an associative algebra with unit $\eta\circ\eps$.
This algebra is known as a {\em $\psi$-twisted convolution algebra}.

Directly from the definition of an entwining structure one obtains the
following
\begin{Lem} \label{2:lemma.entw.gro}
Let $\Es$ be an entwining structure. If $e\in C$ is a group-like element,
then $A$ is a right $C$-comodule with the coaction
$$
\Delta_A: A\ra A\otimes C, \qquad a\mapsto \psi(e\otimes a).
$$
Dually, if $\kappa: A\ra k$ is a character (i.e., an algebra map), then
$C$ is a right $A$-module with the action $c\cdot a =
a_\alpha\kappa(c^\alpha)$.
\end{Lem}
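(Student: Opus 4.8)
The plan is to treat the two assertions separately, each reducing to the appropriate half of the bow-tie diagram of Definition~\ref{2:ES} as encoded in the four explicit $\alpha$-relations displayed just above. In fact the second assertion is nothing but the first read through the self-duality of entwining structures (interchange $A\leftrightarrow C$, $\Delta\leftrightarrow m$, $\eps\leftrightarrow\eta$, and reverse all arrows), under which a group-like element of $C$ turns into a character of $A$ and a right $C$-comodule turns into a right $A$-module. I would nonetheless give both verifications by hand, since each is short once the relations are in place; the only point needing care is the bookkeeping of iterated applications of $\psi$, read consistently as $a_{\beta\alpha}=(a_\beta)_\alpha$ and $c^{\alpha\beta}=(c^\alpha)^\beta$.

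For the first part, set $\Delta_A(a)=\psi(e\ot a)=a_\alpha\ot e^\alpha$; this is linear $A\ra A\ot C$, so it remains to check counitality and coassociativity. Counitality: applying $\id_A\ot\eps$ gives $a_\alpha\eps(e^\alpha)$, which equals $a\,\eps(e)=a$ by the right triangle relation together with $\eps(e)=1$. For coassociativity I would write out both composites in the $\alpha$-notation: $(\Delta_A\ot\id_C)\circ\Delta_A$ sends $a$ to $(a_\alpha)_\beta\ot e^\beta\ot e^\alpha$, while $(\id_A\ot\Delta)\circ\Delta_A$ sends $a$ to $a_\alpha\ot(e^\alpha)\sw1\ot(e^\alpha)\sw2$. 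Since $e$ is group-like, $\Delta e=e\ot e$, so the right pentagon relation specialises at $c=e$ to $a_\alpha\ot(e^\alpha)\sw1\ot(e^\alpha)\sw2=(a_\beta)_\alpha\ot e^\alpha\ot e^\beta$. After relabelling $\alpha\leftrightarrow\beta$ in the first composite, both sides equal $(a_\beta)_\alpha\ot e^\alpha\ot e^\beta$, which establishes coassociativity. Thus the hypothesis ``$e$ group-like'' is used precisely to collapse the two Sweedler legs of $e^\alpha$ back onto $e\ot e$.

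For the second part I would verify that the well-defined $C$-valued pairing $c\cdot a:=\kappa(a_\alpha)c^\alpha$ (the character applied to the algebra leg of $\psi(c\ot a)$) is a right action. Unitality follows by applying $\kappa\ot\id_C$ to the left triangle relation $1_\alpha\ot c^\alpha=1\ot c$, giving $c\cdot1=\kappa(1)c=c$. Associativity is the exact dual of the coassociativity step: expanding $c\cdot(aa')=\kappa((aa')_\alpha)c^\alpha$ via the left pentagon $(aa')_\alpha\ot c^\alpha=a_\alpha a'_\beta\ot c^{\alpha\beta}$ and using that $\kappa$ is multiplicative yields $\kappa(a_\alpha)\kappa(a'_\beta)c^{\alpha\beta}$, which is exactly $(c\cdot a)\cdot a'$ after one further application of $\psi$. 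Here ``$\kappa$ a character'' is used precisely to split $\kappa(a_\alpha a'_\beta)$ into $\kappa(a_\alpha)\kappa(a'_\beta)$. I expect no genuine obstacle: the whole argument is a sequence of direct substitutions, and the only thing to watch is keeping the doubly-twisted indices aligned with the pentagon conventions, after which each equality is immediate.
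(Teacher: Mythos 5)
Your proof is correct and follows essentially the same route as the paper: coassociativity of $\Delta_A$ comes from the right pentagon specialised at the group-like $e$, counitality from the right triangle, and the module statement is the formal dual (which the paper dispatches by simply invoking the self-duality of entwining structures, while you spell out the left-pentagon/left-triangle computation explicitly). You also rightly read the action as $c\cdot a=\kappa(a_\alpha)c^\alpha$ --- the formula $a_\alpha\kappa(c^\alpha)$ printed in the statement is a typo, since $\kappa$ is defined on $A$ and not on $C$.
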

\begin{proof} 
The fact that $\Delta_A$ is a right $C$-coaction follows from the right
part of the bow-tie diagram in Definition~\ref{2:ES}. In particular,
from the right pentagon and
using the fact that $e$ is a group-like element, we
have
\begin{eqnarray*}
(\Delta_A\otimes \id)\circ \Delta_A(a) &=& \Delta_A(a_\alpha)\otimes
e^\alpha = a_{\alpha\beta} \otimes e^{\beta}\otimes e^\alpha
= a_{\alpha\beta} \otimes e\sw 1^{\beta}\otimes e\sw 2^\alpha\\
& =&
a_\alpha\otimes e^\alpha\sw 1\otimes e^\alpha\sw 2 = (\id\otimes
\Delta)\circ
\Delta_A(a),
\end{eqnarray*}
while the right triangle implies that $a_\alpha\eps(e^\alpha) = a\eps(e)
=a$. 

The second statement follows from the self-duality of the notion of
an entwining structure.
\end{proof}

We now give two generic examples of entwining structures.

\begin{Ex} \label{2:Sentwined}
If $H$ is a bialgebra, then
$\psi: H \otimes H \ra H \otimes H \  , \  h \otimes g \mapsto g\sw 1
\otimes hg\sw 2$
defines an entwining structure with $C = A =H$.
Conversely, given an
algebra and coalgebra $H$
such that the map $\psi$ above is an entwining map,
 $H$ is a bialgebra.
The first statement can be checked by direct computations. Conversely,
suppose  that $H$ is an algebra and a coalgebra and that
$(H,H,\psi)$ is an entwining structure. We need to show that
the coproduct $\Delta$ and the counit $\eps$ of $H$ are algebra
maps. Evaluating the left pentagon identity
at $1\otimes h\otimes h'$ for all $h,h'\in H$, we immediately conclude
that $\Delta$ is a multiplicative map. Also,
evaluating the left triangle at $1\otimes 1$ we obtain $\Delta(1) =
1\otimes 1$. Thus $\Delta$ is an algebra map as required. Furthermore
the right triangle reads in this case $ h\sw 1\eps(h'h\sw 2) =
\eps(h')h$. Thus applying $\eps$ we immediately deduce that the counit
is a multiplicative map, as required.
\end{Ex}

The entwining constructed in Example~\ref{2:Sentwined} is known as a {\em
bialgebra entwining}. It justifies the statement that an entwining
structure is a generalisation of a bialgebra. A bialgebra entwining is a
special case of the following more general example.

\begin{Ex} \label{2:ex.Doi.Koppinen}
Given a bialgebra $H$, let  $C$ be a right $H$-module coalgebra and
$A$ a right $H$-comodule
algebra. Recall that this means that $C$ is a right $H$-module and
the module structure map $\mu_C :C\otimes
H\ra C$ is a coalgebra map, and that $A$ is a right $H$-comodule with
the coaction $\Delta_A: A\ra A\otimes H$ that is an algebra map.
Consider a $k$-linear map
$$
\psi: C \otimes A \ra A \otimes C ,\quad c \otimes a \mapsto a \sw 0
\otimes c \cdot a\sw 1.
$$
Then $\Es$ is an entwining structure.
   This is shown by direct calculations (cf.\ \cite[33.4]{bw03}). 
\end{Ex}
\note{For the proof of the left
   pentagon take $a,a'\in A$ and $c\in C$ and compute
   \begin{eqnarray*}
        (aa')_{\alpha}\otimes c^{\alpha} &=&  (aa')\sw 0\otimes c\cdot
(aa')\sw 
       1 =  a\sw 0a'\sw 0\otimes c\cdot (a\sw 1a'\sw 1)\\
&=&  a\sw 0a'\sw 0\otimes (c\cdot a\sw 1)\cdot a'\sw 1
       =  a_{\alpha}a'\sw 0\otimes c^{\alpha}\cdot a'\sw 1 =
        a_{\alpha}a'_{\beta}\otimes c^{\alpha\beta},
   \end{eqnarray*}
   where we used that the right coaction of $H$ on $A$ is
   an algebra map. Similarly, to prove that the left triangle
   commutes we  compute for all $c\in C$,
   $$
   1_{\alpha}\otimes c^{\alpha} =  1\sw 0\otimes c\cdot 1\sw 1 = 1\otimes c,
   $$
   since the assumption that $A$ is a right $B$-comodule algebra implies
that $\Delta_A(1) = 1\otimes 1$.
To prove the commutativity of the right pentagon we compute
   \begin{eqnarray*}
        a_{\alpha}\otimes c^{\alpha}\sw 1\otimes c^{\alpha}\sw 2 &=&
       a\sw 0\otimes (c\cdot a\sw 1)\sw 1\otimes (c\cdot a\sw 1)\sw 2 \\
       & = &  a\sw 
       0\otimes c\sw 1\cdot a\sw 1\otimes c\sw 2\cdot a\sw 2\\
       &=& a\sw 0\sw 0\otimes c\sw 1\cdot a\sw 0\sw 1\otimes c\sw 2\cdot
    a\sw 1\\
       &=&  a\sw 0_{\alpha}\otimes c\sw 1^{\alpha}\otimes c\sw 2\cdot
a\sw 1\\
       &=&  a_{\beta\alpha}\otimes c\sw 1^{\alpha}\otimes c\sw
       2^{\beta},
   \end{eqnarray*}
   where we used that the right multiplication of $C$ by $H$ is a
   coalgebra map. Finally,
   $$
    a_{\alpha}\eps(c^{\alpha}) = a\sw 0\eps(c\cdot a\sw 1) =
   a\sw 0\eps(c)\eps(a\sw 1) = a\eps(c),
   $$
   since the assumption that $C$ is a right $H$-module coalgebra implies
that $\eps(c\cdot h) = \eps(c)\eps(h)$ for all $c\in C$ and $h\in H$. This
shows that the right triangle commutes, and thus completes the proof of
the example. }

A triple $(H,C,A)$ satisfying conditions of 
Example~\ref{2:ex.Doi.Koppinen}
is known as a (right-right) {\em Doi-Koppinen datum} or
a {\em Doi-Koppinen structure},
the corresponding entwining structure is known as an {\em
entwining structure associated to a Doi-Koppinen datum}.
 The studies of Doi-Koppinen structures where initiated independently
by Doi in \cite{d-y92} and Koppinen in \cite{k-m94}. Doi-Koppinen
data as a separate entity first appeared in \cite{cr95}, and then
they were given a separate name in~\cite{cmz97}.  (Incidentally, they
were 
called Doi-Hopf data.) Various properties and applications
of Doi-Koppinen structures are studied in a  monograph
\cite{cmz02}. Finally, the entwining structure associated to a
Doi-Koppinen datum was first introduced in~\cite{b-t99}.

Clearly, a bialgebra $H$ itself forms a Doi-Koppinen datum $(H,H,H)$ in
which $H$ is viewed as an $H$-comodule algebra via the coproduct and $H$
is an $H$-module coalgebra via the multiplication. Therefore the
bialgebra entwining in Example~\ref{2:Sentwined} is a special case of the
Doi-Koppinen entwining. Several other special cases of the Doi-Koppinen
entwining are of particular interest. Most notably, the {\em relative Hopf
entwining} in which $C=H$, and  the {\em dual-relative Hopf entwining}
in which $A=H$. Another important example of an entwining, which again
is a special case of the Doi-Koppinen entwining, but is best verified
directly, is the {\em Yetter-Drinfeld entwining}. In this case $H$ is a
Hopf algebra, $A=C=H$,
and the entwining map $\psi :H\otimes H\ra H\otimes H$ is given by
$\psi: h'\otimes h \mapsto h\sw 2\otimes (Sh\sw 1)h' h\sw 3$. We
encourage the reader to verify that $\psi$ satisfies the bow-tie
diagram, and to verify that this entwining comes from the
Doi-Koppinen datum $(H^{op}\otimes H, H,H)$ (remember that
the first entry is the
bialgebra), with the following module and
comodule structures. The right multiplication by $H^{op}\otimes H$ is
given by $h\cdot (h'\otimes h'') =
h'hh''$, and the right coaction of $H^{op}\otimes H$ on $H$ is
$\varrho^{H}(h) = \sum h\sw 2\otimes Sh\sw 1\otimes h\sw 3$
(cf.\ \cite{cmz97}).

The $\psi$-twisted convolution algebras corresponding to all those
special cases of entwining structures were also studied. For example
the $\psi$-twisted convolution algebra corresponding to a Doi-Koppinen
datum was introduced in \cite[Definition~2.2]{k-m94}
and is also known as  {\em
Koppinen's smash product}. The $\psi$-twisted convolution algebra
corresponding to the relative-Hopf entwining was studied in
\cite{d-y84} and \cite{k-m92}.

A dual version of Example~\ref{2:ex.Doi.Koppinen}, in which $A$ is a left
$H$-module algebra and $C$ is a left $H$-comodule coalgebra
has been constructed by Schauenburg in \cite{s-p00}, and is termed an
{\em 
alternative Doi-Koppinen datum}. Although
Doi-Koppinen and alternative Doi-Koppinen data provide a rich source of
entwining structures, and one can show that, if either $A$ or $C$ is
finite dimensional, any  $\Es$ is  of an (alternative) Doi-Koppinen
type,
  they do not exhaust all possibilities. An example of
an entwining structure that does not come
from 
Doi-Koppinen data is constructed in \cite[Example~3.4]{s-p00}. This
construction is based on earlier work of Tambara \cite{t-d90} on
factorisations of algebras.

Given an entwining structure $\Es$,
a right $A$-module and right $C$-comodule $M$ with coaction $\varrho^M:M \to M\ot C$
is called an {\em entwined module} if for all $m \in M, a \in A$
$$\varrho^M(m \cdot a)=m\sw 0 \cdot \psi(m\sw 1 \otimes a) = m\sw 0\cdot
a_\alpha\otimes m\sw 1 ^\alpha \ .$$
The category of entwined modules together with right $A$-linear and right
$C$-colinear morphisms is denoted by
 $\Me$. The following example shows
that entwined modules unify and generalise various categories of Hopf
modules studied for the last 40 years.

\begin{Ex}\label{2:ex.entw.mod}
If the entwining structure $\Es$ comes from a Doi-Koppinen datum as
in example 
\ref{2:ex.Doi.Koppinen}, then $\Me=\Mc_A^C(H)$,
the category of Doi-Koppinen modules introduced in \cite{d-y92},
 \cite{k-m94}. 
In particular, Sweedler's Hopf modules \cite{s-me69} correspond to a
bialgebra entwining  in
Example~\ref{2:Sentwined}.
In a similar way, relative Hopf modules introduced in
\cite{t-m72}, \cite{d-y83} are simply entwined modules associated to
the relative Hopf entwining, dual-relative Hopf modules or $[C,A]$-Hopf
modules of \cite{d-y83} correspond to the dual-relative Hopf
entwining.
Finally,    Yetter-Drinfeld or crossed modules (cf.\ \cite{y-dn90},
\cite{rt93}), which play an important role in the representation
theory of quantum groups, are simply entwined modules of a
Yetter-Drinfeld entwining.

For either finite dimensional $A$ or $C$, any category of entwined modules
is equal to a category of (alternative) Doi-Koppinen modules.
\end{Ex}

Various properties of  entwined modules, in particular the
modules described in
Example~\ref{2:ex.entw.mod}, in various conventions are
studied in a monograph
\cite{cmz02}. The theory of entwined modules is extremely rich,
but of course, in these notes we are not able to cover it in any detail.
From the point of view of coalgebra-Galois extensions, and their
interpretation as generalised principal bundles of noncommutative
geometry, the following theorem is of the highest importance.

\begin{Thm} \label{2:canonical}
Let $C$-extension $B\inc P$ be a coalgebra-Galois extension.
Then there exists a unique entwining
structure $\Ep$, such that $P$ becomes an entwined module in $\Mp$ with
the  $P$-module structure given by the multiplication.
\end{Thm}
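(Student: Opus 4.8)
The plan is to construct $\psi$ explicitly from the translation map $\tau$ of Lemma~\ref{2:translation} and then to read off both the entwining axioms and the entwined-module condition from the properties listed there. Concretely, I would set
\[
\psi:C\ot P\lra P\ot C,\qquad \psi(c\ot p):=c^{[1]}(c^{[2]}p)\sw 0\ot(c^{[2]}p)\sw 1=\can(\tau(c)\cdot p),
\]
where $\tau(c)=c^{[1]}\ot_B c^{[2]}$ and $\tau(c)\cdot p=c^{[1]}\ot_B c^{[2]}p$ is the right $P$-action on $P\ot_B P$. Well-definedness is immediate, since $\psi$ is a composite of $\tau$, the right $P$-action, and $\can$, all of which are already defined. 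Uniqueness I would handle first, as it also motivates the formula: if $\psi$ is any entwining map making $P$ an entwined module with coaction $\Delta_P$ and action given by multiplication, then $\Delta_P(pp')=p\sw 0\,\psi(p\sw 1\ot p')$ for all $p,p'$; setting $p=c^{[2]}$, multiplying the first leg by $c^{[1]}$, and collapsing $c^{[1]}(c^{[2]})\sw 0\ot(c^{[2]})\sw 1=1\ot c$ by Lemma~\ref{2:translation}(i) forces $\psi(c\ot p')=c^{[1]}\Delta_P(c^{[2]}p')$, the formula above.

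Next I would verify the entwined-module condition $\Delta_P(pp')=p\sw 0\,\psi(p\sw 1\ot p')$ directly. Starting from Lemma~\ref{2:translation}(iii), namely $p\sw 0{p\sw 1}^{[1]}\ot_B{p\sw 1}^{[2]}=1\ot_B p$, I right-multiply by $p'$ to obtain $p\sw 0{p\sw 1}^{[1]}\ot_B{p\sw 1}^{[2]}p'=1\ot_B pp'$ and then apply $\can$ to both sides; the right-hand side becomes $\Delta_P(pp')$, while the left-hand side becomes $p\sw 0\,\psi(p\sw 1\ot p')$ by the definition of $\psi$. This shows $P\in\Mp$ once $\psi$ is known to be an entwining map, and, crucially, it is proved using only property (iii), so it may be used as an independent input in the verification of the bow-tie diagram.

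I would then check the four relations of the bow-tie diagram (Definition~\ref{2:ES}) in the $\alpha$-notation $\psi(c\ot p)=p_\alpha\ot c^\alpha$. The left triangle $\psi(c\ot 1)=1\ot c$ is exactly Lemma~\ref{2:translation}(i); the right triangle $p_\alpha\eps(c^\alpha)=\eps(c)p$ follows from $(\id\ot\eps)\Delta_P=\id$ together with Lemma~\ref{2:translation}(ii). For the left pentagon I would compute $\psi(c\ot pp')=c^{[1]}\Delta_P(c^{[2]}pp')$ and match it against $(m\ot\id_C)\circ(\id_P\ot\psi)\circ(\psi\ot\id_P)(c\ot p\ot p')$; writing $q=c^{[2]}p$, the latter equals $c^{[1]}q\sw 0{q\sw 1}^{[1]}\Delta_P({q\sw 1}^{[2]}p')$, which collapses via property (iii) and $\can$ to $c^{[1]}\Delta_P(qp')=\psi(c\ot pp')$.

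The main obstacle is the right pentagon, which encodes compatibility of $\psi$ with the coproduct of $C$. Here I would use that $\can$ is right $C$-colinear (Definition~\ref{2:def.cgalois}), so that $(\id\ot\Delta_C)\circ\can=(\can\ot\id_C)\circ(\id_P\ot_B\Delta_P)$; applied to $\tau(c)\cdot p$ this gives $(\id\ot\Delta_C)\psi(c\ot p)=\can\bigl(c^{[1]}\ot_B(c^{[2]}p)\sw 0\bigr)\ot(c^{[2]}p)\sw 1$. I would then expand the coaction on the product by the \emph{already established} entwined-module condition, $\Delta_P(c^{[2]}p)=(c^{[2]})\sw 0\,\psi((c^{[2]})\sw 1\ot p)$, and finally apply Lemma~\ref{2:translation}(iv), $c^{[1]}\ot_B(c^{[2]})\sw 0\ot(c^{[2]})\sw 1={c\sw 1}^{[1]}\ot_B{c\sw 1}^{[2]}\ot c\sw 2$, to replace the translation-map data for $c^{[2]}$ by the coproduct data for $c$. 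After regrouping, the right-hand side becomes $\psi(c\sw 1\ot p_\beta)\ot(c\sw 2)^\beta=p_{\beta\alpha}\ot{c\sw 1}^\alpha\ot{c\sw 2}^\beta$, which is precisely the right pentagon. The delicate point throughout is that $\Delta_P$ is only a coaction and not an algebra map, so the coaction on a product such as $c^{[2]}p$ must be controlled by $\psi$ itself through the entwined-module identity; this is exactly why that identity has to be proved before one attacks the right pentagon.
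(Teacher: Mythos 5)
Your proposal is correct and takes essentially the same route as the paper: the same defining formula $\psi(c\otimes p)=\can(\tau(c)\cdot p)$, the same uniqueness argument via Lemma~\ref{2:translation}(i), and verification of the bow-tie diagram through the Translation Map Lemma, exactly as the paper indicates (deferring the details to \cite{bh99}). You have filled in those deferred details correctly, including the genuinely delicate point that the entwined-module identity must be established from Lemma~\ref{2:translation}(iii) \emph{before} the right pentagon, since $\Delta_P$ of a product can only be controlled through $\psi$ itself.
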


\begin{proof}
For a detailed proof of this fact we refer to \cite{bh99}.
We only remark that the  entwining map in this case is defined by
\[\label{2:canent}
\psi: C \otimes P \ra P \otimes C \ , \  c \otimes p \mapsto
\can(\tau(c) \cdot p),
\]
where $\tau$ is the translation map of $C$-extension $B\inc P$.
The bow-tie conditions from Definition \ref{2:ES} can now be verified using
the Translation Map Lemma~\ref{2:translation}, while the uniqueness
follows from the following simple argument.
Suppose that there is an entwining map $\tilde{\psi}$ such
that $P\in \Mc_P^C(\tilde{\psi})$ with structure maps $m_P$ and
$\Delta_{P}$.  
Then, for all $p\in P$, $c\in C$,
$$ 
\psi(c\otimes p) =  c\su 1(c\su 2 p)\sw 0\otimes (c\su 2 p)\sw 1  =
 c\su 1c\su 2\sw 0 \tilde{\psi}(c\su 2\sw 1\otimes p)
= \tilde{\psi}(c\otimes p),
$$ 
where we used the definition of the translation map to obtain the
last equality. 
\end{proof}

Although in principle there is no relation between the
coalgebra structure of $C$ and the algebra structure of $P$, the
definition of the coalgebra-Galois extension is rigid enough to produce
such a relationship in terms of an entwining. The entwining associated
to the $C$-extension $B\inc P$ in Theorem~\ref{2:canonical} is called the {\em canonical
entwining}. Its existence allows one to discuss symmetry properties
of coalgebra-Galois extensions, and to extend such symmetries to
(universal) differential structures on  the $C$-extension $B\inc P$. This is crucial for
the definition of a connection on  $B\inc P$.

To get a better feeling for canonical entwining structures it is
instructive to consider the following
\begin{Ex}\label{2:ex.can.Hopf}
Let $B\inc P$ be a quotient-coalgebra-Galois $H/I$-extension.
  In this case, using the Translation Map Lemma and the fact that  $H/I$ is a right $H$-module,
 we can explicitly compute the formula (\ref{2:canent}) for the canonical entwining:
\[
\begin{aligned}
\psi(\overline{h}\otimes p) 
&= can(\overline{h}^{[1]}\ot_B \overline{h}^{[2]}p)\\
&= \overline{h}^{[1]}{\overline{h}^{[2]}}\sw0 p\sw0\ot \overline{{\overline{h}^{[2]}}\sw1}\;p\sw1\\
&= p\sw0\ot \overline{h}p\sw1\,.
\end{aligned}
\]
Here $p\sw0\ot p\sw1$ is meant as the result of $H$-coaction on $p$. Observe that whenever the
antipode $S$ of $H$ is bijective, so is the above computed canonical entwining $\psi$.
Indeed, it is straightforward to verify that the formula $\psi^{-1}(p\ot\overline{h})=
\overline{h}S^{-1}(p\sw1)\ot p\sw0$ defines the inverse of~$\psi$.
Note also that in particular, for $I=0$, we get the canonical entwining of a Hopf-Galois extension,
which is a
relative Hopf entwining.
\end{Ex}

Very much as in the previous section, it turns out that every 
algebra-Galois $A$-coextension
is equipped with an entwining structure. More precisely, we have the following
dual version of Theorem~\ref{2:canonical}:
 
\begin{Thm}\label{2:main2}
Let $C$ be an algebra-Galois  $A$-coextension of $B$. Then there exists 
a unique map $\psi :C\otimes A\ra A\otimes C$ entwining $C$ with $A$  
and such that $C\in{\CM}\sp{C}_A(\psi)$ with the structure maps \hD\ and
$\mu_C$. (The map $\psi$  
is called the {\em canonical entwining map} associated to the 
algebra-Galois  
$A$-coextension $C\twoheadrightarrow B$.) 
\end{Thm}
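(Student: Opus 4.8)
The plan is to treat Theorem~\ref{2:main2} as the formal mirror image of Theorem~\ref{2:canonical}, exploiting the self-duality of the notion of an entwining structure recorded after Definition~\ref{2:ES}: interchanging $A\leftrightarrow C$, $m\leftrightarrow\Delta$, $\eta\leftrightarrow\eps$ and reversing all arrows turns the data of a coalgebra-Galois extension into those of an algebra-Galois coextension. Under this mirror the total algebra $P$ becomes the total coalgebra $C$, the structure coalgebra becomes the structure algebra $A$, the canonical map $\can$ becomes $cocan$, the product $\ot_B$ becomes the cotensor product $\Box_B$, and the translation map $\tau=\can^{-1}(1\ot\cdot)$ becomes the cotranslation map $\check{\tau}=(\eps\ot\id)\ci cocan^{-1}$ of Lemma~\ref{2:cotranslation}. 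Accordingly I would \emph{define} $\psi$ as the mirror of the canonical-entwining formula (\ref{2:canent}), i.e.\ as the composite
\[
C\ot A\;\st{cocan}{\lra}\;C\Box_B C\;\st{\id_C\Box_B\Delta}{\lra}\;C\Box_B C\Box_B C\;\st{\check{\tau}\Box_B\id_C}{\lra}\;A\ot C,
\]
where the last arrow applies $\check{\tau}$ to the first two tensor legs. Explicitly,
\[
\psi(c\ot a)=\check{\tau}\bigl(c\sw1\ot(c\sw2\cdot a)\sw1\bigr)\ot(c\sw2\cdot a)\sw2,\qquad c\sw2\cdot a:=\mu_C(c\sw2,a).
\]
Since $cocan$ is bijective (Definition~\ref{2:agc}), $\check{\tau}$ is a genuine map out of $C\Box_B C$ and $\psi$ is well defined.

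For existence I would check that $(A,C,\psi)$ satisfies the bow-tie diagram of Definition~\ref{2:ES} and that $C$, equipped with coaction $\Delta$ and action $\mu_C$, lies in $\CM^C_A(\psi)$, i.e.\ that $\Delta\ci\mu_C=(\mu_C\ot\id_C)\ci(\id_C\ot\psi)\ci(\Delta\ot\id_A)$. Rather than expanding $\psi$ by hand I would verify the four explicit relations ($\alpha$-notation left/right pentagon and triangle) one at a time, each as the mirror of the corresponding relation for the canonical entwining of $B\inc P$ proved in \cite{bh99}: the left pentagon for $\psi$ mirrors the right (coproduct) pentagon, the right triangle mirrors the left triangle, and so on. Every such dual computation is driven by the Cotranslation Map Lemma~\ref{2:cotranslation}; property~(iii), $\mu_C(c\sw1,\check{\tau}(c\sw2,c'))=\eps(c)c'$, takes over the role played in the extension case by Translation Map Lemma~\ref{2:translation}(i), $c^{[1]}{c^{[2]}}\sw0\ot{c^{[2]}}\sw1=1\ot c$, while property~(iv), $\check{\tau}(c,\mu_C(c',a))=\check{\tau}(c,c')a$, supplies the needed $A$-linearity. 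The entwined-module compatibility itself reads off the definition of $\psi$ once one invokes the left $B$-colinearity of $\mu_C$ from Lemma~\ref{2:coa}.

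Uniqueness I would obtain by dualising the closing argument of Theorem~\ref{2:canonical}. If $\tilde{\psi}$ is any entwining making $C\in\CM^C_A(\tilde{\psi})$ with structure maps $\Delta$ and $\mu_C$, then substituting the entwined-module axiom for $\tilde{\psi}$ into $\Delta(c\sw2\cdot a)$ in the formula for $\psi$ and then applying Lemma~\ref{2:cotranslation}(iv) and~(ii) ($\check{\tau}\ci\Delta=\eps$) collapses $\psi$ onto $\tilde{\psi}$, exactly mirroring how Lemma~\ref{2:translation}(i) forced $\psi=\tilde{\psi}$ on the extension side.

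The step I expect to be the main obstacle is not the bookkeeping of the dualisation but the well-behavedness of the cotensor product. Unlike $\ot_B$, the functor $\Box_B$ is only left exact, so the intermediate object $C\Box_B C\Box_B C$ and the maps $\id_C\Box_B\Delta$ and $\check{\tau}\Box_B\id_C$ require honest justification that they are defined and compose as claimed; the mirror of a step that tacitly used right-exactness of $\ot_B$ need not survive. I would meet this by leaning on the bijectivity of $cocan$ throughout (it is precisely what makes $\check{\tau}$ and hence $\psi$ well defined) and, wherever the extension-side argument used an exactness property of $\ot_B$, by replacing the formal duality slogan with an explicit Sweedler-notation verification on elements.
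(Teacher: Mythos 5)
Your proposal is correct and follows essentially the same route as the paper: the map you define is exactly the paper's formula $\psi=(\check{\tau}\otimes C)\circ(C\otimes\Delta)\circ cocan$ from equation (\ref{2:psi2}), and the paper likewise treats the theorem as the formal dual of Theorem~\ref{2:canonical}, deferring the bow-tie verification to the dualised computation driven by the Cotranslation Map Lemma~\ref{2:cotranslation}. Your uniqueness argument via the entwined-module axiom together with Lemma~\ref{2:cotranslation}(iv) and (ii) is precisely the mirror of the uniqueness step in Theorem~\ref{2:canonical}, and your caution about $\Box_B$ versus $\otimes$ is well placed but harmless here, since the paper's composite uses plain tensor products after $cocan$.
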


Using the cotranslation map one defines a map $\psi :C\otimes A\ra A\otimes
C$ by
\begin{eqnarray}
&&\psi =
(\check{\tau}\otimes C)\circ(C\otimes\Delta)\circ cocan,\nonumber\\
&&\psi(c\otimes a ) =  \check{\tau}(c\sw 1,\mu_C(c\sw 2,a)\sw
1)\otimes\mu_C(c\sw 2,a)\sw 2\,\label{2:psi2} .
\end{eqnarray}

\subsubsection{Factorisation}

Given two algebras $A,P$, one can study all possible algebra structures
 on the tensor product
$A \otimes P$ with unit $1 \otimes 1$ and
with the property that the multiplication becomes an
$(A,P)$-bimodule map. It turns out (cf.\ \cite{t-d90},
\cite[pp.\ 299-300]{m-s95},
\cite{csv95}) that all such algebra structures
are in one-to-one correspondence with maps
$\Psi: P \otimes A \ra A \otimes P$ such that
\[ \Psi\circ (\mu_P\ut\id_A)=(\id_A\ut\mu_P)\circ (\Psi\otimes\id_P)\circ
(\id_P\ut\Psi),
\quad \Psi(1\ut a)=a\ut 1,
\quad\forall a\in A\]
\[ \Psi\circ
(\id_P\otimes\mu_A)=(\mu_A\otimes\id_P)\circ(\id_A\ut\Psi)\circ
(\Psi\ut\id_A) ,
\quad \Psi(p\otimes 1)=1\otimes p,
\quad\forall p\in P,\]
where $\mu_A$ is the product in $A$ and $\mu_P$ is the product in $P$.
A triple $(P,A,\Psi)$ is known as  a
{\em factorisation structure}.
For example, every braided tensor product of algebras gives rise to a factorisation (given by braiding). Furthermore, crossed product algebras including bicrossproducts \cite{m-s90} correspond to factorisations. 

There is a close relationship between factorisations and entwining
structures.  Let $\Ep$ be an entwining structure with a
finite-dimensional coalgebra $C$,
and let $A=(C^*)^{op}$, i.e., the dual algebra with multiplication given by
$(a  a^{\prime})(c)=a(c\sw 2)a^{\prime}(c\sw 1)$,
 for $a, a^{\prime} \in A$, $c\in C$. 
Define a map $\Psi: P \otimes A \ra A \otimes P \ ,
\ p \otimes a \mapsto \sum_i a_i \otimes p^i$, where
$\sum_i a_i(c)p^i=p_{\alpha}a(c^{\alpha})$ for all $c \in C$.
Then $\Psi$ is a factorisation, that is $A \otimes P$ has an
algebra structure 
given by $(a \otimes p)(a^{\prime} \otimes p^{\prime})=
\sum_i a a^{\prime}_i \otimes p^i p^{\prime}$.

The opposite statement holds for a factorisation
$\Psi$ together with a finite
dimensional algebra~$A$. Then one can construct an entwining structure for the  algebra $P$ and the coalgebra $C:=A^*$.

This relationship between entwining structures and factorisations as
well as the existence  of a canonical entwining structure associated to
a coalgebra-Galois extension allows one to develop the coalgebra-Galois
theory on a purely algebraic (not coalgebraic) level, in terms of
suitable factorisations. This point of view is taken and such a theory
is developed in \cite{bm00} (see Section~\ref{aext}).

Finally, we would like to remark that before the factorisations and
entwining structures appeared in the current setup, similar
structures were studied in category theory. In category theory in place
of algebras one uses monads and in place of coalgebras one uses
comonads. A structure corresponding to factorisation involves two
monads and is known as {\em distributive law} \cite{b-j69}
\cite{bb69}, while the
structure corresponding to an entwining involves a monad and a comonad
and is known as a {\em mixed distributive law} \cite{v-dh73}.

\subsection{Principal extensions}\label{2:subs.princ}

The concept of a faithfully flat Hopf-Galois extension with a 
bijective antipode 
is a cornerstone of Hopf-Galois theory. 
The following notion of a {\em principal extension} generalises this 
key concept 
in a way that 
it encompasses interesting examples escaping Hopf-Galois theory, 
yet still enjoys a number of crucial properties of the aforementioned 
class of 
Hopf-Galois 
extensions. It is an elaboration of the Galois-type extension 
\cite[Definition~2.3]{bh99} 
which evolved from 
\cite[p.182]{s-hj90a}, \cite{bm98a} and other papers. 
\\ 
\begin{Def} 
\label{2:def.principal} 
An $e$-coaugmented coalgebra-Galois $C$-extension  $B\inc P$ is said to be  {\em principal} 
 if 
\begin{rlist}
\item  the canonical entwining map  is bijective,
\item 
 $P$ is $C$-equivariantly projective as a left $B$-module.
\end{rlist}
\end{Def} 
%

 

\begin{Lem}  
Let $(A,C,{\psi})$ be an entwining structure such that 
$\psi$ is bijective. 
Assume also that there exists a 
group-like element $e\in C$ such that $A$ is a right $C$-comodule via 
$\psi\ci(e\ot\id)$ 
and a left $C$-comodule via 
$\psi^{-1}\ci(\id\ot e)$. Then 
 $A$ is coflat as a right (resp.\ left) $C$-comodule if and only if 
 there exists 
    $j_{R}\in \Rhom C CA$ (resp.\  $j_{L}\in \Lhom C CA$) such that 
    $j_{R}(e)=1$ 
(resp.\  $j_{L}(e)=1$). 
(Here $C$ is a $C$-comodule via the coproduct.) 
\label{2:prop.jrl} 
\end{Lem}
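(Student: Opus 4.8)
The plan is to reduce the statement to a splitting property of the coaction and then to realise $j_R$ as that splitting. Throughout I use the standard coring input (available in the framework recalled later in the text): over a field, coflatness, injectivity and relative $(C,k)$-injectivity of a comodule coincide, and a right $C$-comodule $A$ is relatively injective exactly when its coaction $\varrho^A\colon A\to A\otimes C$ admits a right $C$-colinear retraction, where $A\otimes C$ carries the cofree coaction $\id_A\otimes\Delta$. By the self-duality of entwining structures (interchange $A\leftrightarrow C$, $m\leftrightarrow\Delta$, $\eta\leftrightarrow\eps$, and replace $\psi$ by $\psi^{-1}$) the left-handed statement about $j_L$ follows from the right-handed one, so I treat only $j_R$. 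The hypotheses provide precisely the two comodule structures on $A$: the right coaction $\varrho^A(a)=\psi(e\otimes a)$ from Lemma~\ref{2:lemma.entw.gro}, and the left coaction ${}^A\varrho(a)=\psi^{-1}(a\otimes e)$, the latter available because $\psi$ is bijective.

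For the direction ``coflat $\Rightarrow j_R$'' I first record two colinear maps built from the group-like element. The map $\lambda\colon k\to C$, $1\mapsto e$, is a right $C$-colinear monomorphism, since $\Delta e=e\otimes e$ and $\eps(e)=1$ make $k$ a right $C$-comodule via $e$ and force $\lambda$ to intertwine the coactions; it is injective because $e\neq 0$. The map $u\colon k\to A$, $1\mapsto 1_A$, is right $C$-colinear because $\varrho^A(1_A)=\psi(e\otimes 1_A)=1_A\otimes e$ by the left triangle $1_\alpha\otimes c^\alpha=1\otimes c$ of the bow-tie. Since $A$ is coflat, hence injective as a right $C$-comodule, I may extend $u$ along the colinear monomorphism $\lambda$ to a right $C$-colinear map $j_R\colon C\to A$ with $j_R\circ\lambda=u$; that is, $j_R\in\Rhom C C A$ and $j_R(e)=1$.

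For the converse ``$j_R\Rightarrow$ coflat'' I produce from $j_R$ a right $C$-colinear retraction of $\varrho^A$, which gives relative injectivity and hence coflatness. The retraction is the entwined analogue of Doi's total-integral splitting $a\otimes c\mapsto a\sw0\,\phi(S(a\sw1)c)$ of Hopf--Galois theory: the left coaction ${}^A\varrho(a)=a\sem1\otimes a\se0$ coming from $\psi^{-1}$ plays the role of the ``inverse coaction'' $a\mapsto S(a\sw1)\otimes a\sw0$, the product of $A$ replaces that of the comodule algebra, and $j_R$ replaces the total integral $\phi$. One then verifies that the resulting map $r\colon A\otimes C\to A$ satisfies $r\circ\varrho^A=\id_A$, the normalisation being exactly $j_R(e)=1$ together with $\varrho^A(1_A)=1_A\otimes e$, and that $r$ is right $C$-colinear, using the colinearity of $j_R$, the fact that $A$ is an entwined module for its regular action and $e$-coaction (so product and coaction cooperate through $\psi$), and the pentagon and triangle identities. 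Equivalently, and perhaps more transparently, $j_R$ furnishes a contracting homotopy for the complex obtained by applying $A\,\Box_C-$ to the (exact) cobar resolution of a left comodule $M$, through the natural isomorphism $A\,\Box_C(C\otimes M)\cong A\otimes M$; exactness of $A\,\Box_C-$ is coflatness.

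The delicate point is this converse. Unlike the Hopf case there is neither an antipode nor a product on the coalgebra $C$, so the total-integral retraction cannot be transcribed verbatim. The resolution is precisely to use the bijectivity of $\psi$ to supply the left comodule structure ${}^A\varrho$ as a substitute for ``$S$ applied to the coaction'', and to let the colinearity of $j_R$ and the four bow-tie identities carry out the bookkeeping performed classically by the Hopf algebra axioms. Checking the right $C$-colinearity of $r$ is where essentially all of the computation resides, and it is the main obstacle; once it is in place, the equivalences recalled at the outset close the argument, and the left-handed case is obtained by the stated self-dual symmetry.
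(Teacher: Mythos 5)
The paper states this lemma without proof (the surrounding text only remarks that $j_R$ and $j_L$ generalise Doi's total integrals and points to the Hopf--Galois literature for the analogous coflatness results), so there is no in-text argument to compare yours against; your overall route --- over a field, coflat $=$ injective $=$ relatively injective, then trade a colinear splitting of the coaction for a normalised colinear map $C\to A$ --- is the standard and correct one. The forward implication is complete as written: $1\mapsto e$ is a colinear monomorphism $k\to C$, $1\mapsto 1_A$ is colinear by the left triangle of the bow-tie, and injectivity of $A$ extends the latter along the former.

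The gap is in the converse, and it sits exactly where you locate it. You never exhibit the retraction $r\colon A\otimes C\to A$, and your verbal recipe --- substitute the left coaction ${}^A\varrho(a)=\psi^{-1}(a\otimes e)$ for $a\mapsto S(a\sw 1)\otimes a\sw 0$ in Doi's formula --- does not literally define a map: Doi's formula would then require combining the $C$-leg of ${}^A\varrho(a)$ with $c$ inside $C$, and $C$ carries no product. The repair is to apply $\psi^{-1}$ to $a\otimes c$ itself rather than to $a\otimes e$: take $r:=m_A\circ(j_R\otimes\id)\circ\psi^{-1}$. Then $r(\varrho^A(a))=m_A\bigl((j_R\otimes\id)(\psi^{-1}(\psi(e\otimes a)))\bigr)=j_R(e)\,a=a$, and the right $C$-colinearity of $r$ (for the cofree coaction $\id\otimes\Delta$ on $A\otimes C$) follows from the colinearity of $j_R$, the fact that $A$ with the regular action and the coaction $\psi(e\otimes\,\cdot\,)$ is an entwined module, and the identity $(\Delta\otimes\id)\circ\psi^{-1}=(\id\otimes\psi^{-1})\circ(\psi^{-1}\otimes\id)\circ(\id\otimes\Delta)$ obtained by inverting the right pentagon. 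Since this single formula and its verification constitute essentially all of the hard direction, they must appear for the argument to count as a proof. A minor further point: the symmetry that yields the $j_L$ statement is not the self-duality of entwining structures (which exchanges $A$ with $C$ and turns comodules into modules) but the left--right flip of Remark~\ref{2:symmetric}, under which $\psi^{-1}$ is a left--left entwining map and ${}^A\varrho$ its induced coaction; the conclusion of your reduction is right, but the symmetry you invoke is the wrong one.
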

The comodule maps $j_R$ and $j_L$ are generalisations of total integrals of
Doi. The latter are used in \cite{s-hj90a} to prove coflatness results in the
context of Hopf-Galois extensions. Analogous results can be proven also in
our more general setting. The axioms of a principal extension guarantee that 
$(P,C,\psi)$ is an 
entwining structure satisfying the assumptions of the 
above lemma \cite[Theorem~2.7]{bh99}. Moreover, 
it can be shown that 
maps $j_L$ and 
$j_R$ as in Lemma~\ref{2:prop.jrl} can be constructed for any principal 
$C$-extension,  
and one can prove the 
following \cite{bh04}: 
\begin{Thm}\label{2:thm1} 
   Let $B\inc P$ be a principal $C$-extension. Then: 
\begin{zlist}   
\item  $P$ is a projective left and right $B$-module. 
\item $B$ is a direct summand of $P$ as a left and right $B$-module. 
\item $P$ is a faithfully flat left and right $B$-module. 
\item $P$ is a coflat left and right $C$-comodule. 
\end{zlist}
\label{2:cor.coflat} 
\end{Thm}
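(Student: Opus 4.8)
The plan is to deduce all four assertions from one object, a \emph{strong connection}: a $k$-linear lift $\ell\colon C\to P\ot P$ of the translation map $\tau$ through the projection $\pi\colon P\ot P\to P\ot_B P$ that is colinear for the right coaction $\Delta_P$ and for the left coaction ${}^P\!\varrho:=\psi^{-1}\ci(\id_P\ot e)$ supplied by the bijective entwining, and that is normalised by $\ell(e)=1\ot1$. Writing $\ell(c)=c\stl\ot c\str$, the properties I shall use are $c\stl c\str=\he(c)1$, the right colinearity $c\stl\ot\Delta_P(c\str)=(c\sw1)\stl\ot(c\sw1)\str\ot c\sw2$, and its mirror for ${}^P\!\varrho$. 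Both hypotheses of principality enter precisely in the construction of $\ell$.

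First I would build a merely right-colinear lift. By Lemma~\ref{2:lemma.e.p}, $C$-equivariant projectivity furnishes a left $B$-linear right $C$-colinear section $s\colon P\to B\ot P$ of the product map. Putting $\Lambda(x\ot_B y):=x\,s(y)\in P\ot P$ (well defined by left $B$-linearity of $s$) and $\ell_0:=\Lambda\ci\tau$, one checks at once that $\pi\ci\ell_0=\tau$, and that $\ell_0$ is right $C$-colinear follows from the right colinearity of $s$ combined with Lemma~\ref{2:translation}(iv). The remaining and decisive difficulty is to promote $\ell_0$ to a lift that is \emph{also} left $C$-colinear and normalised at $e$; this is exactly where bijectivity of the canonical entwining is indispensable, for it is what produces the left coaction ${}^P\!\varrho$ and allows one to symmetrise $\ell_0$ into an honest strong connection (as in the hypotheses of Lemma~\ref{2:prop.jrl}). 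I expect this symmetrisation to be the main obstacle.

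Granting $\ell$, the statements fall out quickly. For (4), fix linear functionals $\phi,\phi'\colon P\to k$ with $\phi(1)=\phi'(1)=1$ and set $j_R(c):=\phi(c\stl)c\str$ and $j_L(c):=\phi'(c\str)c\stl$. Right colinearity of $\ell$ makes $j_R$ a morphism in $\Rhom{C}{C}{P}$, the mirror colinearity makes $j_L$ a morphism in $\Lhom{C}{C}{P}$, and $\ell(e)=1\ot1$ gives $j_R(e)=j_L(e)=1$. Since principality guarantees that $(P,C,\psi)$ meets the hypotheses of Lemma~\ref{2:prop.jrl}, the existence of $j_R$ and $j_L$ yields coflatness of $P$ as a right and as a left $C$-comodule. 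As for (1), an equivariantly projective module is projective, so $P$ is a projective left $B$-module; the right-handed statement follows by repeating the construction with the left comodule structure ${}^P\!\varrho$ in place of $\Delta_P$, the bijectivity of $\psi$ making this mirror argument available.

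For (2) I would manufacture $B$-bimodule conditional expectations from the total integrals, taking $E\colon p\mapsto p\sw0\,j_R(p\sw1)$ on one side and its $j_L$-analogue on the other. Left $B$-linearity of $E$ is automatic because $B=P^{coC}$ satisfies $\Delta_P(bp)=b\Delta_P(p)$; the normalisation $j_R(e)=1$ together with $\Delta_P(b)=b\ot e$ gives $E|_B=\id_B$; and that $E$ lands in $P^{coC}=B$ is checked using the entwined-module relation $\Delta_P\ci m_P=(m_P\ot\id_C)\ci(\id_P\ot\psi)\ci(\Delta_P\ot\id_P)$ of Theorem~\ref{2:canonical}. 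Finally (3) is formal: projectivity from (1) makes $P$ flat on both sides, while the left (resp.\ right) $B$-linear retraction of $B\hookrightarrow P$ from (2) exhibits every right (resp.\ left) $B$-module $M$ as a direct summand of $M\ot_B P$ (resp.\ $P\ot_B M$); hence $M\ot_B P=0$ forces $M=0$, and flatness together with this faithfulness criterion is faithful flatness.
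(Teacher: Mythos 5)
Your overall route --- extract a normalised bicolinear strong-connection lifting $\ell$ from the two axioms of principality and then read off $j_R$, $j_L$, the retractions and the splittings --- is exactly the strategy the paper follows (it constructs $j_R,j_L$ as in Lemma~\ref{2:prop.jrl} to obtain (4) and observes that (3) is formal given (1) and (2)); your construction of $\ell_0$, your $j_R$ and $j_L$, and your deduction of (3) are all correct. Two points, however, need attention.

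First, the conditional expectation in (2) is wrong as written: $E(p)=p\sw 0\,j_R(p\sw 1)$ is left $B$-linear and restricts to $\id_B$, but it does not take values in $B$. Test it on the Hopf--Galois extension $k\subseteq H=kG$: the identity $j_R=\id_H$ is a unital right-colinear map $C=H\to P=H$, and $E(g)=g\cdot g=g^2\notin k$; with your particular $j_R(h)=\varphi(S(h\sw 1))h\sw 2$ one gets $E(g)=\varphi(g^{-1})g^2$, no better. The entwined-module relation turns $\Delta_P(E(p))$ into $p\sw 0\,\psi(p\sw 1\otimes j_R(p\sw 2))$, and $j_R$ satisfies no identity forcing this to equal $E(p)\otimes e$. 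The correct retraction contracts the \emph{second} leg of the strong-connection splitting: put $E:=(\id_B\otimes\varphi)\circ s$, where $s(p)=p\sw 0\,\ell(p\sw 1)\in B\otimes P$ and $\varphi(1)=1$. Then $E$ lands in $B$ by construction, is left $B$-linear because $s$ is, and $E(b)=(\id\otimes\varphi)(b\otimes 1)=b$ by unitality of $s$; in the test case above this gives $E(p)=\varphi(p)1$. The mirror map built from ${}_P\Delta$ and the left-colinear half of $\ell$ gives the right $B$-linear retraction.

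Second, the step you defer --- upgrading the merely right-colinear, non-unital $\ell_0$ to a lifting that is simultaneously unital, right colinear and left colinear --- is not a technicality but the substance of the theorem (it is Lemma~\ref{bhmslem}, which the paper itself imports from elsewhere). Everything except ``left $B$-projective'' depends on it: the right-module halves of (1) and (2) need the left-colinear half of $\ell$ together with the identification ${}^{coC}\!P=B$, left coflatness in (4) needs $j_L$, and even $j_R(e)=1$ and $s(1)=1\otimes 1$ require the normalisation, which is not free --- the naive shift $\ell_0\mapsto\ell_0+(1\otimes 1)\varepsilon-\ell_0(e)\varepsilon$ destroys colinearity because $\varepsilon$ is not a comodule map. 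As it stands, therefore, your argument establishes only that $P$ is projective as a left $B$-module.
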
 
In (4), the left $C$-comodule structure of $P$ is given by
$_P\hD(p)=\psi^{-1}(p\ot e)$.
Note that (3) follows from (1) and (2) by standard module-theoretic arguments.

\subsubsection{Extensions by coseparable coalgebras}\label{2:sec.ext.cosep}

It turns out that in many cases of interest, for example, in those in which
$C$ corresponds to a coalgebra structure of a matrix quantum group, the
injectivity of the canonical map implies its bijectivity. Recall that a coalgebra $C$ is said to be {\em coseparable} provided  
the coproduct has a retraction in the category of $C$-bicomodules.
Equivalently, $C$ is a coseparable coalgebra if there exists a {\em
cointegral}, i.e.\ a $k$-linear map $\delta :C\ot C\ra k$ such
that $\delta\circ\Delta = \eps$ and, for all $c,c'\in C$,
\begin{equation}\label{2:coint}
 c\sw 1\delta(c\sw 2\ot c') =  \delta(c\ot c'\sw 1)c'\sw 2.
\end{equation}
 Any cosemisimple coalgebra over an algebraically closed field is coseparable (cf.\ \cite[Proposition~2.5.3]{ss}).

\begin{Thm}[\cite{b-t05}, Theorem~4.6]\label{2:thm.main.princ}
Let  $(P,C)_\psi$ an entwining structure such that the map $\psi$ is bijective. Suppose that $e\in C$ is a group-like element and view $P$ as a right $C$-comodule with the coaction $\Delta_P:P\to P\ot C$, $p\mapsto \psi(e\ot p)$. If $C$ is a coseparable coalgebra and the lifted canonical map
$$
\widetilde{\can}: P\ot P{\longrightarrow} P\ot  C,\quad 
p\ot q\longmapsto p\Delta_P(q),
$$
is surjective, then $P$ is a principal $C$-extension of the 
coaction-invariant subalgebra $B=P^{co C}$.
\end{Thm}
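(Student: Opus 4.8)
The plan is to verify each clause of Definition~\ref{2:def.principal} in turn, extracting the two genuinely nontrivial inputs---bijectivity of the canonical map and $C$-equivariant projectivity---from the coseparability of $C$. First I would dispose of the structural points. The left triangle of the bow-tie diagram gives $\psi(e\ot 1)=1\ot e$, so $\Delta_P(1)=1\ot e$ and the extension is $e$-coaugmented; the left pentagon shows that $\Delta_P=\psi(e\ot\cdot)$ is multiplicative in the $\psi$-twisted sense, i.e.\ $P\in\Mp$, so by Lemma~\ref{2:lemma.entw.gro} the map $\Delta_P$ is a coaction and $B:=P^{coC}$ is a subalgebra over which $\widetilde{can}$ descends to $can:P\ot_BP\ra P\ot C$. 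Since $(\id\ot\eps)\circ\widetilde{can}=m_P$ and $\widetilde{can}=can\circ\pi$, where $\pi:P\ot P\twoheadrightarrow P\ot_BP$ has kernel $P(\hO^1B)P$, the hypothesised surjectivity of $\widetilde{can}$ yields surjectivity of $can$. By Proposition~\ref{2:diagram}, together with $\ker\pi=P(\hO^1B)P$, the extension is coalgebra-Galois exactly when $\ker\widetilde{can}=P(\hO^1B)P$; as the inclusion $\supseteq$ always holds, only the reverse inclusion remains.

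The core of the argument is to prove that reverse inclusion from surjectivity and coseparability. Surjectivity of the $k$-linear map $\widetilde{can}$ lets me choose a $k$-linear lift $\ell_0:C\ra P\ot P$, $\ell_0(c)=c\stl\ot c\str$, of $c\mapsto 1\ot c$, and applying $\id\ot\eps$ already forces the normalisation $c\stl c\str=\eps(c)1$. The associated left $P$-linear section $\widetilde{\Theta}(p\ot c):=p\,c\stl\ot c\str$ of $\widetilde{can}$ identifies $\ker\widetilde{can}$ with $\im(\id-\widetilde{\Theta}\circ\widetilde{can})$, so $\ker\widetilde{can}\inc P(\hO^1B)P$ is equivalent to the translation-type identity $p\sw 0(p\sw 1)\stl\ot_B(p\sw 1)\str=1\ot_B p$ in $P\ot_BP$ (cf.\ Lemma~\ref{2:translation}(iii)). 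To secure it I would improve $\ell_0$ using coseparability: the cointegral $\delta$ provides, for every right comodule $M$, a colinear retraction $m\ot c\mapsto m\sw 0\,\delta(m\sw 1\ot c)$ of the coaction, so every $k$-split comodule surjection is comodule-split. Averaging $\ell_0$ against $\delta$---on the right via $\Delta_P$ and on the left via ${}_P\Delta=\psi^{-1}(\cdot\ot e)$, available because $\psi$ is bijective---upgrades $\ell_0$ to a normalised strong connection $\ell$ colinear for the right coaction on its second leg and the left coaction on its first, still satisfying $\widetilde{can}\circ\ell=1\ot\id_C$. From these bicolinearity and normalisation properties the displayed identity follows by a direct Sweedler computation, so $\Theta(p\ot c):=p\,c\stl\ot_Bc\str$ is a two-sided inverse of $can$ and $B\inc P$ is an $e$-coaugmented coalgebra-Galois $C$-extension.

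It remains to check conditions (i) and (ii). For (i): now that $B\inc P$ is coalgebra-Galois, Theorem~\ref{2:canonical} supplies a \emph{unique} entwining making $P$ an object of $\Mc_P^C$ with structure maps $m_P,\Delta_P$; since $\psi$ already does this, uniqueness forces the canonical entwining to equal $\psi$, which is bijective by hypothesis. For (ii): by Proposition~\ref{2:prop.ecoin=coin} the first leg of $\ell$, being $e$-coinvariant, lies in $P_e^{coC}=P^{coC}=B$, so $s(p):=p\sw 0(p\sw 1)\stl\ot(p\sw 1)\str$ takes values in $B\ot P$ and is left $B$-linear and right $C$-colinear, while normalisation gives $m_P\circ s=\id_P$. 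Thus $s$ is the section required by Lemma~\ref{2:lemma.e.p}, and $P$ is $C$-equivariantly projective as a left $B$-module. Both clauses of Definition~\ref{2:def.principal} therefore hold, and $P$ is a principal $C$-extension of $B=P^{coC}$.

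The main obstacle is the middle step: converting the merely $k$-linear lift $\ell_0$ furnished by surjectivity into a genuinely \emph{bicolinear} normalised strong connection. Right colinearity alone (relative projectivity of $C$) does not force the translation identity, since it leaves open whether $\Theta$ is surjective, i.e.\ whether $\ker\widetilde{can}$ exceeds $P(\hO^1B)P$; one must also average on the left, and the delicate point is to perform the two averagings compatibly so that neither destroys the other and the normalisation survives. This is precisely where both hypotheses are indispensable: coseparability, through the cointegral $\delta$, provides the colinear splittings, while bijectivity of $\psi$ provides the left coaction ${}_P\Delta$ without which the left-hand averaging---and hence the section valued in $B\ot P$---would be unavailable.
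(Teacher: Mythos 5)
Your overall strategy is the same as the one the paper points to for a direct proof (the argument of \cite{bb08} reproduced in Section~\ref{2:sec.cosep}): take a normalised $k$-linear section $\sigma$ of $\widetilde{\can}$ over $1\ot C$ and average it on both sides against the cointegral, using ${}_P\Delta=\psi^{-1}(\cdot\ot e)$ on the left and $\Delta_P$ on the right; this is precisely the lifting $\ell=(\gamma\ot\alpha)\circ(\id\ot\sigma\ot\id)\circ(\Delta\ot\id)\circ\Delta$ of \eqref{2:ell.cosep}. However, your final step contains a concrete error. You claim that the first leg of $\ell$ is $e$-coinvariant, hence lies in $B$, and that therefore $s(p)=p\sw 0\ell(p\sw 1)$ takes values in $B\ot P$. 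Neither half is correct: if $\ell(c)=c\stl\ot c\str$ had $c\stl\in B$ for all $c$, then $\tau(c)=\pi_B(\ell(c))=1\ot_B c\stl c\str=\eps(c)\,1\ot_B 1$, so $\can(\tau(c))=\eps(c)1\ot e$ would have to equal $1\ot c$, forcing $C=ke$; and even granting $c\stl\in B$, the product $p\sw 0(p\sw 1)\stl$ lies in $PB$, not in $B$. What is actually true, and what the argument needs, is that the \emph{assembled} first leg $p\sw 0(p\sw 1)\stl$ of $s(p)$ lies in $B$ after summation. This is not a pointwise statement about $\ell$: one must use the criterion from the proof of Lemma~\ref{2:thm.str.inv} (valid because $\psi$ is injective) that $x\in B\ot P$ if and only if $\Delta_{P\ot P}(x)=(\id\ot\Delta_P)(x)$, and verify it for $s(p)$ using the right colinearity of $\ell$ on one side and the left colinearity of $\ell$ together with the entwined-module property of $P$ on the other.

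Once that computation is supplied, your steps for Galois and for equivariant projectivity collapse into one: $s(P)\inc B\ot P$ together with $m\circ s=\id$ gives $s(p)-1\ot p\in(\hO^1B)P\inc\ker\pi_B$, hence $\pi_B(s(p))=1\ot_B p$, which is simultaneously the translation identity you need for injectivity of $\can$ and the statement that $s$ is the splitting required by Lemma~\ref{2:lemma.e.p}. Note also that the two points you explicitly defer --- that the two cointegral averagings can be performed compatibly without destroying $\widetilde{\can}\circ\ell=1\ot\id_C$, and the ``direct Sweedler computation'' for the translation identity --- are exactly where the substance of the proof lies; the former rests on the identity \eqref{2:key} and the cointegral relation \eqref{2:coint} and is the main calculation of Section~\ref{2:sec.cosep}. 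So the skeleton is right, but as written the proposal does not close the argument.
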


Theorem~\ref{2:thm.main.princ} is proven in \cite{b-t05} as a special case of a
general structure theorem for {\em principal comodules} for a {\em coring} (see 
Theorem~\ref{2:thm.princ.field} below). It is also proven (over a commutative ring) in
\cite[Theorem~5.9]{ss05}. The discussion of this proof goes beyond the scope of these notes. 
On the other hand, a direct proof of Theorem~\ref{2:thm.main.princ} has been recently presented in \cite{bb08}. This uses the explicit form  of a strong connection which we describe in more detail in  Section~\ref{2:sec.cosep}. 

The  example of a noncommutative instanton bundle \cite{bcdt04}
described in Section~\ref{2:qspaces}
is principal. Since this is the situation of a quotient coalgebra extension, the
group-like element in the coalgebra $C=\sO(SU_q(4))/I$ is the image of $1$
under the canonical projection, and the invertibility of the canonical
entwining map follows from the invertibility of the antipode of the Hopf
algebra $\sO(SU_q(4))$ (see Example~\ref{2:ex.can.Hopf}). The Galois property has
been proven in \cite{bcdt04} by brute force. The hard part of this proof was to
verify the injectivity of the canonical map $can$. Since  the coalgebra
$C=\sO(SU_q(4))/I$ is cosemisimple, this part is a direct consequence of
Theorem~\ref{2:thm.main.princ}. The same theorem also gives equivariant projectivity.

\subsubsection{Hopf fibrations over the Podle\'s 
quantum 2-spheres}\label{2:qsphere.coalg}

 In our presentation we
follow \cite{bm00}, where the reader can find
detailed proofs of all the facts quoted.
The total space of the bundle
is given by the Hopf algebra of functions on the quantum group $SL_q(2)$,
$P=\sO(SL_q(2))$. Recall that $\sO(SL_q(2))$ is defined as a polynomial
algebra with unit,
generated by   $\alpha$, $\beta$, $\gamma$ and $\delta$,
with relations 
$\alpha \beta = q \beta \alpha$, $\alpha \gamma = q \gamma \alpha$,
$\beta\gamma= \gamma\beta$, $\alpha \delta =
\delta \alpha - (q-q^{-1})\beta\gamma$, $\gamma \delta = q\delta \gamma$,
$\beta \delta = q \delta \beta$, $\alpha\delta - q\beta\gamma =1$. Here
$q$ is any number which is not a root of unity. In the case of
$k=\mathbb{C}$, the algebra $\sO(SL_q(2))$ can be made into a
$C^*$-algebra $C(SU_q(2))$, provided $q\in (0,1)$.

Coideal subalgebras of $P$ or    (embeddable)
homogeneous spaces of the quantum group $\sO(SL_q(2))$ are known as {\em
quantum} or {\em Podle\'s spheres} and were introduced in
\cite{p-p87}. They are defined as subalgebras $B=S_{q,s}$, where $s\in
k$ is a parameter, 
which are  embedded in $P$
and  generated by 
\begin{eqnarray}
\xi &=& s(\alpha^2 - q^{-1}\beta^2)+(s^2 - 1)q^{-1} \alpha \beta , \nonumber\\
\eta &=& s(q \gamma^2 - \delta^2)+(s^2 - 1) \gamma \delta , \nonumber\\
\zeta &=& s(q \alpha \gamma - \beta \delta) + (s^2 - 1)q \beta \gamma .
\end{eqnarray}
In the case of the $C^*$-algebra $C(SU_q(2))$, $S_{q,s}$ can be made
into $C^*$-subalgebras of  $C(SU_q(2))$, provided $s\in [0,1]$.
The coideal $I = B^+P$ can be computed as
$I=\langle \xi - s , \eta + s , \zeta \rangle P$.
The corresponding quotient coalgebra $C=P/I$ is spanned by
 group-like elements
\begin{equation}
g_0 = \pi_I(1), \qquad g_n = \pi_I( \prod_{k=0}^{n-1} (\alpha + q^k s \beta)),\qquad 
g_{-n} = \pi_I( \prod_{k=0}^{n-1} (\delta + q^{-k} s \gamma)), \label{Hopf.fibr.gen}
\end{equation}
$n = 1,2,\ldots $, where the multiplication increases from  left to right
(e.g., $g_2 = \pi((\alpha +  s \beta)(\alpha + q s
\beta))$, etc.). The coalgebra $C$
can be equipped with  numerous Hopf-algebra structures. For example,
$C$ can be an algebra generated by two elements $Z$ and $Z^{-1}$
 such that
$ZZ^{-1} =1=
Z^{-1} Z$ by setting $Z^n = g_n$ and $Z^{-n} = g_{-n}$. In this way, $C$
can be viewed as an algebra of functions on the circle, i.e. $C =
\sO(S^1)$ (or $C = C(U(1))$ in the $C^*$-algebra case). The constructed
coalgebra-Galois extensions is therefore known as a {\em quantum Hopf
fibration}, since the classical Hopf fibration is a principal bundle
over the two-sphere with the circle as a fibre and the three-sphere
$SU(2)$ as the total space. Note that
 $\pi_I$ 
 is a Hopf-algebra map only if $s=0$. This corresponds to the {\em
standard Podle\'s sphere}.

\subsection{The Galois condition in the setting of corings}
\label{2:subs.coring}

In this section we outline some general properties of entwined modules, view them as a special case of comodules of corings, and then describe a generalisation of Hopf algebras in which the ground field is replaced by a noncommutative algebra. For details we refer to \cite{bw03}. This section has a purely algebraic flavour and can be skipped by a more geometrically oriented reader (mind, however, that 
Example~\ref{2:thm.coring.gauge} is certainly of geometric origin and interest).

\subsubsection{The structure theorems for entwined modules}

Suppose that we have an entwining structure $\Ep$ such that $P$ itself is
a $\Ep$-entwined module. This means in particular that $P$ is a right
$C$-comodule with the structure map $\Delta_P :P\ra P\otimes C$, and
one can consider the subalgebra of
coaction invariants
of $P$, $B = P^{coC}$. In this case the coaction invariants
 have
an alternative description.
\begin{Lem} \label{2:lemma.coinv}
Let $\Ep$ be an entwining structure such that $P$  is
a $\Ep$-entwined module. Then
$$
P^{coC} = \{ b\in P\; |\; \Delta_P(b) = b\Delta_P(1) = b 1\sw 0\otimes
1\sw 1\}.
$$
\end{Lem}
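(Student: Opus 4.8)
The plan is to prove the two inclusions separately; write $S$ for the set on the right-hand side, $S=\{b\in P\mid \Delta_P(b)=b\Delta_P(1)\}$. The inclusion $P^{coC}\subseteq S$ is immediate: if $b\in P^{coC}$, then $\Delta_P(bp)=b\Delta_P(p)$ for every $p\in P$ by the defining property of $P^{coC}$, and specialising to $p=1_P$ yields $\Delta_P(b)=b\Delta_P(1)=b\,1\sw 0\otimes 1\sw 1$, so $b\in S$.

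For the reverse inclusion I would exploit that, by hypothesis, $P$ is an object of $\Mp$ with module action given by its own multiplication. Written in $\alpha$-notation, the entwined-module axiom says $\Delta_P(ma)=m\sw 0\, a_\alpha\otimes m\sw 1^\alpha$ for all $m,a\in P$. First I would introduce, for each fixed $p\in P$, the linear map $\Phi_p\colon P\otimes C\to P\otimes C$ given by $\Phi_p(x\otimes c)=x\cdot\psi(c\otimes p)=x\,p_\alpha\otimes c^\alpha$, where $x$ acts by left multiplication on the $P$-leg of $\psi(c\otimes p)\in P\otimes C$. This formula descends to a well-defined linear map on the tensor product and is manifestly left $P$-linear, since $\Phi_p(b\cdot(x\otimes c))=bx\,p_\alpha\otimes c^\alpha=b\,\Phi_p(x\otimes c)$. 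The entwined-module axiom is then precisely the identity $\Phi_p(\Delta_P(m))=\Delta_P(mp)$.

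Now take $b\in S$, so that $\Delta_P(b)=b\Delta_P(1)$, and apply $\Phi_p$ to both sides of this equation. On the left, the axiom gives $\Phi_p(\Delta_P(b))=\Delta_P(bp)$. On the right, left $P$-linearity of $\Phi_p$ gives $\Phi_p(b\Delta_P(1))=b\,\Phi_p(\Delta_P(1))=b\,\Delta_P(1_P\cdot p)=b\,\Delta_P(p)$, where the penultimate step uses the axiom once more with $m=1_P$ and $a=p$. Hence $\Delta_P(bp)=b\Delta_P(p)$ for all $p\in P$, i.e.\ $b\in P^{coC}$, and the two inclusions together give the claimed equality.

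The computation is short, so I do not anticipate a genuine obstacle; the only point demanding a little care is the bookkeeping around $\Phi_p$ — namely verifying that $x\otimes c\mapsto x\,p_\alpha\otimes c^\alpha$ is a well-defined left $P$-linear endomorphism of $P\otimes C$, so that the relation $\Delta_P(b)=b\Delta_P(1)$ may legitimately be transported through it. Once $\Phi_p$ is in place, both invocations of the entwined-module axiom (on $bp$ and on $1_P\cdot p$) are mechanical.
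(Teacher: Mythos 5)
Your proof is correct and follows essentially the same route as the paper: the forward inclusion by specialising to $p=1$, and the converse by using the entwined-module axiom to rewrite $\Delta_P(bp)$ via $\Delta_P(b)=b\Delta_P(1)$. Your map $\Phi_p$ is just a tidy repackaging of the paper's direct computation $\Delta_P(bp)=b\sw 0 p_\alpha\otimes b\sw 1^\alpha=b1\sw 0p_\alpha\otimes 1\sw 1^\alpha=b\Delta_P(p)$ in $\alpha$-notation.
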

\begin{proof}
Clearly, if $b\in P^{coC}$, then $\Delta_P(b) = b\Delta_P(1)$.
Conversely, if $\Delta_P(b) = b\Delta_P(1)$, then
$$
\Delta_P(bp) = b\sw 0 p_\alpha\otimes b\sw 1^\alpha = b1\sw
0p_\alpha\otimes 1\sw 1^\alpha = b\Delta_P(1p)= b\Delta_P(p)
$$
 for all $p\in P$, as required.
\end{proof}

Note that in an $e$-coaugmented coalgebra-Galois extension,
$\Delta_P(1) = 1\otimes e$. Thus Lemma~\ref{2:lemma.coinv} combined with
Theorem~\ref{2:canonical} imply Proposition~\ref{2:prop.ecoin=coin}, i.e.,
 that in the case of an $e$-coaugmented
coalgebra-Galois $C$-extension both definitions of coaction invariants
 coincide.

Given    an entwining structure $\Ep$ such that $P \in \Mp$, one can
consider two functors between the categories of right
$B=P^{coC}$-modules and the entwined modules. First, there is the {\em
induction functor} $-\otimes_B P : \Mc_B \ra \Mp$, which to each right
$B$-module $N$ assigns a $\Ep$-entwined module $N\otimes_B P$, and to a
right $B$-module morphism $f$, a morphism of entwined modules
$f\otimes_B\id_P$. Here $N\otimes_B P$ is a right $P$-module by
multiplication in $P$, i.e., $(n\otimes_Bp)\cdot p' = n\otimes_B pp'$,
 and a right $C$-comodule with the coaction
$\varrho^{N\otimes_B P} = \id_N\otimes_B \Delta_P$. In the opposite
direction, there is a {\em coaction-invariants functor} $(-)^{coC} :\Mp\ra\Mc_B$
that to each $M\in\Mp$ assigns the right $B$-module
$$
M^{coC} := \{m\in M \;|\; \varrho^M(m) = m\Delta_P(1)\}.
$$
Note that $M^{coC}$ is a right $B$-module by the definition of $B$ as
subalgebra of coaction invariants of $P$. One
can easily show that the coaction-invariants functor is the right adjoint of
the induction functor, i.e., for any $N\in \Mc_B$ and $M\in \Mp$,
there is an isomorphism of vector spaces $\Hom_P^C(N\otimes_B P, M)
\cong \Hom_B(N, M^{coC})$, natural in $M$ and $N$. Here $\Hom_P^C(-, -)$
denotes all right $P$-module right $C$-comodule maps. Perhaps the easiest
way of seeing that this is so is to realise that the coaction invariants
functor can be identified with the $\Hom$-functor, $M^{coC} =
\Hom^C_P(M,P)$, and then use the standard Hom-tensor relations.
Explicitly, the unit  of the adjunction reads, for all $N\in
\Mc_B$, 
$$
\eta_N: N\ra (N\otimes_BP)^{coC}, \qquad n\mapsto n\otimes 1,
$$
while, for all $M\in \Mp$, the counit of the adjunction reads
$$
\sigma_M :M^{coC}\otimes_BP\ra M, \qquad m\otimes p\mapsto
m\cdot p.
$$
The structure theorems for entwined modules deal with the properties of the above
 adjoint pair of functors. The first theorem determines when the coaction-invariants 
functor is 
fully faithful, while the second  theorem determines when the above
adjunction is an equivalence of categories, i.e., when $\eta_N$ and
$\sigma_M$ are  isomorphisms (natural in $M$ and $N$). In this way one
obtains 
a generalised version of Schneider's theorem \cite[3.7 Theorem]{s-hj90a}. 
\begin{Thm}\label{2:thm.str.flat}
Let $\Ep$ be an entwining structure such that $P \in \Mp$, and define
$B:=P^{coC}$. 
Then the following are equivalent:
\begin{blist}
\item $B \subseteq P$ is a coalgebra-Galois $C$-extension, $\psi$ is the canonical
 entwining map associated to  $B\inc P$, and $_B P$ is  flat;
\item the functor $(-)^{coC}: \Mp\ra \Mc_B$ is fully faithful, i.e., for all 
$M\in \Mp$, the counit of adjunction $\sigma_M: M^{coC}\otimes_BP\ra M$ is an 
isomorphism.
\end{blist}
\end{Thm}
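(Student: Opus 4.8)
The plan is to read the statement as the entwined-module incarnation of the Galois coring structure theorem and to prove it through the adjunction $-\otimes_B P\dashv(-)^{coC}$ described above, whose counit is precisely $\sigma_M$. Since $(-)^{coC}$ is left exact (being the Hom-functor $\Hom^C_P(P,-)$, equivalently the kernel of the equaliser below), condition (b) is exactly the assertion that this right adjoint is fully faithful, i.e. that the counit is a natural isomorphism. The two bridges between the module- and comodule-theoretic sides will be: (i) the equaliser presentation of coaction invariants, $M^{coC}=\operatorname{Eq}\!\big(\varrho^M,\; m\mapsto m\Delta_P(1)\big)$, coming from Lemma~\ref{2:lemma.coinv}; and (ii) the Galois isomorphism $\can\colon P\otimes_B P\to P\otimes C$, which for every $M\in\Mp$ induces an isomorphism $\omega_M\colon M\otimes_B P\to M\otimes C$ by applying $\id_M\otimes_P(-)$ to $\can$ after the identification $M\otimes_B P\cong M\otimes_P(P\otimes_B P)$.

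For (a)$\Rightarrow$(b) I would fix $M\in\Mp$ and build an explicit inverse of $\sigma_M$. Set $\beta:=\omega_M^{-1}\circ\varrho^M\colon M\to M\otimes_B P$, so that $\beta(m)=m_{(0)}{m_{(1)}}^{[1]}\otimes_B{m_{(1)}}^{[2]}$. Using only the Galois hypothesis one gets $\sigma_M\circ\beta=\id_M$, since $\sigma_M(\beta(m))=m_{(0)}{m_{(1)}}^{[1]}{m_{(1)}}^{[2]}=m_{(0)}\varepsilon(m_{(1)})=m$ by Lemma~\ref{2:translation}(ii) and counitality of $\varrho^M$. The flatness of $_B P$ enters to guarantee that $-\otimes_B P$ preserves the equaliser defining $M^{coC}$, so that $M^{coC}\otimes_B P$ really is the submodule of $M\otimes_B P$ cut out by $\varrho^M\otimes_B\id_P$ and $(m\mapsto m\Delta_P(1))\otimes_B\id_P$; a short computation with the entwining then shows that $\beta$ lands in this submodule, and that $\beta\circ\sigma_M=\id$ there. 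Indeed, for $m'\in M^{coC}$ one has $\varrho^M(m'p)=m'\Delta_P(p)=m'p_{(0)}\otimes p_{(1)}$, whence $\beta(m'p)=m'\big(p_{(0)}{p_{(1)}}^{[1]}\otimes_B{p_{(1)}}^{[2]}\big)=m'\otimes_B p$ by Lemma~\ref{2:translation}(iii). Thus $\sigma_M$ is bijective for all $M$, which is (b).

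For (b)$\Rightarrow$(a) I would first recover the Galois property by evaluating the counit at the entwined module $M=P\otimes C$ (the Sweedler coring of the entwining): under the standard identification of the coaction invariants of this coring, $\sigma_{P\otimes C}$ corresponds to $\can$, so the bijectivity of $\sigma_{P\otimes C}$ forces $\can$ to be bijective, i.e. $B\inc P$ is coalgebra-Galois. Since $P\in\Mp$ with the given $\psi$, the uniqueness clause of Theorem~\ref{2:canonical} then identifies $\psi$ with the canonical entwining map. It remains to deduce flatness of $_B P$, and this is the step I expect to be the main obstacle: pure adjunction formalism yields only that $(-)^{coC}$, being fully faithful, is faithful and hence reflects monomorphisms, which reduces flatness of $_B P$ to showing that the composite $(-)^{coC}\circ(-\otimes_B P)$ preserves monomorphisms. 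Closing this genuinely requires the Galois isomorphism of the first paragraph to realise coaction invariants as an exact equaliser — exactly the technical core of the coring structure theorems of \cite{bw03}, to which I would appeal. Assembling the three pieces gives the equivalence.
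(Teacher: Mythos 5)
Your architecture is essentially the paper's: both directions run through the adjunction $-\otimes_BP\dashv(-)^{coC}$, the transport of the Galois isomorphism along $\id_M\otimes_P(-)$, and the evaluation of the counit at $M=P\otimes C$. The one presentational difference in (a)$\Rightarrow$(b) is that the paper does not construct $\beta=\omega_M^{-1}\circ\varrho^M$ explicitly; it places $\sigma_M$ as the left vertical arrow of a commutative diagram whose top row is the defining sequence of $M^{coC}$ tensored with $P$ (exact because $_BP$ is flat and $-\otimes_BP$ is right exact) and whose bottom row is the coassociativity/counitality sequence $0\to M\to M\otimes C\to M\otimes C\otimes C$, the other two verticals being the bijections induced by $\can$. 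Exactness of both rows then forces $\sigma_M$ to be an isomorphism. This is the same idea as yours, but it absorbs precisely the step you wave at as ``a short computation with the entwining'', namely that $\beta$ lands in $M^{coC}\otimes_BP$: done by hand via the translation map this needs the identity $c^{[1]}\otimes_B1\otimes_Bc^{[2]}=c_{(1)}^{[1]}\otimes_Bc_{(1)}^{[2]}c_{(2)}^{[1]}\otimes_Bc_{(2)}^{[2]}$ and some care with which legs of $P\otimes_BP\otimes_BP$ admit a well-defined coaction, whereas in the diagram it follows at once from coassociativity of $\varrho^M$ and commutativity of the right-hand square. I would recommend adopting the diagram formulation outright.

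In (b)$\Rightarrow$(a) the Galois part matches the paper (which makes your ``standard identification'' precise via the $(P,B)$-bimodule isomorphism $\phi:P\to(P\otimes C)^{coC}$, $p\mapsto p\Delta_P(1)$, with inverse $\id_P\otimes\eps$, so that $\can=\sigma_{P\otimes C}\circ(\phi\otimes_B\id_P)$). The flatness of $_BP$ is indeed the crux, and here your proposed reduction does not close: full faithfulness of the right adjoint $(-)^{coC}$ makes the \emph{counit} an isomorphism, not the unit, so at this stage one cannot identify $(f\otimes_B\id_P)^{coC}$ with $f$ in order to see that $(-)^{coC}\circ(-\otimes_BP)$ preserves monomorphisms --- that identification is exactly the faithfully flat statement of Theorem~\ref{2:thm.structure}, which is not yet available. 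Your fallback of citing the coring structure theorems of \cite{bw03} is also uncomfortable, since the present theorem is itself an instance of those theorems. The paper closes the step differently and more economically: $\Mp$ is a Grothendieck category in which exactness is detected on underlying vector spaces, and a left adjoint of a fully faithful functor into a module category is exact by the Gabriel--Popescu theorem (cf.\ \cite{f-c73}); hence $-\otimes_BP$ is exact and $_BP$ is flat. That is the ingredient your argument is missing.
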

\begin{proof}
Assume that $B \subseteq P$ is a coalgebra-Galois $C$-extension,
 and that $P$ is a flat left
$B$-module.  For all $M\in\Mp$, one can view $M\otimes C$ as a right
$P$-module (as a $\Ep$-entwined module in fact) with the structure map
$(m\otimes c)\cdot p = m\cdot p_\alpha\otimes c^\alpha$. In particular,
$P\otimes C$ is a right $P$-module, and the reader can check that the
canonical map $can :P\otimes_B P\ra P\otimes C$ is a right $P$-module
map (i.e., it is a $(P,P)$-bimodule bijection). Now consider the following
commuting
diagram of right $C$-comodule right $P$-module maps
\[
\xymatrix{
0 \ar[r]  &  M^{coC}\otimes_BP  \ar[r] \ar[d]_{\sigma_{M}} & M\otimes_BP \ar[r] \ar[d]_{\id_M\otimes_{P}\can} & (M\otimes C)\otimes_B{P} \ar[d]_{(\id_{M}\otimes \id_C)\otimes_{{P}}\can} \\
0 \ar[r] & M  \ar[r]_{\varrho^M} &  M \otimes  C \ar[r]_{\ell_{MC}} & M\otimes C\otimes C.  \\
}
\]

The maps in the top row are the obvious inclusion and
\[
m\otimes p\mapsto
\varrho^M(m)\otimes_Bp - m\cdot \Delta_P(1)\otimes_Bp,
\] 
while 
$\ell_{MC}=\varrho^M\otimes \id_C - \id_M\otimes \Delta$
is the coaction equalising map.
The top row is exact since it is a
defining sequence of $M^{coC}$ tensored with $P$ over $B$, and the
functor 
$-\otimes_BP$ is
exact. The bottom row is exact too. Since the canonical map $\can$ is a
$(P,P)$-bimodule  and  right $C$-comodule, also the maps
$\id_M\otimes_P\can$ and $(\id_M\otimes \id_C)\otimes_P\can$ are right
$C$-comodule right $P$-module bijections. Therefore,
$\sigma_M$
is an isomorphism in $\Mp$, i.e., $(-)^{coC}$ is fully faithful, as required.

Conversely, assume that $(-)^{coC}$ is fully faithful.
Note that $P\otimes C$ is an entwined module with the action $(p\otimes
c)\cdot p' = pp'_\alpha\otimes c$, and the coaction $\varrho^{P\otimes
C} = \id_P\otimes \Delta$. Therefore, there is a corresponding
counit of adjunction
$\sigma_{P\otimes C}:(P\otimes C)^{coC}\otimes_BP\ra P\otimes C$,
and it is bijective. Next consider the map $\phi: P\ra (P\otimes
C)^{coC}$, given by $p\mapsto p\Delta_P(1)$. This map is well defined
since 
$$
(p1\sw{0'}\otimes 1\sw{1'})\cdot 1\sw 0 \otimes 1\sw 1 =
p1\sw{0'}{1\sw 0}_\alpha\otimes 1\sw{1'}^\alpha \otimes 1\sw 1
= p1\sw 0\otimes 1\sw 1\otimes 1\sw 2 = \varrho^{P\otimes C}(p1\sw
0\otimes 1\sw 1).
$$
Here $1\sw{0'}\otimes 1\sw{1'}$ denotes another copy of $\Delta_P(1)$,
and we used the definition of the $P$-action on $P\otimes C$ and the
fact that $P$ is an entwined module. Clearly, $\phi$ is a left
$P$-module map. It is also right $B$-linear since, for all $b\in B$ and
$p\in P$, we have
$$
\phi(p)\cdot b = (p1\sw 0\otimes 1\sw 1)\cdot b = p1\sw 0b_\alpha\otimes
1\sw 1^\alpha = pb\sw 0\otimes b\sw 1 = pb1\sw 0\otimes 1\sw 1 =
\phi(pb).
$$
Here we used that $P$ is an entwined module and the definition of
coaction invariants in $P$. Finally, $\phi$ is an isomorphism of
$(P,B)$-bimodules with
the inverse $\phi^{-1} = \id_P\otimes \eps$. Now take any $p,p'\in P$
and compute
\begin{eqnarray*}
\sigma_{P\otimes C}\circ (\phi\otimes_B\id_P)(p\otimes_B p') &=&
\sigma_{P\otimes C} (p1\sw 0\otimes 1\sw 1\otimes_B p')  =
(p1\sw 0\otimes 1\sw 1)\cdot p'\\
&=& p1\sw 0 p'_\alpha\otimes 1\sw 1^\alpha = pp'\sw 0\otimes p'\sw 1,
\end{eqnarray*}
i.e., $can = \sigma_{P\otimes C}\circ (\phi\otimes_B\id_P)$. Therefore,
the canonical map is a composition of isomorphisms and hence an
isomorphism as required. Thus the extension $B\subseteq P$ is Galois.

To prove that ${}_BP$ is flat one uses the following argument from the category 
theory. 
Note that both kernels and cokernels of
any morphism in $\Mp$ are  $\Ep$-entwined modules, i.e., $\Mp$
is an
Abelian category. In fact, one can prove that $\Mp$ is a Grothendieck category. 
Therefore, any sequence of $\Ep$-entwined module maps is
exact if and only if it is exact as a sequence of additive maps. Thus to prove that 
${}_BP$ is flat suffices it to show that that the functor
 $-\otimes_BP: \Mc_B\ra \Mp$ is exact. Note, however, that $-\otimes_BP$ is a left 
adjoint of a fully faithful functor, and since $\Mp$ is a Grothendieck category, 
the functor $-\otimes_BP$ is exact by the Gabriel-Popescu theorem 
(cf.\ \cite[Theorem~15.26]{f-c73}).
\end{proof}

\begin{Thm}\label{2:thm.structure}
Let $\Ep$ be an entwining structure such that $P \in \Mp$, and define
$B:=P^{coC}$. 
Then the following are equivalent:
\begin{blist}
\item $B \subseteq P$ is a coalgebra-Galois $C$-extension, $\psi$ is the canonical
 entwining map associated to $C$-extension $B\inc P$ and $_B P$ is faithfully flat;
\item the functor $- \otimes_B P: \Mc_B \ra \Mp$ is a category equivalence.
\end{blist}
\end{Thm}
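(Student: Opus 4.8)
The plan is to exploit the adjunction between the induction functor $F := -\ot_B P:\Mc_B\ra\Mp$ and the coaction-invariants functor $G := (-)^{coC}:\Mp\ra\Mc_B$, with unit $\eta_N:N\ra(N\ot_BP)^{coC}$ and counit $\sigma_M:M^{coC}\ot_BP\ra M$, together with Theorem~\ref{2:thm.str.flat}. Recall the general fact that an adjunction is an equivalence precisely when \emph{both} $\eta$ and $\sigma$ are natural isomorphisms, and that Theorem~\ref{2:thm.str.flat} already identifies the condition ``$\sigma_M$ is an isomorphism for all $M$'' with ``$B\inc P$ is a coalgebra-Galois $C$-extension, $\psi$ is the canonical entwining map, and ${}_BP$ is flat''. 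Thus the whole problem reduces to pinning down exactly when, in addition, the unit $\eta$ is an isomorphism, and to recognising this extra condition as faithful flatness.

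For (a)$\Ra$(b) I would first note that faithful flatness implies flatness, so Theorem~\ref{2:thm.str.flat} gives that $\sigma_M$ is an isomorphism for every $M\in\Mp$. The key device is then the triangle identity $\sigma_{FN}\ci F(\eta_N)=\id_{FN}$ of the adjunction, which for $FN=N\ot_BP$ reads $\sigma_{N\ot_BP}\ci(\eta_N\ot_B\id_P)=\id_{N\ot_BP}$. Since $\sigma_{N\ot_BP}$ is invertible, this forces $\eta_N\ot_B\id_P$ to be an isomorphism for every right $B$-module $N$. Now faithful flatness of ${}_BP$ enters decisively: the functor $-\ot_BP$ reflects isomorphisms, so from invertibility of $\eta_N\ot_B\id_P$ I conclude that $\eta_N$ itself is an isomorphism. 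With both $\eta$ and $\sigma$ natural isomorphisms, $F$ is an equivalence.

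For (b)$\Ra$(a) I would use that an equivalence of categories has invertible unit and counit. In particular $\sigma_M$ is an isomorphism for all $M$, so Theorem~\ref{2:thm.str.flat} yields that $B\inc P$ is coalgebra-Galois, that $\psi$ is the canonical entwining map, and that ${}_BP$ is flat. It then remains only to upgrade flatness to faithful flatness. Here I would invoke the fact established in the proof of Theorem~\ref{2:thm.str.flat} that $\Mp$ is a Grothendieck category in which exactness is detected on underlying vector spaces, so that the forgetful functor $\Mp\ra\mathbf{Vect}_k$ is exact and faithful. Composing it with the equivalence $F$, which is itself exact and faithful, shows that $-\ot_BP:\Mc_B\ra\mathbf{Vect}_k$ is exact and faithful; since ${}_BP$ is faithfully flat iff $-\ot_BP$ is exact and faithful (for a flat module, faithfulness of $-\ot_BP$ is exactly the implication $N\ot_BP=0\Ra N=0$), this completes the argument.

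The main obstacle, and the place where the strengthening from flat to faithfully flat is genuinely used, is the step in (a)$\Ra$(b) passing from invertibility of $\eta_N\ot_B\id_P$ to invertibility of $\eta_N$. Flatness alone only guarantees that $-\ot_BP$ \emph{preserves} isomorphisms; it is precisely faithful flatness that allows one to \emph{reflect} them, and this is exactly what separates the present theorem from Theorem~\ref{2:thm.str.flat}. Everything else is formal adjunction bookkeeping once Theorem~\ref{2:thm.str.flat} is taken as input.
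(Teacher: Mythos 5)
Your proposal is correct, but the forward direction takes a genuinely different route from the paper. The paper proves (a)$\Rightarrow$(b) by writing down, for each $N\in \Mc_B$, a commutative diagram comparing the descent (Amitsur-type) sequence $0\to N\to N\otimes_BP\to N\otimes_BP\otimes_BP$ --- exact by faithfully flat descent --- with the defining sequence of $(N\otimes_BP)^{coC}$, the comparison being mediated by $\id_N\otimes_B\can$; this exhibits $\eta_N$ directly as an isomorphism. You instead bypass descent entirely: you feed Theorem~\ref{2:thm.str.flat} into the triangle identity $\sigma_{N\otimes_BP}\circ(\eta_N\otimes_B\id_P)=\id$ to get that $\eta_N\otimes_B\id_P$ is invertible, and then use the fact that a faithfully flat module reflects isomorphisms (kernel and cokernel of $\eta_N$ tensor to zero, hence vanish) to conclude that $\eta_N$ is invertible. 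Your route is slicker and isolates exactly what faithful flatness adds over flatness --- reflection rather than mere preservation of isomorphisms --- at the cost of hiding the descent-theoretic content that the paper's diagram makes explicit (and which motivates the later discussion of descent data and Sweedler corings). The converse direction is essentially the paper's argument: both of you extract Galois-ness and flatness from Theorem~\ref{2:thm.str.flat} via invertibility of the counit, and then upgrade flat to faithfully flat from the fact that an equivalence is exact and faithful; your extra remark that exactness in $\Mp$ is detected on underlying vector spaces correctly justifies the step the paper states more tersely as ``$-\otimes_BP$ reflects exact sequences.''
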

\begin{proof}
Assume that $B \subseteq P$ is a coalgebra-Galois $C$-extension,
 and that $P$ is a faithfully flat left
$B$-module.  For all  $N\in\Mc_B$, consider the following commutative diagram of right
$B$-module maps
\[
\xymatrix{
0 \ar[r] & N \ar[r] \ar[d]_{\eta_N} & N\otimes_BP \ar[r] \ar@{=}[d] & N\otimes_BP\otimes_BP \ar[d]_{\id_N\otimes_B\can}\\
0 \ar[r] & (N\otimes _BP)^{coC} \ar[r] & N\otimes_BP \ar[r] & N\otimes_BP\otimes C.
}
\]

The maps in the top row are: $n\mapsto n\otimes_B 1$ and $n\otimes_B
p\mapsto n\otimes_B p\otimes_B  1-n\otimes_B 1\otimes_B p$, and the top
row is exact by the faithfully flat descent. The bottom row is the
defining sequence of
$(N\otimes_BP)^{coC}$ and hence is exact. This implies that the unit
of adjunction
$\eta_N$ is an isomorphism in $\Mc_B$. Since ${}_BP$ is  flat, also $\sigma_M$ is an 
isomorphism for all $M\in \Mp$ by Theorem~\ref{2:thm.str.flat}. Therefore
$(-)^{coC}$ and $-\otimes_BP$
are inverse
equivalences. 

Conversely, assume that $(-)^{coC}$ and $-\otimes_BP$
are inverse equivalences. Then by Theorem~\ref{2:thm.str.flat}  the extension 
$B\subseteq P$ is Galois, and ${}_BP$ is flat.  Since
 $-\otimes_BP$ is an equivalence, it also reflects exact
sequences. Therefore $P$ is a faithfully flat left $B$-module.
\end{proof}

\subsubsection{Corings and Galois comodules}

This part of the text is devoted to the introduction of an algebraic
structure which helps to understand better properties of entwining
structures and entwined modules. This structure will play no further
role in studies of geometric aspects of coalgebra-Galois extensions
presented below, but it is currently a subject of intensive studies. 
Interested readers are referred to \cite{b-t02} and
to the 
monograph \cite{bw03}.

\begin{Def}
Let $A$ be an algebra. An $(A,A)$-bimodule $\sC$ is called an
{\em $A$-coring}
if there exist $(A,A)$-bimodule maps
$$\Delta_{\sC}: \sC \ra \sC \otimes_A \sC \qquad \text{and}
\qquad \varepsilon_{\sC}:\sC \ra A
$$
such that
$$
(\DC\otimes _A\id_\sC)\circ\DC = (\id_\sC\otimes_A\DC)\circ\DC, \quad
(\eC\otimes _A\id_\sC)\circ\DC = \id_\sC = (\id_\sC\otimes_A\eC)\circ\DC,
$$
i.e,. $\DC$ and $\eC$  satisfy the axioms for a coproduct and counit.
\end{Def}

The term {\em coring} was coined by Sweedler in the context of a semi-dual version of the
Jacobson-Bourbaki theorem \cite{s-me75}. 
In
late seventies,
corings resurfaced under the
name of {\em bimodules over a category with a coalgebra structure}, or
{\em BOCS}'s for short, in the work of Rojter \cite{r-av80} and Kleiner
\cite{k-m81}
 on algorithms for 
matrix problems. Corings play particularly important role in the theory
of ring extensions, and the canonical example comes from such an
extension.

\begin{Ex}\label{2:ex.cor.Swe}
Consider an algebra extension
$B \hookrightarrow A$. Let $\sC:=A \otimes_B A$ with the obvious
$(A,A)$-bimodule structure. Then $\sC$ is an $A$-coring with
the coproduct
$$\Delta_{\sC}: \sC \ra \sC \otimes_A \sC \cong A\otimes_B A\otimes_B A
  , \qquad  a \otimes_B a^{\prime}
\mapsto a \otimes_B  1 \otimes_B a^{\prime},
$$ 
and the counit
$$
\varepsilon_{\sC}:\sC \ra A  ,
\qquad a \otimes_B a^{\prime} \mapsto a a^{\prime}.
$$
This coring is called the {\em canonical coring} associated to an extension
of algebras
 $B\hookrightarrow A$, or simply the {\em Sweedler coring}.
\end{Ex}

\renewcommand{\labelenumi}{\roman{enumi})}
 For a long time, essentially only two types of
examples of corings truly generalising coalgebras were known ---
one associated to a ring extension as in Example~\ref{2:ex.cor.Swe},
the other associated to a matrix problem. The latter example was also
studied in the context of
differential graded algebras. This lack of examples
obviously hampered the progress in general coring theory. At the end of the nineties,
however, M.\ Takeuchi  made a remarkable observation that connects
entwining structures with corings, and thus provides one with a reach,
new source of examples of corings. More precisely we have (cf.\
\cite[Proposition~2.2]{b-t02})

\begin{Thm}[Takeuchi] \label{2:Takeuchi}
Let $\Es$ be an entwining structure.
Then $\sC:=A \otimes C$ becomes an $A$-coring
with the following structure:
\begin{rlist}
\item $a \cdot (a^{\prime} \otimes c)\cdot a^{\prime \prime} =
    a a^{\prime} \psi(c \otimes a^{\prime \prime})$,
\item $\Delta_{\sC}: \sC \ra \sC \otimes_A \sC \ ,
    \  a \otimes c \mapsto a \otimes c\sw 1 \otimes_A 1_A \otimes c\sw 2$,
\item $\varepsilon_{\sC}: \sC \ra A \ , \  a \otimes c \mapsto \varepsilon(c)
a$.
\end{rlist}
Conversely, if $A$ is an algebra, $C$ a coalgebra, and $\sC=A \otimes C$ has
the structure of an $A$-coring, then an entwining map $\psi$ is given by
$\psi: C \otimes A \ra A \otimes C \ , \  c \otimes a \mapsto (1 \otimes c)
\cdot a$.
\end{Thm}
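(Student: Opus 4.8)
The plan is to handle the two directions by matching, one at a time, the four relations encoded in the bow-tie diagram of Definition~\ref{2:ES} (left pentagon, left triangle, right pentagon, right triangle) against the four requirements that make $\sC = A\otimes C$ an $A$-coring: that it carry an $(A,A)$-bimodule structure, that $\Delta_\sC$ and $\varepsilon_\sC$ be bimodule maps, and that these obey coassociativity and the counit axioms. Throughout I write $\psi(c\otimes a)=a_\alpha\otimes c^\alpha$.

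For the forward implication I would first unpack (i). Setting $a''=1_A$ and invoking the left triangle $1_\alpha\otimes c^\alpha=1\otimes c$ shows the left action is the free one, $a\cdot(a'\otimes c)=aa'\otimes c$, while setting $a=1_A$ gives the right action $(a'\otimes c)\cdot a''=a'a''_\alpha\otimes c^\alpha$. Associativity of the right action, $((a'\otimes c)\cdot a'')\cdot a'''=(a'\otimes c)\cdot(a''a''')$, reduces after cancelling $a'$ to exactly the left pentagon $(aa')_\alpha\otimes c^\alpha=a_\alpha a'_\beta\otimes c^{\alpha\beta}$, and unitality of the right action is again the left triangle; since the left action is free, left-right compatibility is automatic, so (i) does define an $(A,A)$-bimodule. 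Next I would verify that $\Delta_\sC$ and $\varepsilon_\sC$ are bimodule maps. Left linearity is immediate, as the left action only touches the leftmost tensorand. Right $A$-linearity of $\varepsilon_\sC$, i.e.\ $\varepsilon_\sC((a\otimes c)\cdot a')=\varepsilon_\sC(a\otimes c)a'$, unwinds to $a\,a'_\alpha\varepsilon(c^\alpha)=aa'\varepsilon(c)$, which is the right triangle. Right $A$-linearity of $\Delta_\sC$ is the crux: computing $\Delta_\sC((a\otimes c)\cdot a')$ and $\Delta_\sC(a\otimes c)\cdot a'$ and reducing both to the normal form with $1_A$ in the central slot produces precisely the two sides of the right pentagon $a_\alpha\otimes c^\alpha\sw 1\otimes c^\alpha\sw 2=a_{\beta\alpha}\otimes c\sw 1^\alpha\otimes c\sw 2^\beta$.

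Finally, on the forward side, coassociativity of $\Delta_\sC$ and the two counit axioms are inherited directly from the coassociativity and counitality of $C$, since $\Delta_\sC$ merely transports the coproduct of $C$ with a $1_A$ inserted; the only entwining input is one further use of the left triangle to absorb a stray $\psi$ produced when a scalar $\varepsilon(c\sw i)$ is pushed past the central $1_A$. For the converse I would, assuming the prescribed $\Delta_\sC$ and $\varepsilon_\sC$ of (ii)--(iii) and the standard left action, define $\psi(c\otimes a):=(1_A\otimes c)\cdot a$ and read the same four relations backwards. Unitality of the right action gives $\psi(c\otimes 1_A)=1_A\otimes c$ (left triangle). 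Associativity together with bimodule compatibility, applied to $(1_A\otimes c)\cdot(aa')=((1_A\otimes c)\cdot a)\cdot a'$ after rewriting $a_\alpha\otimes c^\alpha=a_\alpha\cdot(1_A\otimes c^\alpha)$, yields the left pentagon. Right $A$-linearity of $\varepsilon_\sC$ evaluated on $(1_A\otimes c)\cdot a$ gives $a_\alpha\varepsilon(c^\alpha)=a\varepsilon(c)$ (right triangle), and right $A$-linearity of $\Delta_\sC$ evaluated on $(1_A\otimes c)\cdot a$ gives the right pentagon. Hence $\psi$ satisfies the bow-tie diagram.

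The main obstacle is purely the bookkeeping inside the balanced tensor product $\sC\otimes_A\sC$. Because the left $A$-action on $\sC$ is free, there is a canonical isomorphism $\sC\otimes_A\sC\cong A\otimes C\otimes C$ sending $a\otimes c\otimes_A 1_A\otimes c'$ to $a\otimes c\otimes c'$, and every computation involving $\Delta_\sC$ must be pushed into this normal form before the two sides are compared; each time an element of $A$ is moved across $\otimes_A$ it picks up a $\psi$-twist through the right action. The one genuinely error-prone point is keeping the order of the two Greek indices straight when two copies of $\psi$ are composed (the distinction between $a_{\beta\alpha}$ and $a_{\alpha\beta}$), but once the normalisation convention is fixed this matching is exactly the content of the right pentagon, and everything else is routine verification.
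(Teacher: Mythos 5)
Your proposal is correct and follows essentially the same route as the paper's proof: the left pentagon and left triangle are matched with the associativity and unitality of the right $A$-action defined by $\psi$, the right triangle with right $A$-linearity of $\varepsilon_\sC$, the right pentagon with right $A$-linearity of $\Delta_\sC$, and the coring axioms are inherited from the coalgebra axioms of $C$; the converse reads the same dictionary backwards from $\psi(c\otimes a)=(1\otimes c)\cdot a$. Your remark that one extra use of the left triangle is needed to absorb the $\psi$-twist when the scalar $\varepsilon(c\sw 2)$ crosses $\otimes_A$ in the counit axiom is a correct fine point that the paper passes over silently.
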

\begin{proof}
It is obvious that $A\otimes C$ is a left $A$-module with the
specified
action. The following simple calculations,
performed for any $a,a',a''\in A$
and $c\in C$,
$$
(a\otimes c)\cdot  (a'a'') =  a(a'a'')_\alpha\ut c^\alpha =
 aa'_\alpha
a''_\beta \ut c^{\alpha\beta} =  (aa'_\alpha\ut c^\alpha)
a'' =
((a\ut c)\cdot a') \cdot a''
$$
and
$$
(a\ut c)\cdot 1 =  a1_\alpha\ut c^\alpha =a\ut c
$$
prove that $A\otimes C$ is a right $A$-module. Note how the left
pentagon was used to derive the first result and the left triangle
to
obtain the second one. Thus $\sC$ is an $(A,A)$-bimodule.

Next one has to check that $\eC$ and $\DC$ are
$(A,A)$-bimodule maps. Clearly, they are left $A$-linear. Take any
$a,a'\in A$, $c\in C$, and compute
$$
\eC((a\ut c) \cdot a') =  \eC(aa'_\alpha\ut c^\alpha)
=
 aa'_\alpha\eps(c^\alpha) = aa'\eps(c) = \eC(a\ut c)a'.
$$
Here the right triangle was used to establish the penultimate
equality.
Furthermore,
\begin{eqnarray*}
\DC((a\ut c)\cdot a') & = &  aa'_\alpha\ut c^\alpha\sw
1\ut
c^\alpha\sw 2 =  aa'_{\alpha\beta}\ut c\sw 1^\beta\ut c\sw
2^\alpha \\
& = &  (a\ut c\sw 1) \cdot a'_\alpha\ut c\sw 2^\alpha =
 (a\ut c\sw 1)\otimes _A(1\ut c\sw 2) \cdot a' \\
& = &
\DC(a\ut c) \cdot a'.
\end{eqnarray*}
Here the  second equality follows from the right pentagon. Thus
$\eC$ and $\DC$ are $(A,A)$-bimodule morphisms, as
required.
Now, the coassociativity of $\DC$ follows immediately from the
coassociativity of $\Delta$, while the counit property of $\eC$
is
an immediate consequence of the fact that $\eps$ is a counit of $C$.

Conversely, let $\sC = A\otimes C$ be an
$A$-coring with structure maps given
in the theorem.  Denote $\psi(c\ut  a) =
(1\ut c) \cdot a =  a_\alpha\ut
c^\alpha$. Since $\psi$ is defined in terms of the right
multiplication one 
has
$\psi(1\ut c) = c\ut 1$ (the left triangle) and
\begin{eqnarray*}
\psi(c\ut aa') &=& (1\ut c)\cdot (aa') = ((1\ut c)\cdot
a)\cdot a' \\
&=&  (a_\alpha\ut c^\alpha)\cdot  a' = a_\alpha(1\ut
c^\alpha) \cdot a' =  a_\alpha a'_\beta \ut c^{\alpha\beta},
\end{eqnarray*}
(the left pentagon). Furthermore, since $\eC$ is $A$-linear,
$$
 a_\alpha\eps(c^\alpha) = \eC(a_\alpha\ut c^\alpha) =
\eC((1\ut c)\cdot a) =
\eC(1\ut  c)a = \eps(c) a
$$
(the right triangle). Finally, also $\DC$ is right $A$-linear,
so that
\begin{eqnarray*}
 a_{\alpha\beta}\ut c\sw 1^\beta\ut  c\sw 2^\alpha & = &
 (1\ut
c\sw 1)\cdot a_\alpha\ut c\sw 2^\alpha =  (1\ut c\sw 1)\ut
a_\alpha\ut c\sw 2^\alpha \\
& = &  (1\ut c\sw 1)\ut_A (1\ut c\sw
2)\cdot  a \\
&=&\DC(1\ut c) \cdot a = \DC((1\ut
c)\cdot 
a)\\
& = &  \DC(a_\alpha\ut c^\alpha) =  a_\alpha\ut
c^\alpha\sw
1\ut c^\alpha\sw 2
\end{eqnarray*}
(the right pentagon). Therefore, $\psi$ is an entwining map, as claimed.
\end{proof}

As an application of Theorem~\ref{2:Takeuchi} one obtains a quick proof of
the existence of the canonical entwining structure in
Theorem~\ref{2:canonical}. Indeed, for a coalgebra-Galois  $C$-extension $B\inc P$,
Sweedler's canonical coring $\sC =P \otimes_B P$
induces through the bijectivity of $\can:P \otimes_B P \ra P \otimes C$
a coring 
structure on $P \otimes C$. Then  Theorem \ref{2:Takeuchi} implies that
there is an entwining structure $\Ep$. This is precisely  the
canonical entwining structure $\Ep$ from Theorem~\ref{2:canonical}.

Similarly as for coalgebras, one can study the corepresentation theory of
corings. Given an $A$-coring $\sC$, a   {\em right $\sC$-comodule} is a
right $A$-module
$M$,  together with a right $A$-module map $\varrho^M:
M \ra M \otimes_A \sC$ satisfying the following axioms for a coaction:
\begin{eqnarray}
(\id_M \otimes_A \Delta_{\sC}) \circ \varrho^M &=&
(\varrho^M \otimes_A \id_{\sC}) \circ \varrho^M, \nonumber\\
(\id_M \otimes \varepsilon_{\sC}) \circ \varrho^M &=& \id_M\,.
\end{eqnarray}
A morphism of right $\sC$-comodules is a right $A$-module map $f:M\ra N$
such that $\varrho^N\circ f = (f\otimes_A\id_\sC)\circ \varrho^M$.
The category of $\sC$-comodules is denoted by $\Mc^{\sC}$.

The category of comodules of Sweedler's coring is familiar from the
(noncommutative) descent theory (cf.\ \cite{b-f94})

\begin{Ex}(\cite[25.3]{bw03})\label{2:ex.comodule.Sweedler}
The category of comodules over Sweedler's canonical coring $\sC = A \otimes_B
A$ 
is isomorphic to the category of (right) descent data
${\bf Desc}(A/B)$. 
The objects  
in 
category ${\bf Desc}(A/B)$, known as {\em descent data}, are pairs $(M,f)$,
where $M$ is a right
$A$-module, 
and $f: M\ra M\otimes_B A$ is a right $A$-module morphism satisfying
the 
following conditions. Let, for any $m\in M$, $f(m) =
\sum_i m_i\ut_B a_i$.
Then  
\begin{rlist}
    \item $ \sum_i f(m_i)\ut_B a_i = \sum_i m_i\ut_B 1\ut_B a_i$;
    \item $\sum_i m_i\cdot  a_i = m$.
\end{rlist}
A morphism $(M,f)\ra (M',f')$ in ${\bf Desc}(A/B)$ is a right
$A$-module map $\phi: M\ra M'$ such that $f'\circ\phi =
(\phi\otimes_{B}\id_A)\circ f$.
\end{Ex}

 The category ${\bf Desc}(A/B)$ is a noncommutative
generalisation \cite{c-m76} of the category of descent data
associated to an 
extension of commutative rings introduced by Knus and Ojanguren in
\cite{ko74},  and forms a backbone of the noncommutative
extension of the classical {\em descent theory} \cite{g-a5960a}
\cite{g-a5960b}. Descent theory provides answers to the
following types of 
questions. 
\begin{itemize}
    \item {\em Descent of modules}\index{descent of modules}: given an
    algebra   extension $B\ra A$ and a right $A$-module $M$, is
there a right $B$-module $N$ such that $M\cong N\otimes_B A$ as right
$A$-modules? 
\item {\em Classification of $A$-forms}\index{classification
of $A$-forms}: given a right $B$-module $N$,
classify all right $B$-modules $M$ such that $N\otimes_B A \cong
M\otimes_B A$ as right $A$-modules.
\end{itemize}
For recent developments in  the descent theory we refer to
\cite{n-p97}. Thus corings shed a new light and give new tools for studies
of the 
descent theory.

{}From the point of view of coalgebra-Galois extensions, more important is
the following
\begin{Prop}\label{2:ex.entw.cor}
For an entwining structure $\Es$ and its associated coring $\sC= A \otimes
C$,
 the category of $\sC$-comodules $\Mc^{\sC}$ is isomorphic to the category of entwined
 modules $\Me$. Hence, the theory of entwined modules can be viewed as a
special case of corepresentation theory of corings.
\end{Prop}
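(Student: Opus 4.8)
The plan is to produce an isomorphism of categories that is the identity on underlying right $A$-modules and on morphisms, so that all the real work reduces to matching the two $\sC$-comodule axioms against the single entwined-module compatibility together with the ordinary $C$-comodule axioms. The key device is the natural isomorphism
\[
\theta_M\colon M\otimes_A\sC=M\otimes_A(A\otimes C)\lra M\otimes C,\qquad
m\otimes_A(a\otimes c)\longmapsto m\cdot a\otimes c,
\]
available for every right $A$-module $M$, with inverse $m\otimes c\mapsto m\otimes_A(1_A\otimes c)$; it exists because the left $A$-module structure of $\sC$ from Theorem~\ref{2:Takeuchi} is just left multiplication in the $A$-tensorand. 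First I would record the right $A$-action that $\theta_M$ transports from $M\otimes_A\sC$ (whose right action comes from $\sC$) to $M\otimes C$: since $(1_A\otimes c)\cdot a=\psi(c\otimes a)=a_\alpha\otimes c^\alpha$, this transported action is the $\psi$-twisted one, $(m\otimes c)\cdot a=m\,a_\alpha\otimes c^\alpha$.

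Given a right $\sC$-comodule $(M,\varrho^M)$, I set $\bar\varrho:=\theta_M\circ\varrho^M\colon M\ra M\otimes C$ and write $\bar\varrho(m)=m\sw0\otimes m\sw1$. The three defining features of $\varrho^M$ then translate as follows. Right $A$-linearity of $\varrho^M$ becomes, after applying $\theta_M$ and the computation above, exactly $\bar\varrho(m\cdot a)=m\sw0\cdot a_\alpha\otimes m\sw1^\alpha$, i.e.\ the entwined-module compatibility. The counit axiom, using $\eps_\sC(1_A\otimes c)=\eps(c)1_A$, becomes $m\sw0\,\eps(m\sw1)=m$, the counit law for a $C$-coaction. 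For coassociativity I would apply $\theta$ twice to identify $M\otimes_A\sC\otimes_A\sC\cong M\otimes C\otimes C$ and use the explicit coproduct $\Delta_\sC(a\otimes c)=a\otimes c\sw1\otimes_A(1_A\otimes c\sw2)$; unravelling $(\id_M\otimes_A\Delta_\sC)\circ\varrho^M=(\varrho^M\otimes_A\id_\sC)\circ\varrho^M$ then yields $(\id_M\otimes\Delta)\circ\bar\varrho=(\bar\varrho\otimes\id_C)\circ\bar\varrho$, the coassociativity of the $C$-coaction. Hence $(M,\bar\varrho)$ is an object of $\Me$, and because $\theta_M$ is bijective and natural the assignment $\varrho^M\leftrightarrow\bar\varrho$ is a bijection between $\sC$-comodule structures and entwined-module structures on a fixed right $A$-module $M$.

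Finally I would check morphisms. A map $f\colon M\ra N$ lies in $\Mc^{\sC}$ iff it is right $A$-linear and $\varrho^N\circ f=(f\otimes_A\id_\sC)\circ\varrho^M$; by naturality of $\theta$, namely $\theta_N\circ(f\otimes_A\id_\sC)=(f\otimes\id_C)\circ\theta_M$, this is equivalent to $\bar\varrho^N\circ f=(f\otimes\id_C)\circ\bar\varrho^M$, which together with right $A$-linearity is precisely the condition for $f$ to be a morphism in $\Me$. Thus the two categories have the same objects, via the bijection above, and literally the same morphism sets, so the construction is an \emph{isomorphism}, not merely an equivalence, of categories. The only step requiring genuine care is the coassociativity translation, where one must track the two separate applications of $\theta$ and the fact that the right $A$-action on the inner and outer $C$-factors is $\psi$-twisted; every other verification is an immediate unwinding of the formulas in Theorem~\ref{2:Takeuchi} and of the entwined-module axiom.
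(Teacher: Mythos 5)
Your argument is correct and is essentially the paper's own proof, worked out in more detail: the paper likewise hinges on the canonical identification $M\otimes C\cong M\otimes_A A\otimes C=M\otimes_A\sC$, the $\psi$-twisted right $A$-action on $M\otimes C$, and the observation that the entwined-module condition is exactly right $A$-linearity of the coaction. Your explicit verification of the counit and coassociativity translations and of the morphism correspondence fills in steps the paper leaves to the reader, but introduces no new idea.
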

\begin{proof}
The key observation here is that, if $M$ is a right $A$-module,
    then $M\otimes C$ is a right $A$-module  with the multiplication
    $(m\ut c)\cdot a =  m \cdot a_{\alpha}\ut c^{\alpha}$. The statement
    $M$ is an $(A,C,{\psi})$-entwined module is equivalent to the
    statement that $\varrho^{M}$ is a right $A$-module map.
    Using the canonical identification $M\otimes  C \cong
    M\otimes_{A}A\otimes C = M\otimes_{A}\sC$, one can view a right
    $C$-coaction as a right $A$-module map  $M\ra M\otimes_{A}\sC$,
    i.e., as a right $\sC$-coaction.
    Conversely a right
    $\sC$-coaction can be viewed as a right $A$-module map
    $\varrho^{M}:M\ra M\otimes C$ thus providing a right $\sC$-comodule
    with the structure of an $(A,C,{\psi})$-entwined module.
\end{proof}

In view of Example~\ref{2:ex.entw.cor}, several properties of entwined
modules can be derived from corresponding properties of more general
comodules of a coring. For instance,
 Theorem~\ref{2:thm.str.flat} and Theorem~\ref{2:thm.structure} are
special cases of the structure theorems for
corings  which are isomorphic
to the Sweedler coring and known as {\em Galois corings} (cf.\  \cite[Theorem~5.6]{b-t02} and \cite[28.19]{bw03}).
These, in turn, are special cases of a more general construction which extends both
the canonical Sweedler coring and Galois-type extensions, and leads to structure 
theorems from which structure theorems for entwined modules, such as Theorem~\ref{2:thm.str.flat} and Theorem~\ref{2:thm.structure}, can be derived as corollaries.

\begin{Def}[\cite{eg03}]\label{2:comatrix}
Let $\sC$ be an $A$-coring,  $M$ be a right $\sC$-comodule and let $B=\Rend \sC M$ be the endomorphism ring of $M$.  View $\sC$ as a right $\sC$-comodule with the regular coaction $\DC$. The comodule
$M$ is called a {\em Galois (right) 
 comodule} if $M$ is a finitely generated and
projective right $A$-module, and the evaluation
 map  
 $$ \varphi_\sC: \Rhom \sC M\sC\ot_BM\lra \sC, \qquad  f\ot_B m\longmapsto f(m),$$
is an isomorphism of right $\sC$-comodules. 
\end{Def}

An equivalent definition of Galois comodules is obtained by first noting that $M$ is a $(B,A)$-bimodule and $\Rhom \sC M\sC \simeq M^* = \rhom A M A$ as $(A,B)$-bimodules. If $M_A$ is finitely generated projective, then $M^*\ot_B M$ is an $A$-coring with the coproduct $\Delta_{M^*\ot_B M}(\xi\ot_B m) =  \sum_i \xi\ot_B e^i \ot_A \xi^i\ot_B m$, where $\{e^i\in M,\xi^i\in M^*\}$ is a dual basis of $M_A$, and with the counit $\eps_{M^*\ot_B M}(\xi\ot_B m) =   \xi(m)$ (cf.\ \cite{eg03}). The coring $M^*\ot _B M$ is known as a {\em comatrix coring}. The map  $\varphi_\sC$ reduces to the {\em canonical} $A$-coring morphism
$$
\can_M: M^*\ot_B M\lra \sC, \qquad \xi\ot_B m\longmapsto \sum \xi(m\sw0)m\sw 1.
$$
A finitely generated projective right $A$-module $M$  is a Galois comodule if and only if the canonical map $\can_M$ is an isomorphism of corings. 

To see the relationship of Definition~\ref{2:comatrix} with the coalgebra-Galois theory, take $\sC = A\ot C$, the coring corresponding to an entwining structure $\Es$. Assume $A$ is an entwined module, hence a $\sC$-comodule, and take $M=A$. This is obviously a finitely generated projective right $A$-module. The endomorphism ring $B=\Rend\sC A$ can be easily identified with the coaction-invariant subalgebra, i.e., $B\simeq A^{coC}$. Since $A^*\simeq A$, the corresponding comatrix coring can be identified with the Sweedler coring $A\ot_B A$ of the extension $B\hookrightarrow A$, and the canonical map $\can_A$ with the canonical map of the coalgebra-Galois theory. Thus $A$ is a Galois comodule if and only if the $C$-extension $B\hookrightarrow A$ is Galois. In view of this, Theorem~\ref{2:thm.str.flat} and Theorem~\ref{2:thm.structure} are corollaries of the following Galois Comodule Structure Theorem which, in part, was first formulated in \cite[Theorem~3.2]{eg03}. 

\begin{Thm} [\sc The Galois Comodule Structure Theorem]
\label{2:thm.galoisc}
Let $\sC$ be an $A$-coring and $M$ be a right $\sC$-comodule that 
 is finitely generated and projective as a right $A$-module. Set $B=\Rend \sC M$. 

\begin{zlist}
\item The following are equivalent:
 \begin{blist}
 \item $M$ is a Galois comodule that is flat as a left $B$-module.
 \item $\sC$ is a flat left $A$-module and $M$ is a generator in $\Mc^\sC$.
\item $\sC$ is a flat left $A$-module and, for any $N\in \Mc^\sC$, the  evaluation map $\varphi_{N}: \Rhom \sC M N \ot_BM\to N$, $f\ot_B m\mapsto f(m)$,
is an isomorphism of right $\sC$-comodules.
 \item  $\sC$ is a flat left $A$-module  and the functor $\Rhom \sC M - : \Mc^\sC \to \Mc_B$ is fully faithful. 
 \end{blist}
\item The following are equivalent:
  \begin{blist}
 \item $M$ is a Galois comodule  that is faithfully flat as a left $B$-module. 
 \item $\sC$ is a flat left $A$-module  and $M$ is a projective generator in $\Mc^\sC$.
 \item  $\sC$ is a flat left $A$-module  and $\Rhom \sC M - : \Mc^\sC \to \Mc_B$ is an
        equivalence     
        with the inverse $-\ot_BM:\Mc_B\to \Mc^{\sC}$.
\end{blist}
\end{zlist}
\end{Thm}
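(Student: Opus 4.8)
The plan is to organise the whole proof around the adjoint pair of functors
$$
F=-\ot_B M:\Mc_B\lra\Mc^\sC,\qquad G=\Rhom\sC M{-}:\Mc^\sC\lra\Mc_B,
$$
where $N\ot_B M$ is made into a right $\sC$-comodule via $\id_N\ot_B\varrho^M$ and $M$ is viewed as a $(B,A)$-bimodule. A direct check shows that $F$ is left adjoint to $G$, that the counit of this adjunction is precisely the evaluation map $\varphi_N\colon\Rhom\sC M N\ot_B M\ra N$, $f\ot_B m\mapsto f(m)$, and that the unit is $\eta_N\colon N\ra\Rhom\sC M{N\ot_B M}$, $n\mapsto(m\mapsto n\ot_B m)$. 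Under the identifications $\Rhom\sC M\sC\simeq M^*$ and $\varphi_\sC\simeq\can_M$ recorded before the statement, the Galois property is exactly the assertion that the counit is an isomorphism at the single distinguished comodule $\sC$. Thus the theorem becomes a statement about when the adjunction $F\dashv G$ is fully faithful on the $G$-side (part~(1)) or an equivalence (part~(2)), with the $\can$-language naming the value of the counit at $\sC$.

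First I would dispose of the purely formal equivalences. The general adjunction dictionary gives at once that $G$ is fully faithful iff its counit $\varphi_N$ is an isomorphism for every $N$, which is the content of $(c)\iff(d)$ in part~(1); and that $F\dashv G$ is an equivalence iff, in addition, the unit $\eta_N$ is an isomorphism for every $N$, which underlies part~(2). The role of the standing hypotheses can also be read off formally: since $M_A$ is finitely generated projective, $G=\Rhom\sC M{-}$ commutes with arbitrary direct sums and $M$ is a compact object of $\Mc^\sC$; since ${}_A\sC$ is flat, $\Mc^\sC$ is a Grothendieck category in which the cofree comodules $\sC^{(\Lambda)}$ form a generating family, exactly as in the proof of Theorem~\ref{2:thm.str.flat}. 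Finally, flatness of ${}_BM$ is equivalent to exactness of $F=-\ot_B M$, and faithful flatness to $F$ being exact and reflecting exact sequences.

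The substance of the theorem is to link the single Galois isomorphism $\varphi_\sC=\can_M$ to these categorical properties. The cleanest route I would take is to reduce to the comatrix coring: when $M$ is Galois, $\can_M$ is an isomorphism of corings $M^*\ot_B M\xrightarrow{\ \sim\ }\sC$, so that $\Mc^\sC\cong\Mc^{M^*\ot_B M}$ and $(F,G)$ is carried to the canonical adjunction for the comatrix coring $M^*\ot_B M$. There the comparison is governed entirely by descent along ${}_BM$: if ${}_BM$ is flat, a faithfully-flat-descent argument (as in Theorem~\ref{2:thm.str.flat}, now applied with the Gabriel--Popescu theorem to the Grothendieck category $\Mc^\sC$) shows that all counits $\varphi_N$ are isomorphisms, giving $(a)\Rightarrow(c)$; conversely $(c)\Rightarrow(a)$ by evaluating at $N=\sC$ (Galois) and using that a fully faithful $G$ with left adjoint $F$ forces ${}_BM$ to be flat. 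The generator condition $(b)$ is matched by the elementary fact that $M$ generates $\Mc^\sC$ iff $G$ is faithful, which under flatness of $\sC$ translates into surjectivity and hence, via the comparison with $\varphi_\sC$, bijectivity of all $\varphi_N$.

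For part~(2) I would simply feed the extra hypotheses into the machine of part~(1): ${}_BM$ faithfully flat makes $F$ exact and reflect exactness, which upgrades ``$G$ fully faithful'' to ``$(F,G)$ an inverse equivalence'' and yields $(a)\iff(c)$, while ``$M$ a projective generator'' is the standard characterisation of a progenerator of the Grothendieck category $\Mc^\sC$, matching $(b)$. The main obstacle, and the only genuinely non-formal step, is the descent step in the previous paragraph: proving that flatness (resp.\ faithful flatness) of ${}_BM$ forces the counit $\varphi_N$ to be an isomorphism for \emph{all} $N$, not just for $N=\sC$. This is exactly the comatrix-coring structure theorem, and it is where finite generation and projectivity of $M_A$ (ensuring $G$ preserves the coproducts in a cofree presentation and that $M$ is compact) together with flatness of ${}_A\sC$ (ensuring $\Mc^\sC$ is Grothendieck, so that such presentations exist and remain controllable) are indispensable; once this colimit-compatibility is in place, every remaining implication is a diagram chase or a formal property of the adjunction $F\dashv G$.
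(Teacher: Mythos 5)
Your architecture is sound, and it is worth noting that the paper itself does not prove this theorem: it refers to \cite[18.27]{bw03}, and the only proof content it offers is the remark that assertion (d) of part (1) is a rephrasing of (c) because $\varphi$ is the counit of the adjunction $(-\ot_BM \dashv \Rhom \sC M -)$ --- which is exactly your formal starting point. The rest of your outline is the standard proof from the cited source (and from El Kaoutit--G\'omez-Torrecillas), and it is consistent with how the paper handles the special cases, Theorem~\ref{2:thm.str.flat} and Theorem~\ref{2:thm.structure}: your descent step for $(a)\Rightarrow(c)$ is the analogue of the diagram chase with $M\ot C$ there, and extracting flatness of ${}_BM$ from full faithfulness of $\Rhom \sC M -$ via the Gabriel--Popescu theorem is lifted essentially verbatim from that proof.

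Two points in your sketch need tightening. First, in $(a)\Rightarrow(c)$ you must also produce flatness of ${}_A\sC$, which is not among the hypotheses of (a); it follows because $\sC\cong M^*\ot_BM$ with ${}_AM^*$ finitely generated projective and ${}_BM$ flat, but this deserves a sentence rather than being absorbed silently into ``reduce to the comatrix coring''. Second, your treatment of (b) --- ``$G$ faithful translates into surjectivity and hence, via the comparison with $\varphi_\sC$, bijectivity of all $\varphi_N$'' --- conflates two different strengths: the generator property gives only surjectivity of every counit $\varphi_N$ (a trace argument); the passage to injectivity, i.e.\ $(b)\Rightarrow(d)$, is precisely the Gabriel--Popescu theorem applied to the Grothendieck category $\Mc^\sC$ (here the flatness of ${}_A\sC$ is used) with generator $M$ and $B=\Rend \sC M$, and should be invoked as such rather than deduced from the single isomorphism $\varphi_\sC=\can_M$. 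With these two repairs the proposal is a correct and complete skeleton of the proof.
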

For the proof of this theorem we refer to \cite[18.27]{bw03}. The assertion (d) in Theorem~\ref{2:thm.galoisc}~(1) is simply a rephrasing of the assertion (c), since the natural map $\varphi$ is the counit of the adjunction $(-\ot_BM \dashv \Rhom \sC M - )$.

Thus Galois comodules provide one with a very general and conceptually clear point of view on Galois-type extensions. Going further in this direction one intoduces the notion of a {\em principal comodule} as a Galois $\sC$-comodule $M$ which is projective as a (left) module over its coaction-invariant algebra $B=\Rend \sC M$. Principal extensions are examples of such comodules. Principal comodules are characterised by the following theorem proven 
in~\cite{b-t05}:
\begin{Thm}\label{2:thm.princ.field}
Let  $\sC$ be an $A$-coring and $M$ a right $\sC$-comodule that is finitely generated and projective as a right $A$-module. Set $B=\Rend \sC M$.
\begin{zlist}
\item If $M$ is a principal comodule, then it is faithfully flat as a left $B$-module.
\item 
 View  $M^*\ot M$ as a left $\sC$-comodule via ${}^{M^*\!\!}\varrho\ot \id_M$, where ${}^{M^*\!\!}\varrho: M^*\ra \sC\ot_AM^*$ is the left coaction induced from the right $\sC$-coaction $\varrho^M:M\ra M\ot_A\sC$. Then the following statements are equivalent:
\begin{blist}
\item The map
$
\widetilde{\can}_M :M^*\ot  M\ra \sC$,  $\xi\ot m\mapsto\sum \xi(m\sw 0)m\sw 1,
$
is a split epimorphism of left $\sC$-comodules.
\item $M$ is a principal right $\sC$-comodule.
\end{blist}
\end{zlist}
\end{Thm}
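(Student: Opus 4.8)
The plan is to establish the characterisation in part~(2) first and then to read off part~(1). Throughout I exploit the factorisation
\[
\widetilde{\can}_M \colon M^*\ot M \stackrel{\pi}{\lra} M^*\ot_B M \stackrel{\can_M}{\lra} \sC ,
\]
where $\pi$ is the canonical surjection onto the comatrix coring $M^*\ot_B M$ of Definition~\ref{2:comatrix} and $\can_M$ is the canonical coring morphism; all three maps are left $\sC$-colinear, the comodule structures sitting on the $M^*$-leg. Since the right $B$-action on $M^*$ dual to the $B$-action on $M$ is by left $\sC$-colinear endomorphisms (the elements of $B=\Rend\sC M$ being comodule maps), $\pi$ is a colinear epimorphism. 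This reduces part~(2) to comparing colinear sections of $\pi$ with left $B$-projectivity of $M$.

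For the implication (b)$\Rightarrow$(a), suppose $M$ is principal, so $M$ is Galois ($\can_M$ invertible) and ${}_BM$ is projective. Choosing a left $B$-module dual basis $\{n_j\in M,\ g_j\in\rhom B M B\}$ with $m=\sum_j g_j(m)\,n_j$, I would define
\[
s\colon M^*\ot_B M \lra M^*\ot M,\qquad s(\xi\ot_B m)=\sum_j (\xi\cdot g_j(m))\ot n_j .
\]
One checks that $s$ is well defined over $B$ using the left $B$-linearity of the $g_j$, that $\pi\circ s=\id$ from the dual-basis identity, and that $s$ is left $\sC$-colinear because each $g_j(m)\in B$ acts colinearly on $M^*$. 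Then $\ell:=s\circ\can_M^{-1}\colon\sC\ra M^*\ot M$ is a left $\sC$-colinear section of $\widetilde{\can}_M$, i.e.\ a strong connection, which proves~(a).

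For (a)$\Rightarrow$(b) I would start from a colinear section $\ell$ of $\widetilde{\can}_M$ and set $t:=\pi\circ\ell\colon\sC\ra M^*\ot_B M$, so that $\can_M\circ t=\id_\sC$ exhibits $\can_M$ as a split colinear epimorphism. The projectivity of ${}_BM$ is then recovered by combining $\ell$ with the right $A$-module dual basis of $M$ to manufacture a left $B$-module dual basis, the colinearity of $\ell$ guaranteeing that the resulting maps land in $B=\Rend\sC M$ and are left $B$-linear. The genuinely delicate point, and the main obstacle of the whole proof, is to upgrade the one-sided inverse $t$ to a two-sided one, i.e.\ to prove $t\circ\can_M=\id_{M^*\ot_B M}$ and hence that $M$ is Galois: a split epimorphism of comodules need not be an isomorphism, so here one must feed the counit of the $\sC$-coaction on $M$ together with the colinearity of $\ell$ into the zig-zag $t\circ\can_M$ in order to collapse it to the identity.

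Finally, part~(1) follows from the Galois Comodule Structure Theorem~\ref{2:thm.galoisc}(2). Since ${}_BM$ is projective it is flat, and the remaining hypotheses can be verified from principality: writing $M$ as a direct summand of a free left $B$-module and using $\sC\cong M^*\ot_B M$ shows that $\sC$ is a direct summand of a coproduct of copies of the $A$-projective module $M^*$, hence flat as a left $A$-module; the Galois isomorphism $\can_M$ exhibits $\sC$ as generated in $\Mc^\sC$ by $M$, while ${}_BM$ projective makes $M$ projective in $\Mc^\sC$ via the induction functor $-\ot_B M$. With $M$ a projective generator in $\Mc^\sC$ and $\sC$ flat over $A$, the implication (b)$\Rightarrow$(a) of Theorem~\ref{2:thm.galoisc}(2) yields that ${}_BM$ is faithfully flat. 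Equivalently, and more in the spirit of Theorem~\ref{2:cor.coflat}, one may reduce faithful flatness to showing that $B$ is a direct summand of a finite power of ${}_BM$, a splitting that the strong connection of part~(2) supplies as a $B$-linear resolution of the unit $1_B=\id_M$ inside the trace ideal of $M$.
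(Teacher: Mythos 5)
The paper does not actually prove this statement: it is quoted verbatim from \cite{b-t05}, so your argument can only be judged on its own coherence and against the strategy of that source. Your part (2) follows the standard route and is sound in outline. The factorisation $\widetilde{\can}_M=\can_M\circ\pi$ through the comatrix coring, the explicit colinear section of $\pi$ built from a dual basis of ${}_BM$ for (b)$\Rightarrow$(a), and, for (a)$\Rightarrow$(b), the recognition that everything hinges on the identity $m=m\sw 0\cdot\bigl(m\sw 1{}^{(1)}(m\sw 1{}^{(2)})\bigr)$ (obtained by applying $\eps_\sC$ to $\widetilde{\can}_M\circ\ell=\id_\sC$) together with the fact, forced by the left colinearity of $\ell$, that $(\id_M\ot\ell)(\varrho^M(m))$ lies in $B\ot M\subseteq(M\ot_AM^*)\ot M$ --- these are the right ingredients, and the same identity does collapse $t\circ\can_M$ to the identity once the first two tensor legs are recognised as elements of $B$ and moved across $\ot_B$. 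You only gesture at these computations, but the ideas are correct.

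The genuine gap is in part (1). Your primary route --- verify hypothesis (b) of Theorem~\ref{2:thm.galoisc}(2) and read off (a) --- breaks at the claim that ${}_BM$ projective makes $M$ projective in $\Mc^\sC$ ``via the induction functor''. A left adjoint preserves projective objects only when its right adjoint is exact; here the right adjoint is $\Rhom{\sC}{M}{-}$, whose exactness is precisely the projectivity of $M$ as an object of $\Mc^\sC$, so the argument is circular. Worse, by the very theorem you invoke, ``$M$ is a projective generator in $\Mc^\sC$'' is \emph{equivalent} to ``$M$ is Galois and ${}_BM$ is faithfully flat'', so obtaining it from principality is exactly as hard as part (1) itself.

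Your fallback is the right strategy: if $B$ is a left $B$-module direct summand of a finite power $M^n$, then flatness of ${}_BM$ forces faithful flatness, exactly as in Theorem~\ref{2:cor.coflat}. But the assertion that the strong connection ``supplies'' $1_B=\id_M$ as an element of the trace ideal $\sum\{g(M)\,:\,g\in\rhom{B}{M}{B}\}$ is the entire substance of part (1) and is not proved. The maps $m\mapsto(\id\ot\id\ot\phi)\bigl((\id_M\ot\ell)(\varrho^M(m))\bigr)$, for $\phi$ a linear functional on $M$, do land in $\rhom{B}{M}{B}$; what is missing is the exhibition of $\id_M=\sum_ie^i\ot_A\xi^i$ as a finite sum of their values, which requires a further manipulation of $\widetilde{\can}_M\circ\ell=\id_\sC$ against the finite dual basis of $M_A$ (note that the naive contractions produce only $\id_M\in B\cdot\End_A(M)$, which is vacuous, not $\id_M\in\sum g(M)$). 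Until that identity is established, faithful flatness is not proved.
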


Several generalisations of Galois comodules discussed above have been introduced recently. Comatrix corings built out of modules which  are not  required to be finitely generated projective and the corresponding Galois comodules are discussed in \cite{eg04}. This led in a natural way to considering corings and Galois comodules over firm rings without a unit in \cite{gv07}. Going in a slightly different direction, the definition of a Galois comodule was proposed in \cite{w-r06}. It is required there that a certain morphism of functors,
$
\rhom A M - \otimes_B M \ra -\otimes_A\sC,
$
 where $\sC$ is an $A$-coring, $M$ is a right $\sC$-comodule,
  and $B$ is its endomorphism algebra, is an isomorphism. 
 
Yet another approach to the Galois condition for corings, deeply rooted in noncommutative (affine) algebraic geometry was recently developed by T.\ Maszczyk \cite{m-t}. The idea is to depart from considering comodules and  to start with a morphism of two $A$-corings
$
\widetilde{\gamma} : \sD\ra \sC.
$
With such a morphism one can view $\sD$ as a $\sC$-$\sC$ bicomodule in a natural way and then define a ring $B = \LRhom\sC\sC\sD\sC$. $\sD$ is a $B$-$B$ bimodule and one can consider the quotient $A$-coring $\sD/[\sD,B]$. The map $\gamma$ descends to this quotient, and one says that a Galois condition is satisfied if the resulting $A$-$A$ bimodule map $\gamma:\sD/[\sD,B]\ra \sC$ is bijective (an isomorphism of $A$-corings). In particular, if one starts with a right $\sC$-comodule $M$ that is finitely generated and projective as a right $A$-module and sets $\sD := M^*\otimes M$ and $\widetilde{\gamma}:=\widetilde{\can}_M$, then $B$ is isomorphic to $\Rend \sC M$, and  $\sD/[\sD,B]$ can be identified with $M^*\otimes_B M$. The  resulting $\gamma$ is the canonical map ${\can}_M$.

All these definitions of the Galois condition for corings can be understood as instances of a {\em Galois condition for comonads} in general categories. The latter is discussed in detail in \cite{g-tj06}.

\begin{Rem} \label{2:rem.grouplike}
We have already discussed the problem that one cannot use
group-like elements in a coalgebra $C$ when
defining invariants in the context of a coalgebra Galois
extension $C$-extension $B\inc P$.
On the other hand, studying the corresponding coring
$\sC= P \otimes C$, as given in Theorem~\ref{2:Takeuchi},
yields the advantage that there actually exists a
group-like 
element $g:=\Delta_P(1) \in P \otimes C$. A group-like element in a
general 
$P$-coring $\sC$ is defined in an analogous way as in a coalgebra, i.e.,
$g\in\sC$ is group-like, if $\DC(g) = g\otimes_P g$ and $\eC(g) =1$. One
easily checks that in the case $g=1\sw 0\otimes 1\sw 1$
\begin{eqnarray*}
g\otimes_P g &=& 1\sw {0'}\otimes 1\sw {1'}\otimes_P 1\sw 0\otimes 1\sw 1 =
(1\sw {0'}\otimes 1\sw {1'})\cdot 1\sw 0\otimes 1\sw 1\\
& =&
1\sw {0'}{1\sw 0}_\alpha\otimes 1\sw {1'}^\alpha \otimes 1\sw 1=
1\sw 0\otimes 1\sw 1\otimes 1\sw 2 \\
&=& \DC(1\sw 0\otimes 1\sw 1),
\end{eqnarray*}
where we used that $P$ is an entwined module, and, obviously, $\eC(1\sw
0\otimes 1\sw 1) = 1\sw 0\eps(1\sw 1) = 1$. Therefore $\Delta_P(1)$ is
a group-like element in a $P$-coring $P\otimes C$. In view of
Lemma~\ref{2:lemma.coinv},   the coaction invariants of $P$ are
defined as $\{b \in P \mid \Delta_P(b)=b \otimes_P g \}$.
\end{Rem}

Although the knowledge of corings is not essential for the studies of
(noncommutative) geometry of coalgebra-Galois extensions, and in what
follows we will make some simplifying assumptions which will make
working with coalgebra-Galois extensions and entwining structures more
pleasant, corings are a very useful device, which allows one to view
complicated algebraic structure through much simpler and more familiar
objects (our intuition trained on coalgebras will also work for corings
in the majority of cases). In the context of noncommutative geometry, it
seems to be worthwhile to mention an interesting relationship between
corings with a group-like element and differential graded algebras (see \cite{r-av80} or
\cite[Section~29]{bw03} for more details). Given an $A$-coring $\sC$ with a group-like
element $g$, one can introduce the structure of a graded differential
algebra $\Omega^\bullet$ with $\Omega^0 =A$ and $\Omega^1 = \Ker\eC$.
Given a right $A$-module $M$ one can study connections on $M$ with
values in this graded differential algebra (see below for the definition
of a connection). It turns out that $M$ is a right $\sC$-comodule if and
only if it admits a flat connection. Thus the representation-theoretic
notion of a comodule of a coring has a deep and somewhat unexpected
(noncommutative)
differential-geometric meaning.

\subsubsection{Quantum groupoids}

In noncommutative geometry, Hopf algebras are understood as  deformed algebras of functions 
on groups. In algebra and geometry (in particular in Poisson geometry 
and the geometry of foliations), 
in addtion to groups, one also considers groupoids (cf.\ \cite{m-k05}, \cite{mm03}), 
which are roughly 
defined as groups for which not every two points can be multiplied together. Precisely, 
a {\em groupoid} is a small category in which any morphism is an isomorphism. Thus 
underlying a groupoid are two sets: a set of points (objects) and a set of invertible arrows (maps). An arrow can be composed with another arrow only if the 
head of one of them coincides with the tail of the other. Thus not any pair of arrows can 
be composed together. On the other hand, if the groupoid has a single point (object), then the head of any arrow must be the same as its tail, hence any 
two maps can be composed with each other, and, since they are assumed to be isomorphisms, 
the set of arrows forms a group. A typical example of  a groupoid is a fundamental groupoid of a manifold: the points are points of the manifold, the arrows are homotopy classes of paths, and the product is induced from the concatenation of paths.

As can be seen from the above example, groupoids have a strong geometric flavour. In fact, one 
can associate a groupoid to any principal bundle (this is known as a {\em gauge} or 
{\em Ehresmann} groupoid).  An algebra of functions on a groupoid, however, is no longer a Hopf 
algebra. First, a groupoid is built on two sets and each of them gives rise to an algebra
 of functions. Second, recall that a product of elements of a group gives rise to a coproduct 
in the algebra of 
functions on it. But in the case of a groupoid, not every two arrows can be composed. As a result, 
the product does not provide the algebra of functions on a groupoid with a coalgebra structure, 
but with the structure of a coring over the algebra of functions on points. Thus an algebra of functions on a groupoid is also a coring with a suitable compatibility conditions between the product and coproduct. Noncommutative generalisation of this structure leads to the notions of a {\em bialgebroid}  and a {\em Hopf algebroid} or a {\em quantum groupoid}.

What makes the definition of a bialgebroid non-trivial is the fact that if $A$ is an 
algebra and $\sH$ is an $A$-bimodule and an algebra, the tensor product $\sH\ot_A \sH$ 
is not necessarily an algebra. Thus one cannot require that the coproduct 
$\DH: \sH\to \sH\ot_A \sH$ is an algebra map. A different compatibility condition must be used. Over the years, different authors have found different solutions to this problem. First Sweedler \cite{s-me75a} (in the case of a commutative base) and Takeuchi \cite{t-m77} (in the case of a general base algebra) proposed to restrict the image of $\DH$ to a subbimodule of   
$\sH\ot_A \sH$ on which the algebra structure is well-defined, and then to require it to 
be an algebra map. Later and independently, Lu \cite{l-jh96} proposed to replace the 
algebra condition for $\DH$ with a weaker algebraic condition. Most recently, 
Xu \cite{x-p01} suggested a different definition supported with an additional map
 (an anchor). Amazingly, all these seemingly different solutions lead to the same 
algebraic structure (see \cite{bm02}). The conceptual understanding of the definition 
of a bialgebroid $\sH$ in terms of a monoidal structure of the category of its (left) 
modules has been provided by Schauenburg \cite{s-p98}. Our presentation here 
is based on \cite[Section~31]{bw03}, where more details can be found.

Given a $k$-algebra $A$, an  {\em $A$-ring} or an {\em algebra over $A$}  is a pair
$(U, i)$, where $U$ is a $k$-algebra  and $i: A\to U$ is an algebra map.
If $(U, i)$ is an $A$-ring, then $U$ is an $(A,A)$-bimodule  with the
structure provided by the map $i$,
$au a':= i(a)ui(a')$. A map  of $A$-rings $f: (U, i)\to (V,j)$
is a
$k$-algebra map $f:U\to V$ such that $f\circ i=j$. 

Let $\bar{A}=A^{op}$ be the opposite algebra of $A$.
For $a\in A$, $\bar{a}\in \bar{A}$ is the same $a$ but now viewed as an
element in $\bar{A}$, that is, $a\mapsto \bar{a}$ is an (obvious)
anti-isomorphism of
 algebras. Let
$
A^{e}=A\ot \bar{A}
$
be the enveloping algebra of $A$.
Then a pair $(H,i)$ is  an $A^{e}$-ring if and
only if there exist an algebra map $s:A\to H$ and an anti-algebra map
$t: A\to H$,
 such that
 $s(a)t(b)=t(b) s(a)$,
for all $a,b\in A$. Explicitly, $s(a) = i(a\ot 1)$ and $t(a) =
i(1\ot \bar{a})$, and, conversely,  $i(a\ot \bar{b})=s(a)t(b)$. In the case of an $A^e$-ring $H$, 
$A$ is called a
{\em base algebra}, $H$ a {\em total
algebra}\index{total algebra}, $s$ a {\em source}\index{source map}
map and $t$ a
 {\em target} map\index{target map}. To indicate explicitly the source and target maps we write $(H,s,t)$.

Take a pair of $A^e$-rings $(U, s_U,t_U)$ and $(V, s_V,t_V)$, view $U$ as a right $A$-module via the left multiplication by the target map $t_U$, and view $V$ as a left $A$-module via the left multiplication by the source map $s_V$. A {\em Takeuchi $\times_A$-product} is then defined as
%
%
$$
\begin{array}{c}
 U \times_A V 
 :=
\{\sum\limits_{i} u_i\ot_A v_i \!\in\! M\!\ot_A\! N \; | \; \forall\, b\!\in\!
A, \;
 \sum\limits_{i} u_it_U(b) \ot v_i = \sum\limits_i u_i \ot v_i s_V(b) \} .
\end{array}
$$
%
The importance of the notion of the Takeuchi $\times_A$-product is a direct consequence of  the  
following observation (\cite[Proposition~3.1]{s-me75}, \cite[Proposition~3.1]{t-m77}). 
\begin{Lem}
\label{2:lemma.x-product}
 For any pair of $A^e$-rings $(U, i)$ and $(V, j)$, the  
$(A^e,A^e)$-bimodule 
$U\times_A V$ is an   $A^e$-ring with  
the  algebra map   $A\ot  \bar{A} \ra U\times_A V$,  
$a\otimes \bar{b}\to i(a) \ot  
j(\bar{b})$, the associative product  
$$ \begin{array}{c} 
(\sum_i u^i\ot v^i)(\sum_j \tilde u^j\ot \tilde v^j) =  
\sum_{i,j} u^i\tilde u^j\ot v^i  
\tilde v^j,  
\end{array}$$  
and the unit $1_U\ot 1_V$.  
\end{Lem}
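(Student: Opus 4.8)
The plan is to verify directly that the stated formula endows the $k$-submodule $U\times_A V$ of $U\ot_A V$ with an associative, unital product, and that the proposed map from $A^e$ is a morphism of $A^e$-rings. The whole argument rests on a single governing identity, namely the balancing relation defining $U\ot_A V$: since $U$ is a right $A$-module via $u\cdot a = t_U(a)u$ and $V$ a left $A$-module via $a\cdot v = s_V(a)v$, one has $t_U(a)x\ot_A y = x\ot_A s_V(a)y$ for all $x\in U$, $y\in V$, $a\in A$. Besides this I would use only that $s_U,s_V$ are algebra maps, $t_U,t_V$ are anti-algebra maps, and that source and target maps commute in each factor.

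The central and only non-routine step is to show that $\xi\eta := \sum_{i,j} u^i\tilde u^j\ot_A v^i\tilde v^j$ defines an element of $U\ot_A V$ when $\xi=\sum_i u^i\ot_A v^i\in U\times_A V$; naive multiplication of simple tensors in $U\ot_A V$ is ill-defined, and the Takeuchi condition on the left factor is exactly what repairs this. For fixed $\xi$, the assignment $\tilde u\ot\tilde v\mapsto \sum_i u^i\tilde u\ot_A v^i\tilde v$ is $k$-bilinear, hence descends to a map $L_\xi\colon U\ot_k V\to U\ot_A V$, and the governing identity shows $\xi\mapsto L_\xi$ is itself $A$-balanced, so $L_\xi$ depends only on $\xi\in U\ot_A V$. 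The crux is that $L_\xi$ factors through $U\ot_A V$ precisely when $\xi\in U\times_A V$: applying the manifestly well-defined right-multiplication operator $x\ot_A y\mapsto x\tilde u\ot_A y\tilde v$ to the balancing identity $\sum_i u^i t_U(a)\ot_A v^i = \sum_i u^i\ot_A v^i s_V(a)$ yields $\sum_i u^i t_U(a)\tilde u\ot_A v^i\tilde v = \sum_i u^i\tilde u\ot_A v^i s_V(a)\tilde v$, which is exactly the relation needed for $L_\xi$ to descend to $\bar L_\xi\colon U\ot_A V\to U\ot_A V$. One then sets $\xi\eta := \bar L_\xi(\eta)$. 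I expect this descent to be the main obstacle, since it is the one place where the defining condition of $U\times_A V$ is genuinely used.

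The remaining verifications are short. For closure, I would apply the well-defined operator $\bar L_\xi$ to the balancing identity satisfied by $\eta$, which transports the Takeuchi condition from $\eta$ to $\xi\eta$; thus $\xi\eta\in U\times_A V$. Associativity is read off the formula $(\xi\eta)\zeta = \sum u^i\tilde u^j\bar{u}^\ell\ot_A v^i\tilde v^j\bar{v}^\ell = \xi(\eta\zeta)$, using associativity in $U$ and $V$. For the unit I would check that $1_U\ot_A 1_V$ lies in $U\times_A V$ — this is the governing identity evaluated at $x=1_U$, $y=1_V$ — and that it acts as a two-sided identity, which is immediate from the product formula.

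Finally, for the $A^e$-ring structure I would take $\kappa\colon A\ot\bar{A}\to U\times_A V$, $a\ot\bar b\mapsto s_U(a)\ot_A t_V(b)$. Its image lies in $U\times_A V$: using $s_U(a)t_U(c)=t_U(c)s_U(a)$ and the governing identity, $s_U(a)t_U(c)\ot_A t_V(b) = s_U(a)\ot_A s_V(c)t_V(b) = s_U(a)\ot_A t_V(b)s_V(c)$. Multiplicativity holds because $s_U$ is an algebra map and $t_V$ an anti-algebra map, matching the opposite multiplication in the $\bar{A}$-factor of $A^e$, so that $\kappa$ of a product equals the product of images; and unitality is $s_U(1)\ot_A t_V(1)=1_U\ot_A 1_V$. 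Assembling these facts shows that $(U\times_A V,\kappa)$ is an $A^e$-ring with the stated associative product and unit, as claimed.
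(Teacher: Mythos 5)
Your proof is correct, and it correctly isolates the one genuinely non-trivial point: the product on simple tensors is ill-defined on $U\ot_A V$ a priori, and the descent of $L_\xi$ through the balancing relation in the second factor is exactly where the Takeuchi condition on the left factor $\xi$ is used, with the closure of $U\times_A V$ under the product then following by applying the (now well-defined) operator $\bar L_\xi$ to the Takeuchi identity for $\eta$. The paper itself gives no proof of this lemma -- it only cites Proposition~3.1 of Sweedler and of Takeuchi -- and your argument is the standard direct verification found in those sources; the single cosmetic blemish is that in two places you call the Takeuchi defining condition ``the balancing identity,'' clashing with your earlier use of ``governing identity'' for the actual $\ot_A$-balancing relation $t_U(a)x\ot_A y = x\ot_A s_V(a)y$, though the displayed formulas make clear which identity is meant each time.
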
  

We are now ready to define bialgebroids.

\begin{Def}\label{2:def.Lu} 
Let $(\sH, s, t)$ be an $A^{e}$-ring. View $\sH$ as an
$(A,A)$-bimodule, with the left $A$-action given by the source map $s$,
and the right $A$-action
that descends from the left $\bar{A}$-action given by the target map
$t$, that is, 
$$
a h = s(a) h, \quad h a = t(a)h, \quad \mbox{ for all }a\in A, h\in \sH.
$$
We say that 
$(\sH, s, t, \DH, \eH)$ is
an {\em $A$-bialgebroid}\index{bialgebroid} if
\begin{zlist}
\item $(\sH,\DH, \eH)$ is an $A$-coring;
\item ${\rm Im}(\DH) \subseteq \sH\times_A\sH$ and the corestriction of
      $\DH$ to $\DH :\sH \to \sH\times_A\sH$ is an algebra map;
\item $\eH(1_\sH)=1_A$, and, for all $g$, $h\in \sH$,
$$
\eH(gh) =\eH \left( g s( \eH (h) ) \right) =
\eH \left( g t( \eH (h) ) \right).
$$
\end{zlist}
%
\end{Def}

There are many examples of bialgebroids. For instance, if $H$ is a bialgebra and $A$ is an algebra, then $\sH= A\ot H\ot \bar{A}$ is a bialgebroid over $A$ with the natural tensor algebra structure and:
 \begin{rlist}
	 \item the source map $s: a\mapsto a\ut 1_{H}\ut 1_{A}$;
	 \item the target map $t: a\mapsto 1_{A}\ut 1_{H}\ut \bar{a}$;
	 \item the coproduct $\DH : a\ut b\ut \bar{a'}\mapsto  \sum a\ut b\sw 1\ut 
	 1_{A}\ut 1_{A}\ut b\sw 2\ut \bar{a'}$;
	 \item the counit $\eH: a\ut b\ut \bar{a'}\mapsto \eps(b)a a'$;
 \end{rlist}

The definition of a bialgebroid we present here is by now generally accepted. 
On the other hand, there seems to be no consensus as to how
 a Hopf algebroid should 
be defined. In \cite{l-jh96},  an anti-algebra map
$\kappa :\sH \to \sH$ such that
\begin{rlist}
\item $\kappa \circ t =s$;
\item $\mu_\sH \circ (\kappa \ot\id_\sH) \circ \DH = t\circ \eH
\circ \kappa$;
\item there exists a section $\gamma :\sH\otimes_A \sH \to \sH\otimes_k \sH$
of
 the natural
projection  $\sH\otimes_k \sH \to \sH\otimes_A \sH$ such that
$\mu_\sH \circ (\id_\sH\ot \kappa)\circ \gamma \circ \DH = s\circ \eH$,
\end{rlist}
is called an {\em antipode}.  A bialgebroid with an antipode in this sense is often referred to as a {\em Lu-Hopf algebroid}. Another definition 
of a Hopf algebroid has been proposed by Schauenburg in \cite{s-p00a}. For any 
$A$-bialgebroid $\sH$, the category of its left modules is a monoidal category 
such that the forgetful functor to the category of $A$-bimodules is strict monoidal. 
Schauenburg defines a Hopf algebroid as a bialgebroid for which this functor 
preserves also the closed structure. In algebraic terms, one requires that the 
`canonical map' $\sH\ot_{\bar{A}}\sH\ra \sH  \otimes_A\sH$, 
$g\ot_{\bar A} h \mapsto g\sw 1\ot_A g\sw 2h$ be invertible. One refers to such bialgebroids as {\em $\times_A$-Hopf algebras}. This definition, however, does not lead to an 
antipode as a map $\sH\to \sH$.

The most symmetric and closest to the Hopf algebra case
 definition of a Hopf algebroid  was proposed 
by B\"ohm and Szlach\'anyi in \cite{bs03}, \cite{b-g05}. This definition starts with the observation that, in order to define an antipode which would have similar properties
to those of an antipode in a Hopf algebra, one needs to {\em symmetrise} the
definition of a bialgebroid. Note that the Definition~\ref{2:def.Lu} is not symmetric in the sense that it
considers $\sH$ as an $A$-bimodule with the actions obtained by the {\em left}
multiplication with the source and target maps. One therefore refers to the notion 
defined in Definition~\ref{2:def.Lu} more precisely as a {\em left $A$-bialgebroid}. Obviously, it is possible to define a similar object by using the right multiplication by
the source and target maps. Such an object is then called a {\em right $A$-bialgebroid}. The notion of a Hopf algebroid proposed in \cite{bs03}, \cite{b-g05}
requires existence of  both left f right bialgebroid structures on the same algebra.
\begin{Def}\label{2:def.BS}
A {\em Hopf algebroid} consists of a left $L$-bialgebroid $\sH$ with structure maps $s_L$, $t_L$, $\Delta_L$, $\eps_L$, a right $R$-bialgebroid $\sH$ with structure maps $s_R$, $t_R$, $\Delta_R$, $\eps_R$, and a map $S:\sH\to \sH$
 satisfying the following conditions:
\begin{blist}
\item source-target compatibility,
$$
s_L\circ \eps_L\circ t_R=t_R,\quad t_L\circ \eps_L\circ s_R=s_R,
\quad 
s_R\circ \eps_R\circ t_L=t_L,\quad t_R\circ \eps_R\circ s_L=s_L;
$$
\item colinearity of coproducts,
$$
(\Delta_L \tens_{R} \id_\sH)\circ \Delta_R=(\id_\sH\tens_{L} \Delta_R)\circ \Delta_L,
\qquad
(\Delta_R \tens_{L} \id_\sH)\circ \Delta_L=(\id_\sH\tens_{R} \Delta_L)\circ \Delta_R;
$$
\item the $R\ot L$-bilinearity of the antipode,
$$
S\big(t_L(l) h t_R(r)\big)=s_R(r) S(h) s_L(l),
\qquad
S\big(t_R(r) h t_L(l)\big)=s_L(l) S(h) s_R(r),
$$
for all $r\in R$, $l\in L$ and $h\in \sH$;
\item antipode axioms,
$$
\mu_\sH\circ (S\tens_{L} \id_\sH)\circ \Delta_L=s_R\circ \eps_R,
\qquad 
\mu_\sH\circ (\id_\sH\tens_{R} S)\circ \Delta_R=s_L\circ \eps_L.
$$
\end{blist}
\end{Def}

Note that the axiom (b) in Definition~\ref{2:def.BS} can be stated since condition (a) implies that $\Delta_L$ is $R$-bilinear and $\Delta_R$ is $L$-bilinear. Furthermore, Definition~\ref{2:def.BS} implies that $R$ is isomorphic to the opposite algebra of $L$,
 so that  $L=A$ and $R=\bar{A}$ with no loss of generality. It is also proven
in \cite[Proposition~2.3]{b-g05} that the antipode in a Hopf algebroid is an antimultiplicative and an anticomultiplicative map (in the latter case both coproducts must be used).  B\"ohm and Szlach\'anyi show that their definition 
is not equivalent to that of Lu, by constructing an explicit example of a Hopf algebroid which is not a Lu-Hopf algebroid (cf.\ \cite[Example~4.9]{bs03}).   Many constructions familiar in Hopf algebra can be extended to Hopf algebroids (see \cite{b-g08} for a review). From the geometric point of view, the discussion of strong connections in Hopf algebroid extensions and the construction of the 
relative Chern-Galois character in \cite{bb05} might be of particular interest.

Yet another definition of a Hopf algebroid has been proposed by Day and
 Street in \cite{ds} in the framework of  monoidal 
   bicategories. When specialised to the monoidal bicategory of 
   $k$-algebras,  bimodules and bimodule maps, this definition  is shown in \cite[Theorem~4.7]{bs03} to be very close to (but slightly more restrictive than) the 
B\"ohm-Szlach\'anyi definition.

There are many examples of Hopf algebroids in the sense of Definition~\ref{2:def.BS}.
It is shown in \cite{en01} that weak Hopf algebras (with a bijective antipode) of 
B\"ohm, Nill and Szlach\'anyi \cite{bns99} are examples of bialgebroids. This result is 
then refined in \cite{s-p02}, where it is shown that weak bialgebras are bialgebroids, 
while weak Hopf algebras are Hopf $\times_A$-algebras in the sense of Schauenburg.
In \cite[Example~4.8]{bs03}, weak Hopf algebras are shown to be Hopf algebroids. Furthermore, 
it is shown in \cite{ks03} that one can associate a bialgebroid to any depth-2 algebra 
extension. In case a depth-2 extension is Frobenius, the corresponding bialgebroid is a Hopf algebroid \cite{bs03}. A class of Lu-Hopf algebroids associated  to braided commutative algebras is 
constructed in \cite{bm02}. These are shown to be Hopf algebroids in \cite[Example~4.14]{bs03} (provided the antipodes are bijective).
Other examples of Hopf algebroids in the sense of Definition~\ref{2:def.BS} considered in \cite{bs03} include  quantum torus and the Connes-Moscovici  twisted Hopf algebras 
 \cite{cm01}, \cite{cm04} (cf.\ \cite{b-g03}).
 On the mathematical physics side, quantum groupoids arise from solutions to quantum dynamical Yang-Baxter equations (in the guise of {\em dynamical quantum groups} cf.\ \cite{ev98}) as well as symmetries of certain models in statistical physics (in the guise of {\em face algebras} cf.\ \cite{h-t93}). From the point of view of Galois-type extensions, the 
following example appears to be most significant.

\begin{Ex} \label{2:thm.coring.gauge}
Given 
a coalgebra-Galois $C$-extension $B\inc P$  with  
the translation map
 $\tau$,  consider
 a $B$-bimodule 
$$
\sC = \{ {\sum}_i\, p^i\ut \tilde{p}^i\in
P\otimes P\; |\; {\sum}_i\, p^i\sw 0\ut \tau(p^i\sw 1)
\tilde{p}^i = {\sum}_i\, p^i\ut \tilde{p}^i\otimes_B 1\}.
$$
If  $P$ is faithfully flat as a left 
$B$-module, then $\sC$ is a
$B$-coring with the coproduct and counit
$$
\DC ({\sum}_i\, p^i\ut \tilde{p}^i) = {\sum}_i\, p^i\sw 0\ut
\tau (p^i\sw 1) \ut \tilde{p}^i, \quad \eC ({\sum}_i\, 
a^i\ut
\tilde{p}^i) = {\sum}_i\, p^i\tilde{p}^i.
$$
The $B$-coring $\sC$ is called the {\em Ehresmann} or {\em gauge} 
coring associated to a (left faithfully flat) coalgebra-Galois $C$-extension $B\inc P$.

The Ehresmann coring associated to a (left faithfully flat) Hopf-Galois $H$-extension $B\inc P$ is a $B$-bialgebroid with the algebra structure of a subalgebra of  $P^{e}$,  the  source map $s: p\mapsto p\ut 1$ 
and the target map $t: p\mapsto 1\ut \bar{p}$.
\end{Ex}

For details of the proof we refer to \cite[Section~34]{bw03}. The Ehresmann bialgebroid corresponding to a Hopf-Galois extension was constructed in \cite{s-p98}. Both the Ehresmann coring and bialgebroid can be seen as a noncommutative version of the Ehresmann or gauge groupoid that can be associated to any principal bundle (cf.\ \cite[Example~5.1(8)]{mm03}).

\section{Connections and associated modules}

Recall that, from a geometric point of view, coalgebra-Galois extensions can be
viewed as noncommutative principal bundles. Motivated by
this relationship, one can develop geometric-type objects such as
connections, sections of associated vector bundles, etc. These have
physical meaning of gauge potentials (connections) or matter fields
(sections). The aim of this section is to present differential-geometric aspects of coalgebra-Galois and principal extensions.

We begin in Section~\ref{2:subs.con} by introducing connections in coalgebra-Galois extensions. We give various descriptions of such connections. We then proceed in Section~\ref{2:subs.ass} to define modules associated to coalgebra-Galois extensions, which  can be understood as modules of sections of noncommutative associated
vector bundles. In Section~\ref{2:subs.gau} gauge transformations of coalgebra-Galois extensions are discussed. Finally, in Section~\ref{2:subs.str} strong connections on principal extensions are studied. These are objects that induce connections on associated modules, and thus guarantee the projectivity of the latter.  This makes the desired link with $K$-theory.

\note{Although the theory of connections can be
developed for any coalgebra-Galois extensions it is much easier (and
fully sufficient from the point of view of examples which we have in
mind) to describe it for  $e$-coaugmented $C$-Galois extensions
$e$-coaugmented $C$-extension $B\inc P$. Thus we restrict ourself to such 
extensions only. Further
on we will make an additional assumption about these extensions.
}

\subsection{General coalgebra-Galois extensions}

\subsubsection{Connections}\label{2:subs.con}

\note{
Connections are differential-geometric objects. Thus to define them one
needs first to develop differential geometry or differential structures
on coaugmented coalgebra-Galois extensions. Again, this can be done, and
connections within such framework can be defined (cf.\ \cite{bm93} for
Hopf-Galois extensions and \cite{bm98a} for coalgebra-Galois
extensions, see also an extremely interesting recent paper
\cite{m-s02}),
but in doing so one encounters several technical difficulties, that are
not present in the classical case. For example, it is not clear, which
differential structure should be chosen. Classically this is not the
problem, since although there is a plethora of differential structures
even in this case, one studies only the standard (anti-)commutative
structure, which is considered to be {\em the} differential structure. Of
course, there is a clear motivation for this choice provided by
the infinitesimal picture. In the general, noncommutative case, where
there is no infinitesimal picture, the choice of a differential
structure is not apparent. Second problem arises from the fact that
there is no obvious definition of a Lie algebra of a structure coalgebra
$C$ in a coalgebra-Galois extension. Classically, the fact that the
usual vector fields tangent to a Lie group form a Lie algebra is very
important, and again can be taken as a convincing evidence for the
standard choice of the differential structure. In the noncommutative
case it is probably better to leave the choice of a differential
structure to the point at which one starts working with
examples with concrete applications in mind.
}

If an algebra $P$ is a comodule of a coalgebra $C$, we would like
 to establish covariance properties of $\Op$, i.e., we wish to define
a $C$-coaction on $\Op$. Although in general the coaction
$\Delta_P:P \ra P \otimes C$ does not extend to $\Op$,
it turns out that such an extension
is possible for an entwining structure $\Ep$ with $P \in \Mp$.
\begin{Prop}(\cite[Proposition~2.2]{bm00}) \label{2:prop.univ.cov}
Consider an entwining structure $\Ep$ with $P \in \Mp$, and
define a map
\[\label{2:dsr}
\Delta_{P \otimes P}: P \otimes P \lra P \otimes P \otimes C \ ,
\ p \otimes p^{\prime} \longmapsto p\sw 0 \otimes \psi(p\sw 1 \otimes
p^{\prime}).
\]
The homomorphism $\Delta_{P \otimes P}$ 
is a $C$-coaction on $P \otimes P$, and it restricts to a coaction
$\Delta_{\Op}: \Op \ra \Op \otimes C \ .$
\end{Prop}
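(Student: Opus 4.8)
The plan is to work in the $\alpha$-notation $\psi(c\ot p)=p_\alpha\ot c^\alpha$ of Definition~\ref{2:ES}, in which the candidate coaction reads
$$
\Delta_{P\ot P}(p\ot p')=p\sw0\ot p'_\alpha\ot p\sw1^\alpha .
$$
Three facts carry the whole argument: the four bow-tie relations of Definition~\ref{2:ES}, the comodule axioms for $\Delta_P$, and the entwined-module identity $\Delta_P(pp')=p\sw0 p'_\alpha\ot p\sw1^\alpha$, valid because $P\in\Mp$. I would reduce both assertions of the proposition to these.

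First I would check that $\Delta_{P\ot P}$ is a coaction. For counitality, applying $\id\ot\id\ot\he$ and the right triangle $a_\alpha\he(c^\alpha)=a\he(c)$ (with $a=p'$, $c=p\sw1$) reduces $p'_\alpha\he(p\sw1^\alpha)$ to $p'\he(p\sw1)$, and the comodule counit law $p\sw0\he(p\sw1)=p$ then gives $p\ot p'$. For coassociativity I would evaluate $(\Delta_{P\ot P}\ot\id_C)\circ\Delta_{P\ot P}$ and $(\id_{P\ot P}\ot\Delta)\circ\Delta_{P\ot P}$ on $p\ot p'$. In the former the second application of $\psi$ is already nested, and the comodule coassociativity of $\Delta_P$ puts it in a normal form carrying two Greek indices; in the latter the right pentagon $a_\alpha\ot c^\alpha\sw1\ot c^\alpha\sw2=a_{\beta\alpha}\ot c\sw1^\alpha\ot c\sw2^\beta$ (again with $a=p'$, $c=p\sw1$) produces the same normal form, the two expressions agreeing after interchanging the names of the two summation indices. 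The single point demanding attention is this index bookkeeping; there is no structural obstruction.

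The substantive assertion is that $\Delta_{P\ot P}$ restricts to $\Op=\ker(m\colon P\ot P\to P)$, and here the entwined-module hypothesis is exactly what is needed. The key identity is
$$
(m\ot\id_C)\circ\Delta_{P\ot P}=\Delta_P\circ m ,
$$
which is immediate from the formula above, since $(m\ot\id_C)(p\sw0\ot p'_\alpha\ot p\sw1^\alpha)=p\sw0 p'_\alpha\ot p\sw1^\alpha=\Delta_P(pp')$. Thus $\Delta_{P\ot P}$ maps $\ker m$ into $\ker(m\ot\id_C)$; and since $k$ is a field, $C$ is flat, so $\ker(m\ot\id_C)=(\ker m)\ot C=\Op\ot C$, giving $\Delta_{P\ot P}(\Op)\subseteq\Op\ot C$. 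Finally, that the restricted map $\Delta_{\Op}$ is itself counital and coassociative is automatic, being the restriction of the coaction already verified on $P\ot P$ to the subcomodule $\Op$. The main obstacle, such as it is, is purely notational: keeping the two $\psi$-indices straight in the coassociativity computation; the geometrically meaningful step---the restriction to one-forms---drops out of the entwined-module axiom with no effort.
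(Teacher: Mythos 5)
Your proof is correct and follows essentially the same route as the paper: counitality and coassociativity from the right triangle and right pentagon of the bow-tie diagram, and the restriction to $\Op$ via the identity $(m\ot\id_C)\circ\Delta_{P\ot P}=\Delta_P\circ m$ coming from the entwined-module hypothesis, together with flatness of $C$ over $k$. The paper's own proof is just a terser version of exactly this argument.
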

\begin{proof}
The fact that $\Delta_{P \otimes P}: P \otimes P \ra P \otimes P \otimes
C$ defines a $C$-coaction  can be
checked using the right hand
side of the bow-tie diagram in Definition \ref{2:ES}. The fact that
$\Delta_{P \otimes P}$ restricts to the coaction $\Delta_{\Op}: \Op \ra \Op \otimes C$
 follows from the left hand side of the bow-tie diagram. Explicitly,
since for all
$\sum_i p_i \otimes p^{\prime}_i \in \Op$,  $\sum_i p_i  p^{\prime}_i
=0$, we obtain
\[
\sum_i p_{i (0)} \psi(p_{i (1)} \otimes p^{\prime}_i) =
\Delta_P(\sum_i p_i p^{\prime}_i)= \Delta_P(0)=0.
\]
Hence $\Delta_{P\otimes P}(\Op)\subseteq \Op\otimes C$, as required.
\end{proof}

Thus, in the case of a canonical entwining structure associated to a
coalgebra-Galois 
$C$-extension $B\inc P$, both $P \otimes P$ and $\Op$ become right
$C$-comodules. 
This observation is crucial for the following:
\begin{Def}\label{2:def.con}
Let $B\inc P$ be a coalgebra-Galois 
 $C$-extension. 
\begin{zlist}
\item A {\em connection} 
is a left $P$-module projection $\Pi: \Op \ra \Op$ such that
\begin{rlist}
\item $\ker(\Pi)=P(\Omega^1B)P$ (horizontality),
\item $\Pi \circ \d: P \ra \Op \ $ is a right $C$-comodule map
(covariance property).
\end{rlist}
\item A {\em connection form} is a homomorphism
$\omega: C \ra \Op$ such that
\begin{rlist}
\item $1\sw0\omega(1\sw1)=0$,
\item for all $c\in C$, $(\widetilde{\can} \circ \omega)(c)= 1 \otimes c - 1\sw0 \ot 1\sw1\varepsilon(c)$,
\item $(\id\ot\psi)\ci(\psi\ot\id)\ci(\id \otimes \omega)\ci\hD
=(\omega\ot\id)\ci\hD$.
\end{rlist}
\end{zlist}
\end{Def}

In view of the fact that the definition of a coalgebra-Galois extension
is equivalent the exactness of the sequence (\ref{2:sequence}), we can interpret
connections as left-linear splittings
of the sequence (\ref{2:sequence}) with the covariance property Definition~\ref{2:def.con}(1)(ii). This leads to the following (cf.\ \cite[Proposition~3.3]{bm00}):
\begin{Thm}\label{2:con.form}
Let $B\inc P$ be coalgebra-Galois 
 $C$-extension. Write $\tau(c) = c\su 1\ot_B c\su 2$ for the action of the translation map on any $c\in C$. The following formulae
\begin{equation}\label{2:omegaPi}
\Pi\longmapsto\ho_\Pi,\;\;\; \ho_\Pi(c)=c\su 1\Pi(\d c\su 2),
\end{equation}
\begin{equation}\label{2:Piomega}
\ho\longmapsto\Pi^\omega,\;\;\; \Pi^\omega(p\d p')=pp'\sw0\ho(p'\sw1),
\end{equation}
define mutually inverse maps between the space of connections $\Pi$ and the  space
of connection forms $\omega$.
\end{Thm}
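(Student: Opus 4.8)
The plan is to check that $\omega_\Pi$ is always a connection form and $\Pi^\omega$ always a connection, and then that the two constructions are mutually inverse. Two structural observations organise the argument. First, by Proposition~\ref{2:diagram} the horizontality condition $\ker\Pi=P(\Omega^1B)P$ identifies $\ker\Pi$ with $\ker(\widetilde{\can}|_{\Op})$; since $\Pi$ is idempotent, $\im(\id-\Pi)=\ker\Pi$, so $\widetilde{\can}\ci(\id-\Pi)=0$, that is $\widetilde{\can}\ci\Pi=\widetilde{\can}$ on $\Op$. Second, both defining formulae factor neatly: writing $\mu_\omega\colon P\ot C\ra\Op$, $p\ot c\mapsto p\,\omega(c)$, property (i) of a connection form yields $\Pi^\omega=\mu_\omega\ci\widetilde{\can}|_{\Op}$, while the assignment $\Phi\colon q\ot_B r\mapsto q\,\Pi(\d r)$ is well defined on $P\ot_B P$ (for $b\in B$ one has $\d(br)=b\,\d r+(\d b)r$ with $(\d b)r\in(\Omega^1B)P\inc\ker\Pi$) and $\omega_\Pi=\Phi\ci\tau$.

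First I would verify that $\omega_\Pi$ is a connection form. Property (i) is immediate from Lemma~\ref{2:translation}(iii) at $p=1$, which under $\Phi$ gives $1\sw0\,\omega_\Pi(1\sw1)=\Pi(\d 1)=0$. For property (ii) I use $\widetilde{\can}\ci\Pi=\widetilde{\can}$ and left $P$-linearity of $\widetilde{\can}$ to obtain $\widetilde{\can}(\omega_\Pi(c))=c\su1\,\widetilde{\can}(\d c\su2) = c\su1{c\su2}\sw0\ot{c\su2}\sw1-c\su1 c\su2 1\sw0\ot 1\sw1$; then Lemma~\ref{2:translation}(i) collapses the first summand to $1\ot c$ and Lemma~\ref{2:translation}(ii) collapses the second to $1\sw0\ot 1\sw1\he(c)$, giving exactly the required identity.

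Next I would verify that $\Pi^\omega=\mu_\omega\ci\widetilde{\can}$ is a connection. It is left $P$-linear as a composite of left $P$-linear maps. Idempotency follows by computing $\widetilde{\can}\ci\mu_\omega$ from property (ii) and discarding the resulting $\he$-term by property (i), which gives $\Pi^\omega\ci\mu_\omega=\mu_\omega$ and hence $\Pi^\omega\ci\Pi^\omega=\Pi^\omega$. For horizontality, $P(\Omega^1B)P=\ker(\widetilde{\can}|_{\Op})\inc\ker\Pi^\omega$ is clear, and conversely for $\eta\in\Op$ one has $\widetilde{\can}\eta\in P\ot C^+$, so $\he$ annihilates its coalgebra legs and therefore $\widetilde{\can}(\eta-\Pi^\omega\eta)=\widetilde{\can}\eta-\widetilde{\can}\mu_\omega\widetilde{\can}\eta=0$; thus $\im(\id-\Pi^\omega)\inc P(\Omega^1B)P$, which together with idempotency forces $\ker\Pi^\omega=P(\Omega^1B)P$.

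That the two constructions are mutually inverse is the formal heart, and it is short. Applying $\mu_\omega$ to Lemma~\ref{2:translation}(i) gives $\omega_{\Pi^\omega}(c)=c\su1{c\su2}\sw0\,\omega({c\su2}\sw1)=\omega(c)$, and applying $\Phi$ to Lemma~\ref{2:translation}(iii) gives $\Pi^{\omega_\Pi}(\d p')=p'\sw0{p'\sw1}\su1\Pi(\d({p'\sw1}\su2))=\Pi(\d p')$, whence $\Pi^{\omega_\Pi}=\Pi$ by left $P$-linearity. The main obstacle is what these formal manipulations do not see: the covariance condition Definition~\ref{2:def.con}(1)(ii) for $\Pi^\omega$ and the colinearity axiom (2)(iii) for $\omega_\Pi$. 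These are dual and genuinely computational. Reading (2)(iii) as the statement that $\omega$ intertwines $\hD$ with the iterated entwining $(\id\ot\psi)\ci(\psi\ot\id)$ on $P\ot P$, I would derive it from the covariance of $\Pi\ci\d$ transported through $\tau$, using Lemma~\ref{2:translation}(iv) together with the bow-tie relations of Definition~\ref{2:ES} and the entwined-module identity $\Delta_P(pa)=p\sw0 a_\alpha\ot p\sw1^\alpha$ for $P\in\Mp$; the reverse implication, yielding covariance of $\Pi^\omega\ci\d$ from (2)(iii), runs along the same index-heavy lines. I expect this covariance bookkeeping to be the only step requiring real care.
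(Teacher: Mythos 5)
Your proposal is correct and, at the level of overall strategy, it follows the same route as the paper: an axiom-by-axiom verification driven by the Translation Map Lemma~\ref{2:translation}, with the mutual-inverse claim reduced to parts (i) and (iii) of that lemma. Where you differ is in the local organisation, and in two places your version is genuinely cleaner. The observation that horizontality plus idempotency force $\widetilde{\can}\ci\Pi=\widetilde{\can}$ on $\Op$ lets you read off axiom (2)(ii) for $\ho_\Pi$ in one line from Lemma~\ref{2:translation}(i)--(ii); the paper instead introduces an auxiliary map $\sigma_\Pi:P\ot C^+\ra\Op$, proves it is a monomorphism, and only then extracts $\widetilde{\can}\ci\sigma_\Pi=\id$ from $\Pi^2=\Pi$. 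Likewise, your factorisation $\Pi^\omega=\mu_\omega\ci\widetilde{\can}|_{\Op}$ makes idempotency of $\Pi^\omega$ a two-line consequence of axioms (2)(i)--(ii), a point the paper's proof does not spell out even though being a projection is part of Definition~\ref{2:def.con}(1). Your horizontality and mutual-inverse arguments coincide with the paper's in substance.

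The one step you defer is the one where the paper does its real work: axiom (2)(iii) for $\ho_\Pi$. Your list of ingredients is almost, but not quite, sufficient. Beyond Lemma~\ref{2:translation}(iv), the bow-tie relations and the colinearity of $\Pi\ci\d$, the computation needs two further inputs that you should name explicitly: the transitivity identity $c\su 1\ot_B1\ot_Bc\su 2=c\sw 1\su 1\ot_Bc\sw 1\su 2c\sw 2\su 1\ot_Bc\sw 2\su 2$ (Lemma~\ref{2:translation}(v), re-derived in the paper as (\ref{2:trans.6})), and the explicit formula $\psi(c\ot p)=\can(\tau(c)p)$ for the canonical entwining from (\ref{2:canent}). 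The latter is what converts the output of the left pentagon into translation-map legs, so that (\ref{2:trans.6}) can telescope $(\id\ot\psi)\ci(\psi\ot\id)$ applied to $c\sw 1\ot\ho_\Pi(c\sw 2)$ down to $c\su 11\sw 0\,(\id\ot\psi)\ci(\psi\ot\id)(1\sw 1\ot\Pi(\d c\su 2))=c\su 1\Delta_{\Op}(\Pi(\d c\su 2))$, after which colinearity of $\Pi\ci\d$ and Lemma~\ref{2:translation}(iv) finish the argument. Conversely, the covariance of $\Pi^\omega\ci\d$ is not ``the same index-heavy lines'' but the easy direction: a three-line application of the entwined-module identity together with axiom (2)(iii). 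So the proposal is sound, but to count as a complete proof it must carry out the (2)(iii) computation, for which Lemma~\ref{2:translation}(v) and the formula (\ref{2:canent}) must be added to your toolkit.
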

\begin{proof}
Let $\omega$ be a connection form. The map $\Pi^\omega$ is well 
defined because, for all $p\in P$, $\Pi^\omega(p\d(1)) = p1\sw 0\omega
(1\sw 1) =0$, by Definition~\ref{2:def.con}(2)(i). For any $p,p'\in P$, $b\in B$,
$$
\Pi^\omega(p\d(b)p')= \Pi^\omega(p\d (bp')) - \Pi^\omega(pb\d(p')) = p(bp')\sw 0 \omega
((bp')\sw 1) - pbp'\sw 0\omega(p'\sw 1) =0,
$$
as $\Delta_P$ is left $B$-linear.
On the other hand, if $\sum_i p^i\d( q^i)\in
\Ker\Pi^\omega$, then using Definition~\ref{2:def.con}(2)(ii) we can compute
$$
0 = \sum_i\widetilde{\can}(p^iq^i\sw 0\omega(q^i\sw 1)) = \sum_i(p^iq^i\sw 0
\otimes q^i\sw 1
- p^iq^i1\sw 0\otimes 1\sw 1) = \sum_i\widetilde{\can}(p^i\d( q^i)).
$$
Since $\Ker\widetilde{\can} = P(\Omega^1B) P$, we deduce that $\Ker \Pi^\omega \subseteq P
(\Omega^1 B) P$,
i.e., $\Ker\Pi^\omega = P(\Omega^1B)P$. Finally,  write $\psi^2$ for $(\id\ot\psi)\ci(\psi\ot\id)$, and compute, for all $p\in P$,
\begin{eqnarray*}
\Delta_{\Omega^1P}(\Pi^\omega(\d( p))) & = & p\sw 0\psi^2(p\sw 1
\otimes \omega(p\sw 2)) \\
& = & p\sw 0\omega(p\sw 1)\otimes p\sw 2 \qquad
\qquad \mbox{\rm (by Definition~\ref{2:def.con}(2)(iii))}\\
& = & \Pi^\omega(\d( p\sw 0))\otimes p\sw 1,
\end{eqnarray*}
so that $\Pi^\omega\circ\d$ is a right $C$-comodule map, as required.

Conversely, let $\Pi$ be a connection  in a coalgebra-Galois $C$-extension 
$B\inc P$. In particular, $\Pi$ is left $P$-linear and $\Ker\Pi = P (\Omega^1B)P$, 
so that, for all $b\in B$ and $p\in P$,
$$
\Pi(\d(bp)) = \Pi((\d b) p) + b\Pi(\d p) = b\Pi(\d p).
$$
Therefore, the map $\omega_\Pi$ in equation (\ref{2:omegaPi}) is well defined 
(despite the fact that the differential $\d$ is not a left $B$-module map). Using the 
Translation Map Lemma Lemma~\ref{2:translation}(iii), one immediately finds
$$
1\sw 0\omega_\Pi(1\sw 1) = 1\sw 0 1\sw 1\su 1\Pi(\d 1\sw 1\su 2) = \Pi(\d 1) =0.
$$
Hence $\omega_\Pi$ satisfies condition Definition~\ref{2:def.con}(2)(i).

Next, define a left $P$-linear map $\sigma_\Pi: P\ot C^+\to \Omega^1P$, 
$p\ot c \mapsto p\omega_\Pi(c)$. Again employ Lemma~\ref{2:translation}(iii) 
to note that, for all $p,p'\in P$,
$\Pi(p\d p') = pp'\sw 0\omega_\Pi(p'\sw 1)$. A short calculation  reveals that
$ \Pi (p\d p') = \sigma_\Pi(\widetilde{\can}(p \d p'))$, i.e.,
$$
\Pi = \sigma_\Pi\circ \widetilde{\can}.
$$
Since $\widetilde{\can}$ is an epimorphism and, by the assumption on $\Pi$ and the exactness of sequence (\ref{2:sequence}), $\Ker\Pi = P (\Omega^1B)P = \Ker\widetilde{\can}$, this implies that the map $\sigma_\Pi$ is a monomorphism. Indeed, suppose $\sigma_\Pi(x) =0$ for some $x\in P\ot C^+$. Then there exists $y\in \Omega^1P$ such that $x=\widetilde{\can}(y)$. Therefore,
$$
\Pi(y) = \sigma_\Pi(\widetilde{\can}(y)) = \sigma_\Pi(x) =0,
$$
i.e., $y\in \Ker\Pi = \Ker\widetilde{\can}$, so that
 $x = \widetilde{\can}(y) =0$. Now, as $\Pi$ is assumed to be a projection, we have
$$
\sigma_\Pi\circ \widetilde{\can} = \Pi =\Pi\circ\Pi = \sigma_\Pi\circ \widetilde{\can}\circ\sigma_\Pi\circ \widetilde{\can}.
$$
This means that $\widetilde{\can}\circ\sigma_\Pi = \id_{P\ot C^+}$, for $\sigma_\Pi$ is a monomorphism and $\widetilde{\can}$ is an epimorphism. Therefore, in view of the fact 
that $\omega_\Pi$ satisfies Definition~\ref{2:def.con}(2)(i), we obtain, for all $c\in C$,
\begin{eqnarray*}
\widetilde{\can}(\omega_\Pi(c)) &=& \widetilde{\can}(1\sw 0
\omega_\Pi(c\eps(1\sw 1) - \eps(c) 1\sw 1)) \\
&=& \widetilde{\can}(\sigma_\Pi(1\sw 0\ot (c\eps(1\sw 1) - \eps(c) 1\sw 1))) \\
&=& 1\sw 0\ot c\eps(1\sw 1) - 1\sw 0 \ot \eps(c) 1\sw 1 = 1\ot c - \eps(c)1\sw 0\ot 1\sw 1.
\end{eqnarray*}
Thus $\omega_\Pi$ satisfies condition Definition~\ref{2:def.con}(2)(ii) as well.

To prove that $\omega_\Pi$ satisfies condition Definition~\ref{2:def.con}(2)(iii), first apply $\id\ot_B\id\ot\tau$ to Lemma~\ref{2:translation}(iv), multiply the second and third factors, and use 
Lemma~\ref{2:translation}(iii) to deduce that, for all $c\in C$,
\begin{equation}\label{2:trans.6}
c\su 1\ot_B1\ot_B c\su 2 = c\sw 1\su 1\ot_Bc\sw 1\su 2 c\sw 2\su 1\ot_B c\sw 2\su 2.
\end{equation}
This then facilitates the following calculation (in which $\psi^2$ is the shorthand for $(\id\ot\psi)\ci(\psi\ot\id)$, as before):
\begin{eqnarray*}
\psi^2(c\sw 1\ot \omega_\Pi(c\sw 2)) &=& \psi^2(c\sw 1\ot c\sw 2\su 1\Pi(\d c\sw 2\su 2)) \\
&=& c\sw 2\su 1_\alpha \psi^2(c\sw 1^\alpha\ot \Pi(\d c\sw 2\su 2)) \qquad \mbox{(left pentagon for $\psi$)}\\
&=& c\sw 1\su 1(c\sw 1\su 2 c\sw 2\su 1)\sw 0\psi^2((c\sw 1\su 2 c\sw 2\su 1)\sw 1 \ot \Pi(\d c\sw 2\su 2)) \\
&=& c\su 11\sw 0\psi^2(1\sw 1 \ot \Pi(\d c \su 2)) \qquad \mbox{(equation~(\ref{2:trans.6}))}\\
&=& c\su1\Delta_{\Omega^1P}(\Pi(\d c \su 2)) \qquad \mbox{(def.\ of coaction $\Delta_{\Omega^1P}$)}\\
&=& c\su 1(\Pi(\d c \su 2\sw 0)) \ot c \su 2\sw 1\qquad \mbox{(colinearity of $\Pi\circ\d$)}\\
&=& c\sw 1\su 1(\Pi(\d c\sw 1 \su 2)) \ot c \sw 2\qquad \mbox{(Lemma~\ref{2:translation}(iv))}\\
&=& \omega_\Pi(c\sw 1)\ot c\sw 2.
\end{eqnarray*}
The third equality follows from the definition of the canonical entwining map in equation (\ref{2:canent}). Thus $\omega_\Pi$ satisfies condition Definition~\ref{2:def.con}(2)(iii), and we conclude that $\omega_\Pi$ is a connection form. 

Finally, a simple calculation that uses the Translation Map Lemma~\ref{2:translation} reveals that the maps defined by equations (\ref{2:omegaPi}) and (\ref{2:Piomega}) are inverses of each other. This completes the proof.
\end{proof}


The connection and connection form are constructed in analogy with their classical
counterparts that are adapted to de Rham differential forms. In our setting of
the universal calculus, it appears more convenient to use the following formulation
of the concept of connection.
\begin{Def}\label{2:def.lift}
Let $B\inc P$ be a \cge. A {\em connection lifting} is a homomorphism
$\ell:C\ra P\ot P$ such that
\begin{rlist}
\item $1\sw0\ell(1\sw1)=1\ot1$,
\item $\widetilde{can} \circ \ell= 1 \otimes \id $,
\item  $(\id\ot\psi)\ci(\psi\ot\id)\ci(\id \otimes \ell)\ci\hD
=(\ell\ot\id)\ci\hD$. 
\end{rlist}
\end{Def}
It is straightforward to verify that the connection lifting and connection form
are related by the following simple formula:
\[\label{2:lomega}
\ell=\ho+1\ot1\eps.
\]
This constant shift of a connection form allows one to view a connection as
a lifting of the translation map~$\tau$. Indeed, one can equivalently write the
condition Definition~\ref{2:def.lift}(ii) as $\pi_B\ci\ell=\tau$, where $\pi_B:P\ot P\ra P\ot_BP$ is the canonical
surjection.
\begin{Rem}\label{2:tom}
In an $e$-coaugmented  coalgebra-Galois $C$-extension $B\inc P$,
the external differential commutes with the coaction,
$(\d \otimes \id_C)\circ \Delta_P=\Delta_{\Op} \circ \d$. Indeed,
 for all $p\in P$, we have
\begin{eqnarray*}
\d(p)\sw 0\otimes d(p)\sw 1 &=& 1\otimes p_\alpha \otimes e^\alpha - p\sw 0
\otimes 1_\alpha\otimes p\sw 1^\alpha \\
&=& 1\otimes p\sw 0\otimes p\sw 1 -
p\sw 0\otimes 1\otimes p\sw 1 = \d(p\sw 0)\otimes p\sw 1.
\end{eqnarray*}
Here we used the left triangle in the bow-tie diagram and the
fact that $P$ is an entwined module, i.e., 
\[
p_\alpha\otimes e^\alpha = 1\sw 0p_\alpha\otimes 1\sw 1^\alpha = (p1)\sw
0\otimes p\sw 1 = \Delta_P(p).
\]
Thus, in this case,  the universal differential calculus  
 $\Op$ is a {\em
right-covariant differential calculus} on $P$ (cf.\ \cite{w-sl89} for
the definition of a right-covariant calculus). Moreover, we can now define
a {\em covariant differential} $D=\d-\Pi\ci\d$ which is right $C$-colinear.
\end{Rem}

The definition of a connection Definition~\ref{2:def.con} tries to follow
the classical definition of a connection in principal bundles, albeit in a
dual language. Obviously, not all the classical properties of
connections can be recovered in this general algebraic setup. For
example, if $P$ is an algebra of functions on a total space $X$ of a
classical principal bundle,  $B$ is an algebra of functions on a base
manifold $M$, and $\Omega^1(B)$ is the classical space of $1$-forms on
$M$, then horizontal forms have several equivalent descriptions
$$
\Omega^1_{hor}(P) = P(\Omega^1(B))P = P(\Omega^1(B)) =(\Omega^1(B))P.
$$
Obviously, no such relationships exist in a general noncommutative
setting, even more so in the case of the universal differential
calculus. In a non-universal calculus case, and in particularly nice
examples (e.g., in the case of the quantum Hopf fibering with the
$3D$-calculus discussed in \cite{bm93}), the above equalities can be
obtained. In a general situation with the universal differential
calculus, in order to come closer to the classical geometric intuition,
one is forced to consider also a stronger version of the  notion of a
connection (cf.\ \cite{h-pm96}).

\subsubsection{Associated modules}\label{2:subs.ass}

In noncommutative differential geometry \cite{c-a94} vector bundles
are identified with finitely generated projective modules, via the
extrapolation of the classical Serre-Swann theorem which states that
the category of finite dimensional vector bundles over a compact Hausdorff
space $M$ is equivalent to the category of finitely generated projective
modules over the algebra of functions $C(M)$. The equivalence is given
by assigning to each vector bundle its the module of continuous sections.

In classical geometry, all vector bundles (and hence projective modules of
sections) arise as bundles associated to
principal bundles. More precisely, consider a principal bundle with a total
space $X$, the structure group $G$, and the base space $M=X/G$. Take
any finite-dimensional representation of $G$, i.e., a finite-dimensional
vector space $V$ with a right $G$-action. Then sections of an
associated vector bundle with a standard fibre $V$ are identified with
vector-valued maps $\sigma: X\ra V$ such that, for all $x\in X$ and $g\in
G$, $\sigma(x\cdot g) = \sigma(c)\cdot g$.  Given a vector bundle $E$ with a standard
fibre $k^n$, one can construct a $GL_n(k)$-principal bundle (the frame
bundle) such that $E$ is isomorphic to a bundle associated to this
principal bundle. Since coalgebra-Galois extensions are to be
interpreted as noncommutative principal bundles, it makes sense to study
associated modules, which are to be interpreted as vector bundles. This
construction can be performed for any coalgebra-Galois extension, but
we restrict our attention to those extensions which lead to bona fide
noncommutative vector bundles, i.e., finitely generated projective
modules. 

Recall
that elements of $\Hom^C(V,P)$ are  linear maps $f:V\ra P$ such that
$\Delta_P\circ f = (f\otimes \id)\circ \Delta_V$, where $\Delta_V
:V\ra V\otimes C$ is a coaction of $C$ an $V$. 
\begin{Def}\label{2:def.sec}
For a   coalgebra-Galois   $C$-extension $B\inc P$ and a right
$C$-comodule $V$, the {\em module of sections} of a bundle associated to
the $C$-extension $B\inc P$ with standard fibre $V$ is defined as
the space of right $C$-colinear maps $E:=\Hom^C(V,P)$.
\end{Def}

 Since $\Delta_P$ is by
definition a left $B$-linear map we immediately obtain
$E$ is a left $B$-module with the action $(b\cdot f)(v) = bf(v)$.
 Recall that for a finite-dimensional comodule $V$, the associated module
$E=\Hom^C(V,P)$ is isomorphic to $P \square_C V^*$ in $_B{\Mc}$.
Note that, if $P$ is equvariantly projective, then $E$ is projective.
Indeed, $s \otimes \id: P \Box_C V^* \ra B \otimes P \Box_C V^*$ defines
a splitting of the multiplication map. Furthermore, one can prove:

\begin{Lem}
Let $B \subset P$ be a symmetric (bijectivity of the canonical entwining assumed)
 Galois $C$-extension with $P$ faithfully
flat in $\M_B$ and $V$ a left $C$-comodule with $\dim_k V < \infty$.
Then the associated module $E=\Hom^C(V,P)$ is finitely generated as a left
$B$-module.
\end{Lem}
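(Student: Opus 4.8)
The plan is to establish finite generation by faithfully flat descent, after first trivialising the associated module upon base change to $P$. Since $V$ is finite dimensional, I would use the identification recalled just above the statement, $E=\Hom^C(V,P)\cong P\,\Box_C V^{*}$ as left $B$-modules, where $V^{*}$ is the (finite dimensional) dual comodule of the opposite handedness. Thus it suffices to produce finitely many generators of $P\,\Box_C V^{*}$ over $B$. The strategy is to compute $P\otimes_B E$, show it is finitely generated over $P$ (indeed free of finite rank), and then to descend this property to $E$ using the hypothesis that $P$ is faithfully flat as a right $B$-module.

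For the base-change computation I would write the cotensor product as a kernel, $P\,\Box_C V^{*}=\ker\big(P\otimes V^{*}\to P\otimes C\otimes V^{*}\big)$, namely the difference of the map induced by $\Delta_P$ and the one induced by the coaction on $V^{*}$. Because $P$ is flat as a right $B$-module, the functor $P\otimes_B(-)$ is exact and so commutes with this kernel, giving $P\otimes_B E\cong (P\otimes_B P)\,\Box_C V^{*}$, where the cotensor now uses the $C$-coaction $\id_P\otimes_B\Delta_P$ on the second tensorand. The Galois hypothesis enters exactly here: the canonical map $\can\colon P\otimes_B P\to P\otimes C$ of Definition~\ref{2:def.cgalois} is a bijective right $C$-comodule map, intertwining $\id_P\otimes_B\Delta_P$ with $\id_P\otimes\Delta$, so it induces an isomorphism $(P\otimes_B P)\,\Box_C V^{*}\cong (P\otimes C)\,\Box_C V^{*}$. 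Since tensoring by the vector space $P$ over $k$ is exact, the right-hand side equals $P\otimes(C\,\Box_C V^{*})$, and the counit isomorphism $C\,\Box_C V^{*}\cong V^{*}$ finally yields $P\otimes_B E\cong P\otimes V^{*}$. As $\dim_k V<\infty$, this is a free left $P$-module of rank $\dim_k V$, in particular finitely generated over $P$.

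It remains to descend. Because $P$ is faithfully flat as a right $B$-module, the functor $P\otimes_B(-)$ reflects finite generation: choosing finitely many $P$-generators of $P\otimes_B E$, each is a finite sum $\sum_k p_k\otimes_B e_k$ and hence involves only finitely many elements of $E$; letting $E_0\subseteq E$ be the left $B$-submodule they span, the map $P\otimes_B E_0\to P\otimes_B E$ is surjective, so $P\otimes_B(E/E_0)=0$ and therefore $E/E_0=0$ by faithfulness, i.e.\ $E=E_0$ is finitely generated over $B$. The main obstacle I anticipate is bookkeeping rather than conceptual: one must keep the left/right module structures and the two $C$-comodule structures scrupulously aligned, so that (i) flatness genuinely commutes with the cotensor on the correct factor, (ii) the comodule structure transported by $\can$ is precisely the one appearing in the cotensor, and (iii) the side on which $P$ is faithfully flat matches the side used in the descent step. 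The symmetry assumption (bijectivity of the canonical entwining) is what secures the entwined-module framework in which these comodule structures behave as required.
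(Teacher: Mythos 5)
The paper actually states this lemma without proof (it is introduced only by ``one can prove''), so there is no argument in the text to measure yours against; judged on its own, your proof is correct and is the standard faithfully-flat-descent argument that the authors presumably intend. Every step checks: $E\cong P\,\Box_C V^*$ is exactly the identification recalled in the paper just before the statement; flatness of $P$ in $\M_B$ lets $P\otimes_B(-)$ pass through the kernel defining the cotensor product, giving $P\otimes_B E\cong (P\otimes_B P)\,\Box_C V^*$; the canonical map is by Definition~\ref{2:def.cgalois} a left $P$-linear, right $C$-colinear bijection intertwining $\id_P\otimes_B\Delta_P$ with $\id_P\otimes\Delta$, so it identifies this with $(P\otimes C)\,\Box_C V^*\cong P\otimes V^*$, free of rank $\dim_k V$ over $P$; and the final descent step (generators of $P\otimes_B E$ involve only finitely many elements of $E$, which span a $B$-submodule $E_0$ with $P\otimes_B(E/E_0)=0$, whence $E=E_0$ by faithfulness) is the usual one. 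Two cosmetic remarks. First, your argument nowhere uses the bijectivity of the canonical entwining, and the closing sentence attributing a role to the symmetry assumption is not substantiated; the hypothesis seems to be carried along from the surrounding discussion rather than needed here. Second, the statement says ``left $C$-comodule'' whereas Definition~\ref{2:def.sec} and the identification $E\cong P\,\Box_C V^*$ require $V$ to be a finite-dimensional \emph{right} $C$-comodule (so that $V^*$ is a left one); you silently, and correctly, adopt the right-comodule reading.
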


\subsubsection{Gauge transformations}\label{2:subs.gau}

In  classical geometry, gauge transformations arise from automorphisms of principal bundles. In parallel to this and following \cite{b-t99}, we consider:
\begin{Def}\label{2:def.auto}
Given a coalgebra-Galois $C$-extension $B\inc P$, a left $B$-linear, right $C$-colinear automorphism $F:P\to P$ such that $F(1) = 1$ is called a {\em gauge automorphism} of $B\inc P$. Gauge autmorphisms form a group with respect to the opposite composition (i.e., $FG = G\circ F$) which is denoted by $GA^C(B\inc P)$.
\end{Def}

The following theorem (cf.\ \cite[Theorem~2.4]{b-t99}) provides one with  an equivalent description of gauge automorphisms in terms of certain maps $C\to P$, which play the role of gauge transformations in the coalgebra-Galois setting.
\begin{Thm}\label{2:thm.gauge.aut}
Let $B\inc P$ be a coalgebra-Galois $C$-extension, and let $\psi$ denote the canonical entwining. Then
\begin{zlist}
\item Convolution invertible maps $f:C\to P$ such that 
\begin{blist}
\item $1\sw 0f(1\sw 1) = 1$,
\item $\psi\circ(\id\ot f)\circ\Delta = (f\otimes \id)\circ \Delta$,
\end{blist}
form a group with respect to the convolution product. This group is denoted by $GT^C(B\inc P)$ and called the {\em group of gauge transformations} of $B\inc P$.
\item The assignments
\[ \label{2:Ff}
f\mapsto F_f,\qquad F_f(p) = p\sw 0f(p\sw 1)
\]
\[ \label{2:fF}
F\mapsto f_F, \qquad f_F(c) = c\su 1F(c\su 2)
\]
define the mutually inverse isomorphisms of groups of gauge transformations and gauge automorphisms. 
\end{zlist}
\end{Thm}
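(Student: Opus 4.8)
The plan is to prove part (2) first and then read off part (1) by transporting the group structure along the resulting bijection. Write $\Phi(f)=F_f$ and $\Psi(F)=f_F$ for the two assignments. Two preliminary observations organise everything. First, $F_f$ is automatically left $B$-linear: since $B=P^{coC}$ the coaction $\Delta_P$ is left $B$-linear, so $(bp)\sw0\ot(bp)\sw1=b\,p\sw0\ot p\sw1$ and hence $F_f(bp)=bF_f(p)$; moreover $F_f(1)=1\sw0 f(1\sw1)$, so condition~(a) is literally the statement $F_f(1)=1$. Second, I expect the equivalence ``$f$ satisfies (b)'' $\iff$ ``$F_f$ is right $C$-colinear''. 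One direction is a direct Sweedler computation using that $P$ is a $(P,C,\psi)$-entwined module (Theorem~\ref{2:canonical}): one gets $\Delta_P(F_f(p))=p\sw0 f(p\sw2)_\alpha\ot p\sw1{}^\alpha$, which condition~(b) rewrites as $p\sw0 f(p\sw1)\ot p\sw2=(F_f\ot\id)\Delta_P(p)$; the converse will follow once the bijection $\Phi,\Psi$ is in place.

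Next I would establish that $\Phi,\Psi$ are mutually inverse bijections between $\{f:C\to P\mid\text{(a),(b)}\}$ and the monoid $\mathcal B$ of left $B$-linear right $C$-colinear maps $F:P\to P$ with $F(1)=1$. That $F_f$ lands in $\mathcal B$ is the content of the first paragraph. That $f_F=c\su1 F(c\su2)$ is well defined over $\ot_B$ uses left $B$-linearity of $F$, and the verification of (a),(b) for $f_F$ is a Translation Map Lemma~\ref{2:translation} computation: (a) comes from part~(iii) at $p=1$ together with $F(1)=1$; (b) chains the definition (\ref{2:canent}) of $\psi$, parts~(v) and~(iv), the colinearity of $F$, and the formula $\can(p\ot_B p')=p\Delta_P(p')$. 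The two compositions are short: $\Psi\Phi=\id$ reduces, via Lemma~\ref{2:translation}(iv),(ii) and counitality, to $f_{F_f}=f$ for \emph{every} linear $f$; and $\Phi\Psi=\id$ reduces, via Lemma~\ref{2:translation}(iii), to $F_{f_F}=F$ for every $F\in\mathcal B$. In particular $\Phi$ is injective on all linear maps $C\to P$, which I use repeatedly.

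The multiplicativity step is the computation $F_{f*g}=F_g\ci F_f$: expanding $F_g(F_f(p))$ and using $\Delta_P(F_f(p))=p\sw0 f(p\sw2)_\alpha\ot p\sw1{}^\alpha$ together with condition~(b) for $f$ collapses it to $p\sw0 f(p\sw1)g(p\sw2)=F_{f*g}(p)$ (note only (b) for $f$ is needed). Since $GA^C$ carries the opposite composition $F\cdot G:=G\ci F$, this reads $\Phi(f*g)=\Phi(f)\cdot\Phi(g)$, and $\Phi(\eta\eps)=\id_P$ by counitality; thus $\Phi$ is a monoid isomorphism from $(\{f:\text{(a),(b)}\},*)$ onto $(\mathcal B,\cdot)$. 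As $GA^C$ is exactly the unit group of $(\mathcal B,\cdot)$ (the set-inverse of a bijective $B$-linear colinear unital map is again such), a monoid isomorphism restricts to an isomorphism of unit groups, giving part~(2) — provided $GT^C$ coincides with the unit group of $(\{f:\text{(a),(b)}\},*)$.

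That last proviso is the one genuinely non-formal point, and I expect it to be the main obstacle: one must show that the convolution inverse $h=f^{-1}$ of an $f$ satisfying (a),(b) again satisfies (a),(b). Condition~(a) for $h$ is easy, since $F_h\ci F_f=F_{f*h}=F_{\eta\eps}=\id$ forces $F_h(1)=F_h(F_f(1))=1$. For condition~(b) I would exploit the $\psi$-twisted convolution $(u*_\psi v)(c)=u(c\sw2)_\alpha v(c\sw1{}^\alpha)$, which is associative with the same unit $\eta\eps$. A short check shows that for a linear $u$, condition~(b) is equivalent to $u*_\psi v=u*v$ for all $v$ (one direction is the rewriting above; the converse holds because $P$-valued maps separate $P\ot C$). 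Hence (b) for $f$ says left $*_\psi$-multiplication by $f$ equals left $*$-multiplication by $f$; in particular $f*_\psi h=f*h=\eta\eps$. Then for any $v$, associativity of $*_\psi$ gives $f*(h*_\psi v)=f*_\psi(h*_\psi v)=(f*_\psi h)*_\psi v=v=f*(h*v)$, and since left $*$-multiplication by the convolution-invertible $f$ is injective, $h*_\psi v=h*v$ for all $v$, i.e. $h$ satisfies~(b). With inverse-closure in hand, $GT^C$ is the unit group of the $*$-monoid, $\Phi$ restricts to the asserted group isomorphism $GT^C\to GA^C$ with inverse $\Psi$, and part~(1) follows at once.
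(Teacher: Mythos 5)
Your proposal is correct, and while part (2) ends up close to the paper's own argument (the paper also obtains the correspondence from ${}_P\mathrm{Hom}(P\ot_B P,P)\cong{}_P\mathrm{Hom}(P\ot C,P)$, i.e.\ ${}_B\mathrm{Hom}(P,P)\cong\mathrm{Hom}(C,P)$, with the explicit maps and the matching of colinearity with (b) and normalisation with (a)), your treatment of part (1) is genuinely different. The paper proves closure of conditions (a) and (b) under the convolution product and under convolution inversion by direct Sweedler computations with the bow-tie diagram: for the inverse it expands $c\sw 1\ot 1\ot c\sw 2 = c\sw 1\ot\psi(c\sw 2\ot f(c\sw 3)f^{-1}(c\sw 4))$, peels off one $\psi$ using (b) for $f$, and then applies $f^{-1}\ot\id\ot\id$. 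You instead reformulate (b) as the identity of operators ``left $*_\psi$-multiplication by $f$ equals left $*$-multiplication by $f$'' (the converse direction of this equivalence, via separation of points of $P\ot C$ by maps $p\ot c\mapsto pv(c)$, is the one step you should write out, but it is correct over a field), and then deduce (b) for $f^{-1}$ from associativity of the $\psi$-twisted convolution together with cancellation by the invertible $f$; closure under $*$ you get for free from $F_{f*g}=F_g\circ F_f$ and the already-established fact that $\Psi$ lands in the set of maps satisfying (a),(b). What your route buys is conceptual economy — the only bow-tie computations needed are those already done for the Translation Map Lemma and for $\Psi$, and the group structure on $GT^C$ is literally transported from $GA^C$; what it costs is that you must be careful to note that $M=\{f:\text{(a),(b)}\}$ is closed under $*$ before calling $\Phi$ a monoid isomorphism (this does follow from $f*g=\Psi(F_g\circ F_f)$, but deserves an explicit sentence), and that the identification of $GT^C$ with the unit group of $(M,*)$ is exactly the inverse-closure statement you isolate. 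Both proofs are complete; yours is a legitimate alternative.
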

\begin{proof}
(1) It is clear that if $f, g:C\to P$ satisfy (a), then so does their convolution product $f*g$. Condition (b) reads explicitly, for any $c\in C$,
\[ \label{2:cond.b.gauge}
\psi(c\sw 1\otimes f(c\sw 2)) = f(c\sw 1)\ot c\sw 2.
\]
An easy calculation that uses the left pentagon in the bow-tie diagram reveals that if $f$ and $g$ satisfy (\ref{2:cond.b.gauge}), then so does the convolution product $f*g$. Thus $GT^C(B\inc P)$ is a semigroup. 

 It is clear that the map $\eta: c\mapsto \eps(c)1_P$ satisfies condition (a). The left triangle in the bow-tie diagram ensures that this map also satisfies condition (b). Thus $\eta\in GT^C(B\inc P)$, and therefore $GT^C(B\inc P)$ is a monoid. 

Now take any $f\in GT^C(B\inc P)$. We need to show that its convolution inverse $f^{-1}$ satisfies conditions (a) and (b). Since $P$ is an entwined module and $f$ satisfies conditions (a) and (b), we can compute
$$
1\sw 0\ot 1\sw 1 = \Delta_P (1) = \Delta_P(1\sw 0f(1\sw 1)) = 1\sw 0\psi(1\sw 1\ot f(1\sw 2)) = 1\sw 0 f(1\sw 1)\ot 1\sw 2.
$$
Apply $\id\ot f^{-1}$ and multiply to conclude that $1\sw 0f^{-1}(1\sw 1) = 1$, as required. Finally, the facts that $f$ satisfies (b) and the commutativity of   the left pentagon in the bow-tie diagram, imply that, for all $c\in C$,
$$
c\sw 1\tens 1\tens c\sw 2 = c\sw 1\tens\psi(c\sw 2\tens f(c\sw  
3)f^{-1}(c\sw 4)) 
= c\sw 1 \tens f(c\sw 2)\psi(c\sw 3\tens f^{-1}(c\sw  
4)).  
$$
Applying $f^{-1}\ot\id\ot\id$ to the above equality and multiplying  
the first two factors one finds that $f^{-1}$ satisfies (b). Thus $GT^C(B\inc P)$ is a group as claimed.

(2) The canonical isomorphism $\can: P\ot_B P \cong P\ot C$ induces an isomorphism 
of spaces of left $P$-linear maps, 
${}_P{\rm Hom}(P\ot_B P,P) \cong {}_P{\rm Hom}(P\ot C,P)$. This, in turn, reduces 
to an isomorphism
$$
{}_B{\rm Hom}(P,P) \cong {\rm Hom}(C,P).
$$
One easily checks that the explicit form of this isomorphism is given by the maps in equations (\ref{2:Ff}) and (\ref{2:fF}), and that the opposite composition of automorphisms corresponds to the convolution product. The $C$-colinearity of an automorphism of $P$ induces condition (1)(b) for the corresponding map on the right  hand side. Similarly, normalisation leads to condition (1)(a). 
\end{proof}

Gauge transformations induce  transformations of connections in coalgebra-Galois
extensions. More precisely, the gauge group acts on the space of connections. This is of particular 
importance in the case of strong connections in principal extensions.

\subsection{Strong connections on principal extensions}\label{2:subs.str}

The original motivation for defining strong connections was to have a concept
of connection that for a cleft \hge\ would be expressible only in terms of the
Hopf algebra $H$ and
the coaction-invariant subalgebra~\cite{h-pm96}. 
Roughly speaking, in differential geometry this
 corresponds to the fact that horizontal subspaces are always isomorphic to
tangent spaces of the base manifold, so that one can assemble a connection form
on the total space out of a collection of locally defined forms on the base space.
 This turned out to be a notion that allowed one to construct a {\em covariant
derivative} on the associated modules \cite{hm99},
and thus link the Hopf-Galois theory of
quantum principal bundles with connections on projective modules.

Here we first study strong connections in the general setting of \cg\ extensions.
The definition of a general connection given before called for a replacement of
a diagonal coaction, and only finding it allowed a definition analogous to its earlier
Hopf-Galois version. Now, 
having a general connection at hand, one can phrase the strongness
 condition precisely as  in  Hopf-Galois theory~\cite{h-pm96}.
\begin{Def}[\cite{bm00}]\label{2:strongd}
Let $\Pi$ be a connection on a \cge\ $B\inc P$. We call it {\em strong} if
$(\id-\Pi\ci\d)(P)\inc (\hO^1B)P$.
\end{Def}

One can alternatively define a strong connection in the following way:
\begin{Def}[\cite{dgh01}]
Let $B\inc P$ be a \cge. We call a unital
left $B$-linear right $C$-colinear (with respect to $\id\ot\hD_P$)
splitting of the multiplication map $B\ot P\ra P$
a {\em strong-connection splitting}.
\end{Def}
\begin{Lem}[\cite{dgh01}]\label{2:stronglem}
Let $B\inc P$ be a \cge. Then the equation
\[
s=\Pi\ci\d+\id\ot1
\]
defines a one-to-one correspondence between strong connections and
strong-connection splittings.
\end{Lem}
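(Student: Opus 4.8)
The plan is to write down the inverse correspondence explicitly and to check that each passage carries the defining conditions of one object to those of the other. Throughout I read the strongness condition of Definition~\ref{2:strongd} as $D(P)\subseteq(\Omega^1 B)P$, where $D=\d-\Pi\circ\d$ is the covariant differential of Remark~\ref{2:tom}, and I write $m$ for the action $B\otimes P\to P$. Recall that every $\omega\in\Omega^1 P=\ker(P\otimes P\to P)$ has the form $\sum_i p_i\,\d p_i'$ with $\sum_i p_ip_i'=0$. Given a strong-connection splitting $s$, I would define $\Pi$ as the restriction to $\Omega^1 P$ of the left $P$-linear map $\widehat\Pi\colon P\otimes P\to P\otimes P$, $p\otimes p'\mapsto p\cdot s(p')$ (multiplying $p$ into the first leg of $s(p')\in B\otimes P$). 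Since $m\circ\widehat\Pi=m$, this $\widehat\Pi$ preserves $\Omega^1 P$; using $s(1)=1\otimes 1$ one gets $\widehat\Pi(\d p)=s(p)-p\otimes 1$, which is precisely $s=\Pi\circ\d+\id\otimes 1$ rearranged, so no choice of representative enters and the two passages are visibly mutually inverse (left $P$-linearity of a connection gives $\Pi(\sum_i p_i\,\d p_i')=\sum_i p_i\,\Pi(\d p_i')=\sum_i p_i(s(p_i')-p_i'\otimes 1)$).

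The heart of the matter is that strongness of $\Pi$ is equivalent to $s$ taking values in $B\otimes P$ rather than merely in $P\otimes P$. From $s(p)=\Pi(\d p)+p\otimes 1$ and $\d p=1\otimes p-p\otimes 1$ one finds $s(p)=1\otimes p-D(p)$, so that $D(p)=1\otimes p-s(p)$. If $\Pi$ is strong then $D(p)\in(\Omega^1 B)P\subseteq B\otimes P$, whence $s(p)\in B\otimes P$ and, since $m$ annihilates $(\Omega^1 B)P$, also $m(s(p))=m(1\otimes p)=p$. Conversely, writing $s(p)=\sum_k b_k\otimes q_k$ with $b_k\in B$ and $\sum_k b_kq_k=p$, the identity
\[
1\otimes p-s(p)=\sum_k\bigl(1\otimes b_kq_k-b_k\otimes q_k\bigr)=\sum_k(\d b_k)\,q_k
\]
shows $D(p)\in(\Omega^1 B)P$, i.e.\ $\Pi$ is strong. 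The same identity gives $(B\otimes P)\cap\Omega^1 P\subseteq(\Omega^1 B)P$, which I use for horizontality.

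The remaining connection axioms are then routine. Idempotency $\widehat\Pi^2=\widehat\Pi$ follows from left $B$-linearity of $s$ together with $m\circ s=\id$, via $\sum_k b_k\,s(q_k)=s(\sum_k b_kq_k)=s(p')$; left $P$-linearity is built in. For horizontality, left $B$-linearity of $s$ kills any $p(\sum_i b_i\otimes b_i')p'$ with $\sum_i b_ib_i'=0$, giving $P(\Omega^1 B)P\subseteq\ker\Pi$, while if $\Pi(\omega)=0$ for $\omega=\sum_i p_i\,\d p_i'$ then $\omega=\omega-\widehat\Pi(\omega)=\sum_i p_i\bigl(1\otimes p_i'-s(p_i')\bigr)\in P(\Omega^1 B)P$ by the displayed identity, so $\ker\Pi=P(\Omega^1 B)P$. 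Unitality $s(1)=1\otimes 1$ is equivalent to $\Pi(\d 1)=0$, and left $B$-linearity of $s$ matches left $P$-linearity of $\Pi$ through $\d(bp)=(\d b)p+b\,\d p$ (here $(\d b)p\in(\Omega^1 B)P=\ker\Pi$, so $\Pi(\d(bp))=b\,\Pi(\d p)$).

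The step I expect to be the real obstacle is the matching of the covariance of $\Pi\circ\d$ (Definition~\ref{2:def.con}(1)(ii), taken with respect to the coaction $\Delta_{P\otimes P}$ of Proposition~\ref{2:prop.univ.cov}) with the $C$-colinearity of $s$ (taken with respect to $\id_B\otimes\Delta_P$). The map $p\mapsto p\otimes 1$ is $\Delta_{P\otimes P}$-colinear by the left triangle of the bow-tie diagram, so $\Pi\circ\d$ is $\Delta_{P\otimes P}$-colinear if and only if $s$ is; it then remains to show that on $B\otimes P$ the coaction $\Delta_{P\otimes P}$ reduces to $\id_B\otimes\Delta_P$. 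This rests on $\Delta_P(b)=b\Delta_P(1)$ for $b\in B$ and on $\psi(e\otimes q)=\can(\tau(e)\cdot q)=\Delta_P(q)$, the latter coming from Lemma~\ref{2:translation}(vii); it is exactly here that the group-like element $e$ is used, so the comparison of the two coactions, rather than any of the module-theoretic verifications, is the delicate point of the argument.
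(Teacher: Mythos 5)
Your construction is the same as the paper's: the map $\Pi^s(r\,\d p)=r\bigl(s(p)-p\ot 1\bigr)$, the kernel computation from left $B$-linearity of $s$, idempotency from $\d p-\Pi^s(\d p)=1\ot p-s(p)\in(\Omega^1B)P$, and the identification of strongness with $s(P)\inc B\ot P$ all match the published argument (your identity $1\ot p-s(p)=\sum_k(\d b_k)q_k$ and the observation $m\circ\widehat\Pi=m$ are just slightly more explicit packagings of the same steps; the paper proves $\ker\Pi^s\inc P(\Omega^1B)P$ via $\pi_B\circ s=1\ot_B(-)$ rather than via your displayed identity, but this is cosmetic). The one genuine point of divergence is the covariance step, and you have put your finger on something the paper glosses over. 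The paper simply writes $\hD_{P\ot P}(s(p))=s(p\sw 0)\ot p\sw 1$ ``by the colinearity of $s$'', even though $s$ is colinear for $\id_B\ot\hD_P$ while the connection axiom refers to the restriction of $\hD_{P\ot P}$ --- and the Remark immediately following the lemma in the paper explicitly states that these two coactions on $B\ot P$ must be distinguished for a general $C$-extension. Your resolution, reducing $\hD_{P\ot P}|_{B\ot P}$ to $\id_B\ot\hD_P$ via $\hD_P(b)=b\ot e$ and $\psi(e\ot q)=\hD_P(q)$, is correct but imports an $e$-coaugmentation hypothesis that is not in the statement of the lemma; so as written your proof establishes the lemma only for coaugmented extensions. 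Be aware that this restriction is arguably already implicit in the source you are reconstructing (and is harmless for the principal extensions to which the lemma is ultimately applied), but you should either add the coaugmentation hypothesis or explain how the covariance correspondence is meant to be read in the non-coaugmented case.
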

\begin{proof}
Let
$s$ be such a splitting, and $\Pi^s(r\d p):=r(s(p)-p\ot 1)$. One can verify
that this formula gives a well-defined left $P$-linear endomorphism of $\hO^1P$.
Furthermore, by the left $B$-linearity of $s$, for any
$\sum_i\d b_i.p_i\in(\hO^1B)P$, we have:
\begin{equation}
\Pi^s(\mbox{$\sum_i$}\d b_i.p_i)
= \mbox{$\sum_i$}\Pi^s\llp\d (b_i.p_i)-b_i\d p_i\lrp
= \mbox{$\sum_i$}s(b_ip_i)- b_ip_i\ot 1-b_i(s(p_i)- p_i\ot 1)=0.
\end{equation}
Hence $P(\hO^1B)P\inc \ker\Pi^s$ by the left $P$-linearity of $\Pi^s$.
On the other hand, since $m\ci s=\id$ and $s(P)\inc B\ot P$, we have
$\pi_B(s(p))=1\ot_Bp$, where $\pi_B:P\ot P\ra P\ot_BP$ is the canonical
surjection. Consequently,
\begin{equation}
\pi_B(\Pi^s(p'\d p))
=\pi_B(p'(s(p)-p\ot 1))
=p'\pi_B(s(p))-rp\ot_B 1
=p'\ot_Bp-rp\ot_B 1
=\pi_B(p'\d p).
\end{equation}
Therefore, since $P(\hO^1B)P= \ker\pi_B$,
we obtain $\ker\Pi^s\inc P(\hO^1B)P$.
Thus $\ker\Pi^s= P(\hO^1B)P$. Next, take any $p\in P$. It follows from
$s(P)\inc B\ot P$ that
\begin{equation}
\d p-\Pi^s(\d p)=1\ot p-p\ot 1-s(p)+p\ot 1=1\ot p-s(p)\in B\ot P.
\end{equation}
Since also $m(1\ot p-s(p))=0$, we have
$\d p-\Pi^s(\d p)\in (\hO^1B)P\inc \ker\Pi^s$.
By the left $P$-linearity of $\Pi^s$, we can conclude now that
$\Pi^s\ci(\id-\Pi^s)=0$, i.e., $(\Pi^s)^2=\Pi^s$.
It remains to show that
$\hD_{P\ot P}\ci\Pi^s\ci\d=\llp(\Pi^s\ci\d)\ot\id\lrp\ci\Delta_P$.
The property $\psi(c\ot 1)=1\ot c$ implies that
\begin{equation}
\hD_{P\ot P}(p\ot 1)
= p\sw0\ot\psi(p\sw1\ot 1)
= p\sw0\ot 1\ot p\sw1.
\end{equation}
Therefore,
\begin{eqnarray}
\hD_{P\ot P}(\Pi^s(\d p))
\!\!\!\!\!\!&&
=\hD_{P\ot P}(s(p))-\hD_{P\ot P}(p\ot 1)
\nonumber\\ &&
=s(p\sw0)\ot p\sw1-p\sw0\ot 1\ot p\sw1
\nonumber\\ &&
=\llp(\Pi^s\ci\d)\ot\id\lrp(\Delta_P(p)),
\end{eqnarray}
by the colinearity of $s$. Consequently, $\Pi^s$ is a connection, as claimed.
\end{proof}

\begin{Rem}
Within the framework of Hopf-Galois theory the right coaction
$\id\ot\Delta_P : B\ot P\ra B\ot P\ot H$ and the restriction $\hD_{B\ot P}$
of the diagonal coaction $\hD_{P\ot P}$ (\ref{2:dsr}) coincide.
Therefore, one can use either
of them to define the colinearity of a splitting $s$ of the multiplication map
$B\ot P\ra P$. In the general setting of coalgebra-Galois extensions 
 \begin{equation} 
\hD_{P\ot P}(b\ot p) =(\id\ot\psi)(\Delta_P(b1)\ot p)
=(\id\ot\psi)(b1\sw0\ot 1\sw1\ot p) =b1\sw0\ot\psi(1\sw1\ot p). 
\end{equation} 
On
the other hand, if $B\inc P$ is $C$-Galois and $\psi$ is its
canonical entwining structure \cite[(2.5)]{bh99}, then, by
\cite[Theorem 2.7]{bh99}, $P$ is a $(P,C,\psi)$-module
\cite{b-t99}, so that we have $$\Delta_P(p'p)=p'\sw0\psi(p'\sw1\ot p).$$ 
In
particular, $\Delta_P(p)=1\sw0\psi(1\sw1\ot p)$. Hence 
\begin{equation}
(\id\ot\Delta_P)(b\ot p)=b\ot 1\sw0\psi(1\sw1\ot p). 
\end{equation}
 Hence we need to distinguish between
$\hD_{B\ot P}$ and $\id\ot\Delta_P$ in the $C$-Galois case.
\end{Rem}

The just discussed problem of the diagonal coaction in the general \cg\ setting
was already encountered in Remark~\ref{2:tom}, where it obstructs the definition
of a covariant differential associated to a general connection. For strong connections
this problem disappears. First, we can define:
\begin{Def}\label{2:D}
Let $B\inc P$ be a \cge. A {\em strong covariant differential} is a homomorphism
$D:P\ra(\hO^1B)P$ satifying

(1) $D(bp)=bD(p)+(\d b)p,\,\fa b\in B,\,p\in P$ (the Leibniz rule),

(2) $(\id\ot\hD_P)\ci D=(D\ot\id)\ci\hD_P$ (covariance).
\end{Def}
Taking advantage of Lemma~\ref{2:stronglem} and reasoning as in \cite{cq95,dgh01},
now we can prove:
\begin{Lem}\label{2:Ds}
Let $B\inc P$ be a \cge. Then the equation
\[
D=(\id-\Pi)\ci\d
\]
defines a one-to-one correspondence between strong connections and
strong covariant differentials.
\end{Lem}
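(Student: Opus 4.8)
The plan is to deduce the statement from the bijection of Lemma~\ref{2:stronglem} between strong connections $\Pi$ and strong-connection splittings $s$ of the multiplication map $B\ot P\ra P$, for which $s=\Pi\ci\d+\id\ot1$. Substituting $\Pi(\d p)=s(p)-p\ot1$ into $D=(\id-\Pi)\ci\d$ gives the compact formula
\[
D(p)=\d p-\Pi(\d p)=(1\ot p-p\ot1)-s(p)+p\ot1=1\ot p-s(p).
\]
Thus, writing $\iota\colon P\ra P\ot P$, $\iota(p)=1\ot p$, the assignment $D=(\id-\Pi)\ci\d$ factors as the affine substitution $D=\iota-s$ (equivalently $s=\iota-D$), which is manifestly a bijection of $k$-linear maps. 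It therefore suffices to show that $s$ is a strong-connection splitting \emph{if and only if} $D=\iota-s$ is a strong covariant differential, and then to compose with Lemma~\ref{2:stronglem}.

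First I would treat the implication $s\Rightarrow D$. The codomain is controlled by the identity $(\hO^1B)P=\ker\llp m\colon B\ot P\ra P\lrp$, which follows by applying the right-exact functor $-\ot_BP$ to the sequence $0\ra\hO^1B\ra B\ot B\ra B\ra0$; since $s(P)\inc B\ot P$ and $m\ci s=\id$, the element $D(p)=1\ot p-s(p)$ lies in $B\ot P$ and is killed by $m$, whence $D(p)\in(\hO^1B)P$. The Leibniz rule is then a direct computation using $\d(bp)=(\d b)p+b\,\d p$ together with the left $B$-linearity of $s$ (alternatively, $(\d b)p\in(\hO^1B)P=\ker\Pi$ and $\Pi$ is left $P$-linear). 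The normalisation $D(1)=0$ is exactly the unitality $s(1)=1\ot1$.

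The heart of the matter is the covariance axiom, for which I would use that $s$ is right $C$-colinear with respect to the \emph{tensor-product} coaction $\id_B\ot\Delta_P$ on $B\ot P$, i.e.\ $(\id_B\ot\Delta_P)\ci s=(s\ot\id_C)\ci\Delta_P$. Applying $\id\ot\Delta_P$ to $D(p)=1\ot p-s(p)$ and invoking this colinearity yields
\[
(\id\ot\Delta_P)(D(p))=1\ot p\sw0\ot p\sw1-s(p\sw0)\ot p\sw1=(D\ot\id)(\Delta_P(p)),
\]
which is precisely Definition~\ref{2:D}(2). The converse direction $D\Rightarrow s$ reverses each of these computations verbatim: setting $s=\iota-D$, the membership $s(P)\inc B\ot P$ and $m\ci s=\id$ come from $D(P)\inc(\hO^1B)P=\ker(m|_{B\ot P})$, left $B$-linearity of $s$ from the Leibniz rule, colinearity of $s$ from the covariance of $D$, and unitality of $s$ from $D(1)=0$. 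By construction the resulting $\Pi$ satisfies $D=(\id-\Pi)\ci\d$, so the two substitutions are mutually inverse.

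I expect the main obstacle to be conceptual rather than computational: one must be careful that the covariance of a \emph{strong} covariant differential is expressed through the plain tensor-product coaction $\id\ot\Delta_P$ rather than the diagonal coaction $\Delta_{\Op}$ of Proposition~\ref{2:prop.univ.cov}. As the remark preceding Definition~\ref{2:D} stresses, these two coactions differ in a general (non-coaugmented) \cge, and it is exactly the passage to strong connections --- where the horizontal part lands in $(\hO^1B)P$ and the splitting $s$ is colinear for $\id\ot\Delta_P$ --- that makes them interchangeable here. Matching this against the colinearity built into Lemma~\ref{2:stronglem}, together with checking that the normalisation $D(1)=0$ corresponds to the unitality of $s$, is the only genuinely delicate point; everything else is the routine bookkeeping indicated above and carried out as in \cite{cq95,dgh01}.
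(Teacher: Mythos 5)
Your proposal is correct and takes essentially the same route as the paper: its proof likewise passes through the strong-connection splittings of Lemma~\ref{2:stronglem}, setting $D_s(p)=1\otimes p-s(p)$ and $s_D(p)=1\otimes p-D(p)$ and composing with $s=\Pi\circ\d+\id\otimes 1$, exactly your affine substitution $D=\iota-s$. Your write-up simply supplies the verifications (the identification $(\hO^1B)P=\ker(m|_{B\otimes P})$, the Leibniz rule, colinearity for $\id\otimes\Delta_P$, and the normalisation $D(1)=0$ matching unitality of $s$) that the paper delegates to the phrase ``reasoning as in \cite{cq95,dgh01}''.
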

\begin{proof}
 Given $s$ as in (2),
   define the corresponding $D_{s}:P\to (\Omega^{1}B)P$ via $p\mapsto
   1\otimes p -s(p)$. Conversely, given $D$ as in (3), define $s_{D}:
 P\to B\otimes P$, $p\mapsto 1\otimes p - D(p)$. This establishes the
equivalence between descriptions (2) and (3). The equivalence
 (3)$\Leftrightarrow$  (1) is established as follows.
   Given $D$, define $\Pi_{D}: p\d (q)\mapsto p\d (q) - pD(q)$, while given
    $\Pi$ define $D_{\Pi} = \d- \Pi\circ \d :P\to (\Omega^{1}B)P$.
\end{proof}

\begin{Rem}In the light of Lemma~\ref{2:D},  the same arguments as in \cite[pp.\ 314--315]{dgh01}  establish correspondence between strong connections  on coalgebra-Galois extensions and
Cuntz-Quillen connections on bimodules \cite[p.283]{cq95}. 
\end{Rem}

Let us now pass to {\em strong-connection forms}. One can define them simply as connection
forms corresponding to strong connections via Theorem~\ref{2:con.form}. 
However, it turns out 
that one needs to assume coaugmentation to define the strongness condition for
connection forms in a more intrinsic way.
\begin{Lem}\label{2:thm.str.inv}
Let  $B\inc P$ be an $e$-coaugmented \cge\  such that the canonical entwining map is injective. For a
connection $1$-form $\omega$, the following are equivalent:
\begin{blist}
\item $\omega$ is a strong connection one-form;
\item  $(\id \otimes \Delta_P)\circ \omega(c)=
  1 \otimes 1 \otimes c - \varepsilon(c)1 \otimes 1 \otimes e +
  \omega(c\sw 1) \otimes c\sw 2$,
\end{blist}
\end{Lem}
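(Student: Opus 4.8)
The plan is to work with the connection $\Pi:=\Pi^\omega$ attached to $\omega$ by Theorem~\ref{2:con.form} and its covariant companion $D:=(\id-\Pi)\ci\d$, which comes out as $D(p)=1\ot p-p\ot1-p\sw0\omega(p\sw1)$. Two observations organise everything. First, $D$ always lands in $\Omega^1P$ (so $m\ci D=0$), and since both $\d$ (Remark~\ref{2:tom}) and $\Pi\ci\d$ (Definition~\ref{2:def.con}(1)(ii)) are colinear for the \emph{diagonal} coaction $\Delta_{\Omega^1P}=(\id\ot\psi)\ci(\Delta_P\ot\id)$, so is $D$: one has $\Delta_{\Omega^1P}\ci D=(D\ot\id)\ci\Delta_P$ \emph{unconditionally}. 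Second, in terms of the connection lifting $\ell=\omega+1\ot1\,\eps$ of (\ref{2:lomega}), a direct rewriting shows that condition (b) is equivalent to the bare covariance $(\id\ot\Delta_P)\ci\ell=(\ell\ot\id)\ci\hD$. Thus the lemma asks exactly that the extra, \emph{naive} coaction $\id\ot\Delta_P$ agree on $\mathrm{im}\,D$ with the diagonal one, and I will reduce strongness precisely to that agreement.

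I would first record two elementary facts. (A) $(\Omega^1B)P=(B\ot P)\cap\ker m$; the nontrivial inclusion is obtained by writing any $v=\sum_i x_i\ot y_i$ with $x_i\in B$ and $\sum_i x_iy_i=0$ as $v=-\sum_i(\d x_i)y_i\in(\Omega^1B)P$. (B) For arbitrary $v\in P\ot P$, using $\Delta_P=\psi(e\ot\,\cdot\,)$ (valid because $P$ is an entwined module and $\Delta_P(1)=1\ot e$) one has $\Delta_{\Omega^1P}(v)=(\id\ot\psi)(\Delta_P\ot\id)(v)$ and $(\id\ot\Delta_P)(v)=(\id\ot\psi)(\iota_e\ot\id)(v)$, where $\iota_e(x)=x\ot e$. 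Since $\psi$ is injective, so is $\id_P\ot\psi$ (tensoring over a field), whence the two coactions coincide on $v$ if and only if $(\Delta_P\ot\id)(v)=(\iota_e\ot\id)(v)$, that is, if and only if $v\in B\ot P$ (the equaliser of $\Delta_P$ and $\iota_e$ is $P_e^{coC}=P^{coC}=B$ by Proposition~\ref{2:prop.ecoin=coin}, and $-\ot P$ preserves it). This cancellation is the single point where injectivity of the canonical entwining is used, and it is the technical heart of the argument.

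With (A) and (B) in hand the proof becomes an equivalence chain. Setting $s(p):=p\sw0\ell(p\sw1)$ one checks $s(p)=1\ot p-D(p)$, so naive colinearity of $D$ is the same as naive colinearity of $s$. Condition (b) gives $s$ naive-colinear at once; conversely, naive colinearity of $s$ reads $p\sw0\,\Phi(p\sw1)=0$ for $\Phi(c):=(\id\ot\Delta_P)\ell(c)-(\ell\ot\id)\hD(c)$, and applying the linear map $x\ot d\mapsto x\,\Phi(d)$ to the Translation Map Lemma~\ref{2:translation}(i) identity $c\su1c\su2\sw0\ot c\su2\sw1=1\ot c$ strips off the prefactor, yielding $\Phi(c)=c\su1\big(c\su2\sw0\Phi(c\su2\sw1)\big)=0$. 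Hence condition (b) is equivalent to $D$ being colinear for $\id\ot\Delta_P$, which, combined with the unconditional diagonal colinearity of $D$, is equivalent to the agreement of the two coactions on $\mathrm{im}\,D$. By (B) this agreement (for all $p$) means $D(p)\in B\ot P$ for all $p$, and by (A) together with $m\ci D=0$ this is exactly $D(P)\inc(\Omega^1B)P$, i.e.\ strongness (Definition~\ref{2:strongd}). I expect the main obstacle to be establishing (B) correctly: keeping the distinction between $(\Omega^1B)P$ and $P(\Omega^1B)P$ straight, and verifying that injectivity of $\psi$ is precisely what converts coaction-agreement into membership in $B\ot P$ (one half of (B) needs it, the other does not, matching the fact that injectivity is hypothesised).
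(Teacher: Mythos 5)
Your argument is correct and follows essentially the same route as the paper's proof: the same key observation that injectivity of $\psi$ identifies $B\ot P$ with the equaliser of the diagonal coaction $\Delta_{P\ot P}$ and the naive coaction $\id\ot\Delta_P$ (via $P^{coC}_e=P^{coC}=B$), and the same translation-map trick from Lemma~\ref{2:translation}(i) to strip the prefactor $p\sw 0$ and pass from colinearity of $p\mapsto p\sw 0\omega(p\sw 1)$ to condition (b). Your repackaging through $\ell$ and $s$, the unconditional diagonal colinearity of $D$, and your explicit fact (A) only make precise steps the paper leaves implicit (notably that $D(p)\in\ker m$ upgrades membership in $B\ot P$ to membership in $(\Omega^1B)P$).
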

\begin{proof}
The injectivity of $\psi$ implies that $x\in B\ot P$ if and only if $\Delta_{P\ot P}(x) = (\id\otimes \Delta_P)(x)$. Indeed, it is clear that if $x\in B\ot P$, then $\Delta_{P\ot P}(x) = (\id\otimes \Delta_P)(x)$. Conversely, write $x=\sum_{i} r^i\ot p^i$. In view of the definition of the coaction $\Delta_{P\ot P}$ and the fact that $\Delta_P(p) = \psi(e\ot p)$, the condition   $\Delta_{P\ot P}(x) = (\id\otimes \Delta_P)(x)$ explicitly reads
$$
((\id\ot\psi)\circ (\Delta_P\ot \id))(\mbox{$\sum_i$} r^i\ot p^i) = (\id\ot\psi)(\mbox{$\sum_i$} r^i\ot e\ot p^i).
$$
Since $\psi$ is injective, this implies that $\sum_{i} \Delta_P(r^i)\ot p^i = \sum_{i} r^i\ot e\ot p^i$, i.e., for each of the $r^i$s, we have $\Delta_P(r^i) = r^i\ot e$, so that all of them
are elements of $B$, as claimed.

Let $\omega$ be a  connection form and let $D$ be the corresponding covariant differential. By definition, $\omega$ is strong if and only if, for all $p\in P$, $D(p)\in (\Omega^1B)P$. In view of the discussion above, this is equivalent to the condition
\begin{equation}\label{2:str.inj}
 (\id\otimes \Delta_P)\circ D(p) =
\Delta_{\Omega^1P}\circ D(u), \qquad
\forall p\in P.
\end{equation}
Using the explicit
definition of $\d$ and $D$, Theorem~\ref{2:con.form}(iii),
 as well as the fact that $\Omega^1P\in
\Mc_{\Omega P}^C(\psi^\otimes)$,  one finds that
(\ref{2:str.inj}) implies that
\[
(\id\otimes\Delta_P)(p\sw 0\omega(p\sw 1)) = p\sw 0\otimes 1\otimes p\sw
1-p\otimes 1\otimes e +p\sw 0\omega(p\sw 1)\otimes p\sw 2.\]
Next, for all $c$, let $\tau(c) = c^\su 1\otimes_B c\su 2$ be the
translation map. 
 Using the Translation Map Lemma~\ref{2:translation}, we compute
\begin{eqnarray*}
(\id\otimes\Delta_P)\circ\omega(c) & = & (\id\otimes\Delta_P)
(c\su 1c\su 2\sw
0\omega(c\su 2\sw 1)) = c\su 1(\id\otimes\Delta_P)(c\su 2\sw
0\omega(c\su 2\sw 1))\\
& = & c\su 1c\su 2\sw 0\otimes 1\otimes c\su 2\sw
1-c\su 1c\su 2\otimes 1\otimes e \\
&&+c\su 1c\su 2\sw 0\omega(c\su 2\sw 1)\otimes
 c\su 2\sw 2 \\
& = & 1\otimes 1\otimes c-\eps(c)1\otimes 1\otimes e+\omega(c\sw 1)
\otimes c\sw 2 .
\end{eqnarray*}
Thus if $\omega$ is a strong-connection form, then the assertion (b)  holds. 
Conversely, an easy calculation reveals
that the assertion (b)  implies equation (\ref{2:str.inj}), i.e.,
the connection is strong, as required.
\end{proof}

The structure of coalgebra-Galois extensions is even richer when the
canonical entwining map is bijective. In the case of a Hopf-Galois $H$-extension $B\inc P$, 
the canonical entwining map has the form $\psi: h\ot p\mapsto p\sw 0\ot hp\sw1$.
Hence 
it is bijective, provided the antipode is bijective. The inverse of $\psi$ then reads, 
$\psi^{-1}: p\ot h\to hS^{-1}(p\sw 1)\ot p\sw 0$ (cf.\ Example~\ref{2:ex.can.Hopf}).
Thus, whenever the bijectivity of $\psi$ is assumed one should keep in
mind that this corresponds to the bijectivity of the antipode in the
case of a Hopf-Galois extension. This heuristic understanding can be
extended even further, once one realises that also in more general case
of a Doi-Koppinen datum $(H,C,A)$ in Example~\ref{2:ex.Doi.Koppinen}, the
corresponding entwining map $\psi$ is bijective provided $H$ is a Hopf
algebra with a bijective antipode (in fact, suffices it assume that $H$
is a bialgebra with a twisted antipode, i.e., $H^{op}$ is a Hopf
algebra). 

Much as for strong connection forms, we can simply define {\em strong-connection
liftings} as connection liftings corresponding to strong connections. This time,
to obtain a more intrinsic characterisation of the strongness condition, we need
additionally to assume that the canonical entwining map $\psi$ 
of our $e$-coaugmented \cge\ is bijective. In this case,
we can first define a left coaction 
\[\label{2:leftco}
{}_P\Delta:P\lra C\ot P,\;\;\; {}_P\Delta(p):=\psi^{-1}(p\ot e).
\]
\begin{Rem}\label{2:symmetric} That ${}_P\Delta$ is a coaction
 can be verified directly, but it can also be seen as follows.
In Definition~\ref{2:ES} we defined a right-right entwining structure,
in the sense that the structures defined in Lemma~\ref{2:lemma.entw.gro}
are a  right comodule and a right  module structures. One can easily define
a
left-left entwining structure,  by flipping all tensor
products in the bow-tie diagram (a left-left entwining  would then be
a map $A\otimes C\ra C\otimes A$). One then immediately has the
left-handed version of Lemma~\ref{2:lemma.entw.gro}.
Now, if $\psi$ is a right-right
entwining map, then its inverse is a left-left entwining map. Thus the
left-handed version of Lemma~\ref{2:lemma.entw.gro} implies that
${}_P\Delta$ given by equation (\ref{2:leftco})  is a left coaction. 
\end{Rem}
 
In addition to Lemma~\ref{2:thm.str.inv}, we  obtain the following characterisation of strong connection forms.
\begin{Lem} \label{2:lemma.scfs}
Let $B\inc P$ be an $e$-coaugmented \cge\ whose canonical entwining map $\psi$
is bijective. For a connection form $\omega$ the following are equivalent:
\begin{blist}
\item $\omega$ is a strong connection one-form;
\item $({}_P\Delta \otimes \id) \circ \omega(c)=
  c \otimes 1 \otimes 1  - e \otimes 1 \otimes \varepsilon(c)1 +
  c\sw 1 \otimes \omega(c\sw 2)$.
\end{blist}
\end{Lem}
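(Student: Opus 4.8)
The plan is to follow the proof of Lemma~\ref{2:thm.str.inv} almost line by line, replacing the right coaction $\Delta_P$ by the left coaction ${}_P\Delta$ of equation (\ref{2:leftco}), which is available precisely because $\psi$ is now assumed bijective (Remark~\ref{2:symmetric}). By Lemma~\ref{2:stronglem} and Definition~\ref{2:strongd}, the connection form $\omega$ is strong if and only if the associated covariant differential $D=(\id-\Pi^\omega)\circ\d$ satisfies $D(p)\in(\Omega^1B)P$ for every $p\in P$. First I would record the identification $(\Omega^1B)P=\Omega^1P\cap(B\otimes P)$ inside $P\otimes P$: the inclusion $\subseteq$ is immediate, and conversely any $\sum_i b_i\otimes p_i\in B\otimes P$ with $\sum_i b_ip_i=0$ equals $\sum_i(\d b_i)p_i$ and hence lies in $(\Omega^1B)P$. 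Thus strongness is equivalent to $D(P)\inc B\otimes P$.

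The second step is to detect membership in $B\otimes P$ by the left coaction. I would note that in the $e$-coaugmented case $b\in B$ if and only if ${}_P\Delta(b)=e\otimes b$: indeed $B=P^{coC}=P_e^{coC}$ by Proposition~\ref{2:prop.ecoin=coin}, so $\Delta_P(b)=b\otimes e$, i.e.\ $\psi(e\otimes b)=b\otimes e$, which upon applying $\psi^{-1}$ reads ${}_P\Delta(b)=\psi^{-1}(b\otimes e)=e\otimes b$. A short linear-independence argument then gives $B\otimes P=\{x\in P\otimes P\mid({}_P\Delta\otimes\id)(x)=e\otimes x\}$. Consequently $\omega$ is strong if and only if $({}_P\Delta\otimes\id)(D(p))=e\otimes D(p)$ for all $p\in P$.

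The third step is the translation-map computation. Using $\d p=1\otimes p-p\otimes 1$, the formula $\Pi^\omega(\d p)=p\sw0\omega(p\sw1)$ from Theorem~\ref{2:con.form}, the identity ${}_P\Delta(1)=e\otimes 1$ (left triangle), and the left-handed analogue of Remark~\ref{2:tom} (so that $\d$ and $D$ are left $C$-colinear with respect to ${}_P\Delta$ and the left diagonal coaction on $\Omega^1P$ obtained by flipping the bow-tie, the left version of Proposition~\ref{2:prop.univ.cov}), I would reduce the condition $({}_P\Delta\otimes\id)(D(p))=e\otimes D(p)$ to an equation purely in $\omega$. Feeding in $\omega(c)=c\su1 c\su2\sw0\,\omega(c\su2\sw1)$ and applying the Translation Map Lemma~\ref{2:translation} — property (i) to collapse $c\su1 c\su2\sw0\otimes c\su2\sw1=1\otimes c$, property (ii) for $c\su1c\su2=\varepsilon(c)1$, and property (iv) to turn the last term into $c\sw1\otimes\omega(c\sw2)$ — then assembles the three summands $c\otimes1\otimes1-e\otimes1\otimes\varepsilon(c)1+c\sw1\otimes\omega(c\sw2)$ of assertion (b). The converse is the same calculation read backwards, giving the equivalence.

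The step I expect to be the genuine obstacle is the bookkeeping for the left coaction in the last two steps. Unlike in Lemma~\ref{2:thm.str.inv}, where the diagonal coaction $\Delta_{P\otimes P}$ already supplies the right-colinearity of $\Pi^\omega\circ\d$, here I must first erect the left-handed analogues — the coaction ${}_P\Delta$, the left diagonal coaction on $P\otimes P$, and the left-colinearity of $\d$ — all of which hinge on the bijectivity of $\psi$ and the flip symmetry of the bow-tie diagram (Remark~\ref{2:symmetric}). Care is needed because the left coaction detects the \emph{left} tensor factor of $D(p)$ \emph{directly} (through comparison with $e\otimes D(p)$) rather than via a diagonal comparison, so the argument, parallel in spirit, is not a literal mirror of the right-handed one; in particular I must check that the non-multiplicativity of ${}_P\Delta$ on the products $p\sw0\omega(p\sw1)$ is absorbed correctly by the entwined-module (covariance) relations before the translation-map identities are invoked.
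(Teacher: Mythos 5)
Your plan is workable, but it re-proves the left-handed statement from scratch, whereas the paper obtains it in two lines as a corollary of Lemma~\ref{2:thm.str.inv}. Since $\Delta_P=\psi(e\otimes\cdot)$ and ${}_P\Delta=\psi^{-1}(\cdot\otimes e)$, one has $({}_P\Delta\otimes\id)=\psi^{-2}\circ(\id\otimes\Delta_P)$ on $P\otimes P$, where $\psi^{-2}=(\psi^{-1}\otimes\id)\circ(\id\otimes\psi^{-1})$; the left triangle of the bow-tie gives $\psi^{-2}(1\otimes 1\otimes c)=c\otimes 1\otimes 1$ (and likewise for the $e$-term), and axiom (iii) of a connection form in Definition~\ref{2:def.con} says precisely that $\psi^{-2}(\omega(c\sw 1)\otimes c\sw 2)=c\sw 1\otimes\omega(c\sw 2)$. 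Applying the bijection $\psi^{-2}$ to assertion (b) of Lemma~\ref{2:thm.str.inv} therefore converts it verbatim into assertion (b) here, and the equivalence with strongness is inherited. Your route --- identifying $(\Omega^1B)P$ with $\Omega^1P\cap(B\otimes P)$, detecting $B\otimes P$ by $({}_P\Delta\otimes\id)(x)=e\otimes x$ (which does need ${}^{coC}\!P_e=B$, available here since $\psi$ is bijective), and redoing the translation-map computation --- is sound in principle, but the ``obstacle'' you flag (left colinearity of $\Pi^\omega\circ\d$, the behaviour of ${}_P\Delta$ on the products $p\sw 0\,\omega(p\sw 1)$) is exactly the content of applying $\psi^{-2}$ term by term; doing it globally in one stroke is what the bijectivity of $\psi$ buys you, and it spares you from erecting the entire left-handed covariant calculus first.
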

\begin{proof}
Obviously, since $\psi$ is bijective, Lemma~\ref{2:thm.str.inv} holds. 
Hence it suffices to apply  the bijection $\psi^{-2} = (\psi^{-1}\ot \id)\circ(\id\ot \psi^{-1})$ to assertion (b) in Lemma~\ref{2:thm.str.inv}.
\end{proof}

Now a strong-connection lifting can be characterised as follows:
\begin{Lem}[\cite{bhms}]\label{2:scl}
Let $B\inc P$ be an $e$-coaugmented \cge\ whose canonical entwining map $\psi$
is bijective, and let $\ell$ be a homomorphism from $C$ to $P\ot P$.
It is a strong-connection lifting {\em if and only if} it satisfies the following conditions:

(1) $\ell(e)=1\ot1$ (unitality),

(2) $\pi_B\circ \ell= \tau$, $\pi_B:P\ot P\ra P\ot_B P$, (lifting property)

(3) $({}_P\Delta\ot\id)\ci\ell=(\id\ot\ell)\ci\hD$ and  $(\id\ot\hD_P)\ci\ell=(\ell\ot\id)\ci\hD$
(bicolinearity).
\end{Lem}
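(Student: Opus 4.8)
The plan is to reduce the statement to the already-proven characterizations of strong connection \emph{forms}, transported along the bijective shift $\ell=\omega+1\ot1\eps$ of equation~(\ref{2:lomega}) between connection liftings and connection forms. Throughout I would write $\omega:=\ell-1\ot1\eps$, so that, by definition, $\ell$ is a strong-connection lifting precisely when $\ell$ is a connection lifting (Definition~\ref{2:def.lift}) whose associated form $\omega$ is strong. Conditions (1) and (2) are disposed of immediately: since $B\inc P$ is $e$-coaugmented, $\Delta_P(1)=1\ot e$, so $1\sw0\ell(1\sw1)=\ell(e)$ and Definition~\ref{2:def.lift}(i) becomes literally condition~(1); condition~(2) is the reformulation $\pi_B\ci\ell=\tau$ of Definition~\ref{2:def.lift}(ii) recorded right after that definition (using bijectivity of $\can$).

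The technical core is a single operator identity. Using $\Delta_P(p)=\psi(e\ot p)$ (Remark~\ref{2:tom}) and ${}_P\Delta(p)=\psi^{-1}(p\ot e)$ (equation~(\ref{2:leftco})) together with $\psi\ci\psi^{-1}=\id$, one evaluates on a simple tensor to get, with $\psi^2:=(\id\ot\psi)\ci(\psi\ot\id)$, the equalities $\psi^2\ci({}_P\Delta\ot\id)(x\ot y)=x\ot\psi(e\ot y)=(\id\ot\Delta_P)(x\ot y)$, i.e.\ $\psi^2\ci({}_P\Delta\ot\id)=\id\ot\Delta_P$ on $P\ot P$. Granting (3a) and (3b), I would rewrite the left-hand side of Definition~\ref{2:def.lift}(iii) by (3a) as $\psi^2\ci({}_P\Delta\ot\id)\ci\ell$, apply this identity to obtain $(\id\ot\Delta_P)\ci\ell$, and then invoke (3b) to recognise it as $(\ell\ot\id)\ci\hD$, which is exactly Definition~\ref{2:def.lift}(iii). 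Thus (1),(2),(3a),(3b) already force $\ell$ to be a connection lifting.

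It remains to match the bicolinearity with strongness. Substituting $\omega=\ell-1\ot1\eps$ into condition~(b) of Lemma~\ref{2:lemma.scfs} and using $\Delta_P(1)=1\ot e$ together with ${}_P\Delta(1)=e\ot1$ (the latter from the left triangle), all inhomogeneous terms cancel and one is left with exactly $({}_P\Delta\ot\id)\ci\ell=(\id\ot\ell)\ci\hD$, i.e.\ (3a); the identical substitution into condition~(b) of Lemma~\ref{2:thm.str.inv} collapses to $(\id\ot\Delta_P)\ci\ell=(\ell\ot\id)\ci\hD$, i.e.\ (3b). These are purely formal Sweedler computations. Assembling the two directions: if (1),(2),(3a),(3b) hold, the previous paragraph makes $\ell$ a connection lifting, so $\omega$ is a connection form; then (3a) \emph{is} condition~(b) of Lemma~\ref{2:lemma.scfs}, so $\omega$ is strong and $\ell$ is a strong-connection lifting. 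Conversely a strong-connection lifting is a connection lifting (yielding (1) and (2)) with $\omega$ strong, and Lemma~\ref{2:lemma.scfs} and Lemma~\ref{2:thm.str.inv} then return (3a) and (3b) respectively.

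I expect the main obstacle to be the realisation---and the bookkeeping---that (3a) and (3b) are \emph{not} independent of the covariance condition Definition~\ref{2:def.lift}(iii), but together imply it via the cancellation $\psi\ci\psi^{-1}=\id$ hidden in the operator identity above. Keeping the three tensor legs straight, correctly interchanging the two coactions $\psi(e\ot-)$ and $\psi^{-1}(-\ot e)$, and verifying the small but essential fact ${}_P\Delta(1)=e\ot1$, is where the care lies; once the identity $\psi^2\ci({}_P\Delta\ot\id)=\id\ot\Delta_P$ is in hand, the rest is routine translation through equation~(\ref{2:lomega}).
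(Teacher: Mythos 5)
Your argument is correct, and it is worth noting that the paper itself states Lemma~\ref{2:scl} without proof (citing the manuscript \cite{bhms}), so there is no printed proof to compare against; your proposal is precisely the natural assembly of the ingredients the paper does prove. The two computations you rely on check out: substituting $\omega=\ell-1\ot1\eps$ into Lemma~\ref{2:thm.str.inv}(b) and Lemma~\ref{2:lemma.scfs}(b) does cancel all inhomogeneous terms (using $\Delta_P(1)=1\ot e$ and ${}_P\Delta(1)=\psi^{-1}(1\ot e)=e\ot 1$ from the left triangle), leaving exactly conditions (3b) and (3a); and the operator identity $\bigl((\id\ot\psi)\circ(\psi\ot\id)\bigr)\circ({}_P\Delta\ot\id)=\id\ot\Delta_P$ on $P\ot P$ follows from $\psi\circ\psi^{-1}=\id$ exactly as you describe, so (3a) and (3b) together recover the covariance condition Definition~\ref{2:def.lift}(iii). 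That last point is indeed the only non-obvious step. The single item you should make explicit in the forward direction is that $\omega=\ell-1\ot1\eps$ actually lands in $\Omega^1P$ before invoking Lemmas~\ref{2:thm.str.inv} and~\ref{2:lemma.scfs}, which concern maps $C\ra\Omega^1P$; this is immediate from condition (2), since $m\circ\ell(c)=c\su1c\su2=\eps(c)1$ by the Translation Map Lemma~\ref{2:translation}(ii), but it should be said.
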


Cougmented coalgebra-Galois extensions whose canonical entwining map is bijective
are very symmetric. Following the usual convention of differential geometry, where
one considers right rather than left bundles, we formulated the right-sided version
of extensions. However, much as in differential geometry, any
 $e$-coaugmented \cge\ $B\inc P$ enjoying the existence of $\psi^{-1}$ can
be equivalently formulated as a {\em left} \cge.
To begin with, there already exists a left $C$-coaction on $P$ given by the formula
(\ref{2:leftco}). Thus we can define the left coaction invariants:
\[\label{2:leftcoin}
^{coC}\!P:=\{b\in p\;|\;{}_P\Delta(pb)={}_P\Delta(p)b,\,\fa p\in P\}.
\]
Next, we need to check that the left and right coaction invariants
 coincide: 
\begin{Lem}[\cite{bm00}]
Let $B\inc P$ be an $e$-coaugmented \cge\ whose canonical entwining map $\psi$
is bijective. Then $^{coC}\!P=B$.
\end{Lem}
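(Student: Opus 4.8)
The plan is to identify $^{coC}\!P$ with the set of \emph{left} $e$-coaction invariants $\{b\in P\mid {}_P\Delta(b)=e\ot b\}$ and then to recognise the latter as $B$, exploiting the symmetry afforded by the bijectivity of $\psi$. Throughout I would use two facts already at hand: that in an $e$-coaugmented extension $\Delta_P(p)=\psi(e\ot p)$ (which follows from $\tau(e)=1\ot_B 1$, Lemma~\ref{2:translation}(vii), together with Definition~\ref{2:def.cgalois}), and that $b\in B=P^{coC}$ if and only if $\Delta_P(b)=b\ot e$ (Proposition~\ref{2:prop.ecoin=coin}). Applying $\psi^{-1}$ to this last equivalence and using $\psi(e\ot b)=b\ot e$ shows at once that $b\in B$ precisely when ${}_P\Delta(b)=e\ot b$.

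First I would record the normalisation ${}_P\Delta(1)=e\ot 1$: by the left triangle of the bow-tie diagram (Definition~\ref{2:ES}) one has $\psi(e\ot 1)=1\ot e$, so ${}_P\Delta(1)=\psi^{-1}(1\ot e)=e\ot 1$. The inclusion $^{coC}\!P\inc B$ is then immediate and mirrors Lemma~\ref{2:coinvariants1}: for $b\in{}^{coC}\!P$, evaluating the defining identity ${}_P\Delta(pb)={}_P\Delta(p)b$ at $p=1$ gives ${}_P\Delta(b)=(e\ot 1)b=e\ot b$, whence $b\in B$ by the equivalence just noted.

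The substantive part is the reverse inclusion, and its crux is the following ``left entwined-module'' identity, which I would establish for all $p,q\in P$:
\[
{}_P\Delta(p)\cdot q=\psi^{-1}\big(p\,q\sw 0\ot q\sw 1\big),
\]
where $\cdot q$ denotes right multiplication in the $P$-leg of $C\ot P$. I would prove it by applying the bijection $\psi$ to the left-hand side and computing with the left pentagon of the bow-tie diagram in the form $\psi(c\ot xy)=x_\alpha\,\psi(c^\alpha\ot y)$: writing ${}_P\Delta(p)$ so that $\psi\big({}_P\Delta(p)\big)=p\ot e$, one obtains $\psi\big({}_P\Delta(p)\cdot q\big)=p\cdot\psi(e\ot q)=p\,\Delta_P(q)=p\,q\sw 0\ot q\sw 1$, and then applies $\psi^{-1}$. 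This is the step I expect to demand the most care, since it is exactly where the invertibility of $\psi$ and the pentagon identity must be combined (and where one substitutes $\sum (p_i)_\alpha\ot c_i^\alpha=p\ot e$ into a single leg); the rest is bookkeeping.

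Finally, given $b\in B$, so that $b\sw 0\ot b\sw 1=b\ot e$, the identity specialises to ${}_P\Delta(p)\cdot b=\psi^{-1}(p\,b\ot e)={}_P\Delta(pb)$ for every $p\in P$, i.e.\ $b\in{}^{coC}\!P$. Combining the two inclusions yields $^{coC}\!P=B$. A pleasant by-product is that the displayed identity exhibits $P$ as a left-left entwined module for $\psi^{-1}$ (cf.\ Remark~\ref{2:symmetric}), reconfirming that a coaugmented extension with invertible $\psi$ is genuinely two-sided.
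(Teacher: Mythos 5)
Your proposal is correct and follows essentially the same route as the paper: both identify $B$ with the left $e$-coaction invariants by applying $\psi^{-1}$ to $\Delta_P(b)=b\ot e$, obtain $^{coC}\!P\inc B$ from the left-handed analogue of Lemma~\ref{2:coinvariants1}, and prove the reverse inclusion by exhibiting $P$ as a left-left entwined module for $\psi^{-1}$. The only difference is cosmetic: where the paper asserts that ``one easily checks'' the left-left entwined-module property, you actually carry out that verification via the left pentagon, which is a welcome addition rather than a deviation.
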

\begin{proof}
Since $b\in B$ if and only if $\psi(e\ot b) = \Delta_P(b) = b\ot e$ (cf.\ Proposition~\ref{2:prop.ecoin=coin}), applying $\psi^{-1}$, we immediately conclude that $b\in B$ if and if
$$
b \in {}^{coC}\!P_e \equiv \{p\in P\; |\; {}_P\Delta(p) = p\ot e\}.
$$
The left handed version of Lemma~\ref{2:coinvariants1} implies that $^{coC}\!P\inc ^{coC}\!P_e$. Now, since $P$ is a right-right $(P,C,\psi)$-entwined module by Theorem~\ref{2:canonical}, one easily checks that $P$ is also a left-left $(P,C,\psi^{-1})$-entwined module with coaction ${}_P\Delta$, i.e., $P\in {}^C_P\Mc(\psi^{-1})$ (cf.\ Remark~\ref{2:symmetric}). Thus, if $b\in ^{coC}\!P_e$, we can compute for any $p\in P$,
$$
{}_P\Delta(pb) = \psi^{-1}(p\ot b\sw{-1})\ot b\sw 0 = \psi^{-1}(p\ot e)b = {}_P\Delta(p)b.
$$
Therefore, $b\in ^{coC}\!P$, so that $^{coC}\!P = ^{coC}\!P_e$. This completes the proof.
\end{proof}

The right coaction is left $B$-linear, whereas the left coaction is right $B$-linear.
(In the Hopf-Galois case, they are both $B$-bimodule homomorphisms.) One should also
bear in mind that, in general, even under the assumption of commutativity, $P$ is {\em 
not} a bicomodule with respect to $\dl$ and $\dr$.

Now, the {\em left} canonical map
can be defined as follows:
\[\label{2:leftcan}
\can_L: P \otimes_B P \lra C \otimes P,\;\;\;\can_L(x \otimes y):=\dl(x)y.
\]
It is straightforward to verify that the left and right canonical maps are related
by the commutative diagram
\newline
\[
\xymatrix{
    & P \otimes_B P \ar[ld]_{\can_L} \ar[rd]^{\can_P} &      \\
C \otimes P  \ar[rr]^{\psi} &   & P \otimes C\,.
}
\]

\noindent
This implies $\tau(c)= \can_P^{-1}(1 \otimes c)= can_L^{-1}(c \otimes 1)=
c^{[1]} \otimes_B c^{[2]}$. (Here we used a more symmetric convention
$\can_P=\can$.) Therefore, even though there are left and right canonical
maps, the left and right translation maps coincide. Consequently, the lifting formulation
of a strong connection is also independent of the choice of left or right-sided formulation. 

\note{
\begin{Thm}\label{2:lemma.strong.inv}
Consider a symmetric coalgebra-Galois extension
    $P(B)^{C}_{e}$. Let $\chi :P\otimes P\ra P\otimes C$,
$p\otimes p'\mapsto p\Delta_P(p)$ be the lifting of the canonical map
$\can$. Then  
     a linear map $\omega: C\ra \Omega^{1}P$ is a strong connection
one-form if and only if
linear map $\ell: C\ra P\otimes P$, $c\mapsto \omega(c) +
    \eps(c)1\otimes 1$ is a $(C,C)$-bicomodule map such that
    $\ell(e) = 1$ and $(\chi\circ \ell)(c) = 1\otimes c$, where
    $P\otimes P$ is a $(C,C)$-bicomodule via ${}^{P}\varrho\otimes P$
    and $P\otimes \Delta_{P}$.
\end{Thm}
}

So far we restricted our attention to formulating the concept of a strong connections
without asking when such a connection exists. It turns out that the existance of a strong connection
both forces and is guaranteed by the equivariant projectivity. More precisely, we have:
\begin{Lem}[\cite{bhms}]\label{bhmslem}
An $e$-coaugmented \cge\ $B\inc P$ admits a strong connection {\em if and only if}
$P$ is $C$-equivariantly projective as a left $B$-module.  
\end{Lem}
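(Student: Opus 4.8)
The plan is to prove the two implications by trading both of them for statements about sections of the multiplication map $m_P\colon B\ot P\to P$. By Lemma~\ref{2:stronglem}, a strong connection exists on $B\inc P$ if and only if there is a \emph{strong-connection splitting}, i.e.\ a \emph{unital} left $B$-linear right $C$-colinear (with respect to $\id\ot\Delta_P$) section of $m_P$. On the other hand, Lemma~\ref{2:lemma.e.p} characterises $C$-equivariant projectivity of the left $B$-module $P$ exactly by the existence of a left $B$-linear right $C$-colinear section $s$ of $m_P$, with no unitality required. Thus the two conditions differ only by the normalisation $s(1)=1\ot 1$, and the implication ``strong connection $\Rightarrow$ $C$-equivariant projectivity'' is immediate: a strong-connection splitting is in particular such a section.

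For the converse I would begin from a section $s\colon P\to B\ot P$ furnished by Lemma~\ref{2:lemma.e.p} and first pin down $s(1)$. Evaluating the colinearity identity $(\id\ot\Delta_P)\circ s=(s\ot\id)\circ\Delta_P$ at $1$ and using $e$-coaugmentation, $\Delta_P(1)=1\ot e$, yields $(\id\ot\Delta_P)(s(1))=s(1)\ot e$; since the second tensorand then lies in $P_e^{coC}=P^{coC}=B$ by Proposition~\ref{2:prop.ecoin=coin}, we obtain $s(1)\in B\ot B$ with $m_B(s(1))=1$. The natural candidate for a strong connection is the lifting $\ell\colon C\to P\ot P$, $\ell(c):=c\su1 s(c\su2)$, well defined because $s$ is left $B$-linear (equivalently $\ell=\Phi\circ\tau$, where $\Phi\colon P\ot_B P\to P\ot P$, $p\ot_B q\mapsto p\,s(q)$, is a left $P$-linear section of $\pi_B$). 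Using the Translation Map Lemma~\ref{2:translation}, one checks directly that $\widetilde{\can}\circ\ell=1\ot\id_C$, i.e.\ the lifting property $\pi_B\circ\ell=\tau$ of Definition~\ref{2:def.lift}(ii), while the covariance condition (iii) follows from (\ref{2:canent}) and the same lemma. Hence the \emph{only} axiom of Definition~\ref{2:def.lift} that can fail is the normalisation $\ell(e)=1\ot 1$; indeed Lemma~\ref{2:translation}(vii) gives $\ell(e)=\Phi(1\ot_B 1)=s(1)$.

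The crux is therefore to repair the defect $\delta:=s(1)-1\ot 1\in B\ot B$, which satisfies $m_B(\delta)=0$. The decisive structural fact is that $\delta$ is \emph{bi-coinvariant}: because $B$ consists of right coaction invariants, $(\id\ot\Delta_P)(\delta)=\delta\ot e$, and, writing ${}_P\Delta$ for the left coaction and using $\psi(c\ot b)=b\ot c$ for $b\in B$, also $({}_P\Delta\ot\id)(\delta)=e\ot\delta$. I would use this to subtract from $\ell$ a correction $\kappa\colon C\to P\ot P$ with $\kappa(e)=\delta$ that is \emph{bicolinear} and satisfies $\pi_B\circ\kappa=0$, so that $\ell':=\ell-\kappa$ remains a lift of $\tau$, stays bicolinear, and now has $\ell'(e)=1\ot 1$. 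The naive choice $\kappa(c)=\eps(c)\delta$ does restore $\ell'(e)=1\ot 1$ and preserves both the lifting property and right $C$-colinearity, but a short computation shows it destroys \emph{left} $C$-colinearity — and this is precisely the failure that renders the resulting connection non-strong, since the associated splitting then acquires a term $p\,\delta$ lying outside $(\Omega^1B)P$. The real content is thus to produce a genuinely bicolinear correction, which I expect to build from the bi-coinvariance of $\delta$ together with the left–right symmetry of the translation map recorded in the discussion around~(\ref{2:leftcan}).

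Once a unital bicolinear lifting $\ell'$ is secured, its connection form $\omega'=\ell'-(1\ot 1)\eps$ (cf.\ (\ref{2:lomega})) defines a connection via Theorem~\ref{2:con.form}; equivalently the corresponding splitting $s'$ now lands in $B\ot P$ and is unital, so that Lemma~\ref{2:stronglem} delivers the desired strong connection (in the case of bijective $\psi$ the bicolinearity is literally the strongness criterion of Lemma~\ref{2:scl}). The main obstacle, as indicated, is exactly this normalisation step — achieving $\ell(e)=1\ot 1$ without breaking two-sided colinearity — because any correction merely proportional to $\eps$ spoils left $C$-colinearity and hence strongness; I expect every other ingredient (the lifting and covariance identities for $\ell$, and the easy direction) to be a routine application of the Translation Map Lemma and the defining properties of the canonical entwining.
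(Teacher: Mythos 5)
The paper itself does not prove this lemma --- it is quoted from \cite{bhms}, a manuscript in preparation --- so your proposal has to stand on its own, and it does not: the entire content of the nontrivial implication is left unproved. Your easy direction is fine (a strong connection gives, via Lemma~\ref{2:stronglem}, a unital left $B$-linear right $C$-colinear splitting of $B\ot P\to P$, which is in particular the section required by Lemma~\ref{2:lemma.e.p}), and so are the preliminary steps of the converse: $s(1)\in B\ot B$ by coaugmentation and Proposition~\ref{2:prop.ecoin=coin}, and the map $\ell(c)=c^{[1]}s(c^{[2]})$ satisfies the lifting property and right colinearity by the Translation Map Lemma. But everything then hinges on producing a colinear correction $\kappa$ with $\pi_B\circ\kappa=0$ and $\kappa(e)=\delta$, and you only announce that you ``expect to build'' it. That construction \emph{is} the theorem: left $B$-linearity, right colinearity, the lifting property and unitality pull in incompatible directions, and no pointwise repair of $s$ or of $\ell$ is available in general. (Note also that your diagnosis of the naive correction is off: for $\kappa(c)=\eps(c)\delta$ one has $(\kappa\ot\id)\circ\Delta(c)=\delta\ot c$ while $(\id\ot\Delta_P)\circ\kappa(c)=\eps(c)\,\delta\ot e$, so \emph{right} $C$-colinearity already fails, not merely the left one.) The coseparable case treated in Section~\ref{2:sec.cosep} illustrates what is genuinely needed: there the normalisation $\sigma\mapsto\sigma+1\ot1\eps-\sigma(e)\eps$ is performed naively and the lost colinearity is then restored by averaging with a cointegral; absent such extra structure one must argue differently (compare the comodule-splitting characterisation in Theorem~\ref{2:thm.princ.field}), and your sketch supplies no substitute.

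A secondary point: the lemma assumes only $e$-coaugmentation, so ``strong connection'' must be read through Definition~\ref{2:strongd} and Lemma~\ref{2:stronglem}; the left coaction ${}_P\Delta$ and the bicolinearity criterion of Lemma~\ref{2:scl}, around which your repair strategy is organised, are only available when the canonical entwining is bijective. In the stated generality the condition to be preserved is right $\id\ot\Delta_P$-colinearity of a splitting with values in $B\ot P$, and, as noted above, that is exactly what the obvious correction destroys. In short, your proposal is a correct framing of the problem and an accurate sense of where the difficulty sits, but it does not contain a proof of the implication from equivariant projectivity to the existence of a strong connection.
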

Therefore, to have available all 5 formulations of a strong connection and to ensure their existence,
we need to demand that our \cg\ extension is equivariantly projective, coaugmented and
with bijective canonical entwining. Thus we arrive at the concept of a principal extension
from postulating rich strong-connection theory.
The following theorem summarises and completes this section on strong-connection
theory. It is a generalisation of Theorem~2.3 and Theorem~4.1 in~\cite{dgh01}.
\begin{Thm}\label{2:claim2}{\sc (Strong-Connection Theorem)}
Let $B\inc P$ be a principal $C$-extension. For any $c\in C$, write the translation map $\tau(c) = c\su1\ot_B c\su 2$. 
The following maps
\[\label{2:Dseq}
s\mapsto D_s, \qquad D_s(p) = 1\ot p -s(p), \quad \forall p\in P,
\]\[\label{2:PiD}
D\mapsto \Pi^D, \qquad \Pi^D(p\d p') = p\d p' - pD(p'), \quad \forall p,p'\in P,
\]\[\label{2:omegaPi-pr}
\Pi \mapsto \omega_\Pi, \qquad \omega_\Pi (c) = c\su 1\Pi(\d c\su 2), \quad \forall c\in C,
\]\[\label{2:lomega-pr}
\omega\mapsto \ell^\omega, \qquad \ell^\omega(c) = \omega(c) +\eps(c)1\ot 1, \quad \forall c\in C,
\]\[\label{2:sl}
\ell\mapsto s^\ell, \qquad s^\ell(p) = p\sw 0\ell(p\sw 1), \quad \forall p\in P,
\]
give bijective correspondences between sets of strong-connection splittings $s$, strong covariant differentials $D$, strong connections $\Pi$, strong-connection forms $\omega$, and strong-connection liftings $\ell$. The explicit form of inverses of these maps is obtained by cyclic composition, e.g., the inverse of map (\ref{2:PiD}) is obtained by composing maps (\ref{2:omegaPi-pr}) with (\ref{2:lomega-pr}), with (\ref{2:sl}) with (\ref{2:Dseq}), etc.
\end{Thm}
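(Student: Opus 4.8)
The plan is to arrange the five assignments as the oriented edges of a single cycle $s\to D\to\Pi\to\omega\to\ell\to s$, to observe that four of these edges are already known to be bijections, and then to reduce the entire statement to one explicit verification: that travelling once around the cycle sends each strong-connection splitting back to itself. I would first record that all five sets are nonempty: since $B\inc P$ is principal, $P$ is $C$-equivariantly projective as a left $B$-module, so a strong connection exists by Lemma~\ref{bhmslem}, and with it the associated splitting, covariant differential, form and lifting.

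Next I would invoke the correspondences already established. The maps $s\mapsto D_s$ of~(\ref{2:Dseq}) and $D\mapsto\Pi^D$ of~(\ref{2:PiD}) are exactly the mutually inverse assignments constructed in the proof of Lemma~\ref{2:Ds} (the equivalences of descriptions $(2),(3)$ and $(3),(1)$ there), so each is a bijection with explicit inverse $D\mapsto s_D$, $\Pi\mapsto D_\Pi$. The map $\Pi\mapsto\omega_\Pi$ of~(\ref{2:omegaPi-pr}) is the bijection of Theorem~\ref{2:con.form} restricted to strong objects; because strong-connection forms are \emph{defined} as the connection forms attached to strong connections, this restriction is again a bijection, with inverse $\Pi^\omega$ of~(\ref{2:Piomega}). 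Likewise $\omega\mapsto\ell^\omega$ of~(\ref{2:lomega-pr}) is the bijection~(\ref{2:lomega}) between connection forms and connection liftings, restricted to strong objects (strong-connection liftings being the liftings attached to strong connections), with inverse $\ell\mapsto\ell-\varepsilon(\,\cdot\,)\,1\ot1$. Hence the composite $\Phi:=(\omega\mapsto\ell^\omega)\circ(\Pi\mapsto\omega_\Pi)\circ(D\mapsto\Pi^D)\circ(s\mapsto D_s)$ is a bijection from strong-connection splittings onto strong-connection liftings.

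The only genuinely new map is $\ell\mapsto s^\ell$ of~(\ref{2:sl}), and I would pin it down by computing $s^{\Phi(s)}$ for an arbitrary strong splitting $s$. Tracing the cycle: from $D_s(p)=1\ot p-s(p)$ one gets $\Pi^{D_s}(\d c\su2)=s(c\su2)-c\su2\ot1$, whence $\omega(c)=c\su1\bigl(s(c\su2)-c\su2\ot1\bigr)=c\su1 s(c\su2)-\varepsilon(c)\,1\ot1$ by Translation Map Lemma~\ref{2:translation}(ii), so that $\ell^\omega(c)=c\su1 s(c\su2)$ after the constant shift cancels (here $c\su1$ multiplies the first leg of $s(c\su2)$). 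Applying~(\ref{2:sl}) then gives $s^{\ell}(p)=p\sw0\,{p\sw1}\su1\,s({p\sw1}\su2)$. The decisive step is that the left $B$-linearity of $s$ makes $x\ot_B y\mapsto x\,s(y)$ (with $x$ multiplying the first leg) a well-defined map $P\ot_B P\to P\ot P$; applying it to Translation Map Lemma~\ref{2:translation}(iii), $p\sw0{p\sw1}\su1\ot_B{p\sw1}\su2=1\ot_B p$, yields $s^{\ell}(p)=1\cdot s(p)=s(p)$. Thus $\Phi$ followed by $\ell\mapsto s^\ell$ is the identity.

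Since $\Phi$ is a bijection and $(\ell\mapsto s^\ell)\circ\Phi=\id$, it follows formally that $\ell\mapsto s^\ell$ is the two-sided inverse of $\Phi$, hence a bijection from strong-connection liftings onto strong-connection splittings; consequently all five maps are bijections, and their composite around the cycle is the identity at each vertex (conjugate the identity just proven by the appropriate bijection). The inverse of any single map is then the cyclic composition of the remaining four, as asserted. I expect the only real obstacle to be the bookkeeping of the balanced tensor product in the final step; once the well-definedness over $B$ of $x\ot_B y\mapsto x\,s(y)$ is noted, the identity $s^\ell=s$ falls out directly from Translation Map Lemma~\ref{2:translation}, properties (ii) and (iii).
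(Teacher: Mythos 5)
Your proposal is correct and follows essentially the same route as the paper: assemble the correspondences already established in Lemma~\ref{2:stronglem}, Lemma~\ref{2:Ds}, Theorem~\ref{2:con.form} and equation~(\ref{2:lomega}), and then verify that the cyclic composition is the identity. The only difference is that you explicitly carry out the computation $s^{\ell}=s$ (via Translation Map Lemma~\ref{2:translation}(ii),(iii) and the well-definedness of $x\ot_B y\mapsto x\,s(y)$), which the paper dismisses with ``one easily checks''; your argument for that step is sound.
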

\begin{proof}
The equivalent descriptions of strong connections given by maps (\ref{2:Dseq}) and (\ref{2:PiD}) 
are contained in Lemma~\ref{2:stronglem} and Lemma~\ref{2:Ds}, while the map 
(\ref{2:omegaPi-pr}) is described in Theorem~\ref{2:con.form}. The  map (\ref{2:lomega-pr}) 
originates from equation (\ref{2:lomega}). That the connection splitting corresponding to a 
strong connection  is a strong-connection splitting follows from Lemma~\ref{2:scl}. Thus 
we obtain maps between claimed spaces  as required. One easily checks that cyclic 
compositions provide inverses, as described.
\end{proof}

Since a gauge automorphism 
$F$ is unital, i.e., $F(1)=1$, its left $B$-linearity implies that $F(b) = b$ for all $b\in B$. 
Furthermore, $F$ is $C$-colinear, so that given a strong-connection 
splitting $s: P\to B\ot P$, 
the map $s^F = (\id\ot F^{-1})\circ s\circ F: P\to B\ot P$ is again a strong-connection 
splitting. 
It is clear from the form of $s^F$ that the assignment $F\mapsto s^F$ defines a 
left action of 
the group of gauge automorphisms on strong-connection splittings (remember 
that we use conventions in which the product of gauge automorphisms is given 
by the opposite composition). In view of the description 
of gauge automorphisms in terms of gauge transformations in Theorem~\ref{2:thm.gauge.aut} 
as well as various descriptions of strong connections in Theorem~\ref{2:claim2}, we are led 
to the following.
\begin{Thm}\label{2:thm.gauge.conn}
The group of gauge transformations of  a principal $C$-extension $B\inc P$ 
acts on the 
spaces of strong connection splittings,  covariant differentials, connections, 
connection 
forms and connection liftings in the following ways, for all $f\in GT^C(B\inc P)$, $p,r\in P$ 
and $c\in C$:
\begin{zlist}
\item Strong-connection splittings $s:P\to B\ot P$: 
$$(f\vt s) (p):= s \llp p\sw0 f(p\sw1 )\lrp f^{-1} (p\sw2 );$$
\item Strong covariant differentials $D:P\to \Omega^1BP$: $$(f\vt D) (p):= D \llp p\sw0 f(p\sw1 )\lrp f^{-1} (p\sw2 );$$
\item Strong connections $\Pi:\Omega^1P\to \Omega^1P$: $$(f\vt \Pi) (r\d p):= r \Pi \llp\d (p\sw0 f(p\sw1 ))\lrp f^{-1} (p\sw2 )
+ r p\sw0 f(p\sw1 )\d f^{-1} (p\sw2 ) ;$$
\item Strong connection forms $\ho:C\to \Omega^1P$: $$(f\vt \ho ) (c):= f(c\sw1 )\ho (c\sw2 ) f^{-1} (c\sw3 )
+ f(c\sw1 )\d f^{-1} (c\sw2 ) ;$$
\item Strong connection liftings $\ell:C\to P\ot P$: $$(f\vt \ell ) (c):= f(c\sw1 )\ell (c\sw2 ) f^{-1} (c\sw3 ).$$
\end{zlist}
All these actions are compatible with the isomorphisms described in Theorem~\ref{2:claim2}, i.e., the maps given by (\ref{2:Dseq})--(\ref{2:sl}) are left $GT^C(B\inc P)$-module maps.
\end{Thm}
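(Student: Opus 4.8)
The plan is to reduce everything to the single, geometrically transparent action on strong-connection splittings and then propagate it through the chain of bijections of Theorem~\ref{2:claim2}. The starting point is the observation recorded just before the theorem, that a gauge automorphism $F$ acts on a strong-connection splitting $s:P\to B\ot P$ by the pullback $s^F:=(\id\ot F^{-1})\circ s\circ F$. First I would verify that $s^F$ is again a strong-connection splitting: since $F$ and $F^{-1}=F_{f^{-1}}$ are unital, left $B$-linear and right $C$-colinear, one has $m\circ s^F=F^{-1}\circ m\circ s\circ F=\id$, $s^F(1)=1\ot1$, left $B$-linearity is inherited, and right $C$-colinearity follows from that of $s$, $F$ and $F^{-1}$. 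The identity $(s^F)^G=s^{F\circ G}$ then shows that $F\mapsto s^F$ is a left action of the opposite-composition group $GA^C(B\inc P)$.

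Next I would rewrite $s^F$ in terms of the gauge transformation $f=f_F$ via the isomorphism of Theorem~\ref{2:thm.gauge.aut}, obtaining exactly the formula in item~(1). The key step is that $F=F_f$ is $C$-colinear, so $\Delta_P(F(p))=F(p\sw0)\ot p\sw1=p\sw0 f(p\sw1)\ot p\sw2$; combining this with the right $C$-colinearity of $s$ lets me move $F^{-1}$ off the second leg and turn it into right multiplication by $f^{-1}(p\sw2)$, i.e. $s^F(p)=s(p\sw0 f(p\sw1))f^{-1}(p\sw2)$. This both defines the action $(f\vt s)$ and verifies the action axioms by transport from $GA^C$ to $GT^C$.

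With the action on splittings in hand, I would \emph{define} the actions on covariant differentials, connections, connection forms and liftings by transport through the maps~(\ref{2:Dseq})--(\ref{2:sl}) of Theorem~\ref{2:claim2}, so that compatibility with those isomorphisms (the last assertion of the theorem) is automatic; the remaining work is to compute the transported actions explicitly. Throughout I would use the two identities $f(c\sw1)f^{-1}(c\sw2)=\eps(c)1_P$ (convolution inverse) and, consequently, $f(c\sw1)\,\d f^{-1}(c\sw2)=f(c\sw1)\ot f^{-1}(c\sw2)-\eps(c)\,1\ot1$ in $\Op$. For liftings, inserting $s^\ell(p)=p\sw0\ell(p\sw1)$ from~(\ref{2:sl}) into $(f\vt s)$ gives $(f\vt s^\ell)(p)=p\sw0 f(p\sw1)\ell(p\sw2)f^{-1}(p\sw3)=s^{\tilde\ell}(p)$ with $\tilde\ell(c)=f(c\sw1)\ell(c\sw2)f^{-1}(c\sw3)$, which is item~(5); item~(4) then follows from $\ell=\omega+1\ot1\eps$ together with the $\d f^{-1}$ identity. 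For differentials, $D_s(p)=1\ot p-s(p)$ from~(\ref{2:Dseq}) and the cancellation $(1\ot p\sw0 f(p\sw1))f^{-1}(p\sw2)=1\ot p$ give item~(2), and feeding this into $\Pi^D$ from~(\ref{2:PiD}) together with $D=\d-\Pi\circ\d$ yields item~(3) after the rearrangement $r\d p-r\,\d(p\sw0 f(p\sw1))f^{-1}(p\sw2)=rp\sw0 f(p\sw1)\,\d f^{-1}(p\sw2)$.

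The main obstacle is bookkeeping rather than conceptual: I must keep the several Sweedler coactions (the $P$-coaction $\Delta_P$, its iterate, and the $C$-coproduct on the arguments of $f$ and $f^{-1}$) carefully aligned, and consistently use that $F_f$ is $C$-colinear so that applying $\Delta_P$ to $p\sw0 f(p\sw1)$ merely shifts the index. Once item~(1) is identified with the pullback $s^{F_f}$ and the two convolution identities are in place, items~(2)--(5) and the module-map property are routine verifications; the only genuinely multi-line computation is the one producing item~(3).
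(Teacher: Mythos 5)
Your proposal is correct and follows essentially the same route as the paper: identify $(f\vt s)$ with the pullback $s^{F_f}=(\id\ot F_f^{-1})\circ s\circ F_f$ using the colinearity of $s$ and of $F_f$ (equivalently, the entwined-module property together with the intertwining condition on $f$), and then transport the action through the bijections of Theorem~\ref{2:claim2} to obtain items (2)--(5) and the compatibility statement. The only difference is one of emphasis --- the paper leaves the explicit verification of (2)--(5) to the reader, whereas you sketch those computations (correctly) in somewhat more detail.
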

\begin{proof}
In view of Theorem~\ref{2:thm.gauge.aut} and the discussion preceding the theorem, to show 
that item (1) describes a left action of $GT^C(B\inc P)$ on the space of strong-connection splittings, it
suffices  to show that $f\vt s = s^{F_f} := (\id\ot F_f^{-1})\circ s\circ F_f$, where $F_f$ is given by 
equation (\ref{2:Ff}). Since $s$ is right $C$-colinear, and, for all $p\in P$,  $F_f(p) = p\sw 0f(p\sw 1)$ 
and $F_f^{-1}(p) = p\sw 0f^{-1}(p\sw 1)$, one immediately finds that $ ((\id\ot F_f^{-1})\circ s)(p) = 
s(p\sw 0)f^{-1}(p\sw1)$. Furthermore, $P$ is an entwined module, so that
\begin{eqnarray*}
s^{F_f}(p) &=& s((p\sw 0f(p\sw 1))\sw 0)f^{-1}((p\sw 0f(p\sw 1))\sw 1)\\
&=& s(p\sw 0f(p\sw 2)_\alpha)f^{-1}(p\sw 1^\alpha)\\
&=&  s(p\sw 0f(p\sw 1))f^{-1}(p\sw 2).
\end{eqnarray*}
Note that the final equality follows from the fact that $f$ is a gauge transformation, i.e., it satisfies
 equation (\ref{2:cond.b.gauge}). The formulae for the action in the case of other descriptions 
of a strong connection (items (2)--(5)) are obtained by applying maps described in 
Theorem~\ref{2:claim2}. In particular, this implies that descriptions of actions of the gauge 
group are compatible with these maps. The reader can directly check these formulae, 
noting that the element $1\ot 1\in P\ot P$ is fixed under the gauge transformations.
\end{proof}

\subsubsection{Covariant derivatives on associated modules}

\begin{Thm}
Modules associated to equvariantly projective symmetric (bijectivity of the
canonical entwining assumed) coalgebra-Galois $C$-extensions
via finite-dimensional corepresentations  are finitely generated projective.
\end{Thm}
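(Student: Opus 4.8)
The plan is to treat the two halves of the conclusion separately, using the isomorphism $E:=\Hom^C(V,P)\cong P\square_C V^*$ of left $B$-modules recorded just before the statement (valid because $V$ is finite-dimensional), and writing $n:=\dim_k V$. Projectivity will come straight from equivariant projectivity, while finite generation will require upgrading to faithful flatness.

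First I would establish projectivity, which in fact needs neither finite-dimensionality nor symmetry, only equivariant projectivity. By Lemma~\ref{2:lemma.e.p} there is a left $B$-linear, right $C$-colinear section $s\colon P\to B\otimes P$ of the product map $m_P\colon B\otimes P\to P$, where $B\otimes P$ carries the coaction $\id_B\otimes\Delta_P$. Since this coaction is trivial on the $B$-leg, cotensoring with $V^*$ gives $(B\otimes P)\square_C V^*=B\otimes_k(P\square_C V^*)=B\otimes_k E$, and the colinearity of $s$ makes $s\square_C\id_{V^*}\colon E\to B\otimes_k E$ well defined. This map is left $B$-linear (as $s$ is) and splits the multiplication map $B\otimes_k E\to E$ (as $m_P\circ s=\id_P$). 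Hence $E$ is a direct summand of the free left $B$-module $B\otimes_k E$, and is therefore projective. This is precisely the splitting indicated in the remark preceding the theorem.

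For finite generation I would first upgrade equivariant projectivity to faithful flatness: regarding $B\subseteq P$ as a principal $C$-extension in the sense of Definition~\ref{2:def.principal}, Theorem~\ref{2:thm1} shows that $P$ is faithfully flat as a left $B$-module (this is the implication ``equivariantly projective $\Rightarrow$ faithfully flat'' announced in the introduction). Flatness then lets me base change along $B\hookrightarrow P$ through the cotensor: applying the exact functor $P\otimes_B-$ to the defining equaliser sequence of $P\square_C V^*$ yields $P\otimes_B E\cong(P\otimes_B P)\square_C V^*$, since $V^*$ and $C\otimes V^*$ enter only as fixed vector-space legs. The canonical isomorphism $\can\colon P\otimes_B P\to P\otimes C$ identifies this with $(P\otimes C)\square_C V^*\cong P\otimes_k(C\square_C V^*)\cong P\otimes_k V^*$, using the counit property of the cotensor product. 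Thus $P\otimes_B E$ is free of rank $n$ over $P$, in particular finitely generated; by faithfully flat descent of finite generation (lift a finite generating set of $P\otimes_B E$ to finitely many elements of $E$, let $E_0\subseteq E$ be the $B$-submodule they span, and note that $P\otimes_B(E/E_0)=0$ forces $E=E_0$) the module $E$ is finitely generated over $B$. Combined with the previous paragraph, $E$ is finitely generated projective.

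The main obstacle, and the only place where the symmetric/principal hypotheses are genuinely used, is securing faithful flatness together with the tensor--cotensor interchange $P\otimes_B(P\square_C V^*)\cong(P\otimes_B P)\square_C V^*$ that turns $P\otimes_B E$ into a finite free $P$-module; the projectivity half is essentially formal. I would be careful to check that this interchange really follows from left flatness of $P$ alone (it does, because $P\otimes_B-$ commutes with the relevant kernel when the comodule legs are held fixed) and that the concluding descent step invokes faithful, not merely flat, descent.
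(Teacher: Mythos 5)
Your argument is correct and takes essentially the same route as the paper: projectivity of $E\cong P\square_C V^*$ comes from cotensoring the equivariant splitting $s$ of the multiplication map (the map $s\otimes\id\colon P\square_C V^*\to B\otimes P\square_C V^*$ indicated just before the statement), and finite generation comes from faithful flatness of $P$ over $B$ together with the chain $P\otimes_B E\cong(P\otimes_B P)\square_C V^*\cong(P\otimes C)\square_C V^*\cong P\otimes V^*$ and faithfully flat descent, which is exactly the content of the unproven lemma preceding the theorem. The only point to keep in view is that your passage from equivariant projectivity to faithful flatness via Theorem~\ref{2:thm1} uses the coaugmentation built into the definition of a principal extension, a hypothesis not written explicitly in the theorem but one the paper itself relies on at this step.
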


Once vector bundles are identified with projective modules one can study
connections in such modules. In general \cite{c-a94}, for an algebra $B$
and a left
$B$-module $E$, a {\em connection} is defined as a linear map $\nabla:
E\ra \Ob\otimes_B E$, which satisfies the Leibniz rule in the form
$$
\nabla(b\cdot f) = \d(b)\otimes_B f + b\nabla(f),
$$
for all $b\in B$ and $f\in E$. The theory of connections is of
particular interest, and indeed, meaningful, in the case of projective
modules, since  
 a module admits a connection if and only if it is a projective
module ~\cite{cq95}. The connection constructed directly from an
idempotent is known as the {\em Grassmann}  connection. Note that
there is no need for a projective module to be  finitely generated in order
to have a connection.

Given a strong connection $1$-form $\omega$ in a symmetric
coalgebra-Galois $e$-coaugmented $C$-extension $B\inc P$,
the corresponding covariant differential induces a map
 on the
associated module of sections $\Hom^C(V,P)$:
$$\nabla: \Hom^C(V,P) \lra \Hom^C(V,(\Ob)P) \ , \  f \longmapsto \nabla f,$$
where $\nabla f(v):=\d f(v)-f(v\sw 0) \omega(v\sw 1)$.
\begin{Prop} \label{2:prop.nabla}
If $V$ is finite dimensional then $\nabla$ is a connection on $\Hom^C(V,P)$.
\end{Prop}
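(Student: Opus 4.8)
The plan is to recognise $\nabla$ as post-composition with the strong covariant differential $D$ of Definition~\ref{2:D}, and then to transport the already-established properties of $D$ through the finite-dimensionality of $V$. First I would record that $\nabla f = D\circ f$. By Lemma~\ref{2:Ds} the covariant differential attached to $\omega$ (equivalently, to the connection $\Pi=\Pi^\omega$ of Theorem~\ref{2:con.form}) is $D=(\id-\Pi)\circ\d$, so that $D(p)=\d p - p\sw0\,\omega(p\sw1)$ for all $p\in P$, using $\Pi^\omega(\d p)=p\sw0\omega(p\sw1)$ from (\ref{2:Piomega}). Since $f$ is right $C$-colinear, $f(v\sw0)\ot v\sw1 = f(v)\sw0\ot f(v)\sw1$, whence $\nabla f(v)=\d f(v)-f(v)\sw0\,\omega(f(v)\sw1)=D(f(v))$. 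Thus $\nabla f=D\circ f$, and the whole statement reduces to properties of $D$.

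Next I would check that $\nabla$ is well defined, i.e.\ that $D\circ f\in\Hom^C(V,(\Ob)P)$. The strongness of $\omega$ (Definition~\ref{2:strongd}, Lemma~\ref{2:Ds}) says exactly $D(P)\inc(\Ob)P$, so $(D\circ f)(V)\inc(\Ob)P$. Moreover $D$ is right $C$-colinear by the covariance axiom $(\id\ot\Delta_P)\circ D=(D\ot\id)\circ\Delta_P$ of Definition~\ref{2:D}(2), and $f$ is colinear, so $\Delta_{\Op}((D\circ f)(v))=(D\ot\id)(f(v\sw0)\ot v\sw1)=(D\circ f)(v\sw0)\ot v\sw1$; hence $D\circ f$ is colinear. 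Since $D$ and $f$ are additive and $(b\cdot f)(v)=bf(v)$, the left $B$-module structure is also respected.

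The substantive point — and the reason finite-dimensionality of $V$ is needed — is the natural identification
\[
\Hom^C(V,(\Ob)P)\;\cong\;\Ob\ot_B\Hom^C(V,P),
\]
which turns $f\mapsto D\circ f$ into a genuine connection $\nabla^E\colon E\ra\Ob\ot_B E$ with $E=\Hom^C(V,P)$. I would establish this in two steps. First, the existence of $\omega$ forces $B\inc P$ to be principal (Lemma~\ref{bhmslem} together with the symmetric and $e$-coaugmented hypotheses and Definition~\ref{2:def.principal}), so by Theorem~\ref{2:cor.coflat} $P$ is faithfully flat over $B$; flatness makes the multiplication map $\Ob\ot_B P\ra(\Ob)P$ an isomorphism of right $C$-comodules, the coaction on the left being $\id\ot\Delta_P$ (for $\beta\in\Ob$ one computes $\Delta_{\Op}(\beta)=\beta\ot e$, hence $\Delta_{\Op}(\beta p)=\beta p\sw0\ot p\sw1$). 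Second, for finite-dimensional $V$ the functor $\Hom^C(V,-)$ is the finite cotensor $-\Box_C V^*$, i.e.\ a kernel of a map between finite direct sums, so $\Hom^C(V,(\Ob)P)\cong(\Ob\ot_B P)\Box_C V^*\cong\Ob\ot_B(P\Box_C V^*)\cong\Ob\ot_B E$, the isomorphism being $\beta\ot_B g\mapsto(v\mapsto\beta\,g(v))$. I expect the commutation of $\Ob\ot_B(-)$ with this finite cotensor to be the main obstacle: one must invoke faithful flatness and the finiteness of $V$ to secure the requisite exactness, and it is precisely here that the hypotheses on the extension are consumed.

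Finally I would verify the Leibniz rule. Using $D(bp)=bD(p)+(\d b)p$ from Definition~\ref{2:D}(1), for $b\in B$ and $f\in E$ one has
\[
\nabla(b\cdot f)(v)=D(bf(v))=bD(f(v))+(\d b)f(v)=b\,\nabla f(v)+(\d b)f(v).
\]
Under the left $B$-linear isomorphism above, $b\,\nabla f(\cdot)$ corresponds to $b\cdot\nabla^E f$ and $(\d b)f(\cdot)$ to $\d b\ot_B f$, so that $\nabla^E(b\cdot f)=\d b\ot_B f+b\,\nabla^E f$. This is exactly the connection Leibniz rule, and together with well-definedness it shows that $\nabla$ is a connection on $\Hom^C(V,P)$, completing the proof.
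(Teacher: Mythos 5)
Your proposal is correct and follows essentially the same route as the paper: the substance is the identification $\Hom^C(V,(\Ob)P)\cong(\Ob\otimes_BP)\Box_CV^*\cong\Ob\otimes_B(P\Box_CV^*)\cong\Ob\otimes_B\Hom^C(V,P)$ for finite-dimensional $V$, followed by the Leibniz-rule computation, and your reformulation $\nabla f=D\circ f$ merely repackages this. One small point: for the redistribution of brackets past the cotensor product the paper invokes flatness of $\Ob$ as a right $B$-module (automatic, since $\Ob$ is a direct summand of the free module $B\otimes B$) rather than faithful flatness of $P$ over $B$, which is the ingredient relevant to the other identification $(\Ob)P\cong\Ob\otimes_BP$; both facts are available under your hypotheses, so this does not create a gap.
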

\begin{proof}
The first crucial observation here is that if $V$ is finite dimensional,
then $\Hom^C(V, (\Ob)P)$ $\cong \Ob \otimes_B \Hom^C(V,P)$. 
Indeed, if a right $C$-comodule
$V$ is finite dimensional, then
the dual space $V^*$ is a left $C$-comodule with the coaction given
explicitly, for all $v^*\in V^*$,
${}^{V^*\!}\!\varrho (v^*) = \sum_{i=1}^n v^*(e_i\sw 0)e_i\sw 1\otimes e^i$,
where
$\{e_i\in V\}_{i=1,\ldots n}$ and $\{e^i\in V^*\}_{i=1,\ldots n}$ are dual to each other
bases of $V$ and $V^*$, respectively. For
any right $C$-comodule $W$, we have the canonical identification
$$
\Hom^C(V, W) \cong W\square_C V^*.
$$
Here 
$$
W\square_C V^*:= \left\{ \sum _i w_i\otimes v^*_i\in W\otimes V^*\; |\;
\sum_i \Delta_W(w_i)\otimes v^*_i = \sum_i w_i\otimes
{}^{V^*\!}\varrho(v^*_i)\right\}
$$
is the cotensor product of a right and a left $C$-comodule. We then have
the following chain of identifications
\begin{eqnarray*}
\Hom^C(V, (\Ob)P) &= &\Hom^C(V, \Ob\otimes_BP) \cong
(\Ob\otimes_BP)\square_C V^*\\
&= & \Ob\otimes_B(P\square_C V^* )\cong
\Ob \otimes_B \Hom^C(V,P).
\end{eqnarray*}
The redistribution of brackets in the penultimate  equality is possible
because $\Ob$ is a flat right $B$-module.  Thus the map $\nabla$ can be
viewed as a map $\nabla: \Hom^C(V,P)\ra \Ob\otimes_B \Hom^C(V,P)$, and has a right range
for a connection. Hence only the Leibniz rule needs to be verified. For
all $b\in B$ and $f\in \Hom^C(V,P)$, we have
\begin{eqnarray*}
\nabla(b\cdot f)(v) &=& \d (bf(v)) - bf(v\sw 0)\omega(v\sw 1) =\d(b) f(v) +
b(\d f(v)-f(v\sw 0) \omega(v\sw 1))\\
& =& \d(b) f(v) + b\nabla(f)(v),
\end{eqnarray*}
as required.
\end{proof}

\begin{Rem}\label{2:pm}
\note{The gauge transformations on $H$-Galois extension $B\inc P$ are in on-to-one
correspondence with the gauge automorphisms
understood as unital left $B$-linear right $H$-colinear automorphisms of $P$
\cite[Proposition~5.2]{b-t96}.
If $f : H\ra P$ is a gauge  transformation,
then $F:P\ra P$, $F(p):= p\sw0 f(p\sw1 )$ is a  gauge automorphism.
Analogously, for $\alpha\in \hO^1\! P$, we put
$F(\alpha ) := \llp (\id\ot m)\ci (\id\ot\id\ot f )\ci \dsr \lrp (\alpha )$.
(The other way round we have $f(h) = h\o F(h\t )$.)}
Due to the right $C$-colinearity of the covariant differential $D$,
we can re-write point (2) of the above theorem in terms of the gauge automorphisms $F$
to obtain the formula
$(F\vt D) (p) = F^{-1} (D F(p))$. 
This formula coincides with the usual formula for the action of
gauge transformations on  projective-module connections, 
cf.\ \cite[p.554]{c-a94}. Note, however, that since we use the opposite composition as a group operation in $GA^C(B\inc P)$, we have a left rather than right action here.
\end{Rem}

\subsubsection{Strong connections on pullback constructions}

Let ${}_C\mathbf{A}_C$ be the category of unital algebras equipped
with left and right (not necessarily commuting) coactions ${}_A\hD$
and $\hD_A$ of an $e$-coaugmented coalgebra $C$ such that
${}_A\hD(1)=e\otimes 1$ and $\hD_A(1)=1\otimes e$. Morphisms in
this category are bicolinear algebra homomorphisms. Since we work
over a field, this category is evidently closed under any pullback
\[
\xymatrix{
& P  \ar[rd]^{\mathrm{pr}_2} \ar[ld]_{\mathrm{pr}_1}   & \\
P_1\ar[rd]_{\pi_1} && P_2 \ar[ld]^{\pi_2} \\
 & P_{12} \,. &  
}
\]

The aim of this section is to show that the subcategory of principal extensions is closed under one-surjective pullbacks. Here the right coaction
is the coaction defining a principal extension and the left coaction is the
one defined by the inverse of the canonical entwining. The following
theorem generalises the two-surjective pullback Hopf-Galois result of
\cite{hkmz}:
\begin{Thm}[\cite{hw}]                                                \label{prop-pe}
Let $C$ be an $e$-coaugmented coalgebra and let $P$ be the pullback of
$\pi_1: P_1 \rightarrow P_{12}$ and  $\pi_2:P_2\rightarrow P_{12}$ in
the category ${}_C\mathbf{A}_C$. If $\pi_1$ or $\pi_2$ is surjective
and both $P_1$ and $P_2$ are principal $C$-extensions, then $P$ is
a principal $C$-extension.
\end{Thm}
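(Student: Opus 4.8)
The plan is to verify directly the two defining conditions of a principal extension (Definition~\ref{2:def.principal}) for $B\inc P$, where $B:=P^{coC}$. Since everything is taken in the category ${}_C\mathbf{A}_C$ over a field, I would first record the elementary structural facts: the left and right coactions on $P$ are the componentwise restrictions of those on $P_1$ and $P_2$, the unit is $(1,1)$, so $\Delta_P(1)=1\ot e$ and the extension is $e$-coaugmented, and (using the identification of coaction invariants as in Proposition~\ref{2:prop.ecoin=coin}) $B=P^{coC}$ is the pullback $B_1\times_{B_{12}}B_2$ of the coaction-invariant subalgebras. The proof then splits into (i) showing $B\inc P$ is a \cge\ with bijective canonical entwining, and (ii) showing $P$ is $C$-equivariantly projective as a left $B$-module, which by Lemma~\ref{bhmslem} is equivalent to the existence of a strong connection on $B\inc P$.

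For (i), the canonical entwining map of $P$ is given componentwise by $(\psi_1,\psi_2)$ on $C\ot P\hookrightarrow(C\ot P_1)\oplus(C\ot P_2)$, so its inverse is the componentwise $(\psi_1^{-1},\psi_2^{-1})$, and bijectivity of $\psi$ follows from that of $\psi_1$ and $\psi_2$ (this is the point at which the left coaction $\dl$ of Theorem~\ref{2:canonical} is seen to exist on $P$). Bijectivity of the canonical map $\can\colon P\ot_B P\ra P\ot C$ is then established by a diagram chase relating $P\ot_B P$ to $P_1\ot_{B_1}P_1$ and $P_2\ot_{B_2}P_2$ through the short exact Mayer--Vietoris sequence $0\ra P\ra P_1\oplus P_2\ra P_{12}\ra 0$, which is exact precisely because $\pi_1$ or $\pi_2$ is surjective; combined with the bijectivity of $\can_1$ and $\can_2$ this yields the bijectivity of $\can$.

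The core of the argument is (ii). By the Strong-Connection Theorem~\ref{2:claim2} it suffices to produce a single strong-connection lifting $\ell\colon C\ra P\ot P$, and since $P_1,P_2$ are principal, Lemma~\ref{bhmslem} supplies strong-connection liftings $\ell_1\colon C\ra P_1\ot P_1$ and $\ell_2\colon C\ra P_2\ot P_2$. I would build $\ell$ with these as components; exactness of the sequence above together with right-exactness of $-\ot-$ over a field shows that a componentwise $\ell$ lands in $P\ot P$ (rather than merely in $(P_1\ot P_1)\oplus(P_2\ot P_2)$) exactly when the projected liftings $(\pi_1\ot\pi_1)\circ\ell_1$ and $(\pi_2\ot\pi_2)\circ\ell_2$ coincide as maps $C\ra P_{12}\ot P_{12}$. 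So the task reduces to arranging this matching. Assuming $\pi_2$ surjective, I would fix $\ell_1$ and then realise its projected datum on $P_{12}$ by a strong connection on $P_2$, using surjectivity of $\pi_2$ to lift the necessary gauge data along $\pi_2$; here the gauge action on strong-connection liftings in Theorem~\ref{2:thm.gauge.conn} and the gauge-transformation correspondence of Theorem~\ref{2:thm.gauge.aut} are the organising tools. After replacing $\ell_2$ by this adjusted lifting, the projections agree, $\ell$ is well defined into $P\ot P$, and unitality $\ell(e)=1\ot1$, the lifting property $\pi_B\circ\ell=\tau$, and bicolinearity are all inherited componentwise; thus $\ell$ is a strong-connection lifting on $P$ and (ii) follows from Lemma~\ref{bhmslem}.

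The main obstacle is exactly this matching step. The intermediate object $P_{12}$ is not assumed principal, so it carries no canonical strong connection, and a priori there is no reason for the connection data induced from $P_1$ to be induced from $P_2$ as well; the substance of the proof is to show that surjectivity of $\pi_2$ forces the $P_2$-side to realise every such datum. This is where one-surjectivity is indispensable: surjectivity of $\pi_2$ both makes the Mayer--Vietoris sequence exact (so that the glued $\ell$ genuinely lands in $P\ot P$) and permits the lift of the correcting gauge transformation from $P_{12}$ to $P_2$. The delicate technical points I expect to absorb the real work are verifying that this lift remains convolution-invertible and continues to satisfy the bow-tie compatibility conditions (a) and (b) of Theorem~\ref{2:thm.gauge.aut}, and that it can be normalised so as to preserve $\ell(e)=1\ot1$; granting these, strongness of $\ell$ is preserved and the theorem follows.
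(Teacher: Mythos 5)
Your reduction of the problem to producing a single strong-connection lifting on $P$ (via Lemma~\ref{bhmslem}) is the right target and is exactly where the paper heads, but the gluing mechanism you propose breaks down at two points. First, the claim that a ``componentwise'' $\ell$ built from $\ell_1$ and $\ell_2$ lands in $P\ot P$ as soon as $(\pi_1\ot\pi_1)\ci\ell_1=(\pi_2\ot\pi_2)\ci\ell_2$ is not correct: $P\ot P$ embeds into, but is in general strictly smaller than, the fibre product $(P_1\ot P_1)\times_{P_{12}\ot P_{12}}(P_2\ot P_2)$, because an element of $P\ot P$ must be a sum of simple tensors each of whose \emph{individual factors} already lies in $P\subseteq P_1\oplus P_2$; matching only the images in $P_{12}\ot P_{12}$ does not produce such a pairing of tensor factors. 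Second, and more seriously, the matching step itself cannot be achieved by a gauge transformation. The conditions of Lemma~\ref{2:scl} are affine-linear, so the strong-connection liftings of $P_2$ form an affine subspace of $\Hom(C,P_2\ot P_2)$, whereas the action of Theorem~\ref{2:thm.gauge.conn} only sweeps out a gauge orbit inside it: the gauge group does not act transitively on strong connections. Already in the degenerate case $P_1=P_2=P_{12}$ with $\pi_1=\pi_2=\id$ (where the theorem is trivial) your recipe would require $\ell_1$ and $\ell_2$ to be gauge-equivalent, which fails for generic choices. So surjectivity of $\pi_2$ cannot ``force the $P_2$-side to realise every such datum'' in the sense you need.

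The paper's proof avoids both problems by a different device. One first proves that any surjective morphism in ${}_C\mathbf{A}_C$ whose domain is a principal extension admits a left colinear splitting $\alpha^2_L$ and a right colinear splitting $\alpha^2_R$ (separately, not bicolinearly), and one also fixes a right colinear splitting $\alpha^1_R$ of $\pi_1$ onto its image. The maps $\id+\alpha^2_L\ci\pi_1$ and $\id+\alpha^2_R\ci\pi_1$ send $P_1$ into $P$, so applying them to the two tensor legs of $\ell_1$ yields a map $C\ra P\ot P$ factor by factor --- this replaces your fibre-product gluing and makes the question of matching $\ell_1$ against $\ell_2$ over $P_{12}$ disappear entirely. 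Because these splittings are only one-sidedly colinear, the resulting map is not yet a strong-connection lifting, and the real content of the proof is an explicit convolution-product correction term built from $\ell_2$, from $L:=m_{P_2}\ci(\alpha^{12}_L\ot\alpha^{12}_R)\ci\ell_1$ and from $\alpha^{21}_R$, which restores unitality, the lifting property and bicolinearity. (Your part (i) is a reasonable preliminary; the paper leaves the verification of the Galois and entwining conditions for $P$ implicit and concentrates on the lifting formula.)
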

\begin{proof}
Without the loss of generality, let us assume that $\pi_2$ is surjective.
First step is to prove that any surjective morphism in ${}_C\mathbf{A}_C$
whose domain is a principal extension can be split by a left colinear map and by a right colinear map (not necessarily by a bicolinear map). This can be proved much the same way as in the
Hopf-Galois case \cite{hkmz}. 

Let $\alpha^2_L$ and $\alpha^2_R$ be a left colinear splitting and a right 
colinear splitting of $\pi_2$, respectively. Also, let $\alpha^1_R$ be
a right colinear splitting of $\pi_1$ viewed as a map onto $\pi_1(P_1)$.
\[
\xymatrix{
P_1\ar[rrdd]^{\pi_1}\ar@<.6ex>[dd] &&P  \ar[rr]^{\mathrm{pr}_2} \ar[ll]_{\mathrm{pr}_1} && P_2 \ar@{-}[ld] \\
&&& {\mbox{\footnotesize$\pi_2$}} \ar@{->>}[ld] &\\
 \pi_1(P) \ar@<.6ex>[uu]^{\alpha^1_R}\ar@^{(->}[rr] && P_{12} \ar@<1.5ex>[uurr]^{\alpha^2_L} \ar@<-1.5ex>[uurr]_{\alpha^2_R}\,. &&  
}
\]
Since $P_1$ and $P_2$ are principal, they admit strong-connection
liftings $\ell_1$ and $\ell_2$, respectively. For brevity, let us introduce the notation
\[
\alpha^{12}_L:=\alpha^2_L\circ\pi_1,
\quad
\alpha^{12}_R:=\alpha^2_R\circ\pi_1,
\quad
\alpha^{21}_R:=\alpha^1_R\circ\pi_2|_{\pi_2^{-1}(\pi_1(P_1))}\,,
\quad
L:=m_{P_2}\circ(\alpha^{12}_L\otimes\alpha^{12}_R)\circ\ell_1\,,
\]
where $m_{P_2}$ is the multiplication of $P_2$. In the light of Lemma~\ref{bhmslem}, the proof boils down to verifying that the following formula defines a strong-connection lifting on $P$:
\begin{align}
\ell\;:=\;&\llp(\id+\alpha^{12}_L)\otimes(\id+\alpha^{12}_R)\lrp\circ\ell_1\\
&+\;(\mathrm{pr}_2\circ\he-L)*\Llp\llp\id\otimes(\id+\alpha^{21}_R)\lrp\circ
\llp\ell_2-\ell_2*L+(\alpha^{12}_L\otimes\alpha^{12}_R)\circ\ell_1\lrp
\Lrp.\nonumber
\end{align}
Here $\mathrm{pr}_2$ is the canonical pullback map on the second
component and $*$, as usual, stands for the convolution product.
\end{proof}

\subsubsection{Strong connections on extensions by coseparable coalgebras}\label{2:sec.cosep} 

In view of Theorem~\ref{2:thm.main.princ}, an $e$-coaugmented bijectively
entwined extension $B\subseteq P$ by a coseparable coalgebra $C$ is principal, provided the lifted canonical map is surjective. Following \cite{bb08}, we construct now  an explicit form of  a connection lifting in this case. 

Assume that $C$ is a coseparable coalgebra with a cointegral $\delta$.
Take  an entwining structure $(P,C,\psi)$ such that the map $\psi$ is bijective. Suppose that $e\in C$ is a group-like element and view $P$ as a right $C$-comodule with the coaction $\Delta_P:P\ra P\ot C$, $p\mapsto \psi(e\ot p)$, and as a left $C$-comodule with coaction ${}_P\Delta: P\ra C\ot P$, $p\mapsto \psi^{-1}(p\ot e)$. Let $\widetilde\sigma$ be a $k$-linear section of the lifted canonical map
$$
\widetilde{\can}: P\ot P{\lra} P\ot  C,\quad p\ot q\longmapsto p\Delta_P(q).
$$
Since $\widetilde{\can} (1\ot 1) = 1\ot e$,  the linear map 
$\sigma:=\widetilde\sigma(1\otimes\cdot)$ can always be normalised (so that $\sigma(e) = 1\ot 1$) by making the linear change
$$
\sigma \longmapsto \sigma +1\ot 1\eps - \sigma(e)\eps.
$$
We thus choose $\sigma$ that already is normalised in this way. By Theorem~\ref{2:thm.main.princ}, $B\subseteq P$ is a principal $C$-extension. 
Define 
\begin{equation}\label{2:gamma.alpha}
\gamma = (\delta\ot \id_P)\circ (\id_C\ot {}_P\Delta), \qquad \alpha = (\id_P\ot \delta)\circ (\Delta_P\ot \id_C),
\end{equation}
and
\begin{equation}\label{2:ell.cosep}
\ell =  (\gamma\ot \alpha)\circ (\id_C\ot\sigma\ot \id_C)\circ (\Delta\ot \id_C)\circ \Delta .
\end{equation}
\begin{Thm}[\cite{bb08}] The map $\ell$ given by \eqref{2:ell.cosep}
is a strong-connection lifting. 
\end{Thm}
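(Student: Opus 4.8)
The plan is to verify the three defining conditions of a strong-connection lifting listed in Lemma~\ref{2:scl}: unitality $\ell(e)=1\ot 1$, the lifting property $\pi_B\circ\ell=\tau$, and bicolinearity with respect to ${}_P\Delta$ and $\Delta_P$. The whole argument rests on two preliminary colinearity identities for the contraction maps $\gamma$ and $\alpha$ of (\ref{2:gamma.alpha}), which I would isolate first. Writing ${}_P\Delta(p)=p\sw{-1}\ot p\sw 0$ for the left coaction of (\ref{2:leftco}) and $\Delta_P(p)=p\sw 0\ot p\sw 1$ for the right one, and using the intertwining axiom (\ref{2:coint}) for the cointegral $\delta$, I would prove
$$
{}_P\Delta\circ\gamma=(\id_C\ot\gamma)\circ(\Delta\ot\id_P),\qquad
\Delta_P\circ\alpha=(\alpha\ot\id_C)\circ(\id_P\ot\Delta).
$$
For the first, one expands ${}_P\Delta(\gamma(c\ot p))=\delta(c\ot p\sw{-2})\,p\sw{-1}\ot p\sw 0$ and applies (\ref{2:coint}) with $c':=p\sw{-1}$ to rewrite $\delta(c\ot p\sw{-2})\,p\sw{-1}=c\sw 1\,\delta(c\sw 2\ot p\sw{-1})$; the second is dual, using (\ref{2:coint}) on the right coaction produced by $\alpha$. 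These are the only places where the cointegral intertwining property enters.

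Granting these, bicolinearity is essentially formal. Since $\ell(c)=\gamma\bigl(c\sw 1\ot\sigma^{[1]}(c\sw 2)\bigr)\ot\alpha\bigl(\sigma^{[2]}(c\sw 2)\ot c\sw 3\bigr)$ (writing $\sigma(c')=\sigma^{[1]}(c')\ot\sigma^{[2]}(c')$), the left colinearity of $\gamma$ peels a factor $c\sw 1$ off the leftmost tensorand and, after coassociativity, matches $(\id_C\ot\ell)\circ\Delta$; symmetrically the right colinearity of $\alpha$ gives $(\id\ot\Delta_P)\circ\ell=(\ell\ot\id)\circ\Delta$. Unitality is a direct evaluation: $e$ is group-like, so $\ell(e)=\gamma(e\ot\sigma^{[1]}(e))\ot\alpha(\sigma^{[2]}(e)\ot e)$, and the normalisation $\sigma(e)=1\ot 1$ together with ${}_P\Delta(1)=e\ot 1$, $\Delta_P(1)=1\ot e$ and $\delta\circ\Delta=\eps$ collapse both tensorands to $1$.

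The lifting property is where I expect the genuine work. Because $\can$ is bijective and $\can\circ\pi_B=\widetilde{\can}$, it suffices to show $\widetilde{\can}\circ\ell=1\ot\id_C$. Pushing $\Delta_P$ through $\alpha$ by the second colinearity identity produces a trailing $\ot\,c\sw 4$, so the claim reduces to the ``counit'' identity
$$
\gamma\bigl(c\sw 1\ot\sigma^{[1]}(c\sw 2)\bigr)\,\alpha\bigl(\sigma^{[2]}(c\sw 2)\ot c\sw 3\bigr)=\eps(c)\,1_P .
$$
This is the step that actually uses that $\sigma$ is a section of $\widetilde{\can}$, i.e.\ $\sigma^{[1]}(c')\,\sigma^{[2]}(c')\sw 0\ot\sigma^{[2]}(c')\sw 1=1\ot c'$, and I would combine it with both cointegral axioms (\ref{2:coint}) and with the fact that $P$ is simultaneously a right $(P,C,\psi)$- and a left $(P,C,\psi^{-1})$-entwined module (Theorem~\ref{2:canonical}, Remark~\ref{2:symmetric}). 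The delicate point, and the main obstacle, is that ${}_P\Delta$ and $\Delta_P$ do not commute on $P$ --- they are interlaced only through $\psi$ --- so the left-coaction indices coming from $\gamma$ and the right-coaction indices coming from $\alpha$ cannot be disentangled naively while one collapses the two $\delta$-contractions against the section relation. Once this counit identity is established, all three conditions of Lemma~\ref{2:scl} hold, so $\ell$ is a strong-connection lifting; this is consistent with its mere existence, already guaranteed by principality of $B\subseteq P$ (Theorem~\ref{2:thm.main.princ}) via Lemma~\ref{bhmslem}, the present point being that (\ref{2:ell.cosep}) exhibits it explicitly.
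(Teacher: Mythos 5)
Your skeleton is the same as the paper's: verify the three conditions of Lemma~\ref{2:scl}; obtain the colinearity identities ${}_P\Delta\circ\gamma=(\id_C\ot\gamma)\circ(\Delta\ot\id_P)$ and $\Delta_P\circ\alpha=(\alpha\ot\id_C)\circ(\id_P\ot\Delta)$ from the cointegral axiom \eqref{2:coint}; deduce the bicolinearity of $\ell$ formally and unitality by direct evaluation; and reduce the lifting property, via the right colinearity of $\alpha$ and the bijectivity of $\can$, to the counit identity $\gamma(c\sw 1\ot c\sw 2\sco 1)\,\alpha(c\sw 2\sco 2\ot c\sw 3)\ot c\sw 4=1\ot c$, where $\sigma(c)=c\sco 1\ot c\sco 2$. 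All of this is correct and matches the paper step for step.

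The difficulty is that the one computation you yourself single out as ``the main obstacle'' is left unresolved, and it is the only nontrivial point of the whole proof; as written, your argument has a genuine gap at its central step. The missing idea is the paper's relation \eqref{2:key},
\begin{equation*}
\psi^{-1}(pq\sw 0\ot q\sw 1)\ot q\sw 2={}_P\Delta(p)\,\Delta_P(q),
\end{equation*}
which follows from $\psi^{-1}\bigl(p\,\psi(e\ot q)\bigr)=\psi^{-1}(p\ot e)\,q$ (i.e.\ $\psi^{-1}(p\Delta_P(q))={}_P\Delta(p)q$, a consequence of $\psi^{-1}$ inverting $\psi$ and the multiplicative part of the bow-tie) together with coassociativity. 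This is precisely the device that disentangles the two coactions you worry about: tensor the section relation $c\sco 1c\sco 2\sw 0\ot c\sco 2\sw 1=1\ot c$ with outer coproduct legs, apply $(\id_C\ot\psi^{-1}\ot\id_C\ot\Delta)\circ(\id_C\ot\id_P\ot\Delta\ot\id_C)$, and use \eqref{2:key} on the left and the unitality of $\psi^{-1}$ on the right to get
\begin{equation*}
c\sw 1\ot {}_P\Delta(c\sw 2\sco 1)\Delta_P(c\sw 2\sco 2)\ot c\sw 3\ot c\sw 4
= c\sw 1\ot c\sw 2\ot 1\ot c\sw 3\ot c\sw 4\ot c\sw 5;
\end{equation*}
contracting with $\delta\ot\id_P\ot\delta\ot\id_C$ and invoking \eqref{2:coint} then yields exactly the counit identity. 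Without this manoeuvre (or an equivalent one) the lifting property, and hence the theorem, is not established.
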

\begin{proof}
 Using \eqref{2:coint} one easily checks that the map $\gamma$ is left $C$-colinear, where $C\ot P$ as understood as a left $C$-comodule via $\Delta\ot \id$, and $\alpha$ is right $C$-colinear, where $P\ot C$ is a right $C$-comodule via $\id\ot \Delta$. By the colinearity of $\gamma$ and $\alpha$, the map $\ell$ is $C$-bicolinear. 

To prove that $\ell$ has a lifting property, we start with the following simple calculation, for all $p,q \in P$,
$$
\psi^{-1}(p\Delta_P(q)) = 
 \psi^{-1}\left(p\psi(e\ot q)\right)
= \psi^{-1}(p\ot e)q ={}_P\Delta( p)q.
$$
Here the first and last equalities follow from the definitions of the right and left $C$-coactions on $P$, and the second equality follows by the fact that $\psi^{-1}$ is the inverse of the entwining map $\psi$. Thus we obtain the equality
\begin{equation}
\psi^{-1}(pq\sw 0\ot q\sw 1)\ot q\sw 2 = {}_P\Delta(p)\Delta_P(q). \label{2:key}
\end{equation}
For any $c\in C$,  write explicitly
$
c\sco 1 \ot c\sco 2 := \sigma(c),
$
so that 
$
c\sco 1c\sco 2\sw 0\ot c\sco 2\sw 1 = 1\ot c.
$
This leads to the equality
$$
c\sw 1\ot c\sw 2\sco 1c\sw 2\sco 2\sw 0  \ot c\sw 2\sco 2\sw 1\ot c\sw 3 = c\sw 1\ot 1\ot c\sw 2 \ot c\sw 3.
$$
Apply $(\id_C\ot \psi^{-1}\ot \id_C\ot \Delta)\circ (\id_C\ot \id_P\ot\Delta\ot \id_C)$ and then use \eqref{2:key} on the left hand side, and the unitality of the entwining map (the left triangle in the bow-tie diagram)  on the right hand side, to obtain
$$
c\sw 1\ot {}_P\Delta(c\sw 2\sco 1)\Delta_P(c\sw 2\sco 2) 
 \ot   c\sw 3\ot c\sw 4
 = c\sw 1\ot c\sw 2\ot 1\ot c\sw 3 \ot c\sw 4\ot c\sw 5.
$$
Now apply $\delta\ot \id_P\ot\delta\ot \id_C$ and use the definitions of maps $\gamma$ and $\alpha$ in terms of $\delta$ on the left hand side, and the properties of the cointegral \eqref{2:coint} on the right, to conclude that
$$
\gamma(c\sw 1 \ot c\sw 2\sco 1)\alpha(c\sw 2\sco 2\ot c\sw 3)\ot c\sw 4 = 1\ot c.
$$
By the right $C$-colinearity of $\alpha$, this implies that $\widetilde{\can} \circ \ell = 1\ot \id_C$. Hence, as $\can^{-1} \circ \widetilde{\can} = \pi_B$ is the standard projection $P\ot P\ra P\ot_B P$, we conclude that  $\pi_B\circ \ell= \tau$,  as required.

Finally, the definitions of left and right $C$-coactions on $P$ an \eqref{2:coint} imply that $\alpha(1\ot e) =1$ and $\gamma(e\ot 1) = 1$. These equalities together with the chosen normalisation for $\sigma$ yield $\ell(e)= 1\ot 1$.

Therefore, the map $\ell$ defined in \eqref{2:ell.cosep} satisfies all the properties of Lemma~\ref{2:scl}, i.e.\ it is a strong connection lifting, as stated.
\end{proof}

\subsubsection{Strong connections on homogeneous Galois extensions}

As a further illustration of the theory of strong connections in symmetric
(bijectivity of the canonical entwining assumed)
coalgebra-Galois extensions, we consider such connections in a
coalgebra-Galois extension of a quantum homogeneous space. This is a
preparation for an explicit example in the next section.

 Let $P$ be a Hopf algebra and  $B \subset P$ a left coideal subalgebra.
Consider the homogeneous coalgebra-Galois $P/B^+P$-extension as in Section~\ref{2:qspaces}. Write $C:=P/B^+P$ and $\pi: P\to C$ for the canonical epimorphism.
If the antipode $S$ of $P$ is bijective, then the canonical entwining map
$\psi$ given by $\psi(c \otimes p)=p\sw 1 \otimes \pi(p'p\sw 2) =
p\sw 1 \otimes c\cdot p\sw 2$ for $p' \in \pi^{-1}(c)$ is bijective with
inverse 
$\psi^{-1}(p \otimes c)=c\cdot S^{-1}(p\sw 2) \otimes p\sw 1$, so that
$B \subset P$ is a symmetric coalgebra-Galois extension.

Now, if we consider the Hopf algebra $P$ as a $P$-bicomodule by the
comultiplication $\Delta$ (regular coactions), then the 
 universal differential calculus $\Omega^1 P$ is bicovariant
(cf.\ \cite{w-sl89}). More precisely, the diagonal $P$-coactions on $P\otimes P$
can be restricted to a right $\Delta_{\Omega^1P}$ and a left ${}_{\Omega^1P}\hD$
coaction on $\Omega^1 P$.
%
The coactions $\Delta_{\Omega^1
P}$ and ${}_{\Omega^1
P}\Delta$ make $\Omega^1 P$ into a $P$-bicomodule. Furthermore, one
easily checks that the universal differential $\d: P\ra \Omega^1 P$ is a
$P$-bicomodule map, i.e.,
$$
{}_{\Omega^1
P}\Delta\circ \d = (\id\otimes \d)\circ\Delta , \qquad \Delta_{\Omega^1
P}\circ \d = (\d\otimes\id)\circ \Delta.
$$
Since $\Omega^1 P$ is a bicovariant calculus, one can, in particular,
consider {\em left-invariant forms}, i.e., elements $\omega\in \Omega^1
P$ such that ${}_{\Omega^1
P}\Delta (\omega) = 1\otimes \omega$. Any bicovariant calculus on $P$,
which necessarily is obtained as a quotient of the universal calculus
$\Omega^1 P$, is generated by left-invariant forms (as a left or right
$P$-module). 

In the case of a coalgebra-Galois extension $B \subset P$, one can
also study strong connections whose connection forms $\omega$ are
left-invariant,
i.e.,  such that, for all $c\in C$, ${}_{\Omega^1
P}\Delta (\omega(c)) = 1\otimes \omega(c)$.
The following theorem classifies all left-invariant strong-connection 
forms in the symmetric case \cite[Proposition~4.4]{bm00}.

\begin{Thm} \label{2:thm.str.con.hom}
Consider a  homogeneous coalgebra-Galois $C$-extension
$B \subset P$. Assume that the antipode $S$ is bijective, i.e.,
$B \subset P$ is a  symmetric coalgebra-Galois extension.
View the Hopf algebra $P$ as a right $C$-comodule
as  $\Delta_P = (\id_P\otimes \pi)\circ\Delta$ and as a left $C$-comodule via
${}_P\Delta = (\pi\otimes\id_P)\circ\Delta$. (Note that this is not the induced left coaction \eqref{2:leftco}.)
 Then there is a one-to-one
correspondence between left-invariant strong-connection
 forms and $C$-bicomodule maps $i:C \ra P$ such that
$ \pi\circ i=\id_C$, $i(\pi(1)) = 1$, and $\varepsilon_P \circ i =
\varepsilon_C$. 
A connection $1$-form is given by
$\omega(c)=S(i(c)\sw 1)\d(i(c)\sw 2)$.
\end{Thm}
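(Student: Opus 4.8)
The plan is to move the problem to the level of connection \emph{liftings} via the shift \eqref{2:lomega}, $\ell=\omega+(1\ot1)\eps_C$, and to verify the characterisation of strong-connection liftings in Lemma~\ref{2:scl}. The central device is the universal Maurer--Cartan map $\Phi:P\ra P\ot P$, $\Phi(p):=S(p\sw1)\ot p\sw2$, whose normalisation $\varpi(p):=\Phi(p)-\eps_P(p)\,1\ot1=S(p\sw1)\d p\sw2$ lands in $\Omega^1P$. The map $\Phi$ is a bijection of $P$ onto the subspace of $P\ot P$ invariant under the regular diagonal left $P$-coaction $x\ot y\mapsto x\sw1y\sw1\ot x\sw2\ot y\sw2$, with inverse $x\ot y\mapsto\eps_P(x)\,y$ (immediate from the antipode axiom). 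Hence a linear map $\ell:C\ra P\ot P$ has diagonal-coinvariant values if and only if $\ell=\Phi\ci i$ for the unique $i:=(\eps_P\ot\id_P)\ci\ell:C\ra P$, in which case $\ell(c)=S(i(c)\sw1)\ot i(c)\sw2$ and $\omega(c)=\varpi(i(c))=S(i(c)\sw1)\d i(c)\sw2$, which is the asserted formula. Since $1\ot1$ is diagonal-coinvariant, $\omega$ is left-invariant if and only if $\ell$ is diagonal-coinvariant, and \eqref{2:lomega} matches strong-connection forms with strong-connection liftings; so the task reduces to matching the clauses of Lemma~\ref{2:scl} for $\ell=\Phi\ci i$ with the stated properties of $i$.

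For the two colinearity clauses of Lemma~\ref{2:scl} I would first record that $\Phi$ is itself suitably $C$-bicolinear. The identity $(\id_P\ot\Delta_P)\ci\Phi=(\Phi\ot\id_C)\ci\Delta_P$ (both sides equal $S(p\sw1)\ot p\sw2\ot\pi(p\sw3)$) shows $\Phi$ is right $C$-colinear for $\Delta_P=(\id_P\ot\pi)\ci\Delta$. Dually, using the explicit inverse entwining $\psi^{-1}(p\ot c)=c\cdot S^{-1}(p\sw2)\ot p\sw1$ recorded before the theorem, the induced left coaction of \eqref{2:leftco} is ${}_P\Delta_\psi(p)=\pi(S^{-1}(p\sw2))\ot p\sw1$, and a short computation (where bijectivity of $S$, i.e.\ $S^{-1}\ci S=\id$, is essential) gives the key identity $({}_P\Delta_\psi\ot\id_P)\ci\Phi=(\id_C\ot\Phi)\ci{}_P\Delta$, where ${}_P\Delta=(\pi\ot\id_P)\ci\Delta$ is the left coaction of the statement (both sides equal $\pi(p\sw1)\ot S(p\sw2)\ot p\sw3$). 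Since $\Phi$ is injective, these two intertwining identities at once yield that the right-colinearity clause $(\id\ot\Delta_P)\ci\ell=(\ell\ot\id)\ci\Delta$ is equivalent to $\Delta_P\ci i=(i\ot\id_C)\ci\Delta$, and that the left-colinearity clause $({}_P\Delta_\psi\ot\id)\ci\ell=(\id\ot\ell)\ci\Delta$ is equivalent to ${}_P\Delta\ci i=(\id_C\ot i)\ci\Delta$; together these say exactly that $i$ is a $C$-bicomodule map for $\Delta_P$ and ${}_P\Delta$. The left clause is the delicate point, since it is precisely where the three distinct left coactions on $P$---the regular one governing left-invariance, the induced coaction ${}_P\Delta_\psi$ appearing in Lemma~\ref{2:scl}, and the coaction $(\pi\ot\id)\ci\Delta$ of the statement---are reconciled through $\Phi$; I expect this reconciliation to be the main obstacle.

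The remaining clauses are routine. Unitality $\ell(e)=1\ot1$ is equivalent, by injectivity of $\Phi$ and $\Phi(1)=1\ot1$, to $i(e)=1$ (recall $e=\pi(1)$). That $\omega(c)=\ell(c)-\eps_C(c)\,1\ot1$ lies in $\Omega^1P=\ker m$ reads $S(i(c)\sw1)i(c)\sw2=\eps_P(i(c))\,1=\eps_C(c)\,1$, i.e.\ $\eps_P\ci i=\eps_C$. Finally, from Corollary~\ref{2:bpco} one has $\tau(\pi(p))=S(p\sw1)\ot_Bp\sw2$, and since $\widetilde{\can}=\can\ci\pi_B$ with $\can$ injective, the computation $\widetilde{\can}(\ell(c))=S(i(c)\sw1)\Delta_P(i(c)\sw2)=1\ot\pi(i(c))$ shows the lifting clause $\pi_B\ci\ell=\tau$ to be equivalent to $\pi\ci i=\id_C$.

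Assembling the two directions then finishes the proof. Given a $C$-bicomodule map $i:C\ra P$ (for $\Delta_P$ and ${}_P\Delta$) with $\pi\ci i=\id_C$, $i(\pi(1))=1$ and $\eps_P\ci i=\eps_C$, the lifting $\ell:=\Phi\ci i$ meets every clause of Lemma~\ref{2:scl} by the computations above, so $\omega=\varpi\ci i$ is a strong-connection form, manifestly left-invariant. Conversely, a left-invariant strong-connection form $\omega$ yields a diagonal-coinvariant strong-connection lifting $\ell$, whence $i:=(\eps_P\ot\id_P)\ci\ell$ with $\ell=\Phi\ci i$, and Lemma~\ref{2:scl} returns the four properties of $i$. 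As $i\mapsto\Phi\ci i$ and $\ell\mapsto(\eps_P\ot\id_P)\ci\ell$ are mutually inverse, so are these assignments, which establishes the claimed one-to-one correspondence together with the formula $\omega(c)=S(i(c)\sw1)\d i(c)\sw2$.
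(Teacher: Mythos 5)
Your proof is correct, and it takes a genuinely different route from the paper's. The paper works directly at the level of connection forms: in one direction it verifies the clauses of Theorem~\ref{2:con.form} (in particular the $\psi^2$-compatibility) and the strongness criteria of Lemma~\ref{2:thm.str.inv} and Lemma~\ref{2:lemma.scfs} by explicit Sweedler-notation computations with $\omega(c)=S(i(c)\sw 1)\d i(c)\sw 2$; in the converse direction it cites \cite[Proposition~3.5]{bm98b} for the fact that left-invariance forces $\omega$ to have this form for some splitting $i$, and then extracts the colinearity of $i$ from the two strongness lemmas. You instead pass to connection liftings and reduce everything to Lemma~\ref{2:scl} via the map $\Phi(p)=S(p\sw 1)\ot p\sw 2$, proving (rather than citing) that $\Phi$ is a bijection onto the diagonal coinvariants of $P\ot P$ with inverse $\eps_P\ot\id_P$, and then establishing the two intertwining identities $(\id_P\ot\Delta_P)\ci\Phi=(\Phi\ot\id_C)\ci\Delta_P$ and $({}_P\Delta_\psi\ot\id_P)\ci\Phi=(\id_C\ot\Phi)\ci{}_P\Delta$. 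This buys you a symmetric treatment of both directions at once (injectivity of $\Phi$ converts each clause of Lemma~\ref{2:scl} into the corresponding property of $i$ and back), makes the argument self-contained, and isolates cleanly the reconciliation of the three left coactions in play --- which is indeed the only delicate point, and which your second identity (using $S^{-1}\ci S=\id$) resolves correctly. The paper's computational route has the minor advantage of not presupposing the full lifting machinery of Lemma~\ref{2:scl}, but yours is arguably the more conceptual proof.
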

\begin{proof}
Given such a splitting $i:C\ra P$ of $\pi$, consider
$\omega(c)= Si(c)\sw 1\d (i(c)\sw 2)$,
as stated. The normalisation conditions imply that
 $\omega(\pi(1)) = 0$ and
$\widetilde{\can}\circ\omega(c) = 1\otimes c -
\eps(c)1\otimes\pi(1)$. Use the short-hand notation $\psi^2:= (\id_P \otimes \psi)\circ (\psi\ot \id_P)$ and compute
\begin{eqnarray*}
\psi^{2}(c\sw 1\otimes \omega(c\sw 2)) & = & Si(c\sw 2)\sw 2\d
i(c\sw 2)\sw 3\otimes\pi(i(c\sw 1) Si(c\sw 2)\sw 1i(c\sw 2)\sw 4)\\
& = &
Si(c)\sw 3 \d
i(c)\sw 4\otimes\pi(i(c)\sw 1S(i(c)\sw 2)i(c)\sw 5) \quad \mbox{\rm ($i$
is left-colinear)}\\
& = & Si(c)\sw 1\d i(c)\sw 2\otimes \pi(i(c)\sw 3)\\
& = & Si(c\sw 1)\sw 1\d i(c\sw 1)\sw 2\otimes \pi(i(c\sw 2)) \qquad
\mbox{\rm ($i$ is right-colinear)}\\
& = & \omega(c\sw 1)\otimes c\sw 2 \qquad \qquad\qquad\qquad\qquad
\mbox{\rm ($\pi$ is split by $i$).}
\end{eqnarray*}
Theorem~\ref{2:con.form} implies that $\omega$ is a
connection  one-form. Finally, compute
\begin{eqnarray*}
(\id\otimes\Delta_P)(\omega(c)) & = & Si(c)\sw 1\otimes i(c)\sw
2\otimes\pi(i(c)\sw 3) - \eps(c) 1\otimes 1\otimes \pi(1)\\
& = & Si(c\sw 1)\sw 1\otimes i(c\sw 1)\sw 2\otimes c\sw 2 - \eps_C(c)
1\otimes 
1\otimes\pi(1)\\
& = & \omega(c\sw 1)\otimes c\sw 2+1\otimes 1\otimes c - \eps_C(c) 1\otimes
 1\otimes
\pi(1),
\end{eqnarray*}
where the use of the fact that $i$ is a right colinear splitting was
made in the derivation of the second equality.
Lemma~\ref{2:thm.str.inv} now implies that the connection
corresponding to $\omega$ is strong.

Conversely, assume that there is a  strong connection with the
left-invariant connection form $\omega$. Then  the left-invariance
of $\omega$ implies that there exists a splitting $i: C\ra P$ of $\pi$
such that $\eps_P\circ i =\eps_C$ and
$\omega(c)=Si(c)\sw 1\d i(c)\sw 2$  (cf.\
\cite[Proposition~3.5]{bm98b}). The fact that $\omega(\pi(1)) =0$
implies that $i(\pi(1))=1$. Applying $(\id\otimes \Delta_P)$ to
this $\omega$ and using Lemma~\ref{2:thm.str.inv}, one deduces that
$i$ is right-colinear. Since we are dealing with a symmetric
coalgebra-Galois extension the entwining map is bijective.
 The left coaction \eqref{2:leftco} induced by
$\psi^{-1}$ is ${}_P\hD(p) = \pi(S^{-1}p\sw 2)\otimes p\sw 1$. By
Lemma~\ref{2:lemma.scfs},
\[ ({}_P\hD\otimes \id_P)\omega(c) = \pi(i(c)\sw 1) \otimes
Si(c)\sw 2\otimes i(c)\sw
3 -\eps_C(c) \pi(1)\otimes 1\otimes 1
\]
must be equal to
\[
c\sw 1\otimes S(i(c\sw 2)\sw 1)\otimes i(c\sw 2)\sw 2
-\eps_C(c)\pi(1)\otimes
1\otimes 1.
\]
Applying $\id\otimes S^{-1}\otimes \eps_C$ to this equality, one deduces that
$i$ must be left-colinear (with respect to the coaction ${}_P\Delta$). 
This completes the proof.
\end{proof}

Theorem~\ref{2:thm.str.con.hom}  shows that strong connections in a
symmetric coalgebra-Galois extensions over a coideal subalgebra can be
obtained from purely (co)algebraic data. This observation allows one to
construct concrete examples of strong connections.

\subsubsection{Dirac monopoles over the Podle\'s 2-spheres}

Consider the quantum Hopf fibration described in Section~\ref{2:qsphere.coalg}.
In this case, the coalgebra $C$ is spanned by group-like elements $g_\mu$, $\mu\in \Z$, given by equations \eqref{Hopf.fibr.gen}, and the bicolinear splitting
$i$ of the projection $\pi$  can be
relatively easily computed. Explicitly, for all positive integers $n$, it comes out as
\begin{eqnarray}
i(g_n) &=& \prod_{k=0}^{n-1} \frac{\alpha + q^k s (\beta + \gamma) +
q^{2k} s^2 \delta}{1 + q^{2k} s^2}, \nonumber\\
i(g_{-n}) &=& \prod_{k=0}^{n-1} \frac{\delta - q^{-k} s (\beta + \gamma) +
q^{-2k} s^2 \alpha}{1 + q^{-2k} s^2},
\end{eqnarray}
where  the multiplication increases from  left to right. Thus, in
view of Theorem~\ref{2:thm.str.con.hom}, we
have constructed a strong left-invariant connection in the quantum Hopf
fibration with connection lifting $\ell=(S\ot\id)\ci\hD\ci i$. Such a connection in the classical Hopf fibration is known as
the {\em Dirac magnetic monopole}, as it has a
physical interpretation of a point particle which is a source of a
magnetic field. (See \cite{l-g00} for very nice description of
classical
monopoles from the point of view of noncommutative geometry.)
Motivated by this correspondence, the strong
connection constructed from $i$ via Theorem~\ref{2:thm.str.con.hom} is
called the {\em Dirac $q$-monopole}.

Furthermore, one can study the module of sections of a line bundle
associated to the quantum Hopf fibration. As a right $C$-comodule we
take the one-dimensional space
$V=k\,$ with the coaction $ \Delta_V(1)= 1 \otimes g_1$. Then
the module of sections turns out to be $E =\Hom^C(V,H)=
\{x(\alpha + s \beta) + y(\gamma + s \delta) \mid x,y \in \mathcal{O}(S^2_{q,s})\}$.
 Explicitly, $E$ is given by the  following idempotent matrix:
\[
E \cong (S_{q,s}^2)^2 \mathbf{p}, \quad \mathbf{p} =
\frac{1}{1+s^2} 
\left(\begin{array}{cc} 1- \zeta &\xi \\ -\eta &s^2+q^{-2}\zeta\end{array}\right).
\]



\end{document}